\newtheorem{theorem}{Theorem}[section]
\newtheorem{lemma}[theorem]{Lemma}
\newtheorem{proposition}[theorem]{Proposition}
\newtheorem{corollary}[theorem]{Corollary}
\newtheorem{definition}[theorem]{Definition}
\newtheorem{example}[theorem]{Example}
\newtheorem{remark}[theorem]{Remark}
\numberwithin{figure}{section}
\numberwithin{equation}{section}
\newcommand{\vertiii}[1]{{\left\vert\kern-0.25ex\left\vert\kern-0.25ex\left\vert #1 
    \right\vert\kern-0.25ex\right\vert\kern-0.25ex\right\vert}}
\begin{document}
\title{{\bf A kernel method for the learning of Wasserstein geometric flows}}

\author{Jianyu Hu$^{1}$, Juan-Pablo~Ortega$^{1}$, and Daiying Yin$^{1}$}


\maketitle

\begin{abstract}
{Wasserstein gradient and Hamiltonian flows have emerged as essential tools for modeling complex dynamics in the natural sciences, providing a unifying geometric formulation of many partial differential equations (PDEs) and finding applications in fields ranging from optimal transport to quantum mechanics and information geometry.} Despite their significance, the inverse identification of potential functions and interaction kernels underlying these flows remains relatively unexplored.  
In this work, we tackle this challenge by addressing the inverse problem of simultaneously recovering the potential function and interaction kernel from discretized observations of the density flow. We formulate the problem as an optimization task that minimizes a loss function specifically designed to enforce the underlying variational structure of Wasserstein flows, ensuring consistency with the geometric properties of the density manifold.  
Our framework employs a kernel-based operator approach using the associated {reproducing kernel Hilbert space (RKHS)}, which provides a closed-form representation of the unknown components. Furthermore, we conduct a comprehensive error analysis, providing convergence rates under adaptive regularization parameters as the temporal and spatial discretization mesh sizes tend to zero.
Moreover, a stability analysis is presented to bridge the gap between discrete trajectory data
and continuous-time flow dynamics for the Wasserstein Hamiltonian flow.
{Finally, numerical experiments on both Wasserstein gradient and Hamiltonian flows demonstrate accurate and robust recovery from discrete density observations, while comparisons with sparse-learning approaches illustrate the strong dependence of sparse recovery on the choice of dictionary.}
\end{abstract}

\makeatletter
\addtocounter{footnote}{1}{%
\footnotetext{Jianyu Hu, Juan-Pablo Ortega, and Daiying Yin are with the Division of Mathematical Sciences, School of Physical and Mathematical Sciences, Nanyang Technological University, Singapore. Their email addresses are {\texttt{Jianyu.Hu@ntu.edu.sg}}, {\texttt{Juan-Pablo.Ortega@ntu.edu.sg}}, and {\texttt{YIND0004@e.ntu.edu.sg}}, respectively.}
}
\makeatother

\tableofcontents

\section{Introduction}

Many differential systems arising in physics can be formulated using an energy functional and a
linear operator that connects the time derivative of the state to the gradient of the energy. Typically, the {\it energy functional} comprises internal, potential, and interaction energy terms, expressed explicitly in the following form:
\begin{equation}\label{energy-functional}
\mathcal{F}(\rho)= \int_{\mathbb{R}^d}U(\rho(x))\mathrm dx + \int_{\mathbb{R}^d}V(x)\rho(x)\mathrm dx + \frac{1}{2}\iint_{\mathbb{R}^d\times \mathbb{R}^d}W(x-y)\rho(x)\rho(y)\mathrm dx\mathrm dy.
\end{equation}
The energy functional is defined on the \emph{density manifold}, which is the space of probability density functions on configuration space equipped with the optimal transport (Wasserstein) distance~\cite{lafferty1988density,villani2009optimal,ambrosio2008gradient}.
Two prominent classes of such systems are the \emph{Wasserstein gradient flow systems}:
\begin{equation}\label{gradient-flow-o}
\partial_t \rho_t = \nabla \cdot \big(\rho_t \nabla (U'(\rho_t)+V + (W \ast \rho_t))\big),\quad \rho_0 = \mu_0,
\end{equation}
and the \emph{Wasserstein Hamiltonian flow systems}:
\begin{equation}\label{hamiltonian-flow-o}
\partial_{tt}\rho_t + \Gamma_W(\partial_t\rho_t,\partial_t\rho_t)=\nabla \cdot \big(\rho_t \nabla (U'(\rho_t)+V + (W \ast \rho_t))\big),\quad \rho_0 = \mu_0,
\end{equation}
where the symbol ``\(*\)'' represents the convolution operator, and \(\Gamma_W\) is the Christoffel symbol with respect to the Levi-Civita connection \cite{abraham2012manifolds} associated to a natural metric on the density manifold that will be defined later on (see Definition \ref{Otto metric}, \cite{chow2020wasserstein}, and \cite[Proposition 11]{li2018geometry} for more details).

These systems naturally appear in numerous scientific and engineering contexts, including porous medium flows~\cite{Carrillo2000AsymptoticLO,vazquez2007porous,otto2001geometry}, cell population dynamics models~\cite{bodnar2006integro,carrillo2019population}, and biological swarming or collective movement phenomena~\cite{topaz2006nonlocal}. Wasserstein gradient and Hamiltonian flows, driven by distinct underlying energy functionals, exhibit deep connections to many fundamental partial differential equations (PDEs). Examples include the heat equation~\cite{Boltzmann1964LecturesOG}, Wasserstein geodesics~\cite{benamou2000computational,carlen2003constrained}, the Vlasov and Schr\"{o}dinger equations~\cite{Carrillo2003KineticER,Lott2004RicciCF,chow2019discrete}, Fokker--Planck equations~\cite{Jordan1996THEVF,Furioli2017FokkerPlanckEI,Toscani2006KineticMO}, and general Hamiltonian PDEs~\cite{ambrosio2008hamiltonian,chow2020wasserstein}. 

{
From an inverse-problem perspective, the energy functional in \eqref{energy-functional} encodes the constitutive mechanisms governing the observed density evolution. The internal energy $U$ describes local density-dependent effects such as nonlinear diffusion, the potential $V$ represents external forces or confinement, and the interaction kernel $W$ characterizes nonlocal interactions between individuals or particles. In many applications, these mechanisms are not directly observable, whereas macroscopic density profiles can be measured or reconstructed at successive times \cite{lund2014nonparametric,srivastava2025inference}. Examples include population distributions, aggregation and collective dynamics, and transport or diffusion processes. This naturally leads to the inverse problem considered in this paper: given observations of the evolving density $\rho_t$, can one identify the underlying energetic mechanisms $U, V,$ and $ W$ that generate the observed dynamics? In this sense, the inverse problem is a constitutive-law identification problem at the level of probability densities.
}

The rich geometric structures naturally present in the formulation of these problems motivate the development of machine learning methodologies capable of inferring the unknown energy function from observed trajectory data while preserving key geometric properties such as gradient or Hamiltonian structures. Pioneering efforts include the sequentially thresholded least-square method~\cite{rudy2017data}, variants of LASSO-type algorithms~\cite{kang2021ident,rudy2019data}, iterative greedy algorithms such as subspace pursuit~\cite{he2023group}, advanced gradient-descent-based algorithms designed for solving sparse regression or \(L^1\)-minimization problems~\cite{schaeffer2017learning} and energy variational approaches \cite{hu2024energetic,lu2024learning}. Nevertheless, a significant drawback of these strategies lies in the lack of comprehensive connections to the underlying differential equations and geometric structures. 
Recent advances, such as the probabilistic error functional proposed in~\cite{lang2022learning}, have aimed to learn interaction kernels for first-order systems of interacting particles described by mean-field equations. 
Subsequently, \cite{carrillo2025sparse} extends this approach to nonlinear gradient flow equations, assuming a pre-defined set of basis functions and thereby aligning with addressing a basis pursuit problem~\cite{wright2022high}.
More recently, \cite{gao2024self} introduced self-test loss functions to extend their applicability beyond energy-dissipating systems to general weak formulations.
However, such approaches require extensive prior information about the interaction kernels and heavily depend on the initial choice of basis functions.

{
Reproducing kernel Hilbert space (RKHS) methods provide a well-established framework for nonparametric approximation, statistical learning, and regularized inverse problems~\cite{scholkopf2002learning,steinwart2008support}. A central feature of these methods is that regularized optimization problems posed over an infinite-dimensional RKHS can often be reduced, through representer-type results, to finite-dimensional problems determined by the observations. Beyond classical pointwise observations, RKHS methods have also been developed for general linear functionals and operator-based observations, providing a flexible framework for inverse problems and structured observations~\cite{saitoh2016theory,schaback2006kernel}. For differential observations, one common approach is to incorporate the observation operators into an induced kernel, thereby reducing the resulting problem to a standard kernel regression problem in the transformed observation space. More generally, regularization provides a natural mechanism for stabilizing ill-posed inverse problems, and convergence rates can be established under suitable regularity and source conditions~\cite{de2006discretization,blanchard2018optimal}. Kernel methods involving differential operators have also been extensively studied in the numerical approximation of PDEs~\cite{franke1998solving,wendland2004scattered,schaback2007convergence,schaback2016all}, with recent developments including physics-informed kernel formulations~\cite{doumeche2024physics,batlle2025error}.
}

Structure-preserving kernel methods have also been developed for finite-dimensional gradient, Hamiltonian, and Poisson dynamical systems~\cite{feng2021learning, miller2023learning, RCSP2, RCSP3}, enabling the recovery of Hamiltonians or potentials in a structure-preserving manner by explicitly leveraging the geometric structure of the dynamical systems under consideration. 
{In particular, rather than reformulating the problem through an operator-induced kernel, the differential reproducing property can be used to act directly on the original kernel and to derive explicit representations of the estimator associated with differential observations. The present work follows this structure-preserving viewpoint and extends it to inverse problems arising from Wasserstein dynamics, which remains {less} explored due to the inherent complexity of geometric computations on density manifolds~\cite{lott2008some}.}
In the finite-dimensional scenario, structure-preserving kernel methods typically assume the availability of observed data explicitly given as Hamiltonian vector or gradient force fields. However, in infinite-dimensional Wasserstein systems, observing such explicit vector-field data is generally unrealistic. Thus, it is necessary to develop methods suitable for scenarios where only trajectory data of densities are available. Furthermore, similar to the degeneracy issues encountered with Poisson tensors in \cite{RCSP3}, the inverse problem in this setting also exhibits an inherent non-uniqueness: solutions are only determined up to terms in the null space of a certain differential operator, making it impossible to recover the underlying energy functional uniquely. Consequently, any machine learning algorithm designed for infinite-dimensional Wasserstein flow systems must not only respect the structure but also properly account for this degeneracy to ensure meaningful results.

This paper \textit{proposes a data-driven structure-preserving kernel methodology to recover the energy functional \eqref{energy-functional} from observational trajectory data generated by density flows arising from either the Wasserstein gradient flow \eqref{gradient-flow-o} or the Wasserstein Hamiltonian flow \eqref{hamiltonian-flow-o}.} {In this setting, the available data consist of discrete space–time observations of density trajectories rather than direct evaluations of finite-dimensional gradient or Hamiltonian vector fields. The unknown energy components must therefore be inferred through differential operators induced by the Wasserstein dynamics and the observed density trajectories.}

{
Within this framework, we derive an explicit representation of the RKHS estimator using the differential reproducing property, characterize the intrinsic non-identifiability of the inverse problem through the null space of the associated observation operator, and establish convergence results that account for approximation, space–time discretization, and numerical errors. For Wasserstein Hamiltonian systems, we further relate the reconstruction error of the learned energy to the prediction error of the induced density flow in Wasserstein distance. These results connect the recovery of the energy components from discrete density observations with the underlying continuous Wasserstein dynamics.
}


\noindent{\bf Main results and structure of the paper.}\quad 
We propose a data-driven kernel-based approach to estimate the potential function \( V \), the interaction kernel \( W \), and the internal energy \( U \) in the energy functional~\eqref{energy-functional} from observational data. In most of the paper we will concentrate on a simplified situation in which we assume a prior knowledge of the internal energy \( U \), for which there are natural candidates like the Fisher information  $U(\rho) = \rho |\nabla \log \rho|^2$  or the Keller-Segel type energy  $U(\rho) = \frac{1}{m-1}\rho^m$,  for  $m > 1$. 
This methodology is extended in Section \ref{Learning the internal energy} to scenarios in which the internal energy \( U \) is unknown.

The solution to the learning problem is presented in Section \ref{Structure-preserving kernel ridge regression and numerical schemes}, with the associated error analysis detailed in Section \ref{error_analysis}. In both sections, we restrict to the case with space dimension \( d=1 \), and assume the availability of data consisting of density flows:  
\begin{align}  
\left\{ \rho(t_l, x_n) \mid n=1,\dots,N, \, l=1,\dots,L \right\},  
\label{discretized version}
\end{align}  
where the trajectory \( \rho \) is generated by either the gradient flow \eqref{gradient-flow-o} or the Hamiltonian flow \eqref{hamiltonian-flow-o}. The discrete spatial and temporal points \((t_l, x_n)\) are given as:  
\begin{equation*}
\begin{aligned}
x_n = a + n\Delta x,\quad t_l = l\Delta t,\quad \Delta x = \frac{b - a}{N}, \quad \Delta t = \frac{T}{L},\quad n=1,\dots,N,\quad l=1,\dots,L.
\end{aligned}
\end{equation*}

To explicitly describe the method, we consider the inverse problem of recovering \( V \) and \( W \) by solving the following regularized optimization problem in the Cartesian product $\mathcal{H}_{K_1} \times \mathcal{H}_{K_2}$ of two reproducing kernel Hilbert spaces (RKHSs) \( \mathcal{H}_{K_1} \) and \( \mathcal{H}_{K_2} \) associated to two (in principle different) kernel functions $K _1, K _2: {\mathbb R}^d \times {\mathbb R}^d \rightarrow \mathbb{R}$:  
\begin{align}
\big(\widehat{V}_{\lambda, NL}, \widehat{W}_{\lambda, NL}\big) 
&:= \mathop{\arg\min}\limits_{(\phi, \psi) \in \mathcal{H}_{K_1} \times \mathcal{H}_{K_2}} \widehat{R}_{\lambda,NL}(\phi, \psi), \label{exp-pro} 
\end{align}  
with the regularized empirical loss functional given by  
\begin{align}
\widehat{R}_{\lambda,NL}(\phi, \psi) &:= \frac{T|\Omega|}{NL} \sum_{n,l=1}^{N,L} \left| A^{\delta}(\phi, \psi)(t_l, x_n) - f^{\delta}(t_l, x_n) \right|^2 \rho_{t_l}(x_n) + \lambda_1 \|\phi\|_{\mathcal{H}_{K_1}}^2 + \lambda_2 \|\psi\|_{\mathcal{H}_{K_2}}^2, \label{exp-fun}
\end{align}  
where \( |\Omega| = b-a \), \( \lambda_1, \lambda_2 > 0 \) are regularization parameters, and the operator \( A^\delta \) and function \( f^\delta \) will be defined later on in \eqref{neumericalA} and \eqref{f-delta}, respectively (see Section~\ref{Structure-preserving kernel ridge regression and numerical schemes} for more details).  
In our approach, we use the Euler scheme to approximate the spatial and temporal derivatives of the density flows. Faster convergence can be achieved by employing higher-order numerical integrators for smooth solutions. {The code for all experiments is available at \url{https://github.com/jianyuhu/kernel-learning-for-Wasserstein-flows}.}


We now summarize the outline and the main contributions of the paper. 
\begin{enumerate}
\item In Section~\ref{background}, we begin with a brief overview of natural geometric structures that can be defined on the density manifold $\mathcal{P}_{+}(M)$. These structures naturally lead to the Wasserstein gradient flow formulation, through which a wide range of well-known PDEs for probability densities can be interpreted as gradient flows. 
In Section~\ref{Wasserstein Hamiltonian flow}, we introduce a symplectic form on what we call the {\it characterization space} $\mathcal{P}_+(M)\times C^\infty(M)/\mathbb{R}$ (carefully spelled out later on) of the tangent and the cotangent bundles of the density manifold $\mathcal{P}_{+}(M)$, which provides a geometric perspective on the Wasserstein Hamiltonian flows studied in \cite{chow2020wasserstein, Wu2025}. This symplectic structure was presented in \cite{khesin2019geometry} in the context of the study of the geometry of the Madelung transform. 
Finally, we offer a brief comparison of this Hamiltonian structure with those in the Wasserstein space \cite{ambrosio2008hamiltonian,Gangbo2011}, the Poisson bracket approach \cite{lott2008some}, and the ideal fluids framework in  \cite{holm1998euler}.

\item In Section~\ref{Structure-preserving kernel ridge regression and numerical schemes}, we present an operator-theoretic framework that explicitly represents the solutions (Proposition \ref{Operator_representation}) of the optimization problem \eqref{exp-pro} in the product space of two RKHSs associated to two different kernel functions. Applying the so-called differential reproducing property \cite[Theorem 2.7]{RCSP2}, we derive a closed-form representation (Proposition \ref{Kernel representation}) of the unknown potential and interaction kernels in the product space of the two RKHSs.  
Additionally, we analyze the kernel (null space) of the operator \( A^\delta \) defined in \eqref{neumericalA} and show that the inclusion of regularization terms in the loss function ensures the uniqueness of the minimizer for the minimization problem (Remark \ref{Uniqueness}). Section \ref{Learning the internal energy} extends all these results to scenarios in which the internal energy \( U \) is unknown.
\item In Section~\ref{error_analysis}, we analyze the convergence rates of our estimator to the ground truth by decomposing the error into three components: the approximation error, the mesh-induced discretization error, and the scheme-dependent numerical error. The approximation error analysis typically relies on standard source conditions~\cite{plato2018optimal,lu2019nonparametric}. Additionally, we introduce an auxiliary minimization problem to aid in the analysis.  
The mesh-induced discretization error (Theorem \ref{discretization-error}) and the scheme-dependent numerical error (Theorem \ref{numrical-error}) are then derived using the operator representations developed in Section~\ref{Structure-preserving kernel ridge regression and numerical schemes} and the regularity properties of the density flows.  

More precisely, if the regularization parameters \( \lambda_1 \) and \( \lambda_2 \), as well as the time and space discretization parameters \( L \) and \( N \) (introduced in \eqref{discretized version}), satisfy the following scaling conditions:
\begin{align*}
\lambda_1 = \lambda_2 \propto N^{-\alpha}, \quad L \propto N^{\beta}, \quad \alpha, \beta > 0,
\end{align*}
then for \( \alpha \in \left(0, \frac{1}{3}\right) \), the convergence rate of the total reconstruction error (Theorem~\ref{Total reconstruction error}) is given by:
\begin{equation*}
\Big\|\big(\widehat{V}_{\lambda, NL}, \widehat{W}_{\lambda, NL}\big) - \big(V, W\big)\Big\|_{\mathcal{H}_{K_1} \times \mathcal{H}_{K_2}} 
\propto 
\begin{cases}
N^{-\min\{\alpha\gamma, \frac{1}{2}(\beta - 3\alpha)\}}, & \quad 3\alpha < \beta \leq 1, \\[8pt]
N^{-\min\{\alpha\gamma, \frac{1}{2}(1 - 3\alpha)\}}, & \quad \beta > 1,
\end{cases}
\end{equation*}
where \( \gamma \) is the regularity parameter in the so-called {\it source condition} that will be introduced later on in \eqref{sou-con}.

Furthermore, we leverage the push-forward formulation of flows established in~\cite{chow2020wasserstein} to show that the flow of the learned Wasserstein Hamiltonian system uniformly approximates the flow of the underlying Wasserstein Hamiltonian system with respect to the Wasserstein distance, thereby justifying the use of the RKHS norm.  

Our stability analysis extends previous studies, which primarily focused on controlling uniform Euclidean distances~\cite{feng2021learning,RCSP2}, to the setting of uniform Wasserstein distances. This extension enables a rigorous analysis of prediction errors and bridges the gap between discrete trajectory data and continuous-time flow dynamics.

{\item In Section~\ref{Numerical experiments}, we provide numerical experiments to evaluate the proposed kernel framework for inverse problems arising from both Wasserstein gradient and Hamiltonian flows. For Wasserstein gradient flows, we consider the recovery of compactly supported and smooth interaction potentials from one-dimensional density trajectories, and investigate the effects of observation resolution and different variational discretizations. For Wasserstein Hamiltonian flows, we consider two-dimensional examples involving both an anisotropic quadratic potential and a highly nonconvex external potential, demonstrating the applicability of the framework beyond one spatial dimension.}

{We also compare the proposed kernel approach with the sparse-learning framework of~\cite{carrillo2025sparse}, using polynomial, Fourier, and Gaussian dictionaries. The experiments show that sparse finite-dimensional approaches can perform very well when the chosen dictionary is well matched to the unknown potential, but may deteriorate substantially when the representation is misspecified. In contrast, the kernel method provides consistently accurate reconstructions across the different examples without requiring a finite dictionary tailored to the anticipated functional form of the unknown potential.}
\end{enumerate}

\paragraph{Comparison with related results in the literature.}

This paper proposes a data-driven kernel-based learning approach for identifying energy functionals on infinite-dimensional density manifolds, applicable to both Wasserstein gradient systems and Hamiltonian systems. In the existing literature, kernel-based methods have been developed to solve mostly inverse problems for dynamical systems on finite-dimensional underlying spaces. For instance, \cite{feng2021learning} focuses on recovering the particle interaction potential function, which is a function defined on \( \mathbb{R} \). \cite{RCSP2} extends this framework to finite-dimensional Hamiltonian systems, with interacting-particle systems as a special case, enabling the recovery of the Hamiltonian function. Furthermore, \cite{RCSP3} generalizes the setting to Poisson systems defined on finite-dimensional Poisson manifolds endowed with a Riemannian structure and addresses the unique identifiability issue through a regularization technique.  
It is worth noting that these existing methods require both state observations (as realizations of random variables) and the corresponding (eventually noisy) evaluations of the Hamiltonian vector field. 

As we already mentioned, the solution of the inverse problem becomes significantly more challenging on infinite-dimensional density manifolds \cite{lott2008some}. In this setting, the available observational data fundamentally differs from finite-dimensional cases: rather than requiring realizations of random variables and their associated vector fields, data are provided in the form of space-time mesh-grid observations of density flows. While \cite{carrillo2025sparse} proposed a least-squares-based sparse identification method that exploits gradient flow structure, their approach requires the a priori selection of a finite basis set. Another closely related method, \cite{lang2022learning}, recovers interaction kernels for mean-field particle systems from observed space-time densities, addressing the ill-posedness problem through regularization when dealing with singular normal matrices.
Subsequently,  \cite{gao2024self} proposed self-test loss functions that not only conserve energy in gradient flows and align with the expected log-likelihood ratio in stochastic differential equations but also enable theoretical analysis of identifiability and well-posedness.
Their RKHS-based approach effectively truncates the (otherwise infinite) basis expansion that would emerge naturally in our kernel ridge regression framework. The key advance of our methodology is that, through the use of the differential reproducing property and the differential Representer's Theorem, we obtain the solution to \eqref{exp-pro}-\eqref{exp-fun} as the optimum over the entire RKHS, not limited to the span of finite preselected basis functions obtained by truncation. This yields a novel, fully data-driven framework that eliminates the need for explicit and arbitrary basis truncation. Moreover, our proposed operator framework provides a systematic way to analyze the unique identifiability/ill-posedness problem by characterizing the kernel of the associated linear operator.

\section{Wasserstein gradient and Hamiltonian flows}\label{background}

In this section, we define some elements of calculus on the density manifold and present the geometric structures underlying Wasserstein gradient and Hamiltonian flows defined on it. These geometric frameworks have been extensively studied in the literature; see the classical books \cite{villani2009optimal, ambrosio2008gradient}, as well as contributions such as \cite{otto2001geometry,lott2008some, chow2020wasserstein}. 
In Section~\ref{Hamiltonian structure}, we present a symplectic form on what we call the characterization space $\mathcal{P}_+(M) \times C^\infty(M)/\mathbb{R}$ (see below), that provides a geometric perspective on the Wasserstein Hamiltonian flows discussed in \cite{chow2020wasserstein, Wu2025}. 
This symplectic structure was introduced in \cite{khesin2019geometry} in the study of the geometry of the Madelung transform.
As explained later in Appendix \ref{apped: density manifold intro}, this geometric approach differs from the Hamiltonian structure on Wasserstein spaces in \cite{ambrosio2008hamiltonian, Gangbo2011} and from the Poisson structure on the density manifold \cite{lott2008some}. {For completeness, we provide details in Appendix~\ref{apped: density manifold intro}.}

\subsection{Calculus on the density manifold}

We start by defining the density manifold $\mathcal{P}_{+}(M)$ on a general paracompact manifold $M$, even though in subsequent sections we will formulate and solve the learning problem on the particular case in which $M$ is a Euclidean space.

Let $(M,g)$ be a smooth, paracompact Riemannian manifold without boundary of dimension $d$, where $g$ denotes the Riemannian metric tensor. This metric can be used to define a volume form $d\operatorname{vol}_M $ (see \cite{do:carmo:1993}) that is locally expressed as
\begin{align*}
d\operatorname{vol}_M = \sqrt{\operatorname{det}(g_{ij})}\, dx^1 \wedge \cdots \wedge dx^d,
\end{align*}
with $g_{ij}$ is the local representation of the metric tensor $g$ in a set of local coordinates $(x^1,\dots,x^d)$ of $M$.

We define the probability density manifold $\mathcal{P}_{+}(M)$ as the set of densities on $M$ given by
\begin{align*}
\mathcal{P}_{+}(M) = \left\{ \rho \in C^\infty(M)\mid \rho > 0,\, \int_M \rho\, d\operatorname{vol}_M = 1 \right\},
\end{align*}
{This space can be endowed with the structure of an infinite-dimensional Fr\'echet manifold in the convenient setting introduced in \cite{kriegl1997convenient} (see \cite{lott2008some}).} For each $\rho \in \mathcal{P}_{+}(M) $, the tangent space $T_{\rho}\mathcal{P}_{+}(M) $ can be written as  
\begin{align*}
T_{\rho}\mathcal{P}_{+}(M)= \left\{\sigma\in C^\infty(M)~\bigg|~\int_M \sigma\, d\operatorname{vol}_M = 0\right\}.
\end{align*}

Denote by $C^\infty(M)/\mathbb{R}$ the space of smooth functions on $M$ defined up to the addition of constants. For any $\rho\in \mathcal{P}_{+}(M) $ and  $\phi \in C^\infty(M)$ define the operator ${h}_\rho: C^\infty(M) \rightarrow C^\infty(M)$ by 
\begin{align}\label{V-rho}
{h}_\rho(\phi) := -\nabla \cdot (\rho\nabla\phi).
\end{align}
This operator $h_\rho$, referred to as the {\it identification map} in \cite{otto2001geometry}, drops to a linear isomorphism from the quotient space $C^\infty(M)/\mathbb{R}$ to the tangent space $T_{\rho}\mathcal{P}_{+}(M)$ (see also \cite{lott2008some}).

Consider again the {\it weighted Laplacian} $\Delta_\rho := \nabla\cdot(\rho\nabla)$ and denote by $\left(-\Delta_\rho\right)^{\dagger}: T_{\rho} \mathcal{P}_{+}(M) \rightarrow C^\infty(M)/\mathbb{R}$ the pseudo inverse operator of $\left(-\Delta_\rho\right)$. It is straightforward to verify that
\[
(-\Delta_\rho)^\dagger (-\Delta_\rho)(-\Delta_\rho)^\dagger = (-\Delta_\rho)^\dagger.
\]
Given $\sigma_1 = h_{\rho}(\phi_1)$ and $\sigma_2 = h_{\rho}(\phi_2)$, we have 
\begin{align*}
\int_M \langle\nabla \phi_1, \nabla \phi_2\rangle\, \rho\, d\operatorname{vol}_M
&= \int_M \phi_1(-\Delta_\rho)\phi_2\, d\operatorname{vol}_M = \int_M h_{\rho}(\phi_1)(-\Delta_\rho)^\dagger(-\Delta_\rho)(-\Delta_\rho)^\dagger h_{\rho}(\phi_2)\, d\operatorname{vol}_M\\
&= \int_M \sigma_1(-\Delta_\rho)^\dagger\sigma_2\, d\operatorname{vol}_M.
\end{align*}
This identity leads to the following important definition. 

\begin{definition}
[{\bf Otto's Riemannian metric}] 
\label{Otto metric}
The Otto's Riemannian metric {$g_W(\rho) : T_{\rho}\mathcal{P}_{+}(M)\times T_{\rho}\mathcal{P}_{+}(M)\rightarrow \mathbb{R}$ for each $\rho\in\mathcal{P}_{+}(M) $ is defined as}
\begin{align}\label{inn-pro}
g_W(\rho)(\sigma_1,\sigma_2) := \int_M \langle\nabla \phi_1, \nabla \phi_2\rangle_g\, \rho\, d\operatorname{vol}_M=\int_M \sigma_1(-\Delta_\rho)^\dagger\sigma_2\, d\operatorname{vol}_M,
\end{align}
for any $\rho \in \mathcal{P}_{+}(M)  $  and for any tangent vectors $\sigma_1=h_{\rho}(\phi_1), \sigma_2=h_{\rho}(\phi_2)\in T_{\rho}\mathcal{P}_{+}(M)$.
\end{definition}
It can be shown that the geodesic distance on $\mathcal{P}_{+}(M) $ associated to Otto's Riemannian metric is the Wasserstein two-metric $W _2 $ in that space {(see \cite[Lecture 18]{ambrosio2021lectures}, \cite{benamou2000computational} or \cite{otto2001geometry} ).}

The cotangent bundle of the density manifold is defined as
\begin{align*}
T^*\mathcal{P}_{+}(M) := \bigcup_{\rho \in \mathcal{P}_{+}(M)} T^*_{\rho}\mathcal{P}_{+}(M),
\end{align*}
where $T^*_{\rho}\mathcal{P}_{+}(M):=L(T_{\rho}\mathcal{P}_{+}(M),\, \mathbb{R})$ denotes the space of continuous linear functionals on $T_{\rho}\mathcal{P}_{+}(M)$. 
We characterize the cotangent space $T^*_{\rho}\mathcal{P}_{+}(M)$ at a given density $\rho\in\mathcal{P}_{+}(M)$ by defining the map $f _\rho : C^\infty(M)/\mathbb{R} \to T^*_{\rho}\mathcal{P}_{+}(M)$ by
\begin{align}\label{iso}
\langle f _\rho(\phi), \sigma\rangle := g_W(\rho)(\sigma, -\Delta_\rho \phi), \quad \forall\, \sigma \in T_{\rho}\mathcal{P}_{+}(M),
\end{align}
where $\langle\cdot, \cdot\rangle$ denotes the pairing between the tangent and cotangent spaces, and $g_W$ is Otto's Riemannian metric as defined in \eqref{inn-pro}. { Since $g_W$ is a weak Riemannian metric, the associated map $\flat_\rho: T_{\rho}\mathcal{P}_{+}(M)\to T^*_{\rho}\mathcal{P}_{+}(M) $ with $\sigma \mapsto g_W(\rho)(\sigma, \cdot)$
is not surjective. Its image defines a proper subspace of the cotangent space.} 

Propositions \ref{vrho map} and \ref{characterization} prove that both the tangent $T\mathcal{P}_{+}(M) $ and {the subspace $\mathrm{Im}(b^\sharp)$ of cotangent bundle $T^\ast \mathcal{P}_{+}(M)$} are vector bundle isomorphic to the trivial bundle $\mathcal{P}_+(M)\times C^\infty(M)/\mathbb{R}$ via the maps $h: \mathcal{P}_+(M)\times C^\infty(M)/\mathbb{R}\longrightarrow T\mathcal{P}_{+}(M) $ and $f: \mathcal{P}_+(M)\times C^\infty(M)/\mathbb{R}\longrightarrow T^\ast \mathcal{P}_{+}(M) $ defined by
\begin{equation}
\label{bundle isomorphisms}
h(\rho, \phi):= h _\rho(\phi) \  \mbox{and} \  f(\rho, \phi):= f _\rho(\phi),
\end{equation}
respectively, with $h _\rho:C^\infty(M)/\mathbb{R} \longrightarrow  T_\rho\mathcal{P}_{+}(M)$ and $f _\rho: \longrightarrow  T^\ast _\rho\mathcal{P}_{+}(M)$ the maps defined in \eqref{V-rho} and \eqref{iso}. In light of these facts, we will refer to the trivial bundle $\mathcal{P}_+(M)\times C^\infty(M)/\mathbb{R} $ as the {\it characterization space} of both $T\mathcal{P}_{+}(M) $ and $T^\ast \mathcal{P}_{+}(M)$.

Now note that from \eqref{intermediate with gw} in Proposition \ref{characterization}, it follows that for every \( \sigma \in T_{\rho}\mathcal{P}_{+}(M) \) and \( \alpha \in T^*_{\rho}\mathcal{P}_{+}(M) \), there exists a unique \( \phi \in C^\infty(M)/\mathbb{R} \) such that \( \phi = f_\rho^{-1}(\alpha) \) and:
\begin{align}
\label{l2pairing and dual}
\langle \alpha, \sigma \rangle = g_W(\rho)(\sigma, -\Delta_\rho \phi) = \langle \sigma, \phi \rangle_{L^2}.
\end{align}
Note that the $L ^2 $-pairing in the right hand side of \eqref{l2pairing and dual} is well-defined on $C^\infty(M)/\mathbb{R}$ because $\int_M \sigma\, d\operatorname{vol}_M = 0 $.
The identity \eqref{l2pairing and dual} implies that the \( L^2 \)-duality between the tangent space \( T_{\rho}\mathcal{P}_{+}(M) \) and the space \( C^\infty(M)/\mathbb{R} \) can be used to characterize the duality between the tangent spaces \( T_{\rho}\mathcal{P}_{+}(M) \) and the cotangent spaces \( T^*_{\rho}\mathcal{P}_{+}(M) \).

In particular, let  \( \mathcal{F} \in C^\infty(\mathcal{P}_{+}(M)) \) be a smooth functional. We define the {\it functional derivative} \( \delta \mathcal{F}/\delta \rho \in C^\infty(M)/\mathbb{R} \) of $\mathcal{F} $ at $\rho $, and it is defined as the unique element that by \eqref{l2pairing and dual} satisfies that
\begin{align}
\label{def funct der}
\left\langle \mathbf{d}\mathcal{F}(\rho), \sigma \right\rangle=\lim_{\epsilon \to 0} \frac{1}{\epsilon} \big[\mathcal{F}(\rho + \epsilon \sigma) - \mathcal{F}(\rho)\big] = \left\langle \frac{\delta \mathcal{F}}{\delta \rho}, \sigma \right\rangle_{L^2}, \quad \forall\, \sigma \in T_{\rho}\mathcal{P}_{+}(M).
\end{align}
We now define the {\it Wasserstein gradient} $\operatorname{grad}_W \mathcal{F}(\rho) \in T _\rho \mathcal{P}_{+}(M)$ of any functional \( \mathcal{F} \in C^\infty(\mathcal{P}_{+}(M)) \) as the standard gradient with respect to Otto's metric, that is, $\operatorname{grad}_W \mathcal{F}(\rho)$ is the unique element that satisfies the equality
\begin{equation*}
g _W(\operatorname{grad}_W \mathcal{F}(\rho), \sigma)=\left\langle \mathbf{d}\mathcal{F}(\rho), \sigma \right\rangle, \quad \forall\, \sigma \in T_{\rho}\mathcal{P}_{+}(M).
\end{equation*}
As a result of \eqref{l2pairing and dual} and \eqref{def funct der} we can write:
\begin{align}
\label{wasser gradient}
{\operatorname{grad}_W \mathcal{F}(\rho) =- \nabla \cdot \left( \rho \nabla \frac{\delta \mathcal{F}}{\delta \rho} \right).}
\end{align}

\begin{example}[{\bf Linear potential energy}]\normalfont
Consider the linear potential energy functional
\begin{align*}
\mathcal{F}(\rho) = \int_{M} V \rho\, d\operatorname{vol}_M.
\end{align*}
The Wasserstein gradient of this energy is given by {$
\operatorname{grad}_W \mathcal{F}(\rho) = -\Delta_\rho V$},
since for all $\sigma \in  T_{\rho}\mathcal{P}_{+}(M)$ we have that:
\begin{align*}
\left\langle \mathbf{d}\mathcal{F}(\rho), \sigma \right\rangle=\lim_{\epsilon\to 0}\frac{1}{\epsilon}\left[\mathcal{F}(\rho + \epsilon \sigma)-\mathcal{F}(\rho)\right]
= \lim_{\epsilon\to 0}\frac{1}{\epsilon}\int_M V(\rho+\epsilon \sigma - \rho)\, d\operatorname{vol}_M
= \int_M V\sigma\, d\operatorname{vol}_M,
\end{align*}
which guarantees that $\frac{\delta \mathcal{F}}{\delta \rho}=V $ and implies that {$
\operatorname{grad}_W \mathcal{F}(\rho) = -\Delta_\rho V$} by \eqref{wasser gradient}.
\end{example}

\begin{example}[{\bf Interaction energy}]\normalfont
Consider the interaction energy functional defined by
\begin{align*}
\mathcal{F}(\rho) = \frac{1}{2}\iint_{M\times M} W(x - y)\rho(x)\rho(y)\, d\operatorname{vol}_M(x)\, d\operatorname{vol}_M(y).
\end{align*}
Then, the Wasserstein gradient of this energy is {$\operatorname{grad}_W \mathcal{F}(\rho) = -\Delta_\rho(W*\rho)$},
where the convolution $W*\rho$ of the interaction kernel function $W$ with the density $\rho$ is defined as
\begin{align*}
(W*\rho)(x) := \int_{M} W(x - y)\rho(y)\, d\operatorname{vol}_M(y).
\end{align*}
Indeed,
\begin{align*}
&\left\langle \mathbf{d}\mathcal{F}(\rho), \sigma \right\rangle=\lim_{\epsilon\to 0}\frac{1}{\epsilon}\left[\mathcal{F}(\rho + \epsilon\sigma)-\mathcal{F}(\rho)\right] \\
&= \lim_{\epsilon\to 0}\frac{1}{2\epsilon}\iint_{M\times M} W(x - y)\left[(\rho(x)+\epsilon\sigma(x))(\rho(y)+\epsilon\sigma(y)) - \rho(x)\rho(y)\right] d\operatorname{vol}_M(x)d\operatorname{vol}_M(y)\\
&= \int_M (W*\rho)\sigma\, d\operatorname{vol}_M,
\end{align*}
which implies that the functional derivative is given by $\frac{\delta \mathcal{F}}{\delta \rho}=W*\rho$.
\end{example}

\begin{example}[{\bf Nonlinear internal energy}]\normalfont
Consider the nonlinear internal energy functional
\begin{align*}
\mathcal{F}(\rho) = \int_{M} U(\rho)\, d\operatorname{vol}_M.
\end{align*}
Then, the Wasserstein gradient is given by
{$\operatorname{grad}_W \mathcal{F}(\rho) = -\Delta_\rho U'(\rho)$},
since the equalities
\begin{align*}
\left\langle \mathbf{d}\mathcal{F}(\rho), \sigma \right\rangle=\lim_{\epsilon\to 0}\frac{1}{\epsilon}\left[\mathcal{F}(\rho+\epsilon\sigma)-\mathcal{F}(\rho)\right]
= \lim_{\epsilon\to 0}\frac{1}{\epsilon}\int_M \left[U(\rho+\epsilon\sigma)-U(\rho)\right] d\operatorname{vol}_M = \int_M U'(\rho)\sigma\, d\operatorname{vol}_M,
\end{align*}
imply that the functional derivative is given by $\frac{\delta \mathcal{F}}{\delta \rho}=U'(\rho)$.
\end{example}

\subsection{The Wasserstein gradient flow}
Many differential systems arising in physics can be formulated via a potential energy functional $\mathcal{F}$ and a linear operator relating the system's time derivatives to gradients of this potential energy. Let $\mathcal{F}:\mathcal{P}_{+}(M)\to \mathbb{R}$ be a potential energy functional. The corresponding {\it Wasserstein gradient flow} is the solution of the differential equation
\begin{align*}
\partial_t \rho_t = -\operatorname{grad}_W \mathcal{F}(\rho_t).    
\end{align*}
Equations of this form, derived from various choices of energy functionals, are deeply connected to well-studied partial differential equations; see~\cite[pp.~430--433]{villani2009optimal} for further details. In physical modeling, the energy functional $\mathcal{F}$ often comprises internal, potential, and interaction terms, typically given by
\begin{equation}\label{potential-energy}
    \mathcal{F}(\rho) = \int_M U(\rho)\,d\operatorname{vol}_M 
    + \int_M V\rho\,d\operatorname{vol}_M 
    + \frac{1}{2}\iint_{M\times M} W(x - y)\rho(x)\rho(y)\,d\operatorname{vol}_M(x)d\operatorname{vol}_M(y).
\end{equation}
Below, we provide illustrative examples of Wasserstein gradient flows.
Table~\ref{table: gradient flows} summarizes some representative energy functionals and their associated gradient flow equations.

\begin{table}
\caption{Examples of the energy functionals and their corresponding gradient flow equations}
\label{table: gradient flows}
\centering
\begin{center}
\begin{tblr}{
vline{2,3} = {-}{},
hline{1,6} = {-}{0.1em},
hline{2,3,4,5} = {-}{},
}
& Energy functional & Gradient flows    \\
1 & $\mathcal{F}(\rho)= \int_{M}\rho\log\rho d{\operatorname{vol}}_M$ & $\partial_t \rho_t=\Delta \rho_t$   \\
2& $\mathcal{F}(\rho)= \int_{M}\rho\log\rho d{\operatorname{vol}}_M+\int_{M}V\rho d{\operatorname{vol}}_M$ & $\partial_t \rho_t=\Delta \rho_t+\Delta_{\rho_t}V$    \\
3& $\mathcal{F}(\rho)=\frac{1}{m-1} \int_{M}\rho^m d{\operatorname{vol}}_M$ & $\partial_t \rho_t=\Delta \rho_t^m$  \\
4& $\mathcal{F}(\rho)= \iint_{M\times M}W(x-y)\rho(x)\rho(y)d{\operatorname{vol}}_M(x)d{\operatorname{vol}}_M(y)$ & $\partial_t \rho_t=\Delta_{\rho_t} (W*\rho_t)$   
\end{tblr}
\end{center}
\end{table}

\subsection{The Wasserstein Hamiltonian flow}
\label{Wasserstein Hamiltonian flow}
The {\it Wasserstein Hamiltonian flow} has been introduced in \cite{chow2020wasserstein,Wu2025}. The standard way to define Hamilton's equations and their associated flows in geometric mechanics \cite{Abraham1978, Marsden1994} consists of associating Hamiltonian vector fields to Hamiltonian (energy) functions using a predefined symplectic structure. When the phase space of the physical system is the cotangent bundle of a finite-dimensional manifold, a canonical symplectic form is always available, which is obtained as the differential of the so-called Liouville one-form (see \cite{Abraham1978} for details). In infinite-dimensional settings like ours, the situation is technically more convoluted since many equivalent definitions of differential forms in finite dimensions become inequivalent; see \cite[Section 33]{kriegl1997convenient} and \cite[Appendix E]{schmeding2022introduction} for more details. 


In this section, we shall proceed by defining a symplectic structure on the characterization space $\mathcal{P}_+(M) \times C^\infty(M)/\mathbb{R}$ of the cotangent bundle $T ^\ast \mathcal{P}_+(M)$ introduced in the previous section. We will show that the resulting Hamilton's equations coincide with those of the Wasserstein Hamiltonian flow in \cite{chow2020wasserstein,Wu2025}.
This symplectic structure was presented in \cite{khesin2019geometry} to study the geometry of the Madelung transform.
Additionally, employing the Legendre transformation, Hamilton's equations will also be written in Lagrangian form.

\subsubsection{A symplectic structure on the characterization space}\label{Hamiltonian structure}
As we just pointed out, endowing the cotangent bundle $T^*\mathcal{P}_{+}(M)$ with a canonical symplectic form is not straightforward due to the infinite-dimensional character of the density manifold $\mathcal{P}_{+}(M)$. We will instead proceed by using the vector bundle isomorphisms introduced in \eqref{bundle isomorphisms} that allow us to represent both $T\mathcal{P}_{+}(M)$ and $T^*\mathcal{P}_{+}(M)$ by the trivial bundle $\mathcal{P}_{+}(M) \times C^\infty(M)/\mathbb{R}$.

We start by noticing that since the space $C^\infty(M)/\mathbb{R}$ is a vector space, we can hence regard its tangent space  at any point $\phi\in C^\infty(M)/\mathbb{R}$  as the space $C^\infty(M)/\mathbb{R}$ itself. Consequently, the tangent space $T_{(\rho,\phi)}(\mathcal{P}_{+}(M) \times C^\infty(M)/\mathbb{R})$ at point $(\rho,\phi)$ can be written as $T_{\rho}\mathcal{P}_{+}(M) \times C^\infty(M)/\mathbb{R}$. 

Define now the 2-form $\omega\in \Omega^2 \left( \mathcal{P}_{+}(M) \times C^\infty(M)/\mathbb{R})\right)$ given by
\begin{align}\label{symplectic-form}
\omega = \int_{M} d\phi\wedge d\rho ~d{\operatorname{vol}_M}.  
\end{align}
More specifically, \eqref{symplectic-form} means that for any two elements $(\sigma_1,\psi_1),(\sigma_2,\psi_2)\in T_{(\rho,\phi)}(\mathcal{P}_{+}(M) \times C^\infty(M)/\mathbb{R})\simeq T_{\rho}\mathcal{P}_{+}(M) \times C^\infty(M)/\mathbb{R}$, we have
\begin{align*}
\omega(\rho,\phi)((\sigma_1,\psi_1),(\sigma_2,\psi_2)) =\int_{M}(\psi_1\sigma_2-\psi_2\sigma_1) d{\operatorname{vol}_M},
\end{align*}
as $d\rho(\sigma_i,\psi_i)=\sigma_i$ and $d\phi(\sigma_i,\psi_i)=\psi_i$ for $i=1,2$. 
The following result shows that this 2-form is a symplectic form, which makes the representation space $\mathcal{P}_{+}(M) \times C^\infty(M)/\mathbb{R}$ into a symplectic manifold.
 
\begin{theorem}\label{thm:symplectic form}
The 2-form in \eqref{symplectic-form} is a {weak} symplectic form on the characterization space $\mathcal{P}_{+}(M) \times C^\infty(M)/\mathbb{R}$.   Moreover, using Otto’s Riemannian metric, the symplectic form can be expressed as
\begin{align*}
    \omega(\rho,\phi)\big((\sigma_1, \psi_1), (\sigma_2, \psi_2)\big) = g_W(\sigma_2, -\Delta_{\rho} \psi_1) - g_W(\sigma_1, -\Delta_{\rho} \psi_2),
\end{align*}
for any $(\sigma_1,\psi_1),(\sigma_2,\psi_2)\in T_{(\rho,\phi)}(\mathcal{P}_{+}(M) \times C^\infty(M)/\mathbb{R})$.
\end{theorem}

Let $\mathcal{H} : \mathcal{P}_{+}(M) \times C^\infty(M)/\mathbb{R} \to \mathbb{R}$ be a Hamiltonian functional on the characterization space. The associated Hamiltonian vector field $X_{\mathcal{H}} \in \mathfrak{X}\left(\mathcal{P}_{+}(M) \times C^\infty(M)/\mathbb{R}\right)$ is determined by the equality
\begin{align*}
    \omega(\rho, \phi)\big(X_{\mathcal{H}}(\rho, \phi),\, (\sigma, \psi)\big) = d\mathcal{H}(\rho, \phi) \cdot (\sigma, \psi),
\end{align*}
for all $(\sigma, \psi) \in T_\rho \mathcal{P}_{+}(M) \times C^\infty(M)/\mathbb{R}$. This relation yields
\begin{align*}
    X_{\mathcal{H}}(\rho, \phi) = \left( \frac{\delta \mathcal{H}}{\delta \phi},\, -\frac{\delta \mathcal{H}}{\delta \rho} \right).
\end{align*}
Consequently, the Hamiltonian system associated with $\mathcal{H}$ can be written as
\begin{equation}\label{Ham-for}
\begin{cases}
    \partial_t \rho  = \frac{\delta \mathcal{H}}{\delta \phi}(\rho, \phi), \\[6pt]
    \partial_t \phi =  -\frac{\delta \mathcal{H}}{\delta \rho}(\rho, \phi).
\end{cases}
\end{equation}
The solutions of \eqref{Ham-for} determine what we call the \textit{Wasserstein Hamiltonian flow}. This formulation is consistent with the Hamiltonian equations presented in \cite{chow2020wasserstein, Wu2025}.
In physical modeling, the Hamiltonian functional consists in many situations of a kinetic energy plus a potential energy term:
\begin{align}\label{kin+pon}
\mathcal{H}(\rho, \phi) = \frac{1}{2} \int_M |\nabla \phi|_g^2\, \rho\, d\operatorname{vol}_M + \mathcal{F}(\rho),
\end{align}
where the potential energy functional $\mathcal{F}$ is typically taken as in \eqref{potential-energy}. With this choice, Hamilton's equations \eqref{Ham-for} reduce to
\begin{equation}\label{Ham-for1}
\begin{cases}
    \partial_t \rho_t  = -\nabla\cdot(\rho_t\nabla\phi_t), \\[6pt]
    \partial_t \phi_t =  -\frac{1}{2}|\nabla\phi_t|_g^2-\frac{\delta }{\delta \rho_t}\mathcal{F}(\rho_t).
\end{cases}
\end{equation}
We now present several illustrative examples of Wasserstein Hamiltonian flows (\cite{chow2020wasserstein} for additional details).

\begin{example}[{\bf Linear Vlasov Equation}]\normalfont
Consider the linear Vlasov equation
\[
    \partial_t f + v\cdot\nabla_x f - \nabla V(x)\cdot\nabla_v f = 0,
\]
describing a particle density function $f(t,x,v)$ evolving on $[0,T]\times\mathbb{T}_x^d\times\mathbb{R}_v^d$, where $\mathbb{T}^d $ is a $d$-torus and  $V \in C^{\infty}(\mathbb{T}^d) $ is a potential function defined on it. Integrating in the velocity variable, the spatial density $\rho(t,x)=\int_{\mathbb{R}^d}f(t,x,v)\,dv$ satisfies the Hamiltonian flow~\eqref{Ham-for1} with linear potential energy $\mathcal{F}(\rho)=\int_{\mathbb{T}^d} V(x)\rho(x)\,dx$.
\end{example}

\begin{example}[{\bf Linear Schr\"odinger Equation}]\normalfont
Consider the Schr\"odinger equation
\[
    i\partial_t\Psi = -\frac{1}{2}\Delta\Psi + V(x)\Psi.
\]
Performing the Madelung (Bohm) transform $\Psi=\sqrt{\rho}\,e^{-i\Phi}$, we derive the equivalent hydrodynamic system
\[
\begin{cases}
\partial_t\rho_t + \nabla\cdot(\rho_t\nabla\Phi_t) = 0,\\[6pt]
\partial_t\Phi_t + \frac{1}{2}|\nabla\Phi_t|^2 = -V - \frac{1}{8\rho_t}\left(|\nabla\log\rho_t|^2 - 2\Delta\log\rho_t\right),
\end{cases}
\]
corresponding precisely to the Hamiltonian flow~\eqref{Ham-for1}, with energy functional including linear potential energy and Fisher information:
\[
    \mathcal{F}(\rho) = \int_{\mathbb{T}^d}V(x)\rho(x)\,dx + \frac{1}{8}\int_{\mathbb{T}^d}|\nabla\log\rho(x)|^2\rho(x)\,dx.
\]

\end{example}

\noindent {\bf Poisson structure.}
The symplectic form \eqref{symplectic-form} enables us to define a canonical  bracket 
$$\{\cdot,\cdot\}: C^\infty\left(\mathcal{P}_+(M)\times C^\infty(M)/\mathbb{R}\right)\times C^\infty\left(\mathcal{P}_+(M)\times C^\infty(M)/\mathbb{R}\right)\to C^\infty\left(\mathcal{P}_+(M)\times C^\infty(M)/\mathbb{R}\right),$$ which is given by:
\begin{align}\label{Poi-bra}
\{\mathcal{H},\mathcal{G}\} := \omega\left(X_{\mathcal{H}},X_{\mathcal{G}}\right)=\int_M \left(\frac{\delta \mathcal{H}}{\delta\rho} \frac{\delta \mathcal{G}}{\delta\phi} - \frac{\delta \mathcal{H}}{\delta\phi} \frac{\delta \mathcal{G}}{\delta\rho} \right)d\mathrm{vol}_M,
\end{align}
for all $\mathcal{H},\mathcal{G}\in C^\infty\left(\mathcal{P}_+(M)\times C^\infty(M)/\mathbb{R}\right)$. The following Corollary, whose proof can be found in Appendix \ref{Proof of Corollary Poisson}, shows that this bracket endows $C^\infty\left(\mathcal{P}_+(M)\times C^\infty(M)/\mathbb{R}\right)$ with a Poisson algebra structure.

\begin{corollary}
\label{Poissons structure}
The bracket defined in \eqref{Poi-bra} is a Poisson bracket on $C^\infty\left(\mathcal{P}_+(M)\times C^\infty(M)/\mathbb{R}\right)$.   
\end{corollary}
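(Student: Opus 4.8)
The plan is to verify the three defining axioms of a Poisson bracket: bilinearity, skew-symmetry, the Leibniz (derivation) rule, and the Jacobi identity. Bilinearity is immediate from the linearity of the functional derivative operation and of the integral in \eqref{Poi-bra}. Skew-symmetry follows at once by swapping $\mathcal H$ and $\mathcal G$ in \eqref{Poi-bra}: the integrand picks up an overall minus sign. For the Leibniz rule $\{\mathcal H,\mathcal G_1\mathcal G_2\}=\mathcal G_1\{\mathcal H,\mathcal G_2\}+\mathcal G_2\{\mathcal H,\mathcal G_1\}$ I would use the product rule for functional derivatives, $\delta(\mathcal G_1\mathcal G_2)/\delta\phi=\mathcal G_1\,\delta\mathcal G_2/\delta\phi+\mathcal G_2\,\delta\mathcal G_1/\delta\phi$ (and similarly in $\rho$), substitute into \eqref{Poi-bra}, and regroup.

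The substantive part is the Jacobi identity. The cleanest route is to exploit that the bracket \eqref{Poi-bra} comes from the symplectic form \eqref{symplectic-form}, which was already shown to be closed ($d\omega=0$). I would invoke the standard fact from symplectic geometry that for any symplectic manifold $(P,\omega)$, the bracket $\{\mathcal H,\mathcal G\}:=\omega(X_{\mathcal H},X_{\mathcal G})$ satisfies the Jacobi identity precisely because $\omega$ is closed; concretely, one has the identity
\begin{align*}
\{\{\mathcal H,\mathcal G\},\mathcal K\}+\{\{\mathcal G,\mathcal K\},\mathcal H\}+\{\{\mathcal K,\mathcal H\},\mathcal G\}=-\,d\omega\big(X_{\mathcal H},X_{\mathcal G},X_{\mathcal K}\big),
\end{align*}
so that $d\omega=0$ forces the left-hand side to vanish. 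Since $\omega$ in \eqref{symplectic-form} is an exact form ($\omega=d\theta$ with $\theta=\int_M\phi\,d\rho\,d\operatorname{vol}_M$), closedness is automatic, as already noted in the proof that $\omega$ is symplectic. Alternatively, one can verify the Jacobi identity by direct computation: write out $\{\{\mathcal H,\mathcal G\},\mathcal K\}$ using \eqref{Poi-bra}, noting that $\delta\{\mathcal H,\mathcal G\}/\delta\phi$ and $\delta\{\mathcal H,\mathcal G\}/\delta\rho$ involve second functional derivatives of $\mathcal H$ and $\mathcal G$; upon forming the cyclic sum, all terms containing second derivatives cancel in pairs, leaving zero.

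The main obstacle is purely technical rather than conceptual: in this infinite-dimensional convenient-calculus setting one must be careful that functional derivatives exist, that the interchange of the derivation with the integral is justified, and that the symplectic-geometry identity relating the Jacobiator to $d\omega$ is available in the form stated in \cite{kriegl1997convenient} for manifolds modeled on convenient vector spaces. I would therefore phrase the proof to either (i) cite the closedness of $\omega$ established above together with the general convenient-calculus version of the symplectic Jacobi identity, or (ii) fall back on the explicit cyclic cancellation, which only requires the symmetry of mixed second functional derivatives. Since the full rigorous bookkeeping is routine, I would relegate it to Appendix \ref{Proof of Corollary Poisson} as the paper already indicates.
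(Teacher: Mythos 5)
Your proposal is correct, but your primary route differs from the paper's. The paper's Appendix~\ref{Proof of Corollary Poisson} proof is exactly your fallback option: bilinearity and skew-symmetry read off from \eqref{Poi-bra}, the Leibniz rule via the product rule for functional derivatives, and then the Jacobi identity by brute force -- computing $\delta\{\mathcal H,\mathcal G\}/\delta\rho$ and $\delta\{\mathcal H,\mathcal G\}/\delta\phi$ in terms of second functional derivatives, forming the cyclic sum, and observing that everything collapses to terms of the shape $(\mathcal F_\rho\mathcal G_\phi-\mathcal F_\phi\mathcal G_\rho)(\mathcal H_{\rho\phi}-\mathcal H_{\phi\rho})$, which vanish by equality of mixed second functional derivatives. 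Your preferred route instead derives Jacobi from the already-established closedness of the symplectic form \eqref{symplectic-form} via the standard identity expressing the Jacobiator as $\pm\,d\omega(X_{\mathcal H},X_{\mathcal G},X_{\mathcal K})$. That argument is conceptually cleaner and explains \emph{why} Jacobi holds, but it is more delicate here than you let on: the form \eqref{symplectic-form} is only weakly non-degenerate on an infinite-dimensional manifold, so the existence of the Hamiltonian vector fields $X_{\mathcal H}$ (equivalently, of the functional derivatives $\delta\mathcal H/\delta\rho$, $\delta\mathcal H/\delta\phi$) for the functionals entering the cyclic sum is an assumption, not a consequence of smoothness, and the Cartan-type formula for $d\omega$ on vector fields must be taken in the convenient-calculus sense -- precisely the caveats you flag. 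The paper's direct computation sidesteps all of this at the cost of a longer calculation, needing only symmetry of mixed second functional derivatives; either write-up is acceptable, but if you take route (i) you should state explicitly the standing assumption that the functionals involved admit functional derivatives in the sense of \eqref{def funct der}.
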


\subsubsection{The Wasserstein Hamiltonian flow in Lagrangian form}

We now introduce the Legendre inverse transformation, which maps the characterization space $\mathcal{P}_{+}(M) \times C^\infty(M)/\mathbb{R}$ to the tangent bundle $T\mathcal{P}_{+}(M)$, thereby allowing us to reformulate the Wasserstein Hamiltonian flow in Lagrangian form.

\begin{definition}
Let $\mathcal{L}$ be a Lagrangian defined on the tangent bundle $T\mathcal{P}_{+}(M)$. The corresponding Hamiltonian $\mathcal{H}: \mathcal{P}_{+}(M)\times C^\infty(M)/\mathbb{R} \to \mathbb{R}$ is given by the Legendre transformation:
\begin{align*}
    \mathcal{H}(\rho, \phi) := \sup_{\sigma \in T_{\rho}\mathcal{P}_{+}(M)} \left\{ \left\langle \phi, \sigma \right\rangle_{L^2} - \mathcal{L}(\rho, \sigma) \right\}.
\end{align*}
Conversely, the inverse Legendre transformation is
\begin{align*}
    \mathcal{L}(\rho, \sigma) := \sup_{\phi \in C^\infty(M)/\mathbb{R}} \left\{ \left\langle \phi, \sigma \right\rangle_{L^2} - \mathcal{H}(\rho, \phi) \right\}.
\end{align*}
\end{definition}

\begin{remark}\normalfont
Equivalently, the Legendre transformation can be expressed as
\begin{align*}
    \phi = \frac{\delta \mathcal{L}}{\delta \sigma}, \qquad \mathcal{H}(\rho, \phi) = \left\langle \phi, \sigma \right\rangle_{L^2} - \mathcal{L}(\rho, \sigma),
\end{align*}
where the variable $\sigma $ in the right-hand side of the second equality has to be understood as a function of the variables $\rho$ and $\phi$ determined by the first equality (whenever that is possible).
Similarly, the inverse Legendre transformation takes the form
\begin{align*}
    \sigma = \frac{\delta \mathcal{H}}{\delta \phi}, \qquad \mathcal{L}(\rho, \sigma) = \left\langle \phi, \sigma \right\rangle_{L^2} - \mathcal{H}(\rho, \phi),
\end{align*}
where the variable $\phi $ in the right-hand side of the second equality has to be understood as a function of the variables $\rho$ and $\sigma $ determined by the first equality (whenever that is possible).
\end{remark}

Using the definition of the inverse Legendre transformation, the corresponding Lagrangian of the Hamiltonian functional \eqref{kin+pon} can be expressed as
\begin{align*}
\mathcal{L}(\rho, \sigma) &= \sup_{\phi \in C^\infty(M)/\mathbb{R}} \left\{ \left\langle\phi,\sigma\right\rangle_{L^2} - \frac{1}{2}\int_M |\nabla \phi|_g^2\, \rho\, d\operatorname{vol}_M - \mathcal{F}(\rho) \right\} \\
&= \sup_{\phi \in C^\infty(M)/\mathbb{R}} \left\{ \int_M \left(\langle \nabla \phi, -\nabla((-\Delta_\rho)^\dagger \sigma) \rangle_g - \frac{1}{2} |\nabla \phi|_g^2\right)\, \rho\, d\operatorname{vol}_M \right\} - \mathcal{F}(\rho) \\
&= \frac{1}{2} \int_M |\nabla((-\Delta_\rho)^\dagger \sigma)|_g^2\, \rho\, d\operatorname{vol}_M - \mathcal{F}(\rho).
\end{align*}
Accordingly, the Wasserstein Hamiltonian flow \eqref{Ham-for1} can be written in Lagrangian form as
\begin{equation*}
\partial_{tt} \rho_t + \Gamma_W(\partial_t \rho_t, \partial_t \rho_t) = -\operatorname{grad}_{W} \mathcal{F}(\rho_t),
\end{equation*}
where $\operatorname{grad}_{W}$ denotes the Wasserstein gradient operator and $\Gamma_W$ is the so-called Christoffel symbol, given explicitly by
\begin{align}\label{Christoffel symbol}
    \Gamma_W(\partial_t \rho_t, \partial_t \rho_t) 
    = -\left(\Delta_{\partial_t \rho_t} \Delta_{\rho_t}^\dagger \partial_t \rho_t 
    + \frac{1}{2} \Delta_{\rho_t} \left| \nabla \Delta_{\rho_t}^\dagger \partial_t \rho_t \right|^2 \right).
\end{align}
which corresponds to the Levi-Civita connection associated with the Otto metric (see \cite[Proposition 11]{li2018geometry} for more details).

\section{Structure-preserving kernel ridge regression}\label{Structure-preserving kernel ridge regression and numerical schemes}

In this section, we propose a kernel-based approach for the simultaneous recovery of the potential $V$ and interaction kernel $W$ in Wasserstein gradient and Hamiltonian flows from trajectory data of the density flows
\begin{align}\label{data}
\left\{\rho(t_l,x_n), n=1,\cdots, N, l=1,\cdots, L\right\},  
\end{align}
where $(t_l,x_n)$ is a time and space mesh, typically chosen to be uniform within the domain. The most important feature of the methodology that we propose is that it is intrinsically structure-preserving, that is, the recovered system will possess prescribed underlying geometric structures of the type introduced in the previous section despite potential estimation and approximation errors committed by the method.
It is important to note that this setting is applicable to both Wasserstein gradient and Hamiltonian flow systems. In other words, the trajectory data \eqref{data} could originate from either the gradient flow 
\begin{align}\label{gradiet-flow}
\partial_t \rho_t - \Delta_{\rho_t}U'(\rho_t) = \Delta_{\rho_t}\left(V+W*\rho_t\right)   ,
\end{align}
or the Wasserstein Hamiltonian flow
\begin{align}\label{Hamiltonian-flow}
\partial_{tt} \rho_t + \Gamma_W(\partial_t \rho_t,\partial_t \rho_t) - \Delta_{\rho_t}U'(\rho_t) = \Delta_{\rho_t}\left(V+W*\rho_t\right) .
\end{align}
The internal energy  $U$ is typically either the Fisher information  $U(\rho) = \rho |\nabla \log \rho|^2$  or the Keller-Segel type energy  $U(\rho) = \frac{1}{m-1}\rho^m$  for  $m > 1$ , which is assumed to be known and therefore does not affect the analysis. To sum up, the learning problem aims to recover the potential $V$ and the interaction kernel $W$ from the trajectory flow data \eqref{data} in the knowledge of the internal energy $U$ and using a structure-preserving kernel regression method. This methodology is extended in Section \ref{Learning the internal energy} to scenarios in which the internal energy \( U \) is unknown.

\noindent {\bf Structure-preserving kernel ridge regression.}\quad
The idea of structure-preservation in the context of kernel regression is that we search the potential function $V$ and interaction kernel $W$ in two RKHS $\mathcal{H}_{K_1}$ and $\mathcal{H}_{K_2}$, respectively. 
This approach naturally preserves the Hamiltonian structure of the unknown Hamiltonian system. To explicitly describe the method, we will be solving the following loss regularized minimization problem:
\begin{align}
(V^{*}_\lambda, W^{*}_\lambda) &:= \mathop{\arg\min}\limits_{(\phi,\psi) \in \mathcal{H}_{K_1} \times \mathcal{H}_{K_2}} R_{\lambda}(\phi, \psi), \label{exp-pro1} \\
R_{\lambda}(\phi, \psi) &:= \|A(\phi, \psi) - A(V, W)\|_{L^2}^2 + \lambda_1 \|\phi\|_{\mathcal{H}_{K_1}}^2 + \lambda_2 \|\psi\|_{\mathcal{H}_{K_2}}^2, \label{exp-fun1}
\end{align}
where $\lambda_1, \lambda_2 > 0$ are regularization parameters, and the operator $A$ is defined by:
\begin{equation}\label{operatorA}
A(\phi, \psi)(t, x) = \Delta_{\rho_t}(\phi + \psi \ast \rho_t)(x) = \nabla \cdot \big(\rho_t(x) \nabla (\phi(x) + (\psi \ast \rho_t)(x))\big).
\end{equation}
The $L^2$ norm in \eqref{exp-fun1} is defined on the space 
$L^2\left([0,T] \times \mathbb{R}^d, \{\rho_t\}_{t \in [0,T]}\right)$,
which consists of all $L^2$-integrable elements
on $[0,T] \times \mathbb{R}^d$ with respect to the family $ \{\rho_t\}_{t \in [0,T]}$. Explicitly, this space is given by:
\begin{align*}
L^2\left([0,T] \times \mathbb{R}^d, \{\rho_t\}_{t \in [0,T]}\right) := \left\{f \ \Big| \ \|f\|^2_{L^2} := \int_0^T \int_{\mathbb{R}^d} |f(t, x)|^2 \rho_t(x) \, \mathrm{d}x \mathrm{d}t < \infty \right\}.
\end{align*}
Here, $R_\lambda$ is called the {\it regularized expected loss functional} and $(V_\lambda^*,W_\lambda^*)\in\mathcal{H}_{K_1}\times \mathcal{H}_{K_2}$ is called the {\it best-in-class function} with the minimal associated in-class regularized expected loss functional.

When the available data \eqref{data} are discrete in space and time, we approximate the regularized expected loss functional defined in~\eqref{exp-fun1} using numerical integration methods. In our numerical approximation, we shall initially apply the Euler scheme for simplicity. In practice, however, one may utilize higher-order numerical integration schemes to improve accuracy.   

Also for simplicity, we assume that the data are provided on a uniform (regular) mesh over the domain \([0,T]\times \Omega\), with \(\Omega=[a,b]\subset\mathbb{R}\). 
Specifically, we define the discrete spatial and temporal points of the mesh as:
\begin{equation}\label{mesh}
\begin{aligned}
x_n = a + n\Delta x,\quad t_l = l\Delta t,\quad \Delta x = \frac{b - a}{N}, \quad \Delta t = \frac{T}{L},\quad n=1,\dots,N,\quad l=1,\dots,L.
\end{aligned}
\end{equation}

Let $\rho_l^n := \rho(t_l, x_n)$. 
We define the standard forward finite difference operators $\delta_t^{+}\rho$ and $\delta_x^{+}\rho$ to approximate $\partial_t \rho$ and $\partial_x\rho$, respectively,
\begin{align*}
\left(\delta_t^{+} \rho\right)_l^n=\begin{cases}
-\rho^n_l / \Delta t, &\text { if } l=L \\
\frac{\rho_{l+1}^n-\rho_{l}^n}{\Delta t}, &\text { if } l<L
\end{cases}, \quad 
\left(\delta_x^{+} \rho\right)_l^n=\begin{cases}
-\rho_{l}^n / \Delta x, &\text { if } n=N \\
\frac{\rho_{l}^{n+1}-\rho_{l}^n}{\Delta x}, &\text { if } n<N
\end{cases} .
\end{align*}
Using the numerical scheme described above, we introduce the following numerical approximation:
\begin{equation}\label{neumericalA}
\begin{aligned}
A^{\delta}(\phi, \psi)(t_l, x_n) := \left(\delta_x^{+} \rho\right)_l^n \partial_x(\phi + \psi * \rho_{t_l})(t_l, x_n) + \rho_l^n \partial_{xx}(\phi + \psi * \rho_{t_l})(t_l, x_n).
\end{aligned}
\end{equation}  
For the gradient flow \eqref{gradiet-flow}, we define:  
\begin{align*}
f^\delta(t_l, x_n) := \left(\delta_t^{+} \rho\right)_l^n - \left(\delta_x^{+} \rho\right)_l^n \partial_x U'(\rho_l^n) - \rho_l^n \partial_{xx} U'(\rho_l^n),    
\end{align*}  
and for the Wasserstein Hamiltonian flow \eqref{Hamiltonian-flow}, we define:  
\begin{align}\label{f-delta}
f^\delta(t_l, x_n) := \left(\delta_{tt}^{+} \rho\right)_l^n + \Gamma_W\left(\left(\delta_t^{+} \rho\right)_l^n, \left(\delta_t^{+} \rho\right)_l^n\right) - \left(\delta_x^{+} \rho\right)_l^n \partial_x U'(\rho_l^n) - \rho_l^n \partial_{xx} U'(\rho_l^n),    
\end{align}  
where $\delta_{tt}^{+} \rho$ approximates the second-order temporal derivative of $\rho$ defined as:  
\begin{align*}
\left(\delta_{tt}^{+} \rho\right)_l^n =
\begin{cases} 
-\left(\delta_{t}^{+} \rho\right)_l^n / \Delta t, & \text{if } l = L, \\  
\frac{\left(\delta_{t}^{+} \rho\right)_{l+1}^n - \left(\delta_{t}^{+} \rho\right)_l^n}{\Delta t}, & \text{if } l < L.
\end{cases}
\end{align*}  
With these approximations, we formulate the empirical loss minimization problem as follows:
\begin{align}
(\widehat{V}_{\lambda,NL},\widehat{W}_{\lambda,NL}) &:= \mathop{\arg\min}\limits_{(\phi,\psi)\in \mathcal{H}_{K_1}\times \mathcal{H}_{K_2}} \ \widehat{R}_{\lambda,NL}(\phi,\psi), \label{emp-pro1} \\
\widehat{R}_{\lambda,NL}(\phi,\psi) &:= \frac{T|\Omega|}{NL} \sum_{n,l=1}^{N,L} \left| A^{\delta}(\phi,\psi)(t_l,x_n) - f^{\delta}(t_l,x_n) \right|^2 \rho_{t_l}(x_n) 
+ \lambda_1 \|\phi\|_{\mathcal{H}_{K_1}}^2 + \lambda_2 \|\psi\|_{\mathcal{H}_{K_2}}^2, \label{emp-fun1}
\end{align}
where $|\Omega|$ denotes the Lebesgue measure of $\Omega$. 
The functional $\widehat{R}_{\lambda,NL}$ is referred to as the {\it regularized empirical loss functional}. We refer to the minimizer $(\widehat{V}_{\lambda,NL}, \widehat{W}_{\lambda,NL})$ as the {\it structure-preserving kernel estimator} of the potential function $V$ and the interaction kernel $W$. 
Suppose all the data in \eqref{data} are positive. Otherwise, any non-positive data would not contribute to the regularized empirical loss functional \eqref{emp-fun1} and can therefore be omitted.

{
\begin{remark}[{\bf Degeneracy and data-dependence of the learning problem}]
\label{degenercy probleml}
\normalfont 
Given the data \eqref{data}, we define the null space $\mathcal{H}_{\mathrm{null}}$ as follows:
\begin{align}\label{null-space}
\mathcal{H}_{\mathrm{null}} := \left\{ (\phi, \psi) \in \mathcal{H}_{K_1} \times \mathcal{H}_{K_2} \mid A^\delta(\phi, \psi)(t_l, x_n) = 0,\ l = 1, \cdots, L,\ n = 1, \cdots, N \right\}.
\end{align}
If either of the RKHS $\mathcal{H}_{K_1}$ or $\mathcal{H}_{K_2}$ contains linear functions, it is evident that the space $\mathcal{H}_{\mathrm{null}}$ defined in \eqref{null-space} is not empty. Without the regularization term in the loss function \eqref{emp-fun1}, the learning problem suffers from degeneracy due to the presence of this potentially non-empty null space.

We emphasize that the operator $A^\delta$, and hence the null space $\mathcal{H}_{\mathrm{null}}$, depend on the observed density trajectory. This trajectory dependence does not by itself pose a difficulty for the analysis: an analogous situation already arises in finite-dimensional structure-preserving learning problems, where the observation operator depends on the observed states. For instance, for the Poisson systems studied in \cite{RCSP2, RCSP3}, the non-uniqueness of the solutions to the minimization problems is attributed to the degeneracy of the Poisson tensor and can be characterized using Casimir functions. In our infinite-dimensional setting, the degeneracy is much more difficult to identify due to the data-dependence of the null space $\mathcal{H}_{\mathrm{null}}$ and the complexity of the operator $A^\delta$, making it impossible to uniquely recover the underlying energy functional. In any case, once the observations are fixed, the associated operator is fixed, and the regularization and convergence analysis are carried out conditionally on it. 

The trajectory enters the problem in two distinct ways: its associated operator determines the null space, and therefore the identifiable component of the unknown energy, while its regularity enters the discretization error estimates later on in Section \ref{error_analysis}, since the construction of $A^\delta$ and $f^\delta$ involves numerical approximations of derivatives of the observed density.
The informativeness of the data is reflected precisely in the size of $\mathcal{H}_{\mathrm{null}}$. Trajectories exhibiting little dynamical variation, such as densities that are stationary or close to stationarity, provide fewer constraints to distinguish different energy components, which enlarges the null space and deteriorates identifiability. Note, however, that even for a stationary density $\rho_\ast$ the operator does not vanish in general, since it contains terms of the form $V\mapsto \nabla\cdot(\rho_\ast\nabla V)$, so that a partial recovery of the energy components remains possible.
In the subsequent subsections, we will develop an operator framework for the minimization problem and demonstrate that the minimization problems \eqref{exp-pro1}--\eqref{exp-fun1} and \eqref{emp-pro1}--\eqref{emp-fun1} admit unique solutions due to the inclusion of the regularization term. Furthermore, we derive a closed-form expression for the structure-preserving kernel estimator of the empirical minimization problem \eqref{emp-pro1}--\eqref{emp-fun1} and show that $\left(\lambda_1\widehat{V}_{\lambda,NL}, \lambda_2\widehat{W}_{\lambda,NL}\right)$ is orthogonal to the null space with respect to the RKHS inner product.
\end{remark}
}

\subsection{An operator framework for the learning problem}

In this subsection, we first explore a key property of the reproducing kernel Hilbert space (RKHS). We then demonstrate that the minimization problems \eqref{exp-pro1}-\eqref{exp-fun1} and \eqref{emp-pro1}-\eqref{emp-fun1} admit unique solutions, which can be explicitly expressed using an operator representation.

Let $\rho,\pi$ be two probability densities on $\mathbb{R}^d$, and let $K$ be a kernel function in $\mathbb{R}^d\times\mathbb{R}^d$. For $m,n\in \mathbb{N}$, denote by $\Delta_{(\rho,\pi)}^{(m,n)}K$ the actions of the operator $\Delta_\rho$ applied $m$ times to the first variable and the operator $\Delta_\pi$ applied $n$ times to the second variable of the kernel function $K$. Denote by $K*\rho$ the convolution of the kernel $K$ with $\rho$, defined as:
\begin{align*}
(K*\rho)(x,y):= \int_{\mathbb{R}^d} K(x-z,y)\rho(z)\mathrm{d}z.  
\end{align*}
Define $K**(\rho,\pi)$ the twice-convolved kernel function, as:
\begin{align*}
(K**(\rho,\pi))(x,y):= \int_{\mathbb{R}^d} \int_{\mathbb{R}^d} K(x-z_1,y-z_2)\rho(z_1)\pi(z_2)\mathrm{d}z_1 \mathrm{d}z_2.  
\end{align*}
If $\rho=\pi$, we simply denote $\Delta_{\rho}^{(m,n)}=\Delta_{(\rho,\rho)}^{(m,n)}$ and $K**\rho=K**(\rho,\rho)$. 

Applying the differential reproducing property \cite[Theorem 2.7]{RCSP2}, we obtain the following result.

\begin{proposition}\label{rep-pro}
Let $\rho\in C^1(\mathbb{R}^d)$ be a probability density. Let $K\in C_b^5(\mathbb{R}^d\times\mathbb{R}^d)$ be a Mercer kernel. The following two statements hold:
\begin{description}
\item [(i)] For all $x\in\mathbb{R}^d$, we have that $\Delta_{\rho}^{(1,0)}K(x,\cdot), \Delta_{\rho}^{(1,0)}(K*\rho)(x,\cdot)\in\mathcal{H}_{K}$.
\item [(ii)]  For any {$f\in\mathcal{H}_{K}$}, the reproducing property holds for the operator $\Delta_{\rho}$, that is, 
{\begin{align}\label{lap-rep-pro}
\Delta_{\rho}f(x) = \langle f, \Delta_{\rho}^{(1,0)}K(x,\cdot)\rangle_{\mathcal{H}_{K}}, \quad \text{ for all } x\in\mathbb{R}^d.   
\end{align}}
Furthermore, we have the following convolutional differential reproducing property:
{\begin{align}\label{con-rep-pro}
\Delta_{\rho}(f*\rho)(x) = \langle f, \Delta_{\rho}^{(1,0)}(K*\rho)(x,\cdot)\rangle_{\mathcal{H}_{K}},\quad \text{ for all } x\in\mathbb{R}^d. 
\end{align}}
\end{description}
\end{proposition}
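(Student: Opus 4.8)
The plan is to bootstrap everything from the ordinary differential reproducing property of \cite[Theorem~2.7]{RCSP2} applied to the fixed kernel $K$, and to transfer it to the weighted Laplacian $\Delta_\rho$ and to the convolved object $K*\rho$ by linearity and by a Bochner-integration argument in $\mathcal{H}_K$. First I would recall that, since $K\in C_b^5(\mathbb{R}^d\times\mathbb{R}^d)$, for every multi-index $\alpha$ with $|\alpha|\le 2$ and every $x$ the section $\partial_1^\alpha K(x,\cdot)$ (the $\alpha$-derivative of $K$ taken in its \emph{first} slot) lies in $\mathcal{H}_K$, its norm satisfies $\|\partial_1^\alpha K(x,\cdot)\|_{\mathcal{H}_K}^2=(\partial_1^\alpha\partial_2^\alpha K)(x,x)$ and is uniformly bounded in $x$, and $\langle f,\partial_1^\alpha K(x,\cdot)\rangle_{\mathcal{H}_K}=\partial^\alpha f(x)$ for all $f\in\mathcal{H}_K$. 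Writing $\Delta_\rho f=\rho\,\Delta f+\nabla\rho\cdot\nabla f$ — which is where the hypothesis $\rho\in C^1$ enters — I would then simply \emph{define}
\[
\Delta_{\rho}^{(1,0)}K(x,\cdot):=\rho(x)\sum_{i=1}^{d}\partial_{1,i}^{2}K(x,\cdot)+\sum_{i=1}^{d}\partial_{i}\rho(x)\,\partial_{1,i}K(x,\cdot),
\]
which belongs to $\mathcal{H}_K$ as a finite linear combination of elements of $\mathcal{H}_K$, proving the first half of (i); pairing with $f$ and using the reproducing identity term by term gives $\langle f,\Delta_{\rho}^{(1,0)}K(x,\cdot)\rangle_{\mathcal{H}_K}=\rho(x)\Delta f(x)+\nabla\rho(x)\cdot\nabla f(x)=\Delta_\rho f(x)$, which is \eqref{lap-rep-pro}.

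For the convolutional statements I would first observe that, for fixed $x$, the $\mathcal{H}_K$-valued map $z\mapsto K(x-z,\cdot)\,\rho(z)$ is Bochner integrable, since it is measurable and $\|K(x-z,\cdot)\|_{\mathcal{H}_K}=K(x-z,x-z)^{1/2}\le\|K\|_\infty^{1/2}$ while $\rho\in L^1$; hence $(K*\rho)(x,\cdot)=\int_{\mathbb{R}^d}K(x-z,\cdot)\rho(z)\,\mathrm{d}z\in\mathcal{H}_K$, and since the inner product passes through the Bochner integral, $\langle f,(K*\rho)(x,\cdot)\rangle_{\mathcal{H}_K}=\int f(x-z)\rho(z)\,\mathrm{d}z=(f*\rho)(x)$. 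Running the same argument on the sections $\partial_{1,i}K(x-z,\cdot)$ and $\partial_{1,i}^2K(x-z,\cdot)$ — which lie in $\mathcal{H}_K$ with $z$-uniformly bounded norms by the first step — and differentiating under the integral sign in $x$ (legitimate because the relevant derivatives of $K$ are bounded and $\rho\in L^1$), I obtain that $\partial_{1,i}(K*\rho)(x,\cdot)$ and $\partial_{1,i}^2(K*\rho)(x,\cdot)$ are again $\mathcal{H}_K$-valued Bochner integrals, hence in $\mathcal{H}_K$, and that $\langle f,\partial_{1,i}^{k}(K*\rho)(x,\cdot)\rangle_{\mathcal{H}_K}=\int(\partial_i^{k}f)(x-z)\rho(z)\,\mathrm{d}z=\partial_i^{k}(f*\rho)(x)$ for $k=1,2$. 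Defining $\Delta_{\rho}^{(1,0)}(K*\rho)(x,\cdot)$ as the corresponding combination $\rho(x)\sum_i\partial_{1,i}^2(K*\rho)(x,\cdot)+\sum_i\partial_i\rho(x)\,\partial_{1,i}(K*\rho)(x,\cdot)$ then yields the second half of (i), and pairing with $f$ produces $\rho(x)\Delta(f*\rho)(x)+\nabla\rho(x)\cdot\nabla(f*\rho)(x)=\Delta_\rho(f*\rho)(x)$, which is \eqref{con-rep-pro}.

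The main obstacle is purely the analytic bookkeeping: one must justify, in the $\mathcal{H}_K$-valued setting, the interchange of the $z$-integration against the density $\rho$ with both the RKHS inner product and the $x$-differentiation, and verify the measurability and domination hypotheses for Bochner's theorem and for differentiation under the integral sign. All of these reduce to boundedness of $K$ and its derivatives up to the order needed for the Laplacian terms (supplied by $K\in C_b^5$, with room to spare) together with $\rho\in C^1\cap L^1$, but they should be spelled out rather than asserted. One minor point worth flagging in the write-up is that $K*\rho$ is \emph{not} a symmetric kernel; however, we never treat it as a kernel — we only use $(K*\rho)(x,\cdot)$ as a distinguished element of the fixed space $\mathcal{H}_K$, so no difficulty arises.
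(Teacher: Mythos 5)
Your proof is correct, and for part (i) together with \eqref{lap-rep-pro} it coincides with the paper's argument: expand $\Delta_\rho f=\nabla\rho\cdot\nabla f+\rho\,\Delta f$, invoke the differential reproducing property of \cite[Theorem~2.7]{RCSP2} for the first-slot derivative sections $\partial_1^{\alpha}K(x,\cdot)$ with $|\alpha|\le 2$, and assemble $\Delta_\rho^{(1,0)}K(x,\cdot)$ as a finite linear combination with coefficients $\rho(x)$ and $\partial_i\rho(x)$. For the convolutional identity \eqref{con-rep-pro}, however, your route is genuinely different from the printed proof, and it is the more robust one. The paper argues through the commutation $\Delta_\rho(f*\rho)=(\Delta_\rho f)*\rho$ and then exchanges the inner product with the $y$-integral, identifying $\int_{\mathbb{R}^d}\Delta_\rho^{(1,0)}K(x-y,\cdot)\rho(y)\,\mathrm{d}y$ with $\Delta_\rho^{(1,0)}(K*\rho)(x,\cdot)$; but since $\Delta_\rho$ has variable coefficients evaluated at the point of application, it is not translation invariant, so $\Delta_\rho(f*\rho)\neq(\Delta_\rho f)*\rho$ in general, and that chain actually computes $\bigl\langle f,\bigl((\Delta_\rho^{(1,0)}K)*\rho\bigr)(x,\cdot)\bigr\rangle_{\mathcal{H}_K}=((\Delta_\rho f)*\rho)(x)$ rather than $\Delta_\rho(f*\rho)(x)$. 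Your Bochner-integral argument — establishing $(K*\rho)(x,\cdot)$ and its first-slot derivative sections as $\mathcal{H}_K$-valued integrals, differentiating under the integral sign, pairing with $f$, and only at the end multiplying by $\rho(x)$ and $\nabla\rho(x)$ — proves \eqref{con-rep-pro} with $\Delta_\rho^{(1,0)}(K*\rho)(x,\cdot)$ understood as $\Delta_\rho$ acting on the first slot of the convolved kernel, which is exactly the form used downstream in the adjoint $A^{*}$ of Proposition~\ref{Bound-A} and in the Gram matrix. The bookkeeping you flag (measurability and domination for Bochner integrability and for differentiation under the integral, plus continuity of evaluation functionals to identify the Bochner integral with the pointwise convolution) is precisely what should be spelled out; with that written out, your proof is complete and avoids the commutation step on which the paper's version leans.
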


\begin{proof}
By the differential reproducing property \cite[Theorem 2.7]{RCSP2},
the condition $K\in C_b^5(\mathbb{R}^d\times\mathbb{R}^d)$ ensures that all $x\in\mathbb{R}^d$, we have {$\Delta_{\rho}^{(1,0)}K(x,\cdot), \Delta_{\rho}^{(1,0)}(K*\rho)(x,\cdot)\in\mathcal{H}_{K}$.
Furthermore, for each $f\in\mathcal{H}_{K}$, it holds that
\begin{align*}
\Delta_{\rho}f(x)&= \nabla\rho(x)\cdot \nabla f(x)+\rho(x)\Delta f(x)=\langle f, \nabla\rho(x)\cdot \nabla K(x,\cdot)\rangle_{\mathcal{H}_{K}}+\langle f,\rho(x)\Delta K(x,\cdot)\rangle_{\mathcal{H}_{K}}.
\end{align*} 
Hence, equation \eqref{lap-rep-pro} is obtained. Using the definition of the convolution operator, we derive:
\begin{align*}
\Delta_{\rho}(f*\rho)(x)&= ((\Delta_{\rho}f)*\rho)(x) = \int_{\mathbb{R}^d}\left\langle f,\Delta_{\rho}^{(1,0)}K(x-y,\cdot)\right\rangle_{\mathcal{H}_{K}}\rho(y)\mathrm dy\\
&=\left\langle f,\int_{\mathbb{R}^d}\Delta_{\rho}^{(1,0)}K(x-y,\cdot)\rho(y)\mathrm dy\right\rangle_{\mathcal{H}_{K}}=\left\langle f, \Delta_{\rho}^{(1,0)}(K*\rho)(x,\cdot)\right\rangle_{\mathcal{H}_{K}}.
\end{align*}
This establishes quation \eqref{con-rep-pro}.}
\end{proof}

{
\begin{remark}
\normalfont
It is important to note that $f\in \mathcal{H}_{K}$ does not necessarily imply that the convolution $f*\rho\in \mathcal{H}_{K}$. As a result, the  equality 
$
\Delta_{\rho}(f*\rho)(x) = \left\langle f*\rho, \Delta_{\rho}^{(1,0)}K(x,\cdot)\right\rangle_{\mathcal{H}_{K}},
$ with $x\in\mathbb{R}^d$, may {\it not} hold. 
However, the kernel section of the convolution $(K*\rho)(x,\cdot)$ still lies in the RKHS $\mathcal{H}_{K}$ and so does $\Delta_{\rho}^{(1,0)}(K*\rho)(x,\cdot)$ due to the differential reproducing property \cite[Theorem 2.7]{RCSP2}. This is why the convolutional differential reproducing property \eqref{con-rep-pro} remains valid.
\end{remark}
}

{
\begin{remark}
\normalfont
Let $\pi\in C^1(\mathbb{R}^d)$ be a probability density. By Proposition \ref{rep-pro}, the Mercel kernel $K\in C_b^5(\mathbb{R}^d\times\mathbb{R}^d)$ ensures that for all for all $y\in\mathbb{R}^d$, we have $\Delta_{\pi}^{(1,0)}K(y,\cdot), \Delta_{\pi}^{(1,0)}(K*\pi)(y,\cdot)\in\mathcal{H}_{K}$. Then applying equation \eqref{lap-rep-pro} with $f=\Delta_{\pi}^{(1,0)}K(y,\cdot)$ and equation \eqref{con-rep-pro}  with $ f=\Delta_{\pi}^{(1,0)}(K*\pi)(y,\cdot)$, we obtain:
\begin{align*}
\Delta_{(\rho,\pi)}^{(1,1)}K(x,y)= \left\langle \Delta_{\rho}^{(1,0)}K(x,\cdot), \Delta_{\pi}^{(1,0)}K(y,\cdot)\right\rangle_{\mathcal{H}_{K}},  
\end{align*}
and
\begin{align*}
\Delta_{(\rho,\pi)}^{(1,1)}(K**(\rho,\pi))(x,y)= \left\langle \Delta_{\rho}^{(1,0)}(K*\rho)(x,\cdot), \Delta_{\pi}^{(1,0)}(K*\pi)(y,\cdot)\right\rangle_{\mathcal{H}_{K}}.  
\end{align*}
\end{remark}
}

By Proposition \ref{rep-pro}, we conclude that the operator $A$ in \eqref{operatorA} is well-defined, provided that the kernels used to define the RKHSs are such that  $K_1, K_2 \in C_b^5(\mathbb{R}^d \times \mathbb{R}^d)$. The following property establishes that $A$ is a bounded linear operator from $\mathcal{H}_{K_1} \times \mathcal{H}_{K_2}$ to $L^2\left([0,T] \times \mathbb{R}^d, \{\rho_t\}_{t \in [0,T]}\right)$. The detailed proof of this result is provided in Appendix \ref{Proof of Proposition Bound-A}.

\begin{proposition}\label{Bound-A}
Let the $K_1$ and $K_2$ be two Mercer kernels satisfying $K_1,K_2\in C_b^5(\mathbb{R}^d\times\mathbb{R}^d)$. Suppose that the family of probability measures $\{\rho_t\}_{t\in[0,T]}$ satisfying $C_T=\sup_{t\in[0,T]}\|\rho_t\|_{C_b^1}<\infty$.
Then, the operator $A$ defined in \eqref{operatorA} is bounded linear from $\mathcal{H}_{K_1}\times\mathcal{H}_{K_2}$ to $L^2\left([0,T]\times\mathbb{R}^d,\{\rho_t\}_{t\in[0,T]}\right)$ with operator norm $\|A\|\leq \sqrt{2T C_T(\kappa_1^2+\kappa_2^2)} $. Moreover, the adjoint operator $A^*$ of $A$ maps from $L^2\left([0,T]\times\mathbb{R}^d,\{\rho_t\}_{t\in[0,T]}\right)$ to $\mathcal{H}_{K_1}\times\mathcal{H}_{K_2}$ and is given by
\begin{align*}
    A^{\ast}g = \int_0^T\int_{\mathbb{R}^d} g(t,x)\begin{bmatrix}
    \Delta_{\rho_t}^{(1,0)}K_1(x,\cdot)\\
    \Delta_{\rho_t}^{(1,0)}(K_2\ast \rho_t)(x,\cdot)
    \end{bmatrix}^\top\rho_t(x)\mathrm dx \mathrm dt.
\end{align*}
Moreover, the operator $B:= A^{\ast}A: \mathcal{H}_{K_1}\times \mathcal{H}_{K_2}\rightarrow \mathcal{H}_{K_1}\times \mathcal{H}_{K_2}$ given by
\begin{align}\label{operatorB}
B(\phi,\psi)=\int_0^T\int_{\mathbb{R}^d} \Delta_{\rho_t}(\phi+\psi\ast\rho_t)(x)\begin{bmatrix}
    \Delta_{\rho_t}^{(1,0)} K_1(x,\cdot)\\
    \Delta_{\rho_t}^{(1,0)}(K_2\ast \rho_t)(x,\cdot)
    \end{bmatrix}^\top\rho_t(x)\mathrm dx \mathrm dt,    
\end{align}
is trace class with trace $\operatorname{Tr}(B)\leq 4T C_T(\kappa_1^2+\kappa_2^2)$. 
\end{proposition}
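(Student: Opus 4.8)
The plan is to establish the four assertions of Proposition~\ref{Bound-A} in sequence: linearity and boundedness of $A$, the formula for $A^{\ast}$, and finally the trace-class property of $B = A^{\ast}A$ together with the trace bound. First I would verify that $A$ maps into $L^2([0,T]\times\mathbb{R}^d, \{\rho_t\})$ with the claimed norm bound. Linearity is immediate from \eqref{operatorA}. For boundedness, I would use the differential reproducing property from Proposition~\ref{rep-pro}: for $(\phi,\psi) \in \mathcal{H}_{K_1}\times\mathcal{H}_{K_2}$ we have $\Delta_{\rho_t}\phi(x) = \langle \phi, \Delta_{\rho_t}^{(1,0)}K_1(x,\cdot)\rangle_{\mathcal{H}_{K_1}}$ and $\Delta_{\rho_t}(\psi\ast\rho_t)(x) = \langle \psi, \Delta_{\rho_t}^{(1,0)}(K_2\ast\rho_t)(x,\cdot)\rangle_{\mathcal{H}_{K_2}}$, so by Cauchy--Schwarz, $|A(\phi,\psi)(t,x)|^2 \leq 2\|\phi\|_{\mathcal{H}_{K_1}}^2 \|\Delta_{\rho_t}^{(1,0)}K_1(x,\cdot)\|_{\mathcal{H}_{K_1}}^2 + 2\|\psi\|_{\mathcal{H}_{K_2}}^2\|\Delta_{\rho_t}^{(1,0)}(K_2\ast\rho_t)(x,\cdot)\|_{\mathcal{H}_{K_2}}^2$. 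The norms of these kernel sections are controlled by $\|\Delta_{\rho_t}^{(1,0)}K_i(x,\cdot)\|_{\mathcal{H}_{K_i}}^2 = \Delta_{(\rho_t,\rho_t)}^{(1,1)}K_i(x,x) \leq \kappa_i^2$ (using the $C_b^5$ regularity of the kernels and the bound $C_T$ on $\|\rho_t\|_{C_b^1}$; this is where I would absorb derivatives of $\rho_t$ into the constant $\kappa_i$). Integrating against $\rho_t(x)\,dx\,dt$ and using $\int_{\mathbb{R}^d}\rho_t(x)\,dx = 1$ yields $\|A(\phi,\psi)\|_{L^2}^2 \leq 2TC_T(\kappa_1^2+\kappa_2^2)(\|\phi\|_{\mathcal{H}_{K_1}}^2 + \|\psi\|_{\mathcal{H}_{K_2}}^2)$, giving the stated operator norm bound.

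Next, for the adjoint formula, I would compute $\langle A(\phi,\psi), g\rangle_{L^2}$ for $g \in L^2([0,T]\times\mathbb{R}^d,\{\rho_t\})$ by writing out the integral, substituting the reproducing-property expressions for $A(\phi,\psi)(t,x)$, and then exchanging the order of integration and inner product (justified by the uniform boundedness of the kernel sections and Fubini). This gives $\langle A(\phi,\psi), g\rangle_{L^2} = \langle (\phi,\psi), A^{\ast}g\rangle_{\mathcal{H}_{K_1}\times\mathcal{H}_{K_2}}$ with $A^{\ast}g$ exactly the vector-valued Bochner integral claimed. The formula for $B = A^{\ast}A$ in \eqref{operatorB} then follows by composing: apply $A$ first to get the scalar $\Delta_{\rho_t}(\phi+\psi\ast\rho_t)(x)$, then apply $A^{\ast}$.

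Finally, for the trace-class property, I would exploit that $B = A^{\ast}A$ is a positive self-adjoint operator, so $\operatorname{Tr}(B) = \|A\|_{\mathrm{HS}}^2$, the squared Hilbert--Schmidt norm of $A$ (equivalently, of $A^{\ast}$). Since $A^{\ast}$ is an integral operator, I would compute its Hilbert--Schmidt norm by integrating the norm of the integral kernel: $\operatorname{Tr}(B) = \int_0^T\int_{\mathbb{R}^d}\big(\|\Delta_{\rho_t}^{(1,0)}K_1(x,\cdot)\|_{\mathcal{H}_{K_1}}^2 + \|\Delta_{\rho_t}^{(1,0)}(K_2\ast\rho_t)(x,\cdot)\|_{\mathcal{H}_{K_2}}^2\big)\rho_t(x)^2\,dx\,dt$. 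Using the same diagonal-kernel bounds $\leq \kappa_1^2$ and $\leq \kappa_2^2$ as before, together with $\rho_t(x)^2 \leq C_T\,\rho_t(x)$ (since $\rho_t \leq \|\rho_t\|_{C_b^1} \leq C_T$) and $\int\rho_t\,dx = 1$, gives $\operatorname{Tr}(B) \leq TC_T(\kappa_1^2+\kappa_2^2)$ — I would note that the factor of $4$ in the statement provides comfortable slack, presumably accommodating a less sharp bookkeeping of the $C_b^1$-norm contributions of $\rho_t$ inside the $\kappa_i$. The main obstacle, as usual in this kind of argument, is not any single inequality but the careful justification of interchanging integration and the $\mathcal{H}_{K_i}$-inner product (i.e., the Bochner-integrability of the vector-valued integrands), which requires the uniform bounds on kernel sections established via Proposition~\ref{rep-pro} and the $C_b^5$ hypothesis; once those are in hand, the rest is bookkeeping with Cauchy--Schwarz and the normalization of $\rho_t$.
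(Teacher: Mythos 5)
Your proposal follows essentially the same route as the paper's proof: boundedness of $A$ via the differential reproducing property of Proposition~\ref{rep-pro} combined with Cauchy--Schwarz and the normalization of $\rho_t$, the adjoint formula by substituting the reproducing identities into $\langle A(\phi,\psi),g\rangle_{L^2}$ and exchanging integration with the RKHS inner product, and the trace bound by summing $\|A u_n\|_{L^2}^2$ over an orthonormal system --- which is exactly the paper's Parseval computation, merely rephrased as the Hilbert--Schmidt norm of $A$.

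Two bookkeeping points in your trace step need fixing, though neither changes the structure of the argument. First, the weight in your Hilbert--Schmidt formula should be $\rho_t(x)$, not $\rho_t(x)^2$: the domain of $A^{\ast}$ is $L^2\left([0,T]\times\mathbb{R}^d,\{\rho_t\}_{t\in[0,T]}\right)$, whose measure is already $\rho_t(x)\,\mathrm dx\,\mathrm dt$, so the correct identity is $\operatorname{Tr}(B)=\sum_n\|Au_n\|_{L^2}^2=\int_0^T\int_{\mathbb{R}^d}\bigl(\|\Delta_{\rho_t}^{(1,0)}K_1(x,\cdot)\|_{\mathcal{H}_{K_1}}^2+\|\Delta_{\rho_t}^{(1,0)}(K_2\ast\rho_t)(x,\cdot)\|_{\mathcal{H}_{K_2}}^2\bigr)\rho_t(x)\,\mathrm dx\,\mathrm dt$; treating $\rho_t$ as part of an integral kernel against Lebesgue measure double-counts the density. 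Second, you cannot both absorb the derivatives of $\rho_t$ into the constants $\kappa_i$ and keep an explicit factor $C_T$ in the final bounds: in the paper $\kappa_i^2=2\|K_i\|_{C_b^4}$ depends only on the kernels, while the diagonal quantities $\Delta_{\rho_t}^{(1,1)}K_i(x,x)=\|\Delta_{\rho_t}^{(1,0)}K_i(x,\cdot)\|_{\mathcal{H}_{K_i}}^2$ carry factors of $\|\rho_t\|_{C_b^1}\le C_T$, and it is precisely these factors that produce the explicit $C_T$ in $\|A\|\le\sqrt{2TC_T(\kappa_1^2+\kappa_2^2)}$ and $\operatorname{Tr}(B)\le 4TC_T(\kappa_1^2+\kappa_2^2)$. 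Once these two points are straightened out, your argument coincides with the one given in Appendix~\ref{Proof of Proposition Bound-A}.
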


We now handle the empirical case. Let us denote by \(A_{NL}^{\delta}: \mathcal{H}_{K_1}\times \mathcal{H}_{K_2}\to\mathbb{R}^{NL}\) the empirical version of the operator \(A\), defined in~\eqref{operatorA} and constructed using the numerical schemes described above. Specifically, we define:
\begin{equation}\label{A_NL_delta}
\begin{aligned}
A_{NL}^{\delta}(\phi,\psi) &:= \sqrt{\frac{T|\Omega|}{NL}}\left(A^{\delta}(\phi,\psi)(t_1,x_1),\,A^{\delta}(\phi,\psi)(t_1,x_2),\,\dots,\,A^{\delta}(\phi,\psi)(t_L,x_N)\right)^\top \\
&= \sqrt{\frac{T|\Omega|}{NL}}\Delta_{\rho_L}^{\delta}\left(\phi + \psi*\rho_L\right)(X_N),
\end{aligned}
\end{equation}
where the operator \(A^{\delta}\) is given explicitly in~\eqref{neumericalA}. Now, for arbitrary vectors \(\mathbf{u},\mathbf{v}\in\mathbb{R}^{NL}\), we introduce the weighted inner product:
\begin{align}\label{C_rho}
\langle\mathbf{u},\mathbf{v}\rangle_{C_{\rho}} := \mathbf{u}^\top C_{\rho}\,\mathbf{v},
\end{align}
where the weighting matrix \(C_{\rho}\in\mathbb{R}^{NL\times NL}\) is defined as the diagonal matrix:
\[
C_{\rho} := \operatorname{diag}\{\rho_{t_1}(x_1),\,\rho_{t_1}(x_2),\,\dots,\,\rho_{t_L}(x_N)\}.
\]
The corresponding norm induced by this inner product on \(\mathbb{R}^{NL}\) is denoted by \(\|\cdot\|_{C_{\rho}}\).  
Using a similar argument as in the proof of Proposition \ref{Bound-A}, we can show that the operator $A_{NL}^\delta: \mathcal{H}_{K_1} \times \mathcal{H}_{K_2} \to \mathbb{R}^{NL}$ is bounded and linear.

\begin{corollary}\label{Bound-ANL}
Let $K_1$ and $K_2$ be two Mercer kernels satisfying $K_1,K_2\in C^4(\Omega\times\Omega)$. Suppose that the family of probability measures $\{\rho_t\}_{t\in[0,T]}$ satisfying $C_T=\sup_{t\in[0,T]}\|\rho_t\|_{C_b^2}<\infty$.
Then, the operator $A_{NL}^\delta$ defined in \eqref{A_NL_delta} is bounded linear from $\mathcal{H}_{K_1}\times\mathcal{H}_{K_2}$ to $\mathbb{R}^{NL}$ with operator norm $\|A_{NL}^\delta\|\leq \sqrt{2T C_T(\kappa_1^2+\kappa_2^2)} $. Moreover, the adjoint operator $A^{\delta*}_{NL}$ of $A_{NL}^\delta$, mapping from $\mathbb{R}^{NL}$ to $\mathcal{H}_{K_1}\times\mathcal{H}_{K_2}$, is defined as
\begin{equation}\label{operatorANLstar}
\begin{aligned}
A_{NL}^{\delta*} \mathbf{u}=\sqrt{\frac{T|\Omega|}{NL}}\mathbf{u}^\top C_\rho\left(\Delta^{(1,0)\delta}_{\rho_{L}}K_1(X_N,\cdot),\Delta^{(1,0)\delta}_{\rho_{L}}(K_2\ast \rho_{L})(X_N,\cdot)\right),
\end{aligned}
\end{equation}
{where $\Delta^{(1,0)\delta}_{\rho_{L}}K$ denotes the operation $\Delta^{\delta}_{\rho_{L}}$ defined in \eqref{A_NL_delta} acting on the first variable of the kernel function $K$.}
The operator $B^{\delta}_{NL}:= A_{NL}^{\delta\ast}A_{NL}^\delta: \mathcal{H}_{K_1}\times \mathcal{H}_{K_2}\rightarrow \mathcal{H}_{K_1}\times \mathcal{H}_{K_2}$ given by
\begin{equation}
\begin{aligned}\label{operatorBNL}
B^\delta_{NL}(\phi,\psi) =\frac{T|\Omega|}{NL}{\Delta^\delta_{\rho_{L}}}^\top(\phi+\psi*\rho_{L})(X_N) C_{\rho} \left(
\Delta^{(1,0)\delta}_{\rho_{L}}K_1(X_N,\cdot),
\Delta^{(1,0)\delta}_{\rho_{L}}(K_2\ast \rho_{L})(X_N,\cdot)\right).
\end{aligned}
\end{equation}
is trace class with trace $\operatorname{Tr}(B_{NL})\leq 4T C_T(\kappa_1^2+\kappa_2^2)$. 
\end{corollary}

Having defined the operator \( A \) and $A^\delta_{NL}$ and established their properties in Proposition \ref{Bound-A} and Corollary \ref{Bound-ANL}, we are now prepared to derive an explicit operator representation for the minimizer solving the inverse learning problems \eqref{exp-pro1}--\eqref{exp-fun1} and \eqref{emp-pro1}-\eqref{emp-fun1}. The proof of the following proposition is provided in Appendix \ref{proof prop with the solution}.

\begin{proposition}[{\bf Operator representation of the learning problem solutions}]\label{Operator_representation}  For all \(\lambda_1, \lambda_2 > 0\), the minimization problems \eqref{exp-pro1}--\eqref{exp-fun1} and \eqref{emp-pro1}-\eqref{emp-fun1} admit unique solutions and they are given by
\begin{equation}
\begin{aligned}\label{ope-rep-emp-esti}
(V_{\lambda}^*, W_\lambda^*) &= \left(B + \lambda I \right)^{-1} B(V, W),
\end{aligned}
\end{equation}
and 
\begin{equation}\label{kernel-estimator}
\left(\widehat{V}_{\lambda,NL},\, \widehat{W}_{\lambda,NL}\right) 
= \sqrt{\frac{T|\Omega|}{NL}}\left(B_{NL}^{\delta} + \lambda I\right)^{-1} A_{NL}^{\delta*}\,f_{NL}^{\delta},
\end{equation}
where the matrix is defined as $\lambda = \operatorname{diag}\{\lambda_1,\lambda_2\}$ and \( f_{NL}^{\delta}\in\mathbb{R}^{NL} \) is the vectorized data given explicitly by
\begin{equation*}
f_{NL}^{\delta} := \left(f^{\delta}(t_1,x_1),\,f^{\delta}(t_1,x_2),\,\dots,\,f^{\delta}(t_L,x_N)\right)^\top.
\end{equation*}  
\end{proposition}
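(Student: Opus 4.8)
The plan is to treat both minimization problems as instances of Tikhonov-regularized least squares in Hilbert spaces, where the forward operator is the bounded linear operator $A$ (respectively $A_{NL}^\delta$) whose existence and boundedness are already guaranteed by Proposition~\ref{Bound-A} and Corollary~\ref{Bound-ANL}. The key observation is that, thanks to the differential reproducing property (Proposition~\ref{rep-pro}), the loss functionals $R_\lambda$ and $\widehat R_{\lambda,NL}$ can be rewritten purely in terms of these operators: $R_\lambda(\phi,\psi)=\|A(\phi,\psi)-A(V,W)\|_{L^2}^2+\|(\phi,\psi)\|_\lambda^2$ and $\widehat R_{\lambda,NL}(\phi,\psi)=\|A_{NL}^\delta(\phi,\psi)-\sqrt{T|\Omega|/(NL)}\,f_{NL}^\delta\|_{C_\rho}^2+\|(\phi,\psi)\|_\lambda^2$, where $\|(\phi,\psi)\|_\lambda^2:=\lambda_1\|\phi\|_{\mathcal H_{K_1}}^2+\lambda_2\|\psi\|_{\mathcal H_{K_2}}^2$ is the norm associated to the positive-definite diagonal operator $\lambda=\operatorname{diag}\{\lambda_1,\lambda_2\}$ on $\mathcal H_{K_1}\times\mathcal H_{K_2}$. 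Once the problems are in this canonical form, the abstract theory of linear Tikhonov regularization applies directly.

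First I would establish existence and uniqueness. Each functional is a sum of a convex quadratic term (the squared residual norm, convex since it is a composition of an affine map with a norm squared) and the strictly convex, coercive penalty $\|(\phi,\psi)\|_\lambda^2$; hence each functional is strictly convex, continuous, and coercive on the Hilbert space $\mathcal H_{K_1}\times\mathcal H_{K_2}$, so it attains a unique global minimizer. This simultaneously resolves the degeneracy discussed in Remark~\ref{degenercy probleml}: even though $A$ (or $A_{NL}^\delta$) may have nontrivial kernel, the penalty term removes the ambiguity.

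Next I would derive the closed-form expressions by computing the Fr\'echet derivative of each functional and setting it to zero. For the expected problem, differentiating $R_\lambda$ gives the normal equation $A^*A(\phi,\psi)-A^*A(V,W)+\lambda(\phi,\psi)=0$, i.e.\ $(B+\lambda I)(\phi,\psi)=B(V,W)$ with $B=A^*A$ as in \eqref{operatorB}; since $B$ is positive self-adjoint and $\lambda$ is positive-definite, $B+\lambda I$ is boundedly invertible, yielding \eqref{ope-rep-emp-esti}. For the empirical problem, one uses the explicit form of $A_{NL}^\delta$ in \eqref{A_NL_delta} and its adjoint $A_{NL}^{\delta*}$ in \eqref{operatorANLstar} with respect to the $C_\rho$-weighted inner product on $\mathbb R^{NL}$; the stationarity condition reads $B_{NL}^\delta(\phi,\psi)+\lambda(\phi,\psi)=A_{NL}^{\delta*}\big(\sqrt{T|\Omega|/(NL)}\,f_{NL}^\delta\big)$, and inverting $B_{NL}^\delta+\lambda I$ produces \eqref{kernel-estimator}. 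The computation of the adjoint and of $B_{NL}^\delta=A_{NL}^{\delta*}A_{NL}^\delta$ is already recorded in Corollary~\ref{Bound-ANL}, so this step is essentially bookkeeping with the bracket notation $\langle\cdot,\cdot\rangle_{C_\rho}$ and the reproducing identities.

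The main obstacle is not any single hard estimate but rather the careful handling of the weighted inner product structure in the empirical case: one must verify that $A_{NL}^{\delta*}$ as written in \eqref{operatorANLstar} is genuinely the adjoint of $A_{NL}^\delta$ \emph{relative to the $C_\rho$-inner product on $\mathbb R^{NL}$ and the RKHS inner product on the domain}, and that the constant $\sqrt{T|\Omega|/(NL)}$ is tracked consistently from the definition \eqref{A_NL_delta} through to \eqref{kernel-estimator}. A secondary point requiring care is confirming that all quantities appearing --- in particular $\Delta_{\rho_t}^{(1,0)}K_1(x,\cdot)$, $\Delta_{\rho_t}^{(1,0)}(K_2*\rho_t)(x,\cdot)$, and their empirical analogues --- genuinely lie in the respective RKHSs, which is exactly what the regularity hypotheses $K_1,K_2\in C_b^5$ (resp.\ $C^4$) together with Proposition~\ref{rep-pro} provide, so the normal equations make sense as identities in $\mathcal H_{K_1}\times\mathcal H_{K_2}$.
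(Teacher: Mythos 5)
Your proposal is correct and follows essentially the same route as the paper's proof: both reduce the problems to Tikhonov-regularized least squares, derive the normal equation $(B+\lambda I)(\phi,\psi)=B(V,W)$ (resp.\ $(B_{NL}^{\delta}+\lambda I)(\phi,\psi)=A_{NL}^{\delta*}\sqrt{T|\Omega|/(NL)}\,f_{NL}^{\delta}$) by differentiating the loss, and invert using positivity of $B$ and $\lambda$. The only difference is cosmetic: you make the existence/uniqueness argument explicit via strict convexity and coercivity, whereas the paper obtains uniqueness directly from the unique solvability of the critical-point equation.
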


\subsection{A kernel representation of the learning problem}

The operator representation \eqref{kernel-estimator} of the solution to the minimization problem does not carry in its wake explicit computational instructions that can be used to construct algorithms of use in practical applications. 
Therefore, we now further derive a representation of the kernel estimator $\left(\widehat{V}_{\lambda,NL}, \widehat{W}_{\lambda,NL}\right)$ of the minimization problem \eqref{emp-pro1}-\eqref{emp-fun1} that facilitates computation and is the analog of the classical Representer Theorem \cite{Mohri:learning:2012} adapted to our setup.  We first define the {\it Gram matrix} as follows:
\begin{align}\label{gram-matrix}
G_{NL}:= 
C_\rho\left(\lambda_2\Delta_{\rho_{L}}^{(1,1)\delta}K_1(X_N,X_N)+\lambda_1\Delta_{\rho_{L}}^{(1,1)\delta}(K_2\ast \ast\rho_{L})(X_N,X_N)\right)C_\rho.
\end{align}
By \cite[Proposition 3.4]{RCSP2}, it can be shown that the Gram matrix $G_{NL}$ is symmetric and positive semi-definite. Furthermore, we have the following property.
\begin{proposition}[{\bf A Representer Theorem for the solution}]\label{Kernel representation}
Suppose that $K_1$ and $K_2$ are Mercer kernel such that $K_1,K_2\in C^4(\Omega\times\Omega)$. Then for every $\lambda_1,\lambda_2>0$, the estimator of the regularized empirical loss functional $\widehat{R}_{\lambda,NL}$ in \eqref{emp-fun1} can be written as
\begin{align}\label{kernel-rep}
\widehat{V}_{\lambda,NL}= \left\langle \widehat{\mathbf{C}}_1,\Delta_{\rho_{L}}^{(1,0)\delta} K_1(X_N,\cdot) \right \rangle_{C_\rho}\quad \text{and}\quad \widehat{W}_{\lambda,NL}=  \left\langle \widehat{\mathbf{C}}_2,\Delta_{\rho_{L}}^{(1,0)\delta} (K_2 * \rho_{L})(X_N,\cdot) \right \rangle_{C_\rho}, 
\end{align}
where $\langle\cdot,\cdot\rangle_{C_\rho}$ is the weighted product in $\mathbb{R}^{NL}$ defined in \eqref{C_rho} and the coefficients $\widehat{\mathbf{C}}_1,\widehat{\mathbf{C}}_2\in \mathbb{R}^{NL}$ are given by
\begin{equation}\label{coeffient}
\begin{cases}
\widehat{\mathbf C}_1 &= \lambda_2\left(G_{NL}+\lambda_1\lambda_2\frac{NL}{T|\Omega|} C_\rho\right)^{-1}
C_\rho f_{NL}^\delta, \\
\widehat{\mathbf C}_2&=\lambda_1\left(G_{NL}+\lambda_1\lambda_2\frac{NL}{T|\Omega|} C_\rho\right)^{-1}
C_\rho f_{NL}^\delta. 
\end{cases}
\end{equation}
\end{proposition}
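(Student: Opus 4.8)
The plan is to combine the operator representation of the kernel estimator from Proposition \ref{Operator_representation} with the explicit form of the adjoint operator $A_{NL}^{\delta*}$ from Corollary \ref{Bound-ANL}, and then to unwind the resulting expression using the differential reproducing property of Proposition \ref{rep-pro} together with the Representer-type ansatz. Concretely, I would start from \eqref{kernel-estimator}, namely $(\widehat{V}_{\lambda,NL},\widehat{W}_{\lambda,NL}) = \sqrt{T|\Omega|/(NL)}\,(B_{NL}^{\delta}+\lambda I)^{-1}A_{NL}^{\delta*}f_{NL}^{\delta}$, and rewrite it as $(B_{NL}^{\delta}+\lambda I)(\widehat{V}_{\lambda,NL},\widehat{W}_{\lambda,NL}) = \sqrt{T|\Omega|/(NL)}\,A_{NL}^{\delta*}f_{NL}^{\delta}$. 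Since $B_{NL}^{\delta}=A_{NL}^{\delta*}A_{NL}^{\delta}$, the right-hand side already tells us that $\lambda(\widehat{V}_{\lambda,NL},\widehat{W}_{\lambda,NL})$ lies in the range of $A_{NL}^{\delta*}$, which by \eqref{operatorANLstar} is spanned by the kernel sections $\Delta^{(1,0)\delta}_{\rho_L}K_1(X_N,\cdot)$ and $\Delta^{(1,0)\delta}_{\rho_L}(K_2*\rho_L)(X_N,\cdot)$. This motivates the ansatz $\lambda_1\widehat{V}_{\lambda,NL} = \langle \widehat{\mathbf C}_1, \Delta^{(1,0)\delta}_{\rho_L}K_1(X_N,\cdot)\rangle_{C_\rho}$ and $\lambda_2\widehat{W}_{\lambda,NL} = \langle \widehat{\mathbf C}_2, \Delta^{(1,0)\delta}_{\rho_L}(K_2*\rho_L)(X_N,\cdot)\rangle_{C_\rho}$ for some coefficient vectors $\widehat{\mathbf C}_1,\widehat{\mathbf C}_2\in\mathbb{R}^{NL}$ to be determined.

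Next I would substitute this ansatz into the defining relation and extract a linear system for $\widehat{\mathbf C}_1,\widehat{\mathbf C}_2$. Applying the operator $A_{NL}^{\delta}$ to $(\widehat{V}_{\lambda,NL},\widehat{W}_{\lambda,NL})$ and using the differential reproducing property \eqref{lap-rep-pro}--\eqref{con-rep-pro} to evaluate $\Delta_{\rho_{t_l}}\widehat{V}_{\lambda,NL}(x_n)$ and $\Delta_{\rho_{t_l}}(\widehat{W}_{\lambda,NL}*\rho_{t_l})(x_n)$ as inner products against kernel sections, the cross-inner-products collapse into the matrices $\Delta^{(1,1)\delta}_{\rho_L}K_1(X_N,X_N)$ and $\Delta^{(1,1)\delta}_{\rho_L}(K_2**\rho_L)(X_N,X_N)$; note that the cross term between the $V$-part and the $W$-part vanishes because the two RKHS norms are independent, so $B_{NL}^{\delta}$ acts block-diagonally in the relevant sense. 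Collecting the $\rho$-weights into $C_\rho$ and the scaling factor $T|\Omega|/(NL)$, the equation $(B_{NL}^{\delta}+\lambda I)(\widehat{V}_{\lambda,NL},\widehat{W}_{\lambda,NL}) = \sqrt{T|\Omega|/(NL)}\,A_{NL}^{\delta*}f_{NL}^{\delta}$ should reduce, after multiplying through by $\lambda_1\lambda_2$ and matching coefficient vectors, to $\bigl(G_{NL}+\lambda_1\lambda_2\tfrac{NL}{T|\Omega|}C_\rho\bigr)\widehat{\mathbf C}_i = \lambda_{j}\,C_\rho f_{NL}^{\delta}$ with $\{i,j\}=\{1,2\}$, which is exactly \eqref{coeffient} once one checks invertibility of $G_{NL}+\lambda_1\lambda_2\tfrac{NL}{T|\Omega|}C_\rho$ — this follows since $G_{NL}$ is symmetric positive semi-definite (by \cite[Proposition 3.4]{RCSP2}) and $C_\rho$ is strictly positive diagonal under the positivity assumption on the data. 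Finally I would divide by $\lambda_1$ and $\lambda_2$ to recover \eqref{kernel-rep}.

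The main obstacle I anticipate is bookkeeping: correctly tracking how the weighted inner product $\langle\cdot,\cdot\rangle_{C_\rho}$, the scaling constant $\sqrt{T|\Omega|/(NL)}$, and the two distinct regularization parameters $\lambda_1,\lambda_2$ interact when one passes from the abstract operator identity to the finite-dimensional linear system. In particular, care is needed because the natural ansatz is for $\lambda_1\widehat V_{\lambda,NL}$ and $\lambda_2\widehat W_{\lambda,NL}$ rather than for $\widehat V_{\lambda,NL},\widehat W_{\lambda,NL}$ directly (this is also why the coefficients in \eqref{coeffient} carry the "swapped" factors $\lambda_2$ and $\lambda_1$), and one must verify that the resulting pair actually lies in $\mathcal{H}_{K_1}\times\mathcal{H}_{K_2}$ and solves the minimization problem — but uniqueness is already guaranteed by Proposition \ref{Operator_representation}, so it suffices to exhibit one solution of the stated form. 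A secondary technical point is justifying the exchange of the convolution with the RKHS inner product when evaluating $\Delta_{\rho_{t_l}}(\widehat W_{\lambda,NL}*\rho_{t_l})$, but this is precisely the content of the convolutional differential reproducing property \eqref{con-rep-pro}, which applies since $K_1,K_2\in C^4(\Omega\times\Omega)$ and the densities are $C^2_b$.
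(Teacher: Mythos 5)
Your overall strategy is essentially the paper's: posit that the estimator lies in the span of the kernel sections $\Delta^{(1,0)\delta}_{\rho_L}K_1(X_N,\cdot)$ and $\Delta^{(1,0)\delta}_{\rho_L}(K_2*\rho_L)(X_N,\cdot)$ (the paper deduces this from the invariance of $\mathcal{H}_{K_1}^{NL}\times\mathcal{H}_{K_2}^{NL}$ under $B_{NL}^{\delta}$, you from the range of $A_{NL}^{\delta*}$ --- both are legitimate), apply $B_{NL}^{\delta}+\lambda I$, reduce to a finite linear system via the differential and convolutional reproducing properties, and get invertibility from the positive semi-definiteness of $G_{NL}$ together with the positivity of $C_\rho$. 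The rescaling between your ansatz for $\lambda_1\widehat V_{\lambda,NL},\lambda_2\widehat W_{\lambda,NL}$ and the normalization in \eqref{kernel-rep} is harmless and consistent with $\lambda_1\widehat{\mathbf C}_1=\lambda_2\widehat{\mathbf C}_2$.

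There is, however, one genuinely wrong step as written: the claim that ``the cross term between the $V$-part and the $W$-part vanishes \ldots so $B_{NL}^{\delta}$ acts block-diagonally.'' It does not. By \eqref{operatorBNL}, both components of $B_{NL}^{\delta}(\phi,\psi)$ are driven by the same mixed vector $\Delta^{\delta}_{\rho_L}(\phi+\psi*\rho_L)(X_N)$, so the linear system for $(\widehat{\mathbf C}_1,\widehat{\mathbf C}_2)$ is coupled: each of the two equations contains both $\widehat{\mathbf C}_1^{\top}C_\rho\Delta^{(1,1)\delta}_{\rho_L}K_1(X_N,X_N)$ and $\widehat{\mathbf C}_2^{\top}C_\rho\Delta^{(1,1)\delta}_{\rho_L}(K_2**\rho_L)(X_N,X_N)$, which is exactly the system the paper writes down. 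If $B_{NL}^{\delta}$ were truly block-diagonal, the system would decouple and the coefficients would involve only the single-kernel matrix $C_\rho\Delta^{(1,1)\delta}_{\rho_L}K_1(X_N,X_N)C_\rho+\lambda_1\tfrac{NL}{T|\Omega|}C_\rho$ (and its $K_2$ analogue), which is inconsistent with the Gram matrix \eqref{gram-matrix} that mixes both kernels with the swapped weights $\lambda_2,\lambda_1$, i.e.\ inconsistent with the very formula \eqref{coeffient} you claim to reach. The correct derivation keeps the coupling: subtracting the two coupled equations and using the invertibility of $C_\rho$ gives $\lambda_1\widehat{\mathbf C}_1=\lambda_2\widehat{\mathbf C}_2$, and substituting back yields $\bigl(G_{NL}+\lambda_1\lambda_2\tfrac{NL}{T|\Omega|}C_\rho\bigr)\widehat{\mathbf C}_1=\lambda_2\,C_\rho f^{\delta}_{NL}$ and its counterpart for $\widehat{\mathbf C}_2$. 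Once you replace the block-diagonality assertion by this coupled computation, your argument coincides with the paper's proof.
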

\begin{proof}
We first define a subspace $\mathcal{H}_{K_1}^{NL} \times \mathcal{H}_{K_2}^{NL}$ of the RKHS $\mathcal{H}_{K_1} \times \mathcal{H}_{K_2}$ as follows:
\begin{align*}
\mathcal{H}_{K_1}^{NL} \times \mathcal{H}_{K_2}^{NL} := &\operatorname{span}\{\Delta_{\rho_{t_l}}^{(1,0)\delta}K_1(x_n,\cdot) : l=1,\cdots,L, \, n=1,\cdots,N\} \\
\times &\operatorname{span}\{\Delta_{\rho_{t_l}}^{(1,0)\delta}(K_2*\rho_{t_l})(x_n,\cdot) : l=1,\cdots,L, \, n=1,\cdots,N\}.
\end{align*}
From the definition of the operator $B_{NL}^\delta$ in \eqref{operatorBNL}, it follows that $\mathcal{H}_{K_1}^{NL} \times \mathcal{H}_{K_2}^{NL}$ is an invariant subspace of $B_{NL}^\delta$. Consequently, there exist coefficients $\widehat{\mathbf{C}}_1, \widehat{\mathbf{C}}_2 \in \mathbb{R}^{NL}$ such that the estimator in \eqref{ope-rep-emp-esti} can be expressed as:
\begin{align}\label{estimator}
\widehat{V}_{\lambda,NL}= \left\langle \widehat{\mathbf{C}}_1,\Delta_{\rho_{L}}^{(1,0)\delta} K_1(X_N,\cdot) \right \rangle_{C_\rho}\quad \text{and}\quad \widehat{W}_{\lambda,NL}=  \left\langle \widehat{\mathbf{C}}_2,\Delta_{\rho_{L}}^{(1,0)\delta} (K_2 * \rho_{L})(X_N,\cdot) \right \rangle_{C_\rho}, 
\end{align}
Applying $B_{NL}^\delta+\lambda I$ on both sides of the above equation and combing the definition of $A_{NL}^{\delta*}$ in \eqref{operatorANLstar}, we obtain that 
\begin{align*}
&\frac{T|\Omega|}{NL}\left(\Delta^\delta_{\rho_{L}}(\widehat{V}_{\lambda,NL}+\widehat{W}_{\lambda,NL}*\rho_{L})(X_N)\right)^\top C_\rho \left(
\Delta_{\rho_{L}}^{(1,0)\delta}K_1(X_N,\cdot),
\Delta_{\rho_{L}}^{(1,0)\delta}(K_2\ast \rho_{L})(X_N,\cdot)\right)\\
=\ &  \frac{T|\Omega|}{NL}{f_{NL}^\delta}^\top C_\rho\left(
\Delta_{\rho_{L}}^{(1,0)\delta}K_1(X_N,\cdot),
\Delta_{\rho_{L}}^{(1,0)\delta}(K_2\ast \rho_{L})(X_N,\cdot)\right)-\left(\lambda_1\widehat{V}_{\lambda,NL},\lambda_2\widehat{W}_{\lambda,NL}\right).
\end{align*}
Hence by the expression of the estimator $\left(\widehat{V}_{\lambda,NL},\widehat{W}_{\lambda,NL}\right)$ in \eqref{estimator}, we have that
\begin{equation*}
\begin{cases}
\lambda_1\frac{NL}{T|\Omega|}{\widehat{\mathbf C}}_1^\top C_\rho+\left( {\widehat{\mathbf C}}_1^\top C_\rho\Delta_{\rho_{L}}^{(1,1)\delta}K_1(X_N,X_N)+ {\widehat{\mathbf C}}_2^\top C_\rho\Delta_{\rho_{L}}^{(1,1)\delta}(K_2\ast\ast \rho_{L})(X_N,X_N)\right)C_\rho={f_{NL}^\delta}^\top C_\rho,\\
\lambda_2\frac{NL}{T|\Omega|} {\widehat{\mathbf C}}_2^\top C_\rho+\left( {\widehat{\mathbf C}}_1^\top C_\rho\Delta_{\rho_{L}}^{(1,1)\delta}K_1(X_N,X_N)+ {\widehat{\mathbf C}}_2^\top C_\rho\Delta_{\rho_{L}}^{(1,1)\delta}(K_2\ast\ast \rho_{L})(X_N,X_N)\right)C_\rho={f_{NL}^\delta}^\top C_\rho.
   \end{cases}
\end{equation*}
Since the Gram matrix $G_{NL}$ defined in \eqref{gram-matrix} is positive semi-definite and the matrix $C_\rho$ is positive by assumption, the coefficients $\widehat{\mathbf{C}}_1$ and $\widehat{\mathbf{C}}_2$ are unique and given by \eqref{coeffient}. The results follow.   
\end{proof}

\begin{remark}[{\bf Predefined set of basis functions}]\normalfont
One advantage of the proposed kernel approach is that we do not need to select a predefined set of basis functions that is later on truncated to express the solution. Instead, we derive a closed-form solution for the estimator of the minimization problem \eqref{emp-pro1}-\eqref{emp-fun1}. 
This means that although the minimization is performed over the entire RKHS product space $\mathcal{H}_{K_1} \times \mathcal{H}_{K_2}$, we ultimately show that the resulting estimator lies within the subspace $\mathcal{H}_{K_1}^{NL} \times \mathcal{H}_{K_2}^{NL}$, which is data-dependent.
The kernel representation of the estimator in Proposition \ref{Kernel representation} demonstrates that the predefined set of basis functions in \cite{carrillo2025sparse,gao2024self} can be interpreted as a basis of the subspace $\mathcal{H}_{K_1}^{NL} \times \mathcal{H}_{K_2}^{NL}$.
\end{remark}

\begin{remark}[{\bf On the uniqueness of the estimated potential and interaction functions}]\label{Uniqueness}
\normalfont 
It can be checked that $\mathcal{H}_{\mathrm{null}}$ defined in \ref{null-space} is a closed vector subspace of $\mathcal{H}_{K_1} \times \mathcal{H}_{K_2}$. Indeed, let $ \left\{(\phi _m,\psi_m)\right\}_{m \in \mathbb{N}} $ be a convergent sequence in $\mathcal{H}_{\mathrm{null}}$ such that $\left\|(\phi _m,\psi_m)-(\phi ,\psi)\right\|_{\mathcal{H}_{K_1} \times \mathcal{H}_{K_2}}\rightarrow 0 $ as $m \rightarrow \infty $, for some $(\phi,\psi) \in \mathcal{H}_{K_1} \times \mathcal{H}_{K_2}$. 
Then, it holds that 
\begin{equation*}
\begin{aligned}
|A^\delta(&\phi,\psi)(t_l,x_n)|^2= |A^\delta(\phi-\phi_m,\psi-\psi_m)(t_l,x_n)|^2 \\
&=\left|\left(\delta_x^{+} \rho\right)_l^n \partial_x(\phi-\phi_m + (\psi-\psi_m) * \rho_{t_l})(t_l, x_n) + \rho_l^n \partial_{xx}(\phi-\phi_m + (\psi-\psi_m) * \rho_{t_l})(t_l, x_n)\right|^2\\
&\leq
2\left(\left(\delta_x^{+} \rho\right)_l^n+\rho_l^n \right) \left(\kappa_1^2\|\phi-\phi_m\|_{\mathcal{H}_{K_1}}^2+\kappa_2^2\|\psi-\psi_m\|_{\mathcal{H}_{K_2}}^2\right)  \rightarrow 0, \quad \mbox{as $m \rightarrow \infty$,}
\end{aligned}
\end{equation*}
which implies that $A^\delta(\phi,\psi)(t_l,x_n)=0$ for all $l=1,\cdots,L$ and $n=1,\cdots,N$, and hence $\mathcal{H}_{\mathrm{null}}$ is a closed subspace of $\mathcal{H}_{K_1}\times \mathcal{H}_{K_2} $. Therefore, $\mathcal{H}_{K_1}\times \mathcal{H}_{K_2}$ can be decomposed as 
\begin{align}\label{null-decom}
\mathcal{H}_{K_1}\times \mathcal{H}_{K_2}=\mathcal{H}_{\mathrm{null}}\oplus \mathcal{H}_{\mathrm{null}}^\bot,
\end{align}
where $\mathcal{H}_{\mathrm{null}}^\bot $ denotes the orthogonal complement of $\mathcal{H}_{\mathrm{null}}$ with respect to the RKHS product $\left\langle \cdot , \cdot \right\rangle_{\mathcal{H}_{K_1}\times\mathcal{H}_{K_2}}$.

Using the decomposition \eqref{null-decom} and the expression of the kernel estimator \eqref{kernel-rep} and \eqref{coeffient}, we conclude that 
\begin{align*}
\left(\lambda_1\widehat{V}_{\lambda,NL}, \lambda_2\widehat{W}_{\lambda,NL}\right)\in\mathcal{H}_{\mathrm{null}}^\bot,   
\end{align*}
which is due to the fact that for any $(\phi,\psi)\in\mathcal{H}_{\mathrm{null}}$,
\begin{align*}
\left\langle \left(\lambda_1\widehat{V}_{\lambda,NL}, \lambda_2\widehat{W}_{\lambda,NL}\right),(\phi,\psi)\right\rangle_{\mathcal{H}_{K_1}\times \mathcal{H}_{K_2}}& = \lambda_1\left\langle \widehat{V}_{\lambda,NL},\phi\right\rangle_{\mathcal{H}_{K_1}}+\lambda_2\left\langle \widehat{W}_{\lambda,NL},\psi\right\rangle_{\mathcal{H}_{K_2}} \\
&= \left\langle \lambda_1\widehat{\mathbf{C}}_1,\Delta_{\rho_{L}}^{(1,0)\delta} \phi(X_N) \right \rangle_{C_\rho}+\left\langle \lambda_2\widehat{\mathbf{C}}_2,\Delta_{\rho_{L}}^{(1,0)\delta} (\psi * \rho_{L})(X_N) \right \rangle_{C_\rho}\\
&= \lambda_1\left\langle \widehat{\mathbf{C}}_1,\Delta_{\rho_{L}}^{(1,0)\delta} \phi(X_N) +\Delta_{\rho_{L}}^{(1,0)\delta} (\psi * \rho_{L})(X_N) \right \rangle_{C_\rho}=0.
\end{align*}

As pointed out in Remark \ref{degenercy probleml}, the minimization problem \eqref{emp-pro1}-\eqref{emp-fun1} could be degenerate due to the presence of the null space $\mathcal{H}_{\mathrm{null}}$. Therefore, one may wonder why, according to Proposition \ref{Kernel representation}, the estimator $\left(\widehat{V}_{\lambda,NL}, \widehat{W}_{\lambda,NL}\right)$ is unique but not up to functions in the null space $\mathcal{H}_{\mathrm{null}}$. The explanation is in the regularization term.  
Indeed, let $\left(\widehat{V}_{\lambda,NL}, \widehat{W}_{\lambda,NL}\right)$ be the minimizer in \eqref{kernel-rep}  and let $(\phi,\psi)\in\mathcal{H}_{\mathrm{null}}$. It can be showed that $\left(\widehat{V}_{\lambda,NL}, \widehat{W}_{\lambda,NL}\right)+(\phi,\psi)$ is a minimizer of \eqref{emp-pro1} if and only if $\phi\equiv0$ and $\psi\equiv0$. This is because 
\begin{align*}
&\widehat{R}_{\lambda,NL}\left(\left(\widehat{V}_{\lambda,NL}, \widehat{W}_{\lambda,NL}\right)+(\phi,\psi)\right)-\widehat{R}_{\lambda,NL}\left(\widehat{V}_{\lambda,NL}, \widehat{W}_{\lambda,NL}\right)\\
=\ &\lambda_1\|\widehat{V}_{\lambda,NL}+\phi\|_{\mathcal{H}_{K_1}}^2 + \lambda_2\|\widehat{W}_{\lambda,NL}+\psi\|_{\mathcal{H}_{K_2}}^2 - \lambda_1\|\widehat{V}_{\lambda,NL}\|_{\mathcal{H}_{K_1}}^2 - \lambda_2\|\widehat{W}_{\lambda,NL}\|_{\mathcal{H}_{K_2}}^2\\
=\ & 2\lambda_1\left\langle \widehat{V}_{\lambda,NL},\phi\right\rangle_{\mathcal{H}_{K_1}}+2\lambda_2\left\langle \widehat{W}_{\lambda,NL},\psi\right\rangle_{\mathcal{H}_{K_2}} + \lambda_1\|\phi\|^2_{\mathcal{H}_{K_1}}+\lambda_2\|\psi\|^2_{\mathcal{H}_{K_2}}
=\lambda_1\|\phi\|^2_{\mathcal{H}_{K_1}}+\lambda_2\|\psi\|^2_{\mathcal{H}_{K_2}}.
\end{align*}

\end{remark}

\subsection{Learning the internal energy}\label{Learning the internal energy}
The learning framework proposed in the previous section can be extended to situations where the internal energy \( U \) is not known \textit{a priori}. In such cases, we introduce an additional reproducing kernel Hilbert space ${\mathcal H}_{K _3} $
and we formulate the following regularized learning minimization problem:
\begin{align}\label{emp-pro3}
(\widehat{V}_{\lambda, NL}, \widehat{W}_{\lambda, NL}, \widehat{U}_{\lambda, NL}) := \mathop{\arg\min}\limits_{(\phi,\psi,\varphi)\in \mathcal{H}_{K_1}\times \mathcal{H}_{K_2}\times \mathcal{H}_{K_3}} \widehat{R}_{\lambda, NL}(\phi,\psi,\varphi),
\end{align}
where the empirical risk functional \(\widehat{R}_{\lambda, NL}\) is defined as
\begin{align}\label{emp-fun3}
\widehat{R}_{\lambda, NL}(\phi,\psi,\varphi) 
:= \frac{T|\Omega|}{NL}\sum_{n=1}^{N}\sum_{l=1}^{L}\left|A^\delta(\phi,\psi,\varphi)(t_l,x_n)-f^\delta(t_l,x_n)\right|^2\rho_{t_l}(x_n)
+\lambda_1\|\phi\|_{\mathcal{H}_{K_1}}^2+\lambda_2\|\psi\|_{\mathcal{H}_{K_2}}^2+\lambda_3\|\varphi\|_{\mathcal{H}_{K_3}}^2.
\end{align}
Here, the operator \(A^\delta\) is defined by
\begin{align*}
A^\delta(\phi,\psi,\varphi)(t_l,x_n)
:= (\delta_x^+\rho)_l^n\,\partial_x(\phi+\psi*\rho_{t_l}+\varphi)(t_l,x_n)
+\rho_l^n\,\partial_{xx}(\phi+\psi*\rho_{t_l}+\varphi)(t_l,x_n),
\end{align*}
and \(f^\delta\) is given by
\begin{align*}
f^\delta(t_l,x_n) :=
\begin{cases}
(\delta_t^+\rho)_l^n, &\text{for the gradient flow \eqref{gradiet-flow}},\\[6pt]
(\delta_{tt}^+\rho)_l^n+\Gamma_W\left((\delta_t^+\rho)_l^n,(\delta_t^+\rho)_l^n\right), &\text{for the Wasserstein Hamiltonian flow \eqref{Hamiltonian-flow}}.
\end{cases}
\end{align*}

\medskip
\noindent\textbf{Operator representation.}\quad Define the vector-valued operator
\begin{align*}
A_{NL}^{\delta}(\phi,\psi,\varphi) 
:= \sqrt{\frac{T|\Omega|}{NL}}\left(A^{\delta}(\phi,\psi,\varphi)(t_1,x_1),\,A^{\delta}(\phi,\psi,\varphi)(t_1,x_2),\,\dots,\,A^{\delta}(\phi,\psi,\varphi)(t_L,x_N)\right)^\top.
\end{align*}
Analogously to Corollary~\ref{Bound-ANL}, one verifies that the operator \(A_{NL}^\delta:\mathcal{H}_{K_1}\times\mathcal{H}_{K_2}\times\mathcal{H}_{K_3}\to\mathbb{R}^{NL}\) is bounded and linear, and the resulting operator \(B_{NL}^\delta:=A_{NL}^{\delta*}A_{NL}^\delta\) is trace class. Thus, similarly to Proposition~\ref{Operator_representation}, the minimization problem~\eqref{emp-pro3}--\eqref{emp-fun3} admits a unique solution with the operator representation
\begin{align*}
(\widehat{V}_{\lambda, NL},\,\widehat{W}_{\lambda, NL},\,\widehat{U}_{\lambda, NL})
= \sqrt{\frac{T|\Omega|}{NL}}\left(B_{NL}^{\delta}+\lambda I\right)^{-1}A_{NL}^{\delta*}f_{NL}^{\delta},
\end{align*}
where \(\lambda=\operatorname{diag}\{\lambda_1,\lambda_2,\lambda_3\}\).

\medskip
\noindent\textbf{Kernel representation.}\quad Analogously to Proposition~\ref{Kernel representation}, the minimizers in~\eqref{emp-pro3}--\eqref{emp-fun3} admit explicit kernel-based representations:
\begin{equation*}
\begin{cases}
\widehat{V}_{\lambda, NL} = \left\langle \widehat{\mathbf{C}}_1,\Delta_{\rho_L}^{(1,0)\delta}K_1(X_N,\cdot)\right\rangle_{C_\rho}, &\widehat{\mathbf{C}}_1=\lambda_2\lambda_3\left(G_{NL}+\frac{\lambda_1\lambda_2\lambda_3 NL}{T|\Omega|}C_\rho\right)^{-1}C_\rho f_{NL}^\delta, \\[6pt]
\widehat{W}_{\lambda, NL} = \left\langle \widehat{\mathbf{C}}_2,\Delta_{\rho_L}^{(1,0)\delta}(K_2*\rho_L)(X_N,\cdot)\right\rangle_{C_\rho}, &\widehat{\mathbf{C}}_2=\lambda_1\lambda_3\left(G_{NL}+\frac{\lambda_1\lambda_2\lambda_3 NL}{T|\Omega|}C_\rho\right)^{-1}C_\rho f_{NL}^\delta, \\[6pt]
\widehat{U}_{\lambda, NL} = \left\langle \widehat{\mathbf{C}}_3,\Delta_{\rho_L}^{(1,0)\delta}K_3(X_N,\cdot)\right\rangle_{C_\rho}, &\widehat{\mathbf{C}}_3=\lambda_1\lambda_2\left(G_{NL}+\frac{\lambda_1\lambda_2\lambda_3 NL}{T|\Omega|}C_\rho\right)^{-1}C_\rho f_{NL}^\delta,
\end{cases}
\end{equation*}
where \(\langle\cdot,\cdot\rangle_{C_\rho}\) is the weighted inner product defined in~\eqref{C_rho}, and the Gram matrix \(G_{NL}\) is
\begin{align*}
G_{NL}=C_\rho\left(\lambda_2\lambda_3\Delta_{\rho_L}^{(1,1)\delta}K_1(X_N,X_N)+\lambda_1\lambda_3\Delta_{\rho_L}^{(1,1)\delta}(K_2**\rho_L)(X_N,X_N)+\lambda_1\lambda_2\Delta_{\rho_L}^{(1,1)\delta}K_3(X_N,X_N)\right)C_\rho.
\end{align*}

\begin{remark}[{\bf Uniqueness of the solution}]\normalfont
The uniqueness of the solution to the minimization problem~\eqref{emp-pro3}--\eqref{emp-fun3} follows from arguments similar to those in Remark~\ref{Uniqueness}. Define the null space by
\[
\mathcal{H}_{\mathrm{null}}:=\{(\phi,\psi,\varphi)\in\mathcal{H}_{K_1}\times\mathcal{H}_{K_2}\times\mathcal{H}_{K_3} : A^\delta(\phi,\psi,\varphi)(t_l,x_n)=0,\ l=1,\dots,L,\ n=1,\dots,N\}.
\]
It follows that the null space is a closed subspace and we have the decomposition
\[
\mathcal{H}_{K_1}\times\mathcal{H}_{K_2}\times\mathcal{H}_{K_3}= \mathcal{H}_{\mathrm{null}}\oplus\mathcal{H}_{\mathrm{null}}^\bot.
\]
Moreover, the solution satisfies
\[
(\lambda_1 \widehat{V}_{\lambda, NL},\,\lambda_2\widehat{W}_{\lambda, NL},\,\lambda_3\widehat{U}_{\lambda, NL})\in\mathcal{H}_{\mathrm{null}}^\bot,
\]
ensuring uniqueness as in Remark~\ref{Uniqueness}.
\end{remark}

\section{Error bounds for the estimator}
\label{error_analysis}
We now analyze the ability of the structure-preserving kernel estimator $\left(\widehat{V}_{\lambda,NL}, \widehat{W}_{\lambda,NL}\right)$, to recover the unknown potential and
interaction functions $(V,W)$. 
To achieve this goal, we decompose the  {\it reconstruction error} $\left(\widehat{V}_{\lambda,NL},\widehat{W}_{\lambda,NL}\right)-(V,W)$ as the sum of what we shall be calling the {\it estimation} and {\it approximation errors}:

\begin{align} \label{decom-error} 
\left(\widehat{V}_{\lambda,NL},\widehat{W}_{\lambda,NL}\right)-(V,W)=\underbrace{\left(\widehat{V}_{\lambda,NL},\widehat{W}_{\lambda,NL}\right)-\left(V_{\lambda}^*,W_{\lambda}^*\right)}_{\text{Estimation error}}\quad+\quad\underbrace{\left(V_{\lambda}^*,W_{\lambda}^*\right)-(V,W)}_{\text{Approximation error}}.
\end{align}
The estimation error in \eqref{decom-error} comes from two sources: the discretization approximation of the integral and the numerical approximation (Euler scheme) of derivatives of the density flow. Therefore, we further decompose the estimation error into two components, which we shall explicitly refer to as the {\it scheme-dependent  error} and the {\it mesh-induced error}: 
\begin{align}\label{decom-estimation} 
\left(\widehat{V}_{\lambda, NL}, \widehat{W}_{\lambda, NL}\right) - \left(V_{\lambda}^*, W_{\lambda}^*\right) 
= \underbrace{\left(\widehat{V}_{\lambda, NL}, \widehat{W}_{\lambda, NL}\right) - \left(V_{\lambda, NL}, W_{\lambda, NL}\right)}_{\text{Scheme-dependent error}}  +  \underbrace{\left(V_{\lambda, NL}, W_{\lambda, NL}\right) - \left(V_{\lambda}^*, W_{\lambda}^*\right)}_{\text{Mesh-induced error}}.
\end{align}
Here, \((V_{\lambda,NL}, W_{\lambda,NL})\) is defined as the minimizer of the following discretized minimization problem:
\begin{align}
({V}_{\lambda,NL},{W}_{\lambda,NL}) &:= \mathop{\arg\min}\limits_{(\phi,\psi)\in \mathcal{H}_{K_1}\times \mathcal{H}_{K_2}} \ {R}_{\lambda,NL}(\phi,\psi),\label{discretized-pro}  \\
R_{\lambda,NL}(\phi,\psi) &:= \frac{T|\Omega|}{NL} \sum_{n,l=1}^{N,L} \left| A(\phi,\psi)(t_l,x_n) - f(t_l,x_n) \right|^2 \rho_{t_l}(x_n) 
+ \lambda_1 \|\phi\|_{\mathcal{H}_{K_1}}^2 + \lambda_2 \|\psi\|_{\mathcal{H}_{K_2}}^2, \label{discretized-fun}
\end{align}
where the function \(f\) is given explicitly as follows:
\begin{itemize}
\item For the gradient flow~\eqref{gradiet-flow}, we have
\[
f(t_l,x_n) := \partial_{t}\rho_{t_l}(x_n) - \Delta_{\rho_{t_l}}U'(\rho_{t_l}(x_n)).
\]
\item For the Wasserstein Hamiltonian flow~\eqref{Hamiltonian-flow}, we have
\[
f(t_l,x_n) := \partial_{tt}\rho_{t_l}(x_n) + \Gamma_W(\partial_{t}\rho_{t_l},\partial_{t}\rho_{t_l})(x_n) - \Delta_{\rho_{t_l}}U'(\rho_{t_l}(x_n)).
\]
\end{itemize}

\begin{remark}[{\bf Operator representation of  \((V_{\lambda,NL}, W_{\lambda,NL})\)}]\normalfont
We denote by \( A_{NL}: \mathcal{H}_{K_1}\times \mathcal{H}_{K_2}\to\mathbb{R}^{NL} \) the discretized version of the operator \( A \), defined in~\eqref{operatorA}, as follows:
\begin{equation*}
\begin{aligned}
A_{NL}(\phi, \psi) &:= \sqrt{\frac{T|\Omega|}{NL}}\left(A(\phi,\psi)(t_1,x_1), A(\phi,\psi)(t_1,x_2), \dots, A(\phi,\psi)(t_L,x_N)\right)^\top \\
&= \sqrt{\frac{T|\Omega|}{NL}}\,\Delta_{\rho_L}(\phi + \psi * \rho_L)(X_N).
\end{aligned}
\end{equation*}
Then, analogously to Corollary~\ref{Bound-ANL}, the operator \(A_{NL}\) is bounded and linear. Its dual operator \(A_{NL}^{*}:\mathbb{R}^{NL}\to\mathcal{H}_{K_1}\times\mathcal{H}_{K_2}\) is explicitly given by:
\begin{equation*}
A_{NL}^{*}\mathbf{u} = \sqrt{\frac{T|\Omega|}{NL}}\,\mathbf{u}^\top C_{\rho}\left(\Delta_{\rho_L}^{(1,0)}K_1(X_N,\cdot),\,\Delta_{\rho_L}^{(1,0)}(K_2*\rho_L)(X_N,\cdot)\right).
\end{equation*}
Furthermore, the operator \( B_{NL}:= A_{NL}^{*}A_{NL}: \mathcal{H}_{K_1}\times\mathcal{H}_{K_2}\to\mathcal{H}_{K_1}\times\mathcal{H}_{K_2}\) defined by
\begin{equation*}
B_{NL}(\phi,\psi) = \frac{T|\Omega|}{NL}\Delta_{\rho_L}^{\top}(\phi+\psi*\rho_L)(X_N)\,C_{\rho}\left(\Delta_{\rho_L}^{(1,0)}K_1(X_N,\cdot),\,\Delta_{\rho_L}^{(1,0)}(K_2*\rho_L)(X_N,\cdot)\right)
\end{equation*}
is trace class, with trace satisfying $
\operatorname{Tr}(B_{NL}) \leq 4T C_T(\kappa_1^2+\kappa_2^2)$.
Consequently, the discretized minimization problem~\eqref{discretized-pro}–\eqref{discretized-fun} admits a unique minimizer, which has the following explicit operator representation:
\begin{equation}\label{kernel-estimator1}
\left(V_{\lambda,NL},\,W_{\lambda,NL}\right) = \sqrt{\frac{T|\Omega|}{NL}}(B_{NL}+\lambda I)^{-1}A_{NL}^{*}f_{NL}=(B_{NL}+\lambda I)^{-1}B_{NL}(V,W),
\end{equation}
where the vectorized data \(f_{NL}\in\mathbb{R}^{NL}\) is defined explicitly by:
\begin{equation*}
f_{NL} := \left(f(t_1,x_1),\,f(t_1,x_2),\,\dots,\,f(t_L,x_N)\right)^\top.
\end{equation*}   
\end{remark}

\subsection{The mesh-induced error}
We demonstrate that the mesh-induced error, as defined in \eqref{decom-estimation}, converges with respect to the RKHS norm $\mathcal{H}_{K_1} \times \mathcal{H}_{K_2}$ as $M \to \infty$ and $L \to \infty$. For simplicity, let $\lambda := \lambda_1 = \lambda_2$ in the following discussion. For each $(\phi, \psi) \in \mathcal{H}_{K_1} \times \mathcal{H}_{K_2}$, define:  
\begin{align}\label{fun-u}
u(t,x) := \Delta_{\rho_t}(\phi + \psi * \rho_t)(x),
\end{align}  
and
\begin{align}\label{fun-w}
w(t, x; s, y) := \Delta_{\rho_t, \rho_s}^{(1,1)}\big(K_1 + K_2 ** (\rho_t, \rho_s)\big)(x, y).
\end{align} 
We now establish properties of these two functions in the following results, whose detailed proofs are contained in Appendix \ref{Proof of Lemma 3.3}.
\begin{lemma}\label{analyze-u-w}
Let $K_1,K_2\in C^6(\Omega\times\Omega)$ be two Mercer kernels. Suppose that $\rho \in C^{1,2}([0,T] \times \Omega)$. 
Then we have that $u \in C^{1,1}([0,T] \times \Omega)$ and $w\in C^{1,1,1,1}([0,T] \times \Omega\times [0,T] \times \Omega)$. Furthermore, 
\begin{equation*}
\left\{\begin{aligned}
&\|u\|_{C^{1,1}} \leq (4 + |\Omega|\|\nabla_t \rho\|_{\infty})(\kappa_1 + \kappa_2)\|\rho\|_{C^{1,2}} (\|\phi\|_{\mathcal{H}_{K_1}} + \|\psi\|_{\mathcal{H}_{K_2}}),\\
&\|w\|_{C^{1,1,1,1}}  \leq (4 + |\Omega|\|\nabla_t \rho\|_{\infty})(\kappa_1^2 + \kappa_2^2)\|\rho\|_{C^{1,2}}^2,
\end{aligned}\right.
\end{equation*}
where $\kappa_1 = \sqrt{2\|K_1\|_{C^6}}$ and $\kappa_2 = \sqrt{2\|K_2\|_{C^6}}$.
\end{lemma}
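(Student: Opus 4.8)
The plan is to treat the two assertions in parallel, since both $u$ and $w$ are built from the weighted Laplacian $\Delta_{\rho_t} = \nabla\rho_t\cdot\nabla + \rho_t\Delta$ acting on kernel sections and on their convolutions. First I would expand $u(t,x) = \Delta_{\rho_t}(\phi+\psi*\rho_t)(x)$ using the product-rule form of $\Delta_{\rho_t}$, obtaining a sum of terms of the shape $\partial_x\rho_t(x)\,\partial_x\phi(x)$, $\rho_t(x)\,\partial_{xx}\phi(x)$, and the analogous convolution terms $\partial_x\rho_t(x)\,\partial_x(\psi*\rho_t)(x)$, $\rho_t(x)\,\partial_{xx}(\psi*\rho_t)(x)$. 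The regularity $u\in C^{1,1}$ — i.e.\ one continuous $t$-derivative and one continuous $x$-derivative — then follows by inspection: each factor involving $\rho$ is $C^{1,2}$ in $(t,x)$ by hypothesis, and each factor involving $\phi$ or $\psi*\rho_t$ can be differentiated once more in $x$ using the differential reproducing property of Proposition \ref{rep-pro} (valid since $K_1,K_2\in C^6$, which exceeds the $C_b^5$ requirement there), so that $\partial_x^k\phi(x) = \langle\phi,\partial_x^k K_1(x,\cdot)\rangle_{\mathcal H_{K_1}}$ is itself $C^{1}$ in $x$ for $k\le 3$; differentiating in $t$ only hits the $\rho_t$ factors, which are $C^1$ in $t$. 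The norm bound is then a matter of applying Cauchy--Schwarz in $\mathcal H_{K_1}$ (resp.\ $\mathcal H_{K_2}$), using $\|\partial_x^k K_i(x,\cdot)\|_{\mathcal H_{K_i}}\le \kappa_i$-type bounds coming from $\kappa_i = \sqrt{2\|K_i\|_{C^6}}$, together with $\|\rho\|_{C^{1,2}}$ bounds on the $\rho$-factors and, for the convolution terms, the fact that convolution against the probability density $\rho_t$ over $\Omega$ contributes at worst a factor controlled by $|\Omega|$ and $\|\nabla_t\rho\|_\infty$ when a $t$-derivative lands on $\psi*\rho_t$.

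For $w(t,x;s,y) = \Delta^{(1,1)}_{\rho_t,\rho_s}\bigl(K_1 + K_2**(\rho_t,\rho_s)\bigr)(x,y)$, I would proceed identically but now tracking two spatial variables and two time variables. Expanding $\Delta^{(1,1)}_{\rho_t,\rho_s}$ produces a sum of products in which each factor is either a $\rho$-type factor ($\rho_t(x)$, $\partial_x\rho_t(x)$, $\rho_s(y)$, $\partial_y\rho_s(y)$, or — after convolution — the smoothed versions $\partial_x^a(\rho_t * \text{kernel})$, etc.) or a pure kernel derivative $\partial_x^a\partial_y^b K_i$ with $a,b\le 2$. The claim $w\in C^{1,1,1,1}$ asks for one continuous derivative in each of $t,x,s,y$; since $K_i\in C^6$ we have up to six kernel derivatives available, comfortably covering the $\le 2+1 = 3$ needed in each spatial slot, and the $\rho$-factors supply the needed one derivative in each time slot. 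The norm estimate parallels the $u$ case, with $(\kappa_1+\kappa_2)$ replaced by $(\kappa_1^2+\kappa_2^2)$ and $\|\rho\|_{C^{1,2}}$ by $\|\rho\|_{C^{1,2}}^2$, reflecting that two copies of $\rho$ now appear (one near the $x$-variable, one near the $y$-variable), and that for the doubly-convolved term $K_2**(\rho_t,\rho_s)$ each convolution again contributes a factor governed by $|\Omega|$ and $\|\nabla_t\rho\|_\infty$ when time-differentiated.

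The main obstacle I anticipate is bookkeeping the time-derivatives of the convolution terms cleanly. When $\partial_t$ hits $\psi*\rho_t$ we get $\psi*\partial_t\rho_t$, and one must verify that this object is still well-behaved: writing $(\psi*\partial_t\rho_t)(x) = \int_\Omega \psi(x-z)\,\partial_t\rho_t(z)\,dz$ and wanting to bound it in terms of $\|\psi\|_{\mathcal H_{K_2}}$, one uses $|\psi(x-z)| = |\langle\psi, K_2(x-z,\cdot)\rangle| \le \kappa_2\|\psi\|_{\mathcal H_{K_2}}$ and then $\int_\Omega|\partial_t\rho_t(z)|\,dz \le |\Omega|\,\|\nabla_t\rho\|_\infty$ — this is precisely where the factor $(4 + |\Omega|\|\nabla_t\rho\|_\infty)$ in the stated bounds originates, the $4$ accounting for the non-convolution terms and the four product-rule pieces of $\Delta_{\rho_t}$. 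The same care is needed when differentiating $K_2**(\rho_t,\rho_s)$ in $t$ (which lands on the $\rho_t$-slot only) or in $s$ (the $\rho_s$-slot only), after which one must still commute the remaining spatial derivatives past the convolution integrals — justified by dominated convergence since all integrands are continuous on the compact $\Omega$. Once the time-derivative-of-convolution terms are handled, everything else is a routine Cauchy--Schwarz-plus-triangle-inequality assembly, and the uniform continuity of the resulting expressions (hence membership in the claimed $C^{k,\dots}$ spaces) follows from continuity of each factor on the relevant compact product domain.
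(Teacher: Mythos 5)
Your proposal is correct and follows essentially the same route as the paper's proof: expand $u$ and $w$ via the product-rule form of the weighted Laplacian, use the differential reproducing property (so that $K_1,K_2\in C^6$ gives $\mathcal H_{K_1},\mathcal H_{K_2}\subset C^3$ and $\|\partial_x^k\phi\|_\infty\le\kappa_1\|\phi\|_{\mathcal H_{K_1}}$, etc.), and bound the sup-norms of $u$, $\nabla_t u$, $\nabla_x u$ (and the corresponding derivatives of $w$) term by term, with the $|\Omega|\|\nabla_t\rho\|_\infty$ contribution arising exactly as you describe from the time-derivative landing on the convolution. The paper's argument is just a terser version of this same bookkeeping, so no further comment is needed.
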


The following lemma is a straightforward consequence of \cite[Lemma A.1]{lang2022learning}.

\begin{lemma}\label{numerical-integrations}
Suppose that $f,g,\rho\in C^{1,1}([0,T]\times \Omega)$. Let $\{x_n,t_l\}_{n,l=1}^{N,L}$ be defined in \eqref{mesh}. 
Denote 
\begin{equation*}
\begin{aligned}
I(f,g,\rho):=\int_0^T\int_{\Omega}f(t,x)g(t,x)\rho_t(x)\mathrm{d}x\mathrm d t,\quad
I_{N,L}(f,g,\rho):=\sum_{n,l=1}^{NL}f(t_l,x_n)g(t_l,x_n)\rho_{t_l}(x_n)\Delta x \Delta t.
\end{aligned}
\end{equation*}
Then we have 
\begin{align*}
\left|I(f,g,\rho)-I_{NL}(f,g,\rho) \right| \leq 2T|\Omega| \widetilde{C}(f,g,\rho)(\Delta x + \Delta t),  
\end{align*}
where the constant 
\begin{align}\label{cofficient-C}
\widetilde{C}(f,g,\rho):=\|f\|_{C^{1,1}}\|g\|_{C^{1,1}}\|\rho\|_{C^{1,1}}.
\end{align}
\end{lemma}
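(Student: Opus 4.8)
The plan is to reduce the weighted two–variable quadrature bound to a statement about a single function and then invoke \cite[Lemma A.1]{lang2022learning}; equivalently, one can give the elementary corner–rule argument directly. I would set $F:=f\,g\,\rho$, so that $I(f,g,\rho)=\int_0^T\!\!\int_\Omega F(t,x)\,\mathrm dx\,\mathrm dt$ and $I_{NL}(f,g,\rho)=\sum_{n,l}F(t_l,x_n)\,\Delta x\,\Delta t$. Since $C^{1,1}([0,T]\times\Omega)$ is a Banach algebra for its natural norm (every term of $\partial_t\partial_x(fg\rho)$ is a product of lower–order derivatives of $f,g,\rho$), it follows that $F\in C^{1,1}([0,T]\times\Omega)$ with $\|F\|_{C^{1,1}}\le \|f\|_{C^{1,1}}\|g\|_{C^{1,1}}\|\rho\|_{C^{1,1}}=\widetilde{C}(f,g,\rho)$, so the whole estimate is governed by $\|F\|_{C^{1,1}}$.

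Next I would partition $[0,T]\times\Omega$ into the $NL$ cells $Q_{l,n}:=[t_{l-1},t_l]\times[x_{n-1},x_n]$, with the conventions $t_0:=0$ and $x_0:=a$. By the mesh definition \eqref{mesh} these cells have pairwise disjoint interiors, each has area $\Delta x\,\Delta t$, their union is exactly $[0,T]\times\Omega$, and the node $(t_l,x_n)$ is a vertex of $Q_{l,n}$. On each cell the mean value inequality gives
\begin{align*}
\left|\int_{Q_{l,n}}F(t,x)\,\mathrm dx\,\mathrm dt-F(t_l,x_n)\,\Delta x\,\Delta t\right|\le \Delta x\,\Delta t\sup_{(t,x)\in Q_{l,n}}|F(t,x)-F(t_l,x_n)|\le \Delta x\,\Delta t\,\|F\|_{C^{1,1}}(\Delta x+\Delta t).
\end{align*}

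Summing this over $l=1,\dots,L$ and $n=1,\dots,N$, and using the triangle inequality together with $N\Delta x=|\Omega|$ and $L\Delta t=T$, yields
\begin{align*}
\left|I(f,g,\rho)-I_{NL}(f,g,\rho)\right|\le (N\Delta x)(L\Delta t)\,\|F\|_{C^{1,1}}(\Delta x+\Delta t)= T|\Omega|\,\widetilde{C}(f,g,\rho)(\Delta x+\Delta t);
\end{align*}
the factor $2$ in the stated bound is a harmless slack that absorbs the non-sharp absolute constant of \cite[Lemma A.1]{lang2022learning} (and of the algebra inequality). There is no genuine obstacle here: the only points requiring care are the sub-multiplicativity $\|fg\rho\|_{C^{1,1}}\le\|f\|_{C^{1,1}}\|g\|_{C^{1,1}}\|\rho\|_{C^{1,1}}$ and the bookkeeping that the nodes $(t_l,x_n)$ from \eqref{mesh} are exactly the vertices of a tiling of $[0,T]\times\Omega$, both of which are routine — which is precisely why the result is a straightforward consequence of \cite[Lemma A.1]{lang2022learning}.
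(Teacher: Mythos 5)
Your proposal is correct and follows essentially the same route as the paper, which simply defers this estimate to \cite[Lemma A.1]{lang2022learning}: your cell-by-cell corner-quadrature argument with $F=fg\rho$ is precisely the content of that cited lemma extended to a three-factor integrand. The only point to watch is the sub-multiplicativity step $\|fg\rho\|_{C^{1,1}}\le\|f\|_{C^{1,1}}\|g\|_{C^{1,1}}\|\rho\|_{C^{1,1}}$, which holds for the additive (sum-of-sup-norms) convention of the $C^{1,1}$ norm but would degrade to a Leibniz factor $3$ under a max-type convention — still harmless in spirit, though then slightly exceeding the stated slack factor $2$.
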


Before analyzing the mesh-induced error, we first state the following lemma, whose detailed proof is provided in Appendix \ref{Proof of Lemma 3.5}.
\begin{lemma} \label{pre-estimator}
Let $K_1, K_2 \in C^6(\Omega \times \Omega)$ be two Mercer kernels. Suppose that $\rho \in C^{1,2}([0, T] \times \Omega)$. Then, for any $(\phi, \psi) \in \mathcal{H}_{K_1} \times \mathcal{H}_{K_2}$, we have the following bound:  
\begin{align*}
\|B(\phi, \psi) - B_{NL}(\phi, \psi)\|_{\mathcal{H}_{K_1} \times \mathcal{H}_{K_2}} 
\leq C_1\|(\phi,\psi)\|_{\mathcal{H}_{K_1}\times\mathcal{H}_{K_2}}(\Delta x + \Delta t)^{\frac{1}{2}},
\end{align*}
where 
\begin{align}\label{coffiential-C_1}
C_1=2T|\Omega|(\kappa_1+\kappa_2)^{\frac{3}{2}}(4+|\Omega|\|\nabla_t\rho\|_{\infty})|\rho\|^3_{C^{1,2}}.    
\end{align}
\end{lemma}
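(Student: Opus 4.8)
The plan is to view $D:=B(\phi,\psi)-B_{NL}(\phi,\psi)$ as the quadrature error of an RKHS-valued integrand and to estimate the modulus of continuity of that integrand in the norm of $\mathcal{H}_{K_1}\times\mathcal{H}_{K_2}$. Comparing the formula for $B$ in Proposition~\ref{Bound-A} with the displayed formula for $B_{NL}$ in the remark preceding this lemma, and using $\frac{T|\Omega|}{NL}=\Delta x\,\Delta t$, one sees that $B(\phi,\psi)$ and $B_{NL}(\phi,\psi)$ are, respectively, the integral over $[0,T]\times\Omega$ and its right-endpoint Riemann sum on the mesh~\eqref{mesh} of the $\mathcal{H}_{K_1}\times\mathcal{H}_{K_2}$-valued map
\[
G(t,x):=u(t,x)\,\rho_t(x)\,g(t,x),\qquad g(t,x):=\bigl(\Delta_{\rho_t}^{(1,0)}K_1(x,\cdot),\ \Delta_{\rho_t}^{(1,0)}(K_2*\rho_t)(x,\cdot)\bigr),
\]
with $u$ as in~\eqref{fun-u}. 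Since $K_1,K_2\in C^6(\Omega\times\Omega)$ and $\rho\in C^{1,2}([0,T]\times\Omega)$, Proposition~\ref{rep-pro} and Lemma~\ref{analyze-u-w} ensure that $G$ is a continuous, hence Bochner-integrable, RKHS-valued map, and the (convolutional) differential reproducing property~\eqref{lap-rep-pro}--\eqref{con-rep-pro} identifies $\int_0^T\!\!\int_\Omega G$ with $B(\phi,\psi)$. Writing $D$ as the sum over the $NL$ mesh cells $[t_{l-1},t_l]\times[x_{n-1},x_n]$ of $\int_{\mathrm{cell}}\bigl(G(t,x)-G(t_l,x_n)\bigr)\,dx\,dt$ and applying the triangle inequality in $\mathcal{H}_{K_1}\times\mathcal{H}_{K_2}$ together with $NL\,\Delta x\,\Delta t=T|\Omega|$ gives
\[
\|D\|_{\mathcal{H}_{K_1}\times\mathcal{H}_{K_2}}\ \le\ T|\Omega|\ \max_{n,l}\ \sup_{(t,x)\in[t_{l-1},t_l]\times[x_{n-1},x_n]}\bigl\|G(t,x)-G(t_l,x_n)\bigr\|_{\mathcal{H}_{K_1}\times\mathcal{H}_{K_2}}.
\]

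The core of the argument is the cell-wise estimate of $\|G(t,x)-G(t_l,x_n)\|$. Telescoping $G=(u\rho)\cdot g$ as
\[
G(t,x)-G(t_l,x_n)=\bigl(u(t,x)\rho_t(x)-u(t_l,x_n)\rho_{t_l}(x_n)\bigr)g(t,x)+u(t_l,x_n)\rho_{t_l}(x_n)\bigl(g(t,x)-g(t_l,x_n)\bigr),
\]
reduces the task to (a) Lipschitz control of the scalar map $(t,x)\mapsto u(t,x)\rho_t(x)$ together with the sup-bound $\|g(t,x)\|^2=w(t,x;t,x)\le\|w\|_\infty$, where $w$ is the function in~\eqref{fun-w}, and (b) the modulus of continuity of $g$ in $\mathcal{H}_{K_1}\times\mathcal{H}_{K_2}$. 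Item (a) is immediate since both $u$ (by Lemma~\ref{analyze-u-w}) and $\rho$ lie in $C^{1,1}$, so $u\rho$ is Lipschitz with constant $\lesssim\|u\|_{C^{1,1}}\|\rho\|_{C^{1,1}}$ with respect to the metric $|t-t'|+|x-x'|$. For item (b)---the step that produces the square-root rate---I would use the polarization identity
\[
\bigl\|g(t,x)-g(t_l,x_n)\bigr\|_{\mathcal{H}_{K_1}\times\mathcal{H}_{K_2}}^2=w(t,x;t,x)-2\,w(t,x;t_l,x_n)+w(t_l,x_n;t_l,x_n),
\]
valid because $w(p;q)=\langle g(p),g(q)\rangle_{\mathcal{H}_{K_1}\times\mathcal{H}_{K_2}}$ is symmetric under swapping its two pairs of arguments. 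Since $w\in C^{1,1,1,1}$, splitting the right-hand side as $\bigl(w(p;p)-w(p;q)\bigr)+\bigl(w(q;q)-w(q;p)\bigr)$ and bounding each difference by a one-variable mean-value estimate yields $\|g(p)-g(q)\|^2\le 4\|w\|_{C^{1,1,1,1}}\bigl(|t-t_l|+|x-x_n|\bigr)$, hence $\|g(p)-g(q)\|\le 2\|w\|_{C^{1,1,1,1}}^{1/2}\bigl(|t-t_l|+|x-x_n|\bigr)^{1/2}$, which on a mesh cell is $O\bigl(\|w\|_{C^{1,1,1,1}}^{1/2}(\Delta x+\Delta t)^{1/2}\bigr)$. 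Combining (a) and (b), and absorbing the faster $O(\Delta x+\Delta t)$ contribution of (a) into the slower one for $\Delta x+\Delta t\le 1$, gives $\|G(t,x)-G(t_l,x_n)\|\lesssim\|u\|_{C^{1,1}}\|\rho\|_{C^{1,1}}\|w\|_{C^{1,1,1,1}}^{1/2}(\Delta x+\Delta t)^{1/2}$ uniformly over the cells.

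It then remains to substitute the quantitative estimates of Lemma~\ref{analyze-u-w}, namely $\|u\|_{C^{1,1}}\le(4+|\Omega|\|\nabla_t\rho\|_\infty)(\kappa_1+\kappa_2)\|\rho\|_{C^{1,2}}(\|\phi\|_{\mathcal{H}_{K_1}}+\|\psi\|_{\mathcal{H}_{K_2}})$ and $\|w\|_{C^{1,1,1,1}}\le(4+|\Omega|\|\nabla_t\rho\|_\infty)(\kappa_1^2+\kappa_2^2)\|\rho\|_{C^{1,2}}^2$, together with $\|\rho\|_{C^{1,1}}\le\|\rho\|_{C^{1,2}}$ and $\|\phi\|_{\mathcal{H}_{K_1}}+\|\psi\|_{\mathcal{H}_{K_2}}\le\sqrt2\,\|(\phi,\psi)\|_{\mathcal{H}_{K_1}\times\mathcal{H}_{K_2}}$, and to collect constants to obtain the claimed bound with $C_1$ as in~\eqref{coffiential-C_1}. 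I expect the main obstacle to be item (b): with only $C^6$ kernels and $\rho\in C^{1,2}$, the function $w$ is merely jointly once-differentiable in each of its four arguments, so the second mixed difference $w(p;p)-2w(p;q)+w(q;q)$ cannot be controlled better than linearly in $|p-q|$---which is exactly why the estimate carries the exponent $\tfrac12$ rather than the $1$ that a smooth scalar quadrature would give; upgrading to a full power of $\Delta x+\Delta t$ would require second-derivative bounds on $w$, i.e. strictly more regularity of $K_1,K_2$ and $\rho$ than is assumed. A secondary, routine point is the justification of the Bochner-integral manipulations in the first paragraph, which follows from Propositions~\ref{rep-pro} and~\ref{Bound-A} and the continuity estimates established along the way.
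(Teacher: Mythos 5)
Your proposal is correct in substance but follows a genuinely different route from the paper's proof. The paper never estimates a vector-valued quadrature error: it writes $\|B(\phi,\psi)-B_{NL}(\phi,\psi)\|^2_{\mathcal H_{K_1}\times\mathcal H_{K_2}}=J_1-J_2$ with $J_1=\langle B(\phi,\psi),\,B(\phi,\psi)-B_{NL}(\phi,\psi)\rangle$ and $J_2=\langle B_{NL}(\phi,\psi),\,B(\phi,\psi)-B_{NL}(\phi,\psi)\rangle$, expands these inner products via the reproducing identities so that only the scalar functions $u$, $w$, $\rho$ appear, and then applies the scalar quadrature bound of Lemma \ref{numerical-integrations} (for each fixed $(s,y)$, resp.\ each mesh point) together with Lemma \ref{analyze-u-w}; this gives $J_1,J_2 = O\big(\|(\phi,\psi)\|^2(\Delta x+\Delta t)\big)$, and the exponent $\tfrac12$ only appears upon taking the square root of the squared norm. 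You instead bound the norm directly, viewing $B-B_{NL}$ as the Bochner-quadrature error of the RKHS-valued map $G=u\rho\,g$ and extracting a H\"older-$\tfrac12$ modulus for the section map $g$ from $\|g(p)-g(q)\|^2=w(p;p)-2w(p;q)+w(q;q)$, which is legitimate because $w(p;q)=\langle g(p),g(q)\rangle_{\mathcal H_{K_1}\times\mathcal H_{K_2}}$ by the (convolutional) differential reproducing property, and $w$ defined in \eqref{fun-w} is $C^{1,1,1,1}$. Both mechanisms encode the same half-order loss, and your cell-wise estimate closes the argument. The paper's route stays entirely scalar (no Bochner integrability or separability discussion needed) and plugs directly into Lemmas \ref{analyze-u-w} and \ref{numerical-integrations}, yielding the constant \eqref{coffiential-C_1} with little extra work; your route gives a more transparent explanation of the square-root rate as a modulus-of-continuity phenomenon for the kernel sections and dispenses with the auxiliary quadrature lemma. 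Two minor caveats: the constant your bookkeeping actually produces (essentially $T|\Omega|\,\|u\rho\|_\infty\,\|w\|^{1/2}_{C^{1,1,1,1}}$ plus the Lipschitz contribution of $u\rho$) has the same structure as, but is not literally equal to, \eqref{coffiential-C_1} — it carries factors like $(\kappa_1+\kappa_2)^{2}(4+|\Omega|\|\nabla_t\rho\|_\infty)^{1/2}$ instead of $(\kappa_1+\kappa_2)^{3/2}(4+|\Omega|\|\nabla_t\rho\|_\infty)$ — and the absorption of the faster $O(\Delta x+\Delta t)$ term implicitly assumes $\Delta x+\Delta t\le 1$ (otherwise keep both terms and enlarge the constant). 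Neither point affects the validity of the rate, since the paper's own constant tracking is equally coarse, but you should state the constant you actually obtain rather than assert that it coincides with \eqref{coffiential-C_1}.
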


We now present an upper bound for the mesh-induced error.
\begin{theorem}[{\bf Mesh-induced error}]\label{discretization-error}
Let $K_1, K_2 \in C^6(\Omega \times \Omega)$ be two Mercer kernels. Suppose that $\rho \in C^{1,2}([0, T] \times \Omega)$. Then, for any $(V,W) \in \mathcal{H}_{K_1} \times \mathcal{H}_{K_2}$, we have the following bound for the mesh-induced error:  
\begin{multline*}
\|\left(V_{\lambda, NL}, W_{\lambda, NL}\right) - \left(V_{\lambda}^*, W_{\lambda}^*\right)\|_{\mathcal{H}_{K_1} \times \mathcal{H}_{K_2}} 
\\
\leq~  \left(\sqrt{\frac{2TC_T}{\lambda}}(\kappa_1+\kappa_2)+1\right)C_1\lambda^{-1} \|\left(V,W\right)\|_{\mathcal{H}_{K_1}\times\mathcal{H}_{K_2}}(\Delta x + \Delta t)^{\frac{1}{2}},
\end{multline*}
where the coefficient $C_1$ is defined in \eqref{coffiential-C_1}.
\end{theorem}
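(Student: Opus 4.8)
The plan is to regard the mesh-induced error as a resolvent perturbation, exploiting the two closed-form representations already at our disposal, namely $(V_\lambda^*,W_\lambda^*)=(B+\lambda I)^{-1}B(V,W)$ from Proposition~\ref{Operator_representation} and $(V_{\lambda,NL},W_{\lambda,NL})=(B_{NL}+\lambda I)^{-1}B_{NL}(V,W)$ from~\eqref{kernel-estimator1} (recall that here $\lambda:=\lambda_1=\lambda_2$). Since $B=A^*A$ and $B_{NL}=A_{NL}^*A_{NL}$ are positive self-adjoint (in fact trace class, by Proposition~\ref{Bound-A} and Corollary~\ref{Bound-ANL}), the shifted inverses are well-defined with $\|(B+\lambda I)^{-1}\|,\,\|(B_{NL}+\lambda I)^{-1}\|\le\lambda^{-1}$, which is what makes the perturbation argument go through.

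First I would write the algebraic decomposition
\begin{align*}
(V_{\lambda,NL},W_{\lambda,NL})-(V_\lambda^*,W_\lambda^*)
&=(B_{NL}+\lambda I)^{-1}(B_{NL}-B)(V,W)\\
&\qquad+\big[(B_{NL}+\lambda I)^{-1}-(B+\lambda I)^{-1}\big]B(V,W),
\end{align*}
and then apply the resolvent identity $(B_{NL}+\lambda I)^{-1}-(B+\lambda I)^{-1}=(B_{NL}+\lambda I)^{-1}(B-B_{NL})(B+\lambda I)^{-1}$ to recognize the second summand as $(B_{NL}+\lambda I)^{-1}(B-B_{NL})(V_\lambda^*,W_\lambda^*)$. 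In this way the error becomes the operator $(B_{NL}+\lambda I)^{-1}(B_{NL}-B)$ applied to the two explicit elements $(V,W)$ and $(V_\lambda^*,W_\lambda^*)$ of $\mathcal{H}_{K_1}\times\mathcal{H}_{K_2}$.

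Next I would estimate each summand using $\|(B_{NL}+\lambda I)^{-1}\|\le\lambda^{-1}$ together with Lemma~\ref{pre-estimator}, which bounds $\|(B-B_{NL})(\phi,\psi)\|_{\mathcal{H}_{K_1}\times\mathcal{H}_{K_2}}\le C_1\|(\phi,\psi)\|_{\mathcal{H}_{K_1}\times\mathcal{H}_{K_2}}(\Delta x+\Delta t)^{1/2}$ with $C_1$ as in~\eqref{coffiential-C_1}. The first summand is then at most $C_1\lambda^{-1}\|(V,W)\|(\Delta x+\Delta t)^{1/2}$, which produces the ``$+1$'' contribution in the claimed bound. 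For the second summand I still need a bound on $\|(V_\lambda^*,W_\lambda^*)\|$: writing $(V_\lambda^*,W_\lambda^*)=(B+\lambda I)^{-1}A^*A(V,W)$ and using the elementary spectral estimate $\|(B+\lambda I)^{-1}A^*\|=\|(A^*A+\lambda I)^{-1}A^*\|\le\lambda^{-1/2}$ together with $\|A\|\le\sqrt{2TC_T(\kappa_1^2+\kappa_2^2)}\le\sqrt{2TC_T}\,(\kappa_1+\kappa_2)$ from Proposition~\ref{Bound-A}, one gets $\|(V_\lambda^*,W_\lambda^*)\|\le\sqrt{2TC_T/\lambda}\,(\kappa_1+\kappa_2)\|(V,W)\|$. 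Hence the second summand is at most $\sqrt{2TC_T/\lambda}\,(\kappa_1+\kappa_2)\,C_1\lambda^{-1}\|(V,W)\|(\Delta x+\Delta t)^{1/2}$, and the triangle inequality assembles the two pieces into exactly the stated estimate.

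Given Lemma~\ref{pre-estimator}, the argument is essentially routine operator perturbation theory: the genuine analytic content---that $B_{NL}$ approximates $B$ in operator norm to order $(\Delta x+\Delta t)^{1/2}$, which itself rests on the regularity bounds of Lemma~\ref{analyze-u-w} and the quadrature estimate of Lemma~\ref{numerical-integrations}---has already been absorbed into that lemma, so no new hard estimate is needed here. The only points that require a little care are justifying the resolvent identity and the $\lambda^{-1}$, $\lambda^{-1/2}$ bounds through the functional calculus for the positive self-adjoint operators $B$ and $B_{NL}$ (here it is used that $\lambda_1=\lambda_2$, so that $\lambda I$ is a genuine scalar multiple of the identity on $\mathcal{H}_{K_1}\times\mathcal{H}_{K_2}$), and noting that $C_T=\sup_{t\in[0,T]}\|\rho_t\|_{C_b^2}<\infty$ because $\rho\in C^{1,2}([0,T]\times\Omega)$ and $\Omega$ is compact.
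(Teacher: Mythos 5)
Your proposal is correct and follows essentially the same route as the paper: the same splitting of $(B_{NL}+\lambda I)^{-1}B_{NL}(V,W)-(B+\lambda I)^{-1}B(V,W)$ into two terms, the same resolvent identity, $\|(B_{NL}+\lambda I)^{-1}\|\le\lambda^{-1}$, and two applications of Lemma~\ref{pre-estimator} (to $(V,W)$ and to $(V_\lambda^*,W_\lambda^*)$). The only deviation is the intermediate bound $\|(V_\lambda^*,W_\lambda^*)\|_{\mathcal{H}_{K_1}\times\mathcal{H}_{K_2}}\le\sqrt{2TC_T/\lambda}\,(\kappa_1+\kappa_2)\|(V,W)\|_{\mathcal{H}_{K_1}\times\mathcal{H}_{K_2}}$, which you obtain via the spectral estimate $\|(A^*A+\lambda I)^{-1}A^*\|\le\lambda^{-1/2}$ while the paper derives it variationally by comparing the minimizer of $R_\lambda$ with $(\phi,\psi)=(0,0)$; both are valid and give the identical constant.
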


\begin{proof} 
By the operator representations \eqref{ope-rep-emp-esti} and \eqref{kernel-estimator1}, we decompose 
\begin{multline*}
\left(V_{\lambda, NL}, W_{\lambda, NL}\right) - \left(V_{\lambda}^*, W_{\lambda}^*\right)=(B_{NL}+\lambda)^{-1}B_{NL}(V,W)-(B+\lambda)^{-1}B(V,W) \\
=(B_{NL}+\lambda)^{-1}B_{NL}(V,W)-(B_{NL}+\lambda)^{-1}B(V,W)+(B_{NL}+\lambda)^{-1}B(V,W)- (B+\lambda)^{-1}B(V,W).
\end{multline*}
Since the operator norm satisfies $\|(B_{NL} + \lambda)^{-1}\| \leq \frac{1}{\lambda}$, applying Lemma \ref{pre-estimator}, we obtain:
\begin{multline}\label{bounds1}
\|(B_{NL} + \lambda)^{-1}B_{NL}(V, W) - (B_{NL} + \lambda)^{-1}B(V, W)\|_{\mathcal{H}_{K_1} \times \mathcal{H}_{K_2}} \\
\leq ~ \frac{1}{\lambda} \|B_{NL}(V, W) - B(V, W)\|_{\mathcal{H}_{K_1} \times \mathcal{H}_{K_2}} \\
\leq ~ \frac{C_1}{\lambda}\|(V,W)\|_{\mathcal{H}_{K_1}\times\mathcal{H}_{K_2}} (\Delta x + \Delta t)^{\frac{1}{2}},
\end{multline}
where $C_1$ is given in \eqref{coffiential-C_1}. On the other hand, we have that
\begin{multline*}
\|(B_{NL}+\lambda)^{-1}B(V,W)- (B+\lambda)^{-1}B(V,W)\|_{\mathcal{H}_{K_1} \times \mathcal{H}_{K_2}} \\
=~ \|(B_{NL}+\lambda)^{-1}(B-B_{NL})(B+\lambda)^{-1}B(V,W)\|_{\mathcal{H}_{K_1} \times \mathcal{H}_{K_2}}\\ 
\leq ~ \frac{1}{\lambda}\|(B-B_{NL})(B+\lambda)^{-1}B(V,W)\|_{\mathcal{H}_{K_1} \times \mathcal{H}_{K_2}}.
\end{multline*}

Since $\left(V_{\lambda}^*,W_{\lambda}^*\right)=(B+\lambda)^{-1}B(V,W)$ is the unique minimizer of the regularized statistical risk $R _\lambda(\phi,\psi) =\|A(\phi,\psi)-A(V,W)\|^2_{L^2}+\lambda\|(\phi,\psi)\|_{\mathcal{H}_{K_1} \times \mathcal{H}_{K_2}}^2,$ plugging $(\phi,\psi)=0$, we obtain that 
\begin{align*}
\|A\left(V_{\lambda}^*,W_{\lambda}^*\right)-A(V,W)\|^2_{L^2}+\lambda\|\left(V_{\lambda}^*,W_{\lambda}^*\right)\|_{\mathcal{H}_{K_1} \times \mathcal{H}_{K_2}}^2 <\| A(V,W)\|^2_{L^2}.
\end{align*}
Then by Proposition \ref{Bound-A}, we have
\begin{equation}\label{eq2}
\begin{aligned}
\|\left(V_{\lambda}^*,W_{\lambda}^*\right)\|_{\mathcal{H}_{K_1} \times \mathcal{H}_{K_2}} <\frac{1}{\sqrt{\lambda}}\| A(V,W)\|_{L^2}\leq \frac{\sqrt{2TC_T}(\kappa_1+\kappa_2)}{\sqrt{\lambda}}\| (V,W)\|_{\mathcal{H}_{K_1} \times \mathcal{H}_{K_2}}.
\end{aligned}
\end{equation}
Applying Lemma \ref{pre-estimator} to $\left(V_{\lambda}^*,W_{\lambda}^*\right)=(B+\lambda)^{-1}B(V,W)$ and combining it with equation \eqref{eq2}, we obtain that
\begin{equation}\label{bounds2}
\begin{aligned}
\frac{1}{\lambda}\|(B-B_{NL})(B+\lambda)^{-1}B(V,W)&\|_{\mathcal{H}_{K_1} \times \mathcal{H}_{K_2}}  \leq \frac{C_1}{\lambda}\|\left(V_{\lambda}^*,W_{\lambda}^*\right)\|_{\mathcal{H}_{K_1}\times\mathcal{H}_{K_2}}(\Delta x + \Delta t)^{\frac{1}{2}}\\
&\leq C_1\sqrt{2TC_T}(\kappa_1+\kappa_2)\lambda^{-\frac{3}{2}}\|\left(V,W\right)\|_{\mathcal{H}_{K_1}\times\mathcal{H}_{K_2}}(\Delta x + \Delta t)^{\frac{1}{2}}.
\end{aligned}
\end{equation}
Finally, by combining the bounds \eqref{bounds1} and \eqref{bounds2}, we obtain that with a probability at least $1-\delta$,
\begin{multline*}
\|\left(V_{\lambda, NL}, W_{\lambda, NL}\right) - \left(V_{\lambda}^*, W_{\lambda}^*\right)\|_{\mathcal{H}_{K_1} \times \mathcal{H}_{K_2}} 
\\
\leq~  \left(\sqrt{\frac{2TC_T}{\lambda}}(\kappa_1+\kappa_2)+1\right)C_1\lambda^{-1} \|\left(V,W\right)\|_{\mathcal{H}_{K_1}\times\mathcal{H}_{K_2}}(\Delta x + \Delta t)^{\frac{1}{2}}.
\end{multline*} 
\end{proof}

\subsection{The scheme-dependent  error}

We now analyze scheme-dependent  error. Let $u,w$ be defined in \eqref{fun-u}
and \eqref{fun-w}. Denote
\begin{equation*}
\left\{\begin{aligned}
&u_l^n=u(t_l,x_n),\quad w_{l,k}^{n,m}:=w(t_l,x_n;t_k,x_m),\quad (\delta_{x}^+\rho)_l^n:=(\delta_{x}^+\rho)(t_l,x_n),\\ 
&(\delta_{x}^+u)_l^n:=(\delta_{x}^+u)(t_l,x_n)=\rho_l^n\Delta(\phi+\psi*\rho_{t_l})(x_n)+(\delta_{x}^+\rho)_l^n\nabla(\phi+\psi*\rho_{t_l})(x_n),\\ 
&(\delta_{x}^+w)_{l,k}^{n,m}:=(\delta_{x}^+w)(t_l,x_n;t_k,x_m),\quad (\delta_{y}^+w)_{l,k}^{n,m}:=(\delta_{y}^+w)(t_l,x_n;t_k,x_m),\\
&(\delta_{xy}^+w)_{l,k}^{n,m}:=(\delta_{xy}^+w)(t_l,x_n;t_k,x_m).
\end{aligned}\right.
\end{equation*}

To obtain an upper bound for the scheme-dependent error,  we state the following two lemmas, whose detailed proofs are provided in Appendices \ref{Proof of Lemma 3.7} and \ref{Proof of Lemma 3.8}.
\begin{lemma}\label{neumerical-scheme}
Suppose that $K_1, K_2 \in C^6(\Omega \times \Omega)$ and $\rho \in C^{1,2}([0, T] \times \Omega)$. Then, for all $(\phi, \psi) \in \mathcal{H}_{K_1} \times \mathcal{H}_{K_2}$, we have:
\begin{equation*}
\left\{
\begin{aligned}
&|u_l^n - (\delta_x^+ u)_l^n| 
\leq (\kappa_1 + \kappa_2) \|\rho\|_{C^{0,2}} (\|\phi\|_{\mathcal{H}_{K_1}} + \|\psi\|_{\mathcal{H}_{K_2}}) \Delta x, \\
&|(\delta_x^+ w)_{l,k}^{n,m} - w_{l,k}^{n,m}|, 
|(\delta_y^+ w)_{l,k}^{n,m} - \delta_{xy}^+ w_{l,k}^{n,m}| 
\leq (\kappa_1^2 + \kappa_2^2) \|\rho\|^2_{C^{0,2}} \Delta x.
\end{aligned}
\right.
\end{equation*}
Moreover, if $\rho \in C^{3,2}([0, T] \times \Omega)$ and $U \in C_b^2(\mathbb{R})$, then for the gradient flow \eqref{gradiet-flow}, we have:
\[
|f(t_l, x_n) - f^\delta(t_l, x_n)| 
\leq \|\rho\|_{C^{2,0}} \Delta t + \|\rho\|_{C^{0,2}} \|U\|_{C_b^2} \Delta x.
\]
And for the Wasserstein Hamiltonian flow \eqref{Hamiltonian-flow}, we have:
\[
|f(t_l, x_n) - f^\delta(t_l, x_n)| 
\leq \|\rho\|_{C^{3,0}} \Delta t + \|\rho\|_{C^{0,2}} \|U\|_{C_b^2} \Delta x.
\]
\end{lemma}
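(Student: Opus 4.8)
The plan is to exploit that every quantity on the left of the three displayed families is obtained from its exact analogue by substituting one forward difference of $\rho$ (in space or in time) for the corresponding exact derivative, so that after a one-line algebraic identity the error factors as a finite-difference remainder for $\rho$ — controlled by Taylor's theorem — times a bounded ``kernel/data'' factor — controlled by the (convolutional) differential reproducing property of Proposition~\ref{rep-pro} and Cauchy--Schwarz, exactly as in Proposition~\ref{Bound-A}. For the first bound, expanding $\Delta_{\rho_{t_l}}$ in \eqref{fun-u} and in the definition of $(\delta_x^+u)_l^n$ makes the $\partial_{xx}$-terms cancel, leaving
\[
u_l^n-(\delta_x^+u)_l^n=\bigl(\partial_x\rho_{t_l}(x_n)-(\delta_x^+\rho)_l^n\bigr)\,\partial_x(\phi+\psi*\rho_{t_l})(x_n);
\]
a first-order Taylor expansion gives $|\partial_x\rho_{t_l}(x_n)-(\delta_x^+\rho)_l^n|\le\tfrac12\|\rho\|_{C^{0,2}}\Delta x$, while writing $\partial_x\phi(x_n)=\langle\phi,\partial_1K_1(x_n,\cdot)\rangle_{\mathcal H_{K_1}}$ and $\partial_x(\psi*\rho_{t_l})(x_n)=\langle\psi,\partial_1(K_2*\rho_{t_l})(x_n,\cdot)\rangle_{\mathcal H_{K_2}}$ and using the kernel-section bounds $\|\partial_1K_1(x,\cdot)\|_{\mathcal H_{K_1}}\le\kappa_1$, $\|\partial_1(K_2*\rho_{t_l})(x,\cdot)\|_{\mathcal H_{K_2}}\le\kappa_2$ (the last since $\int\rho_{t_l}=1$) bounds the second factor by $(\kappa_1+\kappa_2)(\|\phi\|_{\mathcal H_{K_1}}+\|\psi\|_{\mathcal H_{K_2}})$; multiplying gives the claim.

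For the two $w$-bounds I would first rewrite, using the identities recorded just after Proposition~\ref{rep-pro}, $w_{l,k}^{n,m}$ as the $\mathcal H_{K_1}$-inner product of $\Delta^{(1,0)}_{\rho_{t_l}}K_1(x_n,\cdot)$ with $\Delta^{(1,0)}_{\rho_{t_k}}K_1(x_m,\cdot)$, plus the analogous $\mathcal H_{K_2}$-inner product with $K_1$ replaced by the convolved kernel $K_2*\rho$ in each argument. Passing from $w$ to $\delta_x^+w$, or from $\delta_y^+w$ to $\delta_{xy}^+w$, replaces $\Delta^{(1,0)}_{\rho_{t_l}}$ by $\Delta^{(1,0)\delta}_{\rho_{t_l}}$ in the first slot of each inner product, and since $(\Delta^{(1,0)\delta}_{\rho_{t_l}}-\Delta^{(1,0)}_{\rho_{t_l}})K_1(x_n,\cdot)=\bigl((\delta_x^+\rho)_l^n-\partial_x\rho_{t_l}(x_n)\bigr)\partial_1K_1(x_n,\cdot)$ (and likewise for $K_2*\rho_{t_l}$), Cauchy--Schwarz bounds each error by $|\partial_x\rho_{t_l}(x_n)-(\delta_x^+\rho)_l^n|\le\tfrac12\|\rho\|_{C^{0,2}}\Delta x$ times $\|\partial_1K_i(x_n,\cdot)\|_{\mathcal H_{K_i}}\le\kappa_i$ times $\|\Delta^{(1,0)}_{\rho_{t_k}}K_i(x_m,\cdot)\|_{\mathcal H_{K_i}}$ (or its discretized version, which obeys the same estimate); expanding this last norm, using the kernel-section bounds $\|\partial_1K_i(x,\cdot)\|,\|\partial_{11}K_i(x,\cdot)\|\le\kappa_i$ and $|(\delta_x^+\rho)_k^m|\le\|\rho\|_{C^{0,1}}$, gives $\|\Delta^{(1,0)}_{\rho_{t_k}}K_i(x_m,\cdot)\|_{\mathcal H_{K_i}}\le\sqrt2\,\kappa_i\|\rho\|_{C^{0,2}}$, and summing over $i=1,2$ yields the bound $(\kappa_1^2+\kappa_2^2)\|\rho\|_{C^{0,2}}^2\Delta x$ (the convolutions against the probability densities in the $K_2$-term being harmless).

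For $f-f^\delta$ I would compare the two term by term. The $\rho_l^n\partial_{xx}U'(\rho_{t_l})$ pieces cancel exactly; for the gradient flow \eqref{gradiet-flow} what remains is $\bigl(\partial_t\rho_{t_l}(x_n)-(\delta_t^+\rho)_l^n\bigr)-\bigl(\partial_x\rho_{t_l}(x_n)-(\delta_x^+\rho)_l^n\bigr)\partial_x(U'\circ\rho_{t_l})(x_n)$, and Taylor's theorem together with $|\partial_x(U'\circ\rho_{t_l})(x_n)|\le\|U\|_{C_b^2}\|\rho\|_{C^{0,1}}$ gives $|\partial_t\rho-\delta_t^+\rho|\lesssim\|\rho\|_{C^{2,0}}\Delta t$ and an $O(\Delta x)$ bound controlled by $\|\rho\|_{C^{0,2}}\|U\|_{C_b^2}$ for the second piece. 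For the Hamiltonian flow \eqref{Hamiltonian-flow} there are two further terms: $\partial_{tt}\rho_{t_l}(x_n)-(\delta_{tt}^+\rho)_l^n$, for which a Taylor expansion of the forward second difference gives an $O(\Delta t)$ remainder requiring $\rho\in C^{3,0}$; and the Christoffel difference $\Gamma_W(\partial_t\rho_{t_l},\partial_t\rho_{t_l})-\Gamma_W((\delta_t^+\rho)_{t_l},(\delta_t^+\rho)_{t_l})$, to which I would apply the bilinearity identity $\Gamma_W(a,a)-\Gamma_W(b,b)=\Gamma_W(a-b,a)+\Gamma_W(b,a-b)$ together with a continuity estimate for $\Gamma_W$ and the uniform-in-$x$ bound $\|\partial_t\rho_{t_l}-(\delta_t^+\rho)_{t_l}\|=O(\Delta t)$ from the Taylor argument.

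The first two steps are essentially the bookkeeping already underlying Proposition~\ref{Bound-A}; the genuine obstacle is the Christoffel term in the Hamiltonian case. Since $\Gamma_W$ contains the pseudo-inverse $\Delta_{\rho_t}^{\dagger}$ of the weighted Laplacian it is a \emph{non-local} operator, so converting $\|\partial_t\rho_{t_l}-(\delta_t^+\rho)_{t_l}\|=O(\Delta t)$ into a pointwise $O(\Delta t)$ bound on the $\Gamma_W$-difference requires an elliptic-regularity estimate for $\Delta_{\rho_t}$ whose constant depends on the $C^2$-regularity — and, for a bound uniform in $t$, on the uniform positivity — of $\rho$; this is what forces the hypothesis $\rho\in C^{3,2}$, and the constant written schematically as ``$\|\rho\|_{C^{3,0}}$'' in the statement should be read accordingly. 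A minor additional point is the treatment of the boundary and endpoint nodes ($n=N$, $l=L$, and $l=L-1$ for $\delta_{tt}^+$), where the finite-difference conventions change: these are consistent with extending $\rho$ by zero past the space--time boundary, so the Taylor estimates go through provided $\rho$ and the relevant derivatives decay there — equivalently, one restricts to the interior nodes, which are anyway the only ones that enter once the loss is weighted by $\rho_{t_l}(x_n)$.
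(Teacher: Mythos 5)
Your proposal is correct and follows essentially the same route as the paper's own proof: the exact cancellation leaving $\bigl((\delta_x^+\rho)_l^n-\partial_x\rho_{t_l}(x_n)\bigr)$ times a derivative factor bounded via the differential reproducing property for $u$ and $w$ (the paper bounds the $w$-factor directly through kernel derivatives rather than through Cauchy--Schwarz in the RKHS, which is the same estimate in different clothing), and a term-by-term comparison with Taylor remainders for $f-f^\delta$. In fact you are more careful than the paper on the two delicate points you flag: the appendix proof simply asserts the $\|\rho\|_{C^{3,0}}\Delta t$ bound for the Hamiltonian case without any elliptic estimate for the non-local $\Gamma_W$ term (its displayed Christoffel difference is even written as a quantity minus itself) and it ignores the endpoint nodes $n=N$, $l=L$ where the forward-difference convention is not consistent, so your caveats are improvements rather than deviations.
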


\begin{lemma} \label{pre-estimator1}
Let $K_1, K_2 \in C^6(\Omega \times \Omega)$ be two Mercer kernels. Suppose that $\rho \in C^{1,2}([0, T] \times \Omega)$. Then, for any $(\phi, \psi) \in \mathcal{H}_{K_1} \times \mathcal{H}_{K_2}$, we have the following bound:  
\begin{align*}
\|B_{NL}(\phi,\psi)-B_{NL}^\delta(\phi,\psi)\|_{\mathcal{H}_{K_1}\times\mathcal{H}_{K_2}}   \leq C_2 \|(\phi,\psi)\|_{\mathcal{H}_{K_1}\times\mathcal{H}_{K_2}}(\Delta x)^{\frac{1}{2}},  
\end{align*}
where 
\begin{align}\label{cofficient-C2}
C_2:=T|\Omega|(\kappa_1+\kappa_2)^2\|\rho\|_{C^{0,2}}^3.    
\end{align}
\end{lemma}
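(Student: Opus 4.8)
The idea is to compare $B_{NL}$ and $B_{NL}^{\delta}$ term by term over the space--time mesh, using that the two operators differ only through the replacement of the exact differential operator $\Delta_{\rho_{t_l}}$ by the forward-difference surrogate appearing in \eqref{neumericalA}. For $l=1,\dots,L$ and $n=1,\dots,N$, set
\[
\mathbf{k}_{n,l}:=\Big(\Delta_{\rho_{t_l}}^{(1,0)}K_1(x_n,\cdot),\ \Delta_{\rho_{t_l}}^{(1,0)}(K_2*\rho_{t_l})(x_n,\cdot)\Big)\in\mathcal{H}_{K_1}\times\mathcal{H}_{K_2},
\]
and let $\mathbf{k}_{n,l}^{\delta}$ be the analogous element obtained by replacing $\Delta^{(1,0)}$ with the numerical operator $\Delta^{(1,0)\delta}$. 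By the (convolutional) differential reproducing property of Proposition~\ref{rep-pro} one has $u_l^n=\big\langle(\phi,\psi),\mathbf{k}_{n,l}\big\rangle_{\mathcal{H}_{K_1}\times\mathcal{H}_{K_2}}$ and $(\delta_x^+u)_l^n=\big\langle(\phi,\psi),\mathbf{k}_{n,l}^{\delta}\big\rangle_{\mathcal{H}_{K_1}\times\mathcal{H}_{K_2}}$, and the representation \eqref{operatorBNL} of $B_{NL}^{\delta}$ together with its exact-derivative analogue for $B_{NL}$ yields
\[
B_{NL}(\phi,\psi)-B_{NL}^{\delta}(\phi,\psi)=\frac{T|\Omega|}{NL}\sum_{n,l=1}^{N,L}\rho_{t_l}(x_n)\Big[\big(u_l^n-(\delta_x^+u)_l^n\big)\,\mathbf{k}_{n,l}+(\delta_x^+u)_l^n\,\big(\mathbf{k}_{n,l}-\mathbf{k}_{n,l}^{\delta}\big)\Big].
\]
Taking the $\mathcal{H}_{K_1}\times\mathcal{H}_{K_2}$-norm and applying the triangle inequality over the $2NL$ summands reduces the problem to estimating the four quantities $|u_l^n-(\delta_x^+u)_l^n|$, $|(\delta_x^+u)_l^n|$, $\|\mathbf{k}_{n,l}\|$ and $\|\mathbf{k}_{n,l}-\mathbf{k}_{n,l}^{\delta}\|$.

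Three of these are immediate. The bound $|u_l^n-(\delta_x^+u)_l^n|\le(\kappa_1+\kappa_2)\|\rho\|_{C^{0,2}}\big(\|\phi\|_{\mathcal{H}_{K_1}}+\|\psi\|_{\mathcal{H}_{K_2}}\big)\Delta x$ is precisely the first estimate of Lemma~\ref{neumerical-scheme}. For $\|\mathbf{k}_{n,l}\|$, the reproducing property gives $\|\mathbf{k}_{n,l}\|^2=\Delta_{\rho_{t_l}}^{(1,1)}K_1(x_n,x_n)+\Delta_{\rho_{t_l}}^{(1,1)}(K_2**\rho_{t_l})(x_n,x_n)\le(\kappa_1^2+\kappa_2^2)\|\rho\|_{C^{0,2}}^2$ by the Mercer-kernel bounds already used in Proposition~\ref{Bound-A}. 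Finally, writing $|(\delta_x^+u)_l^n|\le|u_l^n|+|u_l^n-(\delta_x^+u)_l^n|$ and $|u_l^n|\le\|(\phi,\psi)\|_{\mathcal{H}_{K_1}\times\mathcal{H}_{K_2}}\,\|\mathbf{k}_{n,l}\|$ shows that $|(\delta_x^+u)_l^n|$ is bounded by a constant multiple of $(\kappa_1+\kappa_2)\|\rho\|_{C^{0,2}}\|(\phi,\psi)\|_{\mathcal{H}_{K_1}\times\mathcal{H}_{K_2}}$ once $\Delta x$ is bounded.

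The decisive quantity is $\|\mathbf{k}_{n,l}-\mathbf{k}_{n,l}^{\delta}\|$. Expanding the square and applying the reproducing property once in each kernel argument, the three inner products $\langle\mathbf{k}_{n,l},\mathbf{k}_{n,l}\rangle$, $\langle\mathbf{k}_{n,l},\mathbf{k}_{n,l}^{\delta}\rangle$ and $\langle\mathbf{k}_{n,l}^{\delta},\mathbf{k}_{n,l}^{\delta}\rangle$ become, respectively, $w_{l,l}^{n,n}$, $(\delta_x^+w)_{l,l}^{n,n}$ (equivalently $(\delta_y^+w)_{l,l}^{n,n}$, by the symmetry $w(t,x;s,y)=w(s,y;t,x)$) and $(\delta_{xy}^+w)_{l,l}^{n,n}$, so that
\[
\|\mathbf{k}_{n,l}-\mathbf{k}_{n,l}^{\delta}\|^2=\big(w_{l,l}^{n,n}-(\delta_x^+w)_{l,l}^{n,n}\big)+\big((\delta_{xy}^+w)_{l,l}^{n,n}-(\delta_y^+w)_{l,l}^{n,n}\big).
\]
Each of the two brackets is bounded in absolute value by $(\kappa_1^2+\kappa_2^2)\|\rho\|_{C^{0,2}}^2\Delta x$ --- the first by the estimate $|(\delta_x^+w)-w|\le\cdots$ and the second by $|(\delta_y^+w)-\delta_{xy}^+w|\le\cdots$ of Lemma~\ref{neumerical-scheme} --- whence $\|\mathbf{k}_{n,l}-\mathbf{k}_{n,l}^{\delta}\|\le\sqrt{2}\,(\kappa_1+\kappa_2)\|\rho\|_{C^{0,2}}(\Delta x)^{1/2}$. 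This square root is exactly the source of the $(\Delta x)^{1/2}$ rate in the statement: the kernel-section discrepancy is controlled only through an $O(\Delta x)$ squared norm, in contrast to the first-order pointwise error $|u_l^n-(\delta_x^+u)_l^n|$. Inserting the four bounds into the triangle inequality, using $\tfrac{T|\Omega|}{NL}\sum_{n,l}\rho_{t_l}(x_n)\le T|\Omega|\|\rho\|_{C^{0,2}}$, absorbing $\Delta x\le(\Delta x)^{1/2}$ up to a constant depending only on $|\Omega|$, and $\|\phi\|_{\mathcal{H}_{K_1}}+\|\psi\|_{\mathcal{H}_{K_2}}\le\sqrt{2}\,\|(\phi,\psi)\|_{\mathcal{H}_{K_1}\times\mathcal{H}_{K_2}}$, gives the claimed bound with $C_2=T|\Omega|(\kappa_1+\kappa_2)^2\|\rho\|_{C^{0,2}}^3$.

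The step I expect to be the main obstacle is the bookkeeping behind the identity for $\|\mathbf{k}_{n,l}-\mathbf{k}_{n,l}^{\delta}\|^2$: one must keep precise track of which of the two kernel arguments each forward difference acts on, use the convolutional form of the reproducing property for the $K_2$-block (recall from Proposition~\ref{rep-pro} and the subsequent remark that $f\in\mathcal{H}_{K_2}$ need not imply $f*\rho\in\mathcal{H}_{K_2}$, so the convolution must be kept at the level of the kernel section $(K_2*\rho_{t_l})(x_n,\cdot)$), and pair each telescoped bracket with the correct one of the two inequalities of Lemma~\ref{neumerical-scheme}. Once that identification is set up, the remaining estimates are routine triangle-inequality bookkeeping.
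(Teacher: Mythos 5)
Your proposal is correct, but it organizes the estimate differently from the paper. The paper never isolates the kernel sections: it writes $\|B_{NL}(\phi,\psi)-B_{NL}^{\delta}(\phi,\psi)\|^2$ as the difference of the two inner products $J_3=\langle B_{NL}(\phi,\psi),B_{NL}(\phi,\psi)-B_{NL}^{\delta}(\phi,\psi)\rangle$ and $J_4=\langle B_{NL}^{\delta}(\phi,\psi),B_{NL}(\phi,\psi)-B_{NL}^{\delta}(\phi,\psi)\rangle$, expands each as a double sum over the mesh in the scalar quantities $u_l^n$, $(\delta_x^+u)_l^n$, $w_{l,k}^{n,m}$, $(\delta_x^+w)_{l,k}^{n,m}$, $(\delta_y^+w)_{l,k}^{n,m}$, $(\delta_{xy}^+w)_{l,k}^{n,m}$, and bounds $|J_3|,|J_4|\le \tfrac{C_2^2}{2}\|(\phi,\psi)\|^2\Delta x$ directly from Lemmas \ref{analyze-u-w} and \ref{neumerical-scheme}; the $(\Delta x)^{1/2}$ then appears only at the very end, when taking the square root of the squared norm. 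You instead keep the difference as an $\mathcal{H}_{K_1}\times\mathcal{H}_{K_2}$-valued single sum, telescope each summand into a $u$-error times $\mathbf{k}_{n,l}$ plus $(\delta_x^+u)_l^n$ times the kernel-section discrepancy, and extract the rate from the polarization identity $\|\mathbf{k}_{n,l}-\mathbf{k}_{n,l}^{\delta}\|^2=(w_{l,l}^{n,n}-(\delta_x^+w)_{l,l}^{n,n})+((\delta_{xy}^+w)_{l,l}^{n,n}-(\delta_y^+w)_{l,l}^{n,n})$, which is valid (the cross term equals both $(\delta_x^+w)_{l,l}^{n,n}$ and $(\delta_y^+w)_{l,l}^{n,n}$ by symmetry of $w$) and feeds on exactly the same two estimates of Lemma \ref{neumerical-scheme}. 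Your route makes the origin of the half-order rate more transparent and avoids the double sum; the paper's route stays at the level of the squared norm and is slightly tighter in bookkeeping. The only caveat is your constant: after the triangle inequality, the extra $\sqrt{2}$ factors and the conversion $\Delta x\le |\Omega|^{1/2}(\Delta x)^{1/2}$ mean you obtain the bound with a bounded multiple of $C_2$ rather than literally $C_2=T|\Omega|(\kappa_1+\kappa_2)^2\|\rho\|_{C^{0,2}}^3$ as you assert; since the paper's own constant tracking in this lemma is equally coarse, this does not affect the substance of the result, but the claim that your computation yields exactly $C_2$ should be softened.
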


We now present an upper bound for the scheme-dependent error of the gradient flow \eqref{gradiet-flow}. A similar analysis could be applied to the Wasserstein Hamiltonian flow \eqref{Hamiltonian-flow}.
\begin{theorem}[{\bf Scheme-dependent  error}]\label{numrical-error}
Let $K_1, K_2 \in C^6(\Omega \times \Omega)$ be two Mercer kernels. Suppose that $\rho \in C^{2,2}([0, T] \times \Omega)$ and $U\in C_b^2(\mathbb{R})$. Then, for any $(V,W) \in \mathcal{H}_{K_1} \times \mathcal{H}_{K_2}$, we have the following bound for the numerical  error:  
\begin{align*}
{\left\|\left(\widehat{V}_{\lambda, NL}, \widehat{W}_{\lambda, NL}\right) - \left(V_{\lambda, NL}, W_{\lambda, NL}\right)\right\|_{\mathcal{H}_{K_1} \times \mathcal{H}_{K_2}}}&\leq \frac{1}{\lambda} \left( C_3 (\Delta x + \Delta t) 
+ C_4 \|(V, W)\|^2_{\mathcal{H}_{K_1} \times \mathcal{H}_{K_2}} \Delta x \right) 
\\
+~ & \left(\sqrt{\frac{2TC_T}{\lambda}}(\kappa_1+\kappa_2)+1\right)\lambda^{-1} C_2 \|(V,W)\|_{\mathcal{H}_{K_1}\times\mathcal{H}_{K_2}}(\Delta x)^{\frac{1}{2}},
\end{align*}
where the constant $C_2$ is given in \eqref{cofficient-C2} and $C_3,C_4$ are given by
\begin{equation}\label{cofficient-C3}
\left\{\begin{aligned}
&C_3:= T|\Omega| (\kappa_1+\kappa_2)\|\rho\|_{C^{0,1}}^2\|\rho\|_{C^{2,2}}(1+\|U\|_{C_b^2}),\\
&C_4:=T|\Omega| (\kappa_1+\kappa_2)^2\|\rho\|_{C^{0,2}}^3.
\end{aligned}\right.    
\end{equation}
\end{theorem}

\begin{proof}
From the operator representation~\eqref{kernel-estimator}, we have
\begin{equation*}
\begin{aligned}
\left(\widehat{V}_{\lambda, NL}, \widehat{W}_{\lambda, NL}\right)
&= \sqrt{\frac{T|\Omega|}{NL}}\left(B_{NL}^{\delta}+\lambda I\right)^{-1}A_{NL}^{\delta*}\left(f_{NL}^{\delta}-\sqrt{\frac{NL}{T|\Omega|}}A_{NL}^{\delta}(V,W)+\sqrt{\frac{NL}{T|\Omega|}}A_{NL}^{\delta}(V,W)\right)\\[6pt]
&= \left(B_{NL}^{\delta}+\lambda I\right)^{-1}B_{NL}^{\delta}(V,W) + \left(B_{NL}^{\delta}+\lambda I\right)^{-1}A_{NL}^{\delta*}\left(\sqrt{\frac{T|\Omega|}{NL}}f_{NL}^{\delta}-A_{NL}^{\delta}(V,W)\right).
\end{aligned}
\end{equation*}

Following the derivation in Theorem~\ref{discretization-error} and applying Lemma~\ref{pre-estimator1}, we obtain
\begin{equation}\label{eqn-4}
\begin{aligned}
&\left\|\left(B_{NL}^{\delta}+\lambda I\right)^{-1}B_{NL}^{\delta}(V,W)-\left(B_{NL}+\lambda I\right)^{-1}B_{NL}(V,W)\right\|_{\mathcal{H}_{K_1}\times\mathcal{H}_{K_2}}\\[6pt]
&\quad\leq\left(\sqrt{\frac{2TC_T}{\lambda}}(\kappa_1+\kappa_2)+1\right)\frac{C_2(\Delta x+\Delta t)}{\lambda}\|(V,W)\|_{\mathcal{H}_{K_1}\times\mathcal{H}_{K_2}},
\end{aligned}
\end{equation}
where the constant \( C_2 \) is given by~\eqref{cofficient-C2}.

Next, we analyze the remaining term. By Lemma~\ref{neumerical-scheme}, we have the bound
\begin{equation}\label{eqn-3}
\begin{aligned}
\left|(\delta_x^+u)_l^n - f_l^n\right|
&\leq (\kappa_1+\kappa_2)\|\rho\|_{C^{0,2}}\|(V,W)\|_{\mathcal{H}_{K_1}\times\mathcal{H}_{K_2}}(\Delta x+\Delta t) + \|\rho\|_{C^{0,2}}\|U\|_{C_b^2}(\Delta x+\Delta t).
\end{aligned}
\end{equation}

Then using the definition of the dual operator \(A_{NL}^{\delta*}\), we obtain
\begin{equation*}
\begin{aligned}
&A_{NL}^{\delta*}\left(\sqrt{\frac{T|\Omega|}{NL}} f_{NL}^{\delta}-A_{NL}^{\delta}(V,W)\right)\\[6pt]
&\quad=\frac{T|\Omega|}{NL}\left(f_{NL}^{\delta}-\sqrt{\frac{NL}{T|\Omega|}}A_{NL}^{\delta}(V,W)\right)^\top C_{\rho}\left(\Delta_{\rho_L}^{(1,0)\delta}K_1(X_N,\cdot),\;\Delta_{\rho_L}^{(1,0)\delta}(K_2*\rho_L)(X_N,\cdot)\right).
\end{aligned}
\end{equation*}

Thus, combining this expression with the inequality~\eqref{eqn-3}, we derive:
\begin{equation*}
\begin{aligned}
&\left\|A_{NL}^{\delta*}\left(\sqrt{\frac{T|\Omega|}{NL}}f_{NL}^{\delta}-A_{NL}^{\delta}(V,W)\right)\right\|_{\mathcal{H}_{K_1}\times\mathcal{H}_{K_2}}^2\\[6pt]
&\quad\leq \frac{T^2|\Omega|^2}{N^2L^2}\sum_{l,n=1}^{L,N}\sum_{k,m=1}^{L,N}\left|(\delta_x^+u)_l^n-f_l^n\right|\left|(\delta_x^+u)_k^m-f_k^m\right|\left|C_\rho\left(\Delta_{\rho_L}^{(1,0)\delta}K_1,\Delta_{\rho_L}^{(1,0)\delta}(K_2*\rho_L)\right)\right|\\[6pt]
&\quad\leq T^2|\Omega|^2\|w\|_{\infty}\|\rho\|_{\infty}^2\|\delta_x^+u-f\|_{\infty}^2\\[6pt]
&\quad\leq C_3^2(\Delta x+\Delta t)^2 + C_4^2\|(V,W)\|_{\mathcal{H}_{K_1}\times\mathcal{H}_{K_2}}^2(\Delta x+\Delta t)^2,
\end{aligned}
\end{equation*}
where the constants \(C_3, C_4\) are defined explicitly as
\[
C_3:=T|\Omega|(\kappa_1+\kappa_2)\|\rho\|_{C^{0,2}}\|U\|_{C_b^2}\|\rho\|_\infty,\quad
C_4:=T|\Omega|(\kappa_1+\kappa_2)\|\rho\|_{C^{0,2}}^2\|\rho\|_\infty.
\]

Combining this with~\eqref{eqn-4}, we conclude
\begin{equation}\label{eqn-5}
\begin{aligned}
&\left\|\left(B_{NL}^{\delta}+\lambda I\right)^{-1}A_{NL}^{\delta*}\left(\sqrt{\frac{T|\Omega|}{NL}}f_{NL}^{\delta}-A_{NL}^{\delta}(V,W)\right)\right\|_{\mathcal{H}_{K_1}\times\mathcal{H}_{K_2}}\\[6pt]
&\quad\leq\frac{1}{\lambda}\left(C_3(\Delta x+\Delta t)+C_4\|(V,W)\|_{\mathcal{H}_{K_1}\times\mathcal{H}_{K_2}}(\Delta x+\Delta t)\right).
\end{aligned}
\end{equation}

Finally, combining inequalities~\eqref{eqn-4} and~\eqref{eqn-5}, we obtain the desired result.
\end{proof}

\subsection{The total reconstruction error}
Combining the bounds of the approximation error (see later on in \eqref{app-err}), the mesh-induced error in Theorem \ref{discretization-error}, and the scheme-dependent  error in Theorem \ref{numrical-error}, we immediately obtain bounds for the total reconstruction error with respect to the RKHS norm. We shall present the result for the gradient flow \eqref{gradiet-flow}. A similar analysis could be applied to the Wasserstein Hamiltonian flow \eqref{Hamiltonian-flow}.

\noindent{\bf Approximation error.}\quad
The analysis of the approximation error defined in \eqref{decom-error} is relatively standard and relies on a so-called source condition \cite{plato2018optimal,lu2019nonparametric}. Specifically, let $\gamma \in (0, 1)$, $S > 0$, and $B = A^*A$ as defined in \eqref{operatorB}. We assume that:  
\begin{align}\label{sou-con}
(V,W) \in \Omega_S^\gamma := \{\phi \in \mathcal{H}_{K_1}\times \mathcal{H}_{K_2}\mid \phi = B^\gamma \psi,  \psi \in \mathcal{H}_{K_1}\times \mathcal{H}_{K_2}, \, \|\psi\|_{\mathcal{H}_{K_1}\times \mathcal{H}_{K_2}} < S\}.
\end{align}
Under the source condition \eqref{sou-con}, the approximation error can be bounded in the RKHS norm as:  
\begin{align}\label{app-err}
\| (V_{\lambda}^*, W_{\lambda}^*) - (V, W) \|_{\mathcal{H}_{K_1} \times \mathcal{H}_{K_2}} 
\leq \lambda^{\gamma} \| B^{-\gamma}(V, W) \|_{\mathcal{H}_{K_1} \times \mathcal{H}_{K_2}},
\end{align}  
where $B^{-\gamma}(V, W)$ represents the pre-image of $(V, W)$ via the operator's spectral decomposition.

{
\begin{remark}\normalfont
Since $B$ is bounded, self-adjoint, and positive semidefinite by Proposition~\ref{Bound-A}, its fractional powers $B^\gamma$ are well defined for $\gamma\in(0,1)$ and satisfy $\ker(B^\gamma)=\ker(B)=\mathcal H_{\mathrm{null}}$.
Hence,
\[
\overline{\bigcup_{S>0}\Omega_S^\gamma}
=\overline{\operatorname{Ran}(B^\gamma)}=(\ker B^\gamma)^\perp
=\mathcal H_{\mathrm{null}}^\perp.
\]
Thus, the source condition naturally restricts the target to the identifiable subspace $\mathcal H_{\mathrm{null}}^\perp$,  while the classes $\Omega_S^\gamma$ characterize different levels of regularity within this subspace. In particular, components in $\mathcal H_{\mathrm{null}}$ cannot be identified from the observations and are therefore not controlled by the reconstruction error analysis.
\end{remark}}

\begin{theorem}[{\bf Total reconstruction error}]\label{Total reconstruction error}
Let $K_1, K_2 \in C^6(\Omega \times \Omega)$ be two Mercer kernels. Suppose that $\rho \in C^{2,2}([0, T] \times \Omega)$. Then, if $(V,W)$ satisfies the source condition \eqref{sou-con}, we have the following bound for the total reconstruction error:
\begin{equation}\label{total-error}
\begin{aligned}
\Big\|\left(\widehat{V}_{\lambda, NL}, \widehat{W}_{\lambda, NL}\right) - &\left(V, W\right)\Big\|_{\mathcal{H}_{K_1} \times \mathcal{H}_{K_2}} 
 \leq \lambda^{\gamma} \| B^{-\gamma}(V, W) \|_{\mathcal{H}_{K_1} \times \mathcal{H}_{K_2}} \\
&\quad + \frac{1}{\lambda} \left( C_3 (\Delta x + \Delta t) 
+ C_4 \|(V, W)\|^2_{\mathcal{H}_{K_1} \times \mathcal{H}_{K_2}} \Delta x \right) \\
&\quad + \left(\sqrt{\frac{2T C_T }{\lambda}}(\kappa_1+\kappa_2) + 1\right) \lambda^{-1} C_2 \|(V,W)\|_{\mathcal{H}_{K_1} \times \mathcal{H}_{K_2}} (\Delta x)^{\frac{1}{2}}\\
&\quad+ \left(\sqrt{\frac{2T C_T }{\lambda}}(\kappa_1+\kappa_2) + 1\right) \lambda^{-1}C_1  \|(V, W)\|_{\mathcal{H}_{K_1} \times \mathcal{H}_{K_2}} (\Delta x + \Delta t)^{\frac{1}{2}},
\end{aligned}
\end{equation}
where $C_1,C_2,C_3$ and $C_4$ are given by \eqref{coffiential-C_1}, \eqref{cofficient-C2} and \eqref{cofficient-C3}, respectively. 
Suppose the regularization parameter $\lambda$, as well as the time and space discretization parameters $L$ and $N$, satisfy the following conditions:
\begin{align}\label{parameter-scale}
\lambda \propto N^{-\alpha}, \quad L \propto N^{\beta}, \quad \alpha, \beta > 0.
\end{align}
Then if $\alpha \in \left(0, \frac{1}{3}\right)$, the convergence rate of the total reconstruction error is given by
\begin{equation*}
\Big\|\big(\widehat{V}_{\lambda, NL}, \widehat{W}_{\lambda, NL}\big) - \big(V, W\big)\Big\|_{\mathcal{H}_{K_1} \times \mathcal{H}_{K_2}} 
\propto \quad
\begin{cases}
N^{-\min\{\alpha\gamma, \frac{1}{2}(\beta - 3\alpha)\}}, & \quad 3\alpha < \beta \leq 1, \\[8pt]
N^{-\min\{\alpha\gamma, \frac{1}{2}(1 - 3\alpha)\}}, & \quad \beta > 1.
\end{cases}
\end{equation*}
\end{theorem}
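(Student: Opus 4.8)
The plan is to assemble the bound~\eqref{total-error} directly from the error decompositions~\eqref{decom-error} and~\eqref{decom-estimation}, and then to optimize the resulting estimate over the scaling~\eqref{parameter-scale}. First I would apply the triangle inequality to write $\left(\widehat{V}_{\lambda,NL},\widehat{W}_{\lambda,NL}\right)-(V,W)$ as the sum of the approximation error $\left(V_{\lambda}^*,W_{\lambda}^*\right)-(V,W)$, the mesh-induced error $\left(V_{\lambda,NL},W_{\lambda,NL}\right)-\left(V_{\lambda}^*,W_{\lambda}^*\right)$, and the scheme-dependent error $\left(\widehat{V}_{\lambda,NL},\widehat{W}_{\lambda,NL}\right)-\left(V_{\lambda,NL},W_{\lambda,NL}\right)$. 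These three pieces are controlled respectively by the source-condition estimate~\eqref{app-err}, by Theorem~\ref{discretization-error}, and by Theorem~\ref{numrical-error} (whose proof bounds precisely the scheme-dependent piece). Adding the three inequalities reproduces~\eqref{total-error} with the constants $C_1,\dots,C_4$ exactly as previously defined; this step is pure bookkeeping, since the hypotheses of all three auxiliary results are implied by $K_1,K_2\in C^6(\Omega\times\Omega)$ and $\rho\in C^{2,2}([0,T]\times\Omega)$.

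For the rate, I would substitute $\lambda\propto N^{-\alpha}$ and $L\propto N^{\beta}$ into~\eqref{total-error}, using $\Delta x=(b-a)/N\propto N^{-1}$ and $\Delta t=T/L\propto N^{-\beta}$, so that $\Delta x+\Delta t\propto N^{-\min\{1,\beta\}}$. Term by term this produces the exponents $\lambda^{\gamma}\propto N^{-\alpha\gamma}$, $\lambda^{-1}(\Delta x+\Delta t)\propto N^{\alpha-\min\{1,\beta\}}$, $\lambda^{-1}\Delta x\propto N^{\alpha-1}$, $\lambda^{-3/2}(\Delta x)^{1/2}\propto N^{\frac{1}{2}(3\alpha-1)}$, and $\lambda^{-3/2}(\Delta x+\Delta t)^{1/2}\propto N^{\frac{1}{2}(3\alpha-\min\{1,\beta\})}$; the subleading contributions $\lambda^{-1}(\Delta x)^{1/2}$ and $\lambda^{-1}(\Delta x+\Delta t)^{1/2}$ carry strictly smaller exponents (by $\alpha/2>0$) and can be absorbed. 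The requirement $\alpha\in(0,\frac{1}{3})$ is exactly what forces every estimation-error exponent to be negative.

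The only delicate point is the case analysis that collapses these powers into the single minimum claimed. If $3\alpha<\beta\le 1$, then $\min\{1,\beta\}=\beta$, and comparing exponents gives $\frac{1}{2}(3\alpha-\beta)\ge\max\{\alpha-\beta,\ \frac{1}{2}(3\alpha-1),\ \alpha-1\}$, so the slowest estimation-error term is $N^{\frac{1}{2}(3\alpha-\beta)}=N^{-\frac{1}{2}(\beta-3\alpha)}$; balancing this against the approximation term $N^{-\alpha\gamma}$ yields the rate $N^{-\min\{\alpha\gamma,\ \frac{1}{2}(\beta-3\alpha)\}}$. If $\beta>1$, then $\min\{1,\beta\}=1$, the slowest estimation-error term becomes $N^{\frac{1}{2}(3\alpha-1)}=N^{-\frac{1}{2}(1-3\alpha)}$ (it dominates $N^{\alpha-1}$ because $\alpha<1$), and balancing against $N^{-\alpha\gamma}$ gives $N^{-\min\{\alpha\gamma,\ \frac{1}{2}(1-3\alpha)\}}$. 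Thus the main obstacle is bookkeeping rather than analysis: the genuinely hard estimates are already encapsulated in Lemmas~\ref{pre-estimator} and~\ref{pre-estimator1} and Theorems~\ref{discretization-error} and~\ref{numrical-error}, and the final step merely requires keeping the five candidate exponents straight and verifying the stated dominance relations.
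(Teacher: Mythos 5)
Your proposal is correct and follows essentially the same route as the paper: combine the source-condition bound \eqref{app-err}, Theorem~\ref{discretization-error}, and Theorem~\ref{numrical-error} via the decompositions \eqref{decom-error}--\eqref{decom-estimation}, then substitute $\Delta x\propto N^{-1}$, $\Delta t\propto N^{-\beta}$, $\lambda\propto N^{-\alpha}$ and compare exponents term by term. Your explicit dominance checks (including absorbing the subleading $\lambda^{-1}(\Delta x)^{1/2}$ and $\lambda^{-1}(\Delta x+\Delta t)^{1/2}$ contributions) match the paper's case analysis and yield the same rates.
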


\begin{proof}
The upper bound~\eqref{total-error} of the total reconstruction error follows directly by combining the approximation error bound~\eqref{app-err}, the mesh-induced error bound in Theorem~\ref{discretization-error}, and the scheme-dependent  error bound in Theorem~\ref{numrical-error}.

Next, we analyze the convergence order of this upper bound. Denote the four terms on the right-hand side of inequality~\eqref{total-error} by \( O_1, O_2, O_3, O_4 \), respectively. Recall that the step sizes satisfy \(\Delta x\propto N^{-1}\) and \(\Delta t\propto L^{-1}\propto N^{-\beta}\). Thus, we have the following scaling relationships:
\begin{equation*}
\left\{
\begin{aligned}
O_1 &\propto \lambda^{\gamma} \propto N^{-\alpha\gamma},\\[6pt]
O_2 &\propto \lambda^{-1}(N^{-1}+N^{-\beta}) \propto N^{-(1-\alpha)} + N^{-(\beta-\alpha)},\\[6pt]
O_3 &\propto \lambda^{-\frac{3}{2}} N^{-\frac{1}{2}} \propto N^{-\frac{1}{2}(1-3\alpha)},\\[6pt]
O_4 &\propto \lambda^{-\frac{3}{2}}(N^{-1}+N^{-\beta})^{\frac{1}{2}} \propto N^{-\frac{1}{2}(1-3\alpha)} + N^{-\frac{1}{2}(\beta-3\alpha)}.
\end{aligned}
\right.
\end{equation*}

Therefore, if \(\beta > 1\), the convergence rate of the total reconstruction error is given by
\[
N^{-\min\left\{\alpha\gamma,\; \frac{1}{2}(1-3\alpha)\right\}}.
\]

On the other hand, if \(3\alpha < \beta \leq 1\), the convergence rate of the total reconstruction error becomes
\[
N^{-\min\left\{\alpha\gamma,\; \frac{1}{2}(\beta - 3\alpha)\right\}}.
\]
\end{proof}

\begin{remark}[{\bf Numerical considerations}]\normalfont
The condition \(\beta = 1\) in \eqref{parameter-scale} implies that the time and space discretizations have the same order. According to the Theorem \ref{Total reconstruction error}, when \(\beta\) lies in the interval \(3\alpha < \beta < 1\), the overall convergence rate is non-decreasing as \(\beta\) increases. This indicates that increasing \(\beta\) can accelerate convergence. 
However, when \(\beta\) reaches \(1\), the convergence rate no longer changes. This suggests that further refining the time discretization beyond this point becomes ineffective for the convergence speed.   
\end{remark}

\subsection{Stability estimates for the Wasserstein Hamiltonian flows}
Theorem \ref{Total reconstruction error} ensures that the structure-preserving kernel estimator yields potential and interaction functions that closely approximate the data-generating potential and interaction functions in the RKHS norm in the presence of enough data. We now demonstrate that the flow of the learned Wasserstein Hamiltonian system uniformly approximates the flow of the underlying Wasserstein Hamiltonian system, thereby justifying the use of the RKHS norm. 
A similar derivation could be extended to Wasserstein gradient flow systems; see also \cite{carrillo2025sparse}.

\begin{proposition}[{\bf Hamiltonian flow as density transition equation} {\cite[Proposition 2]{chow2020wasserstein}}] \label{hamiltonian-flow}
Let $\left(X_t\right)_{0 \leq t<T}$ be a smooth diffeomorphism in $\mathbb{T}^d$ with $X_0=I d, \dot{X}_0=\nabla \Phi (I d)$ for some smooth function $\Phi \in C ^1(\mathbb{T}^d)$. Suppose that $X_t$ satisfies
\begin{align}\label{flow}
\frac{\mathrm d^2}{\mathrm d t^2} X_t=-\nabla (V+\rho_t*W)(X_t),
\end{align}
where $\rho_t=X_t \# \mu$ is the push-forward by the flow $X_t$ of some initial density $\mu \in \mathcal{P}_{+}\left(\mathbb{T}^d\right)$. Then, the density path $\rho_t=\rho(t, \cdot)$ is a solution of the Wasserstein Hamiltonian flow equation \eqref{Hamiltonian-flow} with $U=0$. 
\end{proposition}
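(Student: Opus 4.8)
The plan is to route the argument through the first-order (Hamiltonian) form \eqref{Ham-for1} of the Wasserstein Hamiltonian flow. I would first produce a time-dependent velocity potential $\phi_t\in C^\infty(\mathbb{T}^d)/\mathbb{R}$ with $\phi_0=\Phi$ such that the pair $(\rho_t,\phi_t)$ solves
\[
\partial_t\rho_t=-\nabla\cdot(\rho_t\nabla\phi_t),\qquad \partial_t\phi_t=-\tfrac12|\nabla\phi_t|^2-(V+W*\rho_t),
\]
and then eliminate $\phi_t$ to recover \eqref{Hamiltonian-flow} with $U=0$, using the explicit formula for the Christoffel symbol $\Gamma_W$ recalled above. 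For Step~1, since $\rho_t=X_t\#\mu$ and $X_t$ is a diffeomorphism, I would introduce the Eulerian velocity $v_t:=\dot X_t\circ X_t^{-1}$, which is smooth on $\mathbb{T}^d$, and differentiate $\int_{\mathbb{T}^d}f\,\rho_t=\int_{\mathbb{T}^d}f(X_t)\,\mu$ in $t$ for arbitrary $f\in C^\infty(\mathbb{T}^d)$ to get the continuity equation $\partial_t\rho_t+\nabla\cdot(\rho_t v_t)=0$.

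\textbf{The velocity is a gradient.} The crux is showing that $v_t=\nabla\phi_t$ for a single-valued $\phi_t$. The key observation is that $X_t$ is the configuration component of the Hamiltonian flow of the prescribed time-dependent Hamiltonian $H(t,x,p)=\tfrac12|p|^2+V(x)+(W*\rho_t)(x)$ with initial momentum $p_0=\dot X_0=\nabla\Phi$: the equations $\dot x=\partial_pH=p$, $\dot p=-\partial_xH$ reproduce exactly $\ddot X_t=-\nabla(V+W*\rho_t)(X_t)$. Hence $L_t:=\{(X_t(a),\dot X_t(a)):a\in\mathbb{T}^d\}$ is the image under the (time-dependent) symplectomorphism $\Psi_t$ of the exact Lagrangian submanifold $\{(a,\nabla\Phi(a)):a\in\mathbb{T}^d\}$, so it is itself Lagrangian; since $X_t$ is a diffeomorphism, $L_t$ is a graph over $\mathbb{T}^d$, and the action integral
\[
\phi_t(X_t(a)):=\Phi(a)+\int_0^t\Big(\tfrac12|\dot X_s(a)|^2-(V+W*\rho_s)(X_s(a))\Big)\,\mathrm ds
\]
defines a single-valued function $\phi_t$ on $\mathbb{T}^d$ (using that $a\mapsto X_t(a)$ is a bijection and that $\Phi$ is globally defined on $\mathbb{T}^d$) with $\nabla\phi_t\circ X_t=\dot X_t$, i.e.\ $v_t=\nabla\phi_t$; the continuity equation then reads $\partial_t\rho_t=-\nabla\cdot(\rho_t\nabla\phi_t)$.

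\textbf{Hamilton--Jacobi and elimination of $\phi_t$.} Differentiating the action integral along characteristics shows, by standard Hamilton--Jacobi theory, that $\phi_t$ solves $\partial_t\phi_t+\tfrac12|\nabla\phi_t|^2+V+W*\rho_t=0$ modulo an additive constant, which is the second equation of \eqref{Ham-for1} since $\delta\mathcal F/\delta\rho=V+W*\rho$ for $\mathcal F$ as in \eqref{potential-energy} with $U=0$. Differentiating $\partial_t\rho_t=-\nabla\cdot(\rho_t\nabla\phi_t)$ in $t$ and inserting $\nabla\partial_t\phi_t=-\tfrac12\nabla|\nabla\phi_t|^2-\nabla(V+W*\rho_t)$ gives
\[
\partial_{tt}\rho_t=-\nabla\cdot(\partial_t\rho_t\,\nabla\phi_t)+\tfrac12\nabla\cdot(\rho_t\nabla|\nabla\phi_t|^2)+\Delta_{\rho_t}(V+W*\rho_t).
\]
Because $\partial_t\rho_t=-\Delta_{\rho_t}\phi_t$ has zero mean, one has $\Delta_{\rho_t}^{\dagger}\partial_t\rho_t=-\phi_t$ in $C^\infty(\mathbb{T}^d)/\mathbb{R}$; substituting this into the explicit expression for $\Gamma_W$ yields precisely $\Gamma_W(\partial_t\rho_t,\partial_t\rho_t)=\nabla\cdot(\partial_t\rho_t\,\nabla\phi_t)-\tfrac12\nabla\cdot(\rho_t\nabla|\nabla\phi_t|^2)$, so the displayed identity is exactly \eqref{Hamiltonian-flow} with $U=0$.

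\textbf{The hard part.} The main obstacle is the construction step: I must verify that the Eulerian velocity remains the gradient of a \emph{single-valued} potential on $\mathbb{T}^d$ for every $t<T$, not merely a closed one-form. This is exactly where the hypotheses are used. The diffeomorphism property of $X_t$ forbids caustics and keeps $L_t$ a graph over configuration space, while the assumption that $\Phi$ is globally defined on $\mathbb{T}^d$ guarantees that the initial circulation $\oint_\gamma\dot X_0\cdot\mathrm d\ell$ vanishes around \emph{every} loop $\gamma$, contractible or not; this vanishing is propagated in time because the force $-\nabla(V+W*\rho_t)$ is a gradient (Kelvin's circulation theorem), so $v_t$ stays exact. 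Once this is in place, the remaining steps are routine transport-equation and Hamilton--Jacobi manipulations.
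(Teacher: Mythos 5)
The paper does not actually prove this proposition: it is imported verbatim from \cite[Proposition 2]{chow2020wasserstein} and used as a black box in the stability analysis, so there is no in-paper argument to compare against. Your reconstruction is essentially the standard characteristics/Madelung-type proof and, as far as I can check, it is correct: the push-forward identity gives the continuity equation $\partial_t\rho_t=-\nabla\cdot(\rho_t v_t)$ for $v_t=\dot X_t\circ X_t^{-1}$; the observation that $X_t$ solves the characteristic system of $H(t,x,p)=\tfrac12|p|^2+V+(W*\rho_t)$ with exact initial momentum $d\Phi$, together with the graph property supplied by the diffeomorphism hypothesis and the vanishing of circulation (or, equivalently, the explicit action integral), yields a single-valued $\phi_t$ with $v_t=\nabla\phi_t$ and the Hamilton--Jacobi equation $\partial_t\phi_t=-\tfrac12|\nabla\phi_t|^2-(V+W*\rho_t)$; and your elimination step is consistent with the paper's conventions, since $\partial_t\rho_t=-\Delta_{\rho_t}\phi_t$ gives $\Delta_{\rho_t}^\dagger\partial_t\rho_t=-\phi_t$ in $C^\infty(\mathbb{T}^d)/\mathbb{R}$, and substituting this into the displayed formula for $\Gamma_W$ indeed produces $\nabla\cdot(\partial_t\rho_t\nabla\phi_t)-\tfrac12\nabla\cdot(\rho_t\nabla|\nabla\phi_t|^2)$, so the identity you derive is exactly \eqref{Hamiltonian-flow} with $U=0$.

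Two points you assert rather than prove, and which a full write-up should include: (i) the identity $\nabla\phi_t\circ X_t=\dot X_t$ for the action-integral definition of $\phi_t$ requires the first-variation computation (differentiate $S(t,a)=\Phi(a)+\int_0^t L\,\mathrm ds$ in $a$, integrate by parts using the Euler--Lagrange equation $\ddot X_s=-\nabla(V+W*\rho_s)(X_s)$ and the initial data $X_0=\mathrm{Id}$, $\dot X_0=\nabla\Phi$, then compose with $X_t^{-1}$); this is routine but it is the step on which the single-valuedness claim actually rests, and it makes the Kelvin-circulation discussion redundant. (ii) You should note that $\rho_t=X_t\#\mu$ remains a smooth positive density (change of variables with the diffeomorphism $X_t$), so that all expressions $\Delta_{\rho_t}$, $\Delta_{\rho_t}^\dagger$ are well defined. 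Neither point is a gap in the idea, only in the level of detail.
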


\begin{remark}
\normalfont 
The flow map $(X_t)_{t\in[0,T]}$ of equation \eqref{flow} on $\mathbb{T}^d$ with $X_0=I d, \dot{X}_0=\nabla \Phi(Id)$ can be represented as 
\begin{align}\label{flow-rep}
    X_t(\mathbf x)=\mathbf x +\int_0^t\nabla \Phi(\mathbf x)\mathrm ds-\int_{0}^t\int_0^s\nabla (V+W*\rho_{\tau})(X_\tau(\mathbf x)) \mathrm d\tau\mathrm ds.
\end{align}
\end{remark}

In the following two lemmas, we show some properties of the flow map of equation \eqref{flow}.
\begin{lemma}\label{Lip-flow}
Let $(X_t)_{t\in[0,T]}$ be the flow map of equation \eqref{flow} with initial conditions $X_0=I d, \dot{X}_0=\nabla \Phi(Id)$ for some smooth function $\Phi\in C^2(\mathbb{T}^d)$. If $V,W\in C^2(\mathbb{T}^d)$, then for all $t\in[0,T]$, the map $X_t$ is Lipschitz with Lipschitz constant
\begin{align*}
L_{X_t}=\left(1+td\|\Phi\|_{C^2}\right)\exp\left\{\frac{1}{2}d\left(\|V\|_{C^2}+\|W\|_{C^2}\right)t^2 \right\}.
\end{align*}
\end{lemma}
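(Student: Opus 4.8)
The plan is to estimate the Lipschitz constant of $X_t$ directly from the integral representation \eqref{flow-rep}, combining the triangle inequality with a Grönwall-type argument applied to the displacement $|X_t(\mathbf x) - X_t(\mathbf y)|$. First I would fix $\mathbf x, \mathbf y \in \mathbb{T}^d$ and set $e(t) := |X_t(\mathbf x) - X_t(\mathbf y)|$. Using \eqref{flow-rep}, I would write
\[
e(t) \le |\mathbf x - \mathbf y| + t \big|\nabla\Phi(\mathbf x) - \nabla\Phi(\mathbf y)\big| + \int_0^t \int_0^s \big|\nabla(V + W*\rho_\tau)(X_\tau(\mathbf x)) - \nabla(V + W*\rho_\tau)(X_\tau(\mathbf y))\big|\, \mathrm d\tau\, \mathrm ds.
\]
The first two terms are bounded by $(1 + t\,\mathrm{Lip}(\nabla\Phi))\,|\mathbf x - \mathbf y| \le (1 + t d \|\Phi\|_{C^2})\,|\mathbf x - \mathbf y|$, using that $\mathrm{Lip}(\nabla\Phi) \le d\|\Phi\|_{C^2}$ on the torus (the factor $d$ coming from passing from the Hessian operator norm to the $C^2$-norm that controls all second partials). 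For the integrand, since $V, W \in C^2$ one has $\mathrm{Lip}(\nabla V) \le d\|V\|_{C^2}$ and $\mathrm{Lip}(\nabla(W*\rho_\tau)) \le d\|W*\rho_\tau\|_{C^2} \le d\|W\|_{C^2}$ (because $\|W*\rho_\tau\|_{C^2} \le \|W\|_{C^2}\|\rho_\tau\|_{L^1} = \|W\|_{C^2}$, as $\rho_\tau$ is a probability density), so the integrand is bounded by $d(\|V\|_{C^2} + \|W\|_{C^2})\, e(\tau)$.

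This yields the integral inequality
\[
e(t) \le (1 + t d\|\Phi\|_{C^2})\,|\mathbf x - \mathbf y| + d(\|V\|_{C^2} + \|W\|_{C^2}) \int_0^t \int_0^s e(\tau)\, \mathrm d\tau\, \mathrm ds.
\]
The double integral $\int_0^t\int_0^s e(\tau)\,\mathrm d\tau\,\mathrm ds = \int_0^t (t-\tau) e(\tau)\,\mathrm d\tau$, which is the structure one gets for a second-order ODE. I would then invoke a second-order Grönwall lemma: if $e(t) \le a(t) + c \int_0^t (t-\tau) e(\tau)\,\mathrm d\tau$ with $a$ nondecreasing and $c > 0$, then $e(t) \le a(t)\cosh(\sqrt{c}\,t) \le a(t)\exp(\tfrac12 c t^2)$, the last bound using $\cosh(u) \le e^{u^2/2}$. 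Applying this with $a(t) = (1 + td\|\Phi\|_{C^2})|\mathbf x - \mathbf y|$ and $c = d(\|V\|_{C^2} + \|W\|_{C^2})$ gives exactly
\[
e(t) \le \left(1 + t d\|\Phi\|_{C^2}\right)\exp\!\left\{\tfrac{1}{2} d\big(\|V\|_{C^2} + \|W\|_{C^2}\big) t^2\right\} |\mathbf x - \mathbf y|,
\]
which is the claimed Lipschitz constant $L_{X_t}$.

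The main obstacle I anticipate is the second-order Grönwall step: the standard Grönwall lemma handles $e(t) \le a(t) + c\int_0^t e(\tau)\,\mathrm d\tau$, but here the kernel $(t-\tau)$ reflects the two nested time integrations, so one needs the sharper comparison with $\cosh(\sqrt c\, t)$ rather than $e^{ct}$. I would either cite a standard reference for this variant or prove it quickly by differentiating twice: setting $F(t) := \int_0^t (t-\tau)e(\tau)\,\mathrm d\tau$, one has $F''(t) = e(t) \le a(t) + cF(t)$, and comparing with the solution of $G'' = cG + a$, $G(0) = F'(0) = 0$, via the variation-of-constants formula for $\cosh$, yields the bound. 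A minor secondary point to check carefully is the inequality $\mathrm{Lip}_{\mathbb{T}^d}(\nabla f) \le d\|f\|_{C^2}$ and the estimate $\|W*\rho_\tau\|_{C^2}\le \|W\|_{C^2}$, both of which are elementary but should be stated explicitly since they pin down the constant. I would also note that smoothness of $\Phi, V, W$ guarantees existence and uniqueness of the flow on $[0,T]$, so $e(t)$ is well-defined and continuous, legitimizing the Grönwall argument.
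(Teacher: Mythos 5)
Your proposal is correct and follows the same overall strategy as the paper: both start from the integral representation \eqref{flow-rep}, bound the increments of $\nabla\Phi$, $\nabla V$, and $\nabla(W*\rho_\tau)$ by $d\|\Phi\|_{C^2}$, $d\|V\|_{C^2}$, $d\|W\|_{C^2}$ times the displacement (using $\|\rho_\tau\|_{L^1}=1$ for the convolution term, exactly as the paper does implicitly), and close with a Gr\"onwall argument. The difference lies in the Gr\"onwall step. The paper first coarsens the double time integral via $\int_0^t\int_0^s(\cdot)\,\mathrm d\tau\,\mathrm ds\le t\int_0^t(\cdot)\,\mathrm d\tau$, arriving at $e(t)\le a(t)+c\,t\int_0^t e(\tau)\,\mathrm d\tau$ with $c=d\left(\|V\|_{C^2}+\|W\|_{C^2}\right)$, and then invokes the classical Gr\"onwall lemma to read off $\exp\{\tfrac12 c t^2\}$; strictly speaking, since the coefficient $ct$ involves the outer variable, a direct application of that lemma (freezing $t$ on a fixed interval) only yields $\exp\{c t^2\}$, and obtaining the factor $\tfrac12$ this way amounts to tacitly replacing the outer $t$ by the integration variable. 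Your version instead retains the exact Volterra kernel $\int_0^t(t-\tau)e(\tau)\,\mathrm d\tau$ and compares with its resolvent, giving $e(t)\le a(t)\cosh\left(\sqrt c\,t\right)\le a(t)\exp\{\tfrac12 c t^2\}$ for nondecreasing $a$, which is precisely what is needed to justify the constant as stated. So your route is essentially the paper's argument with a sharper Gr\"onwall step, and it is in fact the cleaner derivation of the claimed Lipschitz constant, at the modest cost of proving or citing the second-order Gr\"onwall variant you describe (your sketch via $F''(t)=e(t)\le a(t)+cF(t)$ and variation of constants is adequate).
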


\begin{lemma}\label{dist-control}
Let $(X_t)_{t\in[0,T]}, (\widehat X_t)_{t\in[0,T]}$ be two flow maps of equation \eqref{flow} with the same initial conditions $X_0=Id, \dot{X}_0=\nabla \Phi(Id)$ for some smooth function $\Phi\in C^2(\mathbb{T}^d)$, but by different functions $V,W$ and $\widehat{V}, \widehat{W}$, respectively.  If $V,W,\widehat{V}, \widehat{W}\in C^2(\mathbb{T}^d)$, then for all $t\in[0,T]$, the distance of these two maps $X_t$ and $\widehat X_t$ can be controlled as follows
\begin{align*}
\left\|X_t(\mathbf{x})-\widehat{X}_t(\mathbf{x})\right\|^2 \leq &\left(4t^2d^2\left(\| V- \widehat{V}\|^2_{C^2}+\|W -\widehat{W}\|_{C^2}^2\right)+4td\|\widehat W\|_{C^2} \int_0^t W_2(\rho_s,\widehat \rho_s) \mathrm{d} s\right)\\
&\exp\left\{2t^2 d^2\left(\|\widehat V\|^2_{C^2}+\|\widehat W\|^2_{C^2}\right)\right\}.
\end{align*}
\end{lemma}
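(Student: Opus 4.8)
The plan is to derive a Grönwall-type inequality for the squared distance $D_t(\mathbf{x}) := \|X_t(\mathbf{x}) - \widehat{X}_t(\mathbf{x})\|^2$ by subtracting the integral representations~\eqref{flow-rep} for the two flows and estimating each resulting term. First I would write
\begin{align*}
X_t(\mathbf{x}) - \widehat{X}_t(\mathbf{x}) = -\int_0^t\int_0^s\Big[\nabla(V+W*\rho_\tau)(X_\tau(\mathbf{x})) - \nabla(\widehat{V}+\widehat{W}*\widehat{\rho}_\tau)(\widehat{X}_\tau(\mathbf{x}))\Big]\,\mathrm{d}\tau\,\mathrm{d}s,
\end{align*}
since the initial-condition terms $\mathbf{x}$ and $\int_0^t\nabla\Phi(\mathbf{x})\,\mathrm{d}s$ cancel. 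Then I would split the integrand into three pieces by inserting intermediate terms: (a) $\nabla V(X_\tau) - \nabla\widehat{V}(X_\tau)$, controlled by $\|V - \widehat{V}\|_{C^2}$; (b) $\nabla(W*\rho_\tau)(X_\tau) - \nabla(\widehat{W}*\rho_\tau)(X_\tau)$, controlled by $\|W-\widehat{W}\|_{C^2}$ (using that $\rho_\tau$ is a probability density, so convolution does not increase the sup-norm of derivatives); and (c) the remaining ``same-coefficient'' difference $\nabla(\widehat{V}+\widehat{W}*\rho_\tau)(X_\tau) - \nabla(\widehat{V}+\widehat{W}*\widehat{\rho}_\tau)(\widehat{X}_\tau)$, which I further split into $\nabla(\widehat{V}+\widehat{W}*\rho_\tau)(X_\tau) - \nabla(\widehat{V}+\widehat{W}*\rho_\tau)(\widehat{X}_\tau)$ — a Lipschitz term bounded by $d(\|\widehat{V}\|_{C^2}+\|\widehat{W}\|_{C^2})\|X_\tau(\mathbf{x})-\widehat{X}_\tau(\mathbf{x})\|$ — plus the convolution-difference term $\nabla(\widehat{W}*\rho_\tau)(\widehat{X}_\tau) - \nabla(\widehat{W}*\widehat{\rho}_\tau)(\widehat{X}_\tau)$.

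The key observation for piece (c)'s convolution difference is that $|\nabla(\widehat{W}*(\rho_\tau-\widehat{\rho}_\tau))(y)| = \big|\int (\nabla\widehat{W})(y-z)(\rho_\tau-\widehat{\rho}_\tau)(z)\,\mathrm{d}z\big|$, and since $\nabla\widehat{W}$ is Lipschitz with constant controlled by $\|\widehat{W}\|_{C^2}$ (or more precisely $d\|\widehat{W}\|_{C^2}$ accounting for dimension), this is a difference of integrals of a test function against the two measures $\rho_\tau = X_\tau\#\mu$ and $\widehat{\rho}_\tau = \widehat{X}_\tau\#\mu$. By the Kantorovich–Rubinstein dual characterization of $W_1 \le W_2$, this is bounded by $d\|\widehat{W}\|_{C^2}\,W_2(\rho_\tau,\widehat{\rho}_\tau)$. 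Collecting all four bounds, using $(\sum a_i)^2 \le 4\sum a_i^2$ on the four pieces, and bounding the iterated integrals $\int_0^t\int_0^s(\cdot)\,\mathrm{d}\tau\,\mathrm{d}s$ by $t\int_0^t(\cdot)\,\mathrm{d}\tau$, I obtain an inequality of the form
\begin{align*}
D_t(\mathbf{x}) \le 4t^2d^2\big(\|V-\widehat{V}\|_{C^2}^2 + \|W-\widehat{W}\|_{C^2}^2\big) + 4td\|\widehat{W}\|_{C^2}\int_0^t W_2(\rho_s,\widehat{\rho}_s)\,\mathrm{d}s + 2t d^2\big(\|\widehat{V}\|_{C^2}^2+\|\widehat{W}\|_{C^2}^2\big)\int_0^t D_\tau(\mathbf{x})\,\mathrm{d}\tau,
\end{align*}
after absorbing constants appropriately.

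Finally I would apply Grönwall's inequality (in integral form) to close the estimate: with the ``constant'' part $g(t) := 4t^2d^2(\|V-\widehat{V}\|_{C^2}^2+\|W-\widehat{W}\|_{C^2}^2) + 4td\|\widehat{W}\|_{C^2}\int_0^t W_2(\rho_s,\widehat{\rho}_s)\,\mathrm{d}s$ being nondecreasing in $t$ and the kernel $h(\tau) := 2\tau d^2(\|\widehat{V}\|_{C^2}^2+\|\widehat{W}\|_{C^2}^2)$, Grönwall gives $D_t(\mathbf{x}) \le g(t)\exp\big(\int_0^t h(\tau)\,\mathrm{d}\tau\big) = g(t)\exp\big(t^2d^2(\|\widehat{V}\|_{C^2}^2+\|\widehat{W}\|_{C^2}^2)\big)$, matching the stated bound up to the factor of $2$ in the exponent, which comes from a slightly cruder bound on the iterated integral. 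The main obstacle I anticipate is the convolution-difference term in (c): one must be careful that the naive Lipschitz-in-$y$ bound on $\nabla(\widehat{W}*\rho_\tau)$ only yields a $D_\tau$-type term, whereas the genuinely new contribution — the difference between pushing forward $\mu$ by $X_\tau$ versus $\widehat{X}_\tau$ inside the convolution — must be routed through the Wasserstein distance via Kantorovich–Rubinstein rather than through a pointwise Lipschitz estimate, since $X_\tau$ and $\widehat{X}_\tau$ differ and we do not want to re-introduce $\sup_{\mathbf{x}}D_\tau(\mathbf{x})$ prematurely. Keeping the $W_2(\rho_s,\widehat{\rho}_s)$ term separate (rather than bounding it by $\sup_{\mathbf{x}}\|X_s(\mathbf{x})-\widehat{X}_s(\mathbf{x})\|$) is what makes the stated inequality usable as a building block for a subsequent uniform-in-time stability argument.
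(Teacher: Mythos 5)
Your proposal follows essentially the same route as the paper's proof: cancel the initial-condition terms in the integral representation \eqref{flow-rep}, split the integrand into four pieces (kernel differences $\nabla V-\nabla\widehat V$ and $\nabla W-\nabla\widehat W$, a Lipschitz-in-position term giving the Gr\"onwall kernel, and the convolution against $\rho_s-\widehat\rho_s$ handled by Kantorovich--Rubinstein with $W_1\le W_2$), bound the iterated integral by $t\int_0^t$, and close with Gr\"onwall; your bookkeeping even yields a slightly smaller exponent than the stated $2t^2d^2(\|\widehat V\|_{C^2}^2+\|\widehat W\|_{C^2}^2)$, which still implies the claimed bound.
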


\begin{proposition}[{\bf From discrete data to continuous Wasserstein flows}]\label{stability estimate}
Suppose that $V,W$ satisfy the source conditions. Let $\widehat V:=\widehat{V}_{\lambda,NL},\widehat W:=\widehat{V}_{\lambda,NL}$ be their kernel estimators with Mercel kernels $K_1,K_2\in C^4(\mathbb T^d\times\mathbb T^d)$, respectively. For initial data $\rho_0,\widehat\rho_0\in\mathcal{P}_2(\mathbb{T}^d)$, let $\rho:=(\rho_t)_{t\in[0,T]},\widehat\rho:=(\widehat\rho_t)_{t\in[0,T]}\in C^2\left([0,T],\mathcal{P}_2(\mathbb{T}^d)\right)$ be solutions of equation \eqref{Hamiltonian-flow} by functions $V,W$ and $\widehat{V}, \widehat{W}$, respectively. Then we have that 
\begin{align*}
W_2^2\left(\rho_t, \widehat{\rho}_t\right) \leq C_1(t)W_2^2\left(\rho_0, \widehat{\rho}_0\right) +C_2(t)\left(\kappa_1^2\| V- \widehat{V}\|^2_{\mathcal{H}_{K_1}}+\kappa_2^2\|W -\widehat{W}\|_{\mathcal{H}_{K_2}}^2\right) ,
\end{align*}
where $L_{\widehat{X}_t}$ is defined in Lemma \ref{Lip-flow} of the flow map $\widehat{X}_t$, and $C_1(t), C_2(t)$ are defined as
\begin{equation}\label{coeff}
\begin{aligned}
C_1(t)& = L_{\widehat{X}_t} \exp\left\{\int_0^t4sd\|\widehat W\|_{C^2}\exp\left\{2s^2 d^2\left(\|\widehat V\|^2_{C^2}+\|\widehat W\|^2_{C^2}\right)\right\}\mathrm d s\right\}, \\
C_2(t) &=4t^2d^2\exp\left\{2t^2 d^2\left(\|\widehat V\|^2_{C^2}+\|\widehat W\|^2_{C^2}\right)+\int_0^t4sd\|\widehat W\|_{C^2}\exp\left\{2s^2 d^2\left(\|\widehat V\|^2_{C^2}+\|\widehat W\|^2_{C^2}\right)\right\}\mathrm d s\right\}.
\end{aligned}
\end{equation}
\end{proposition}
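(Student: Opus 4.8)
The plan is to combine the Wasserstein distance control of flow maps from Lemma~\ref{dist-control} with a Gr\"onwall-type argument, and then translate the $C^2$-norms of the estimated functions into RKHS norms using the boundedness of the kernel sections. The starting point is Proposition~\ref{hamiltonian-flow}: the two density paths $\rho$ and $\widehat\rho$ are push-forwards $\rho_t=X_t\#\rho_0$ and $\widehat\rho_t=\widehat X_t\#\widehat\rho_0$ of the respective flow maps solving \eqref{flow} for $(V,W)$ and $(\widehat V,\widehat W)$. Because $X_t$ and $\widehat X_t$ share \emph{different} initial densities in general, I would first split the transport cost: couple $\rho_0$ and $\widehat\rho_0$ by an optimal plan $\pi_0$ and use the pair $(X_t(\mathbf x),\widehat X_t(\mathbf y))$ with $(\mathbf x,\mathbf y)\sim\pi_0$ as a (suboptimal) coupling of $\rho_t,\widehat\rho_t$, giving
\begin{align*}
W_2^2(\rho_t,\widehat\rho_t)\le \int \|X_t(\mathbf x)-\widehat X_t(\mathbf y)\|^2\,\mathrm d\pi_0(\mathbf x,\mathbf y)
\le 2\int\|X_t(\mathbf x)-X_t(\mathbf y)\|^2\mathrm d\pi_0 + 2\int\|X_t(\mathbf y)-\widehat X_t(\mathbf y)\|^2\mathrm d\pi_0.
\end{align*}
The first term is bounded by $2L_{X_t}^2 W_2^2(\rho_0,\widehat\rho_0)$ via Lemma~\ref{Lip-flow} (or, more sharply, by $L_{\widehat X_t}^2$ after reversing roles), and the second term is bounded pointwise by Lemma~\ref{dist-control}.

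**Next I would** feed the bound of Lemma~\ref{dist-control} back into the splitting. Writing $g(t):=W_2^2(\rho_t,\widehat\rho_t)$ and abbreviating $E(t):=\exp\{2t^2d^2(\|\widehat V\|_{C^2}^2+\|\widehat W\|_{C^2}^2)\}$, Lemma~\ref{dist-control} gives, after integrating against $\pi_0$,
\begin{align*}
g(t)\le L_{\widehat X_t}\, g(0) + 4t^2d^2\big(\|V-\widehat V\|_{C^2}^2+\|W-\widehat W\|_{C^2}^2\big)E(t) + 4td\|\widehat W\|_{C^2}E(t)\int_0^t g(s)^{1/2}\,\mathrm d s,
\end{align*}
where I have used $W_2(\rho_s,\widehat\rho_s)=g(s)^{1/2}$. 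To close this I would apply a Bihari/Gr\"onwall inequality for the integral term $\int_0^t g(s)^{1/2}\mathrm d s$; the cleanest route is to bound $g(s)^{1/2}\le \tfrac12(1+g(s))$ or to work directly with $h(t):=\sup_{s\le t}g(s)^{1/2}$, and then absorb the linear-in-$g$ contribution by the exponential factor $\exp\{\int_0^t 4sd\|\widehat W\|_{C^2}E(s)\,\mathrm d s\}$. This produces exactly the constants $C_1(t)$ and $C_2(t)$ stated in \eqref{coeff}.

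**The final step** is to replace $\|V-\widehat V\|_{C^2}$ and $\|W-\widehat W\|_{C^2}$ by their RKHS counterparts. Since $\widehat V=\widehat V_{\lambda,NL}\in\mathcal H_{K_1}$ with $K_1\in C^4(\mathbb T^d\times\mathbb T^d)$ (and likewise for $\widehat W$), the differential reproducing property — the same tool underlying Proposition~\ref{rep-pro} — gives for every multi-index $|\alpha|\le 2$ that $\partial^\alpha(V-\widehat V)(\mathbf x)=\langle V-\widehat V,\partial^\alpha K_1(\mathbf x,\cdot)\rangle_{\mathcal H_{K_1}}$, hence $\|V-\widehat V\|_{C^2}\le \kappa_1\|V-\widehat V\|_{\mathcal H_{K_1}}$ with $\kappa_1$ controlling the relevant derivatives of $K_1$; similarly $\|W-\widehat W\|_{C^2}\le\kappa_2\|W-\widehat W\|_{\mathcal H_{K_2}}$. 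Substituting these into the bound on $g(t)=W_2^2(\rho_t,\widehat\rho_t)$ yields the claimed estimate.

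**The main obstacle** I expect is bookkeeping rather than conceptual: making the two-sided splitting of $W_2^2(\rho_t,\widehat\rho_t)$ interact correctly with Lemma~\ref{dist-control}, whose right-hand side \emph{already} contains a $W_2(\rho_s,\widehat\rho_s)$ term, and then running the Gr\"onwall argument so that the emergent constants match $C_1(t),C_2(t)$ in \eqref{coeff} on the nose (in particular getting $L_{\widehat X_t}$ rather than $L_{X_t}^2$ in front of $W_2^2(\rho_0,\widehat\rho_0)$, which forces a careful choice of which flow to Lipschitz-estimate). A secondary technical point is justifying that the pushed-forward plan is an admissible coupling and that all the integrals against $\pi_0$ are finite, which follows from $\rho_0,\widehat\rho_0\in\mathcal P_2(\mathbb T^d)$ and the Lipschitz bounds on compact $\mathbb T^d$.
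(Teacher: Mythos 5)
Your proposal is correct and follows essentially the same route as the paper's proof: the push-forward representation from Proposition \ref{hamiltonian-flow}, a split of $W_2^2(\rho_t,\widehat\rho_t)$ into a same-flow/different-initial-data term (handled via the push-forward of the optimal plan and the Lipschitz bound of Lemma \ref{Lip-flow} for $\widehat X_t$) plus a same-initial-data/different-flow term (handled by Lemma \ref{dist-control}), followed by Gr\"onwall and the differential reproducing property to replace $\|V-\widehat V\|_{C^2}$, $\|W-\widehat W\|_{C^2}$ by $\kappa_1\|V-\widehat V\|_{\mathcal H_{K_1}}$, $\kappa_2\|W-\widehat W\|_{\mathcal H_{K_2}}$. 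The only deviations are bookkeeping, exactly as you anticipated: the paper splits with the triangle inequality for $W_2$ and estimates $W_2^2\bigl(\widehat X_t\#\rho_0,\widehat X_t\#\widehat\rho_0\bigr)$ directly, which is how $L_{\widehat X_t}$ (rather than your $2L_{X_t}^2$) enters, and it applies Gr\"onwall directly to the inequality containing $\int_0^t W_2(\rho_s,\widehat\rho_s)\,\mathrm d s$ without the Bihari-type adjustment you flag.
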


\section{Numerical experiments}\label{Numerical experiments}

{
In this section, we evaluate the proposed variational kernel learning framework on a collection of inverse problems arising from Wasserstein geometric dynamics. In Subsection~\ref{Wasserstein gradient flow}, we consider the recovery of interaction potentials in Wasserstein gradient flows. 
In Subsection~\ref{Wasserstein_Hamiltonian_flow}, we investigate the identification of external potentials in Wasserstein Hamiltonian flows. 
The numerical experiments are designed to assess reconstruction accuracy, robustness with respect to observation resolution and variational discretization, and the sensitivity of sparse learning approaches to the choice of basis dictionary.

For the proposed kernel method, we use the Gaussian kernels in all experiments,
\begin{align*}
K_\eta (x,y) = \exp\left(-\frac{\|x-y\|^2}{2\eta^2}\right),   
\end{align*}
which is universal in the sense of \cite{micchelli2006universal}. Consequently, the associated reproducing kernel Hilbert space is dense in the space of continuous functions on compact domains, ensuring sufficient approximation flexibility for the recovery problems that we consider.

\noindent{\bf Overview of the numerical experiments.}\quad
The numerical experiments are designed to evaluate the proposed kernel framework across a range of inverse problems for Wasserstein gradient and Hamiltonian flows. The test cases include compactly supported and smooth interaction potentials, as well as quadratic and highly nonconvex external potentials. For each example, observational data are obtained by subsampling high-resolution numerical solutions in space and time. Different observation resolutions are considered to assess robustness with respect to data quality.

We compare the proposed kernel approach with the recent sparse-learning framework in \cite{carrillo2025sparse}. For sparse learning, polynomial, Gaussian, and Fourier dictionaries are employed to examine the dependence of the reconstruction on the choice of basis representation. Reconstruction accuracy is quantified by relative \(L^2\) errors on prescribed evaluation domains.

The experiments demonstrate that the proposed framework accurately recovers both interaction and external potentials from coarse observations and remains effective across a broad range of dissipative and Hamiltonian Wasserstein dynamics.

\subsection{Wasserstein gradient flows}\label{Wasserstein gradient flow}

In this subsection, we investigate the performance of the proposed kernel framework for recovering interaction potentials in Wasserstein gradient flow systems of the form
\begin{equation}\label{ex_grad_flow}
\partial_t \rho_t=\nabla \cdot \Bigl(\rho_t \nabla\bigl(U'(\rho_t)+V+(W*\rho_t)\bigr)\Bigr),\qquad\rho_0=\mu_0.
\end{equation}

The examples considered below are chosen to exhibit different interaction mechanisms and dynamical behaviors. Throughout this subsection, unless otherwise stated, we use the nonlinear internal energy
\begin{align}\label{nonlinear internal energy}
U(\rho)=\frac{\kappa}{m-1}\rho^m.   
\end{align}

In one spatial dimension, \eqref{ex_grad_flow} can be written in conservation form
\[
\partial_t\rho+\partial_xF=0,
\]
where $F=-\rho\,\partial_x\bigl(U'(\rho)+V+W*\rho\bigr)$.
Integrating the conservation law allows the flux \(F\) to be reconstructed from the observed density. Consequently,
\[
-\frac{F}{\rho}-\partial_x\bigl(U'(\rho)+V\bigr)=W'*\rho,
\]
which provides a convolution equation for the interaction force \(W'\). This leads to an alternative flux-based variational formulation in addition to the weak-form \(A^\delta\)-based formulation introduced in Section~\ref{Structure-preserving kernel ridge regression and numerical schemes}. In the numerical experiments below, we compare both formulations.

\noindent{\bf Data generation.}\quad
All datasets in this subsection are generated from numerical solutions of \eqref{ex_grad_flow}. The reference solutions are computed using the second-order finite-volume scheme of \cite{carrillo2015finite} combined with the third-order strong-stability-preserving Runge--Kutta method of \cite{gottlieb2001strong}. For each example, the forward problem is solved on a sufficiently fine space--time mesh with discretization parameters \((\delta x,\delta t)\) over the time interval \([0,T]\), so that the numerical error of the forward solver is negligible compared with the reconstruction error.

The resulting high-resolution solution is regarded as the reference trajectory. Observational data are obtained by uniform subsampling in space and time. Given spatial and temporal subsampling factors \(C_x\) and \(C_t\), the observation mesh sizes are defined by
\[
\Delta x=C_x\delta x,
\qquad
\Delta t=C_t\delta t.
\]
Thus, \(C_x\) and \(C_t\) determine the spatial and temporal resolutions of the available observations. The values of \((\delta x,\delta t)\), \((C_x,C_t)\), and the final time \(T\) are specified in each example.

\noindent{\bf Error metric.}\quad
To quantify reconstruction accuracy, we compare the recovered interaction potential \(\widehat W\) with the exact interaction potential \(W\). Since \(W\) is identifiable only up to an additive constant, the reconstructed potential is first aligned with the exact one by removing its optimal mean shift. The relative reconstruction error is defined by
\begin{equation}
E_{\mathrm{rel}}(I)
=
\frac{
\|\widehat W-W\|_{L^2(I)}
}{
\|W\|_{L^2(I)}
},
\label{eq:relative_error}
\end{equation}
where \(I\subset\mathbb{R}\) denotes the evaluation interval. In the experiments below, errors are reported on several reconstruction intervals.

\subsubsection{Example 1: Nonlinear diffusion and compactly supported attraction interaction}

We first consider the one-dimensional aggregation--diffusion equation \eqref{ex_grad_flow} with \(m=2\), \(\kappa=0.2\), and \(V=0\). The initial condition is given by $\rho_0(x)=\frac{1}{4}\chi_{[-2,2]}(x)$, and the interaction potential is chosen as the compactly supported attractive kernel
\begin{align*}
W(x)=-5(1-|x|)_+.
\end{align*}

\noindent{\bf Density evolution.}\quad The observed data are generated by subsampling a high-resolution numerical solution on the spatial domain $[-6,6]$ over the time interval $[0,0.5]$. 
The reference solution is computed using the spatial and temporal mesh sizes $\delta x=0.01$ and $\delta t=5\times10^{-5}$, respectively. 
We choose subsampling factors $C_x=6$ and $C_t=50$, yielding the observation mesh sizes
\[
\Delta x = 0.06,
\qquad
\Delta t = 2.5\times10^{-3}.
\]
The resulting dynamics exhibit the formation and subsequent merging of particle clusters driven by the attractive interaction potential and weak diffusion. 
A subset of the trajectory data used for training is displayed in Figure~\ref{fig:example1}, where the color map indicates the temporal evolution of the solution.

\begin{figure}
    \centering
    \includegraphics[width=0.5\linewidth]{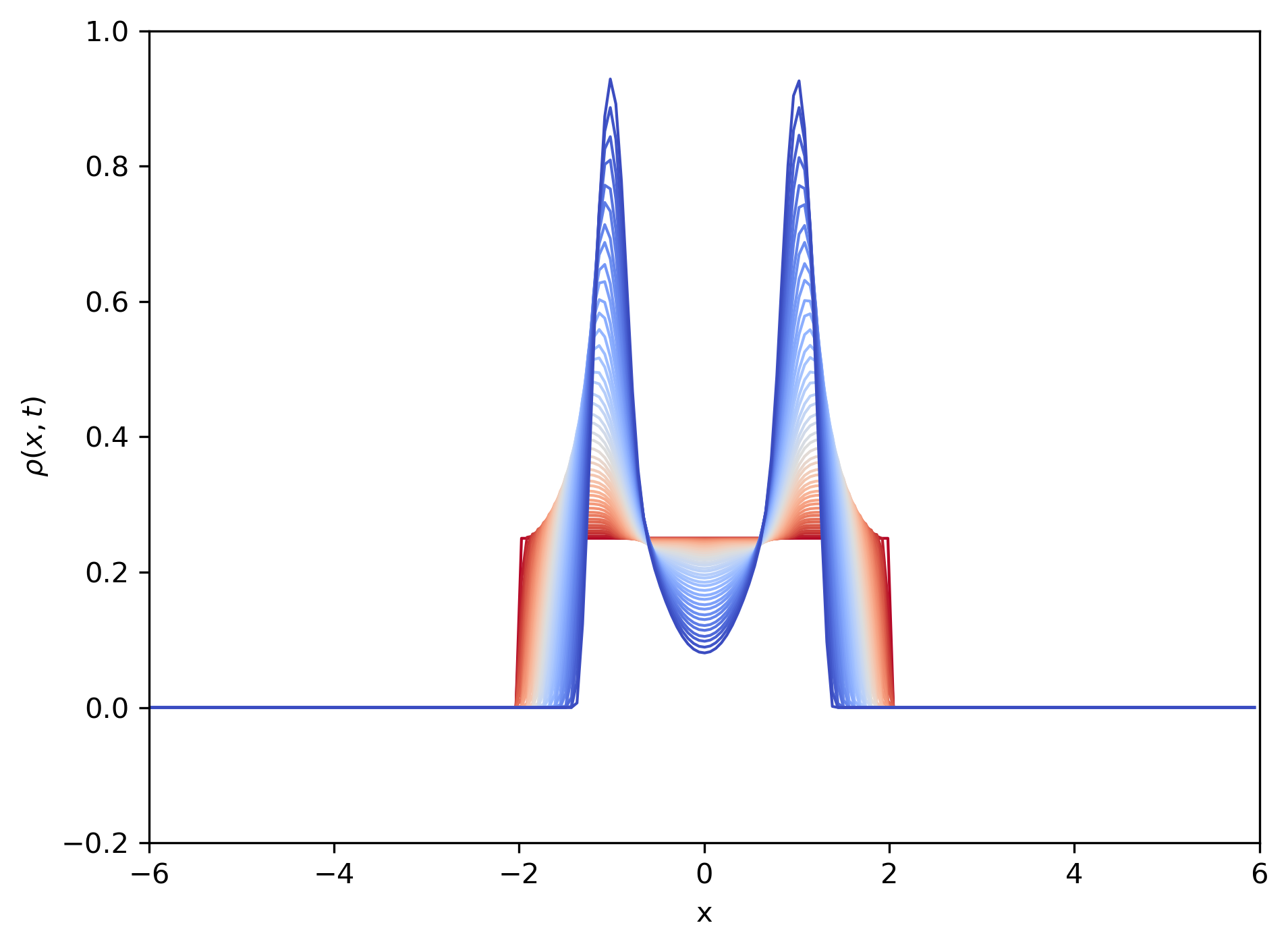}
    \caption{A subset of the trajectory data used for training with observation mesh sizes $\Delta x=0.06$ and $\Delta t=2.5\times10^{-3}$. Colors indicate the progression of time from red to blue.}
    \label{fig:example1}
\end{figure}

\noindent{\bf Training.}\quad
For the kernel-based approach, the interaction potential is approximated in a finite-dimensional reproducing kernel space generated by Gaussian kernel sections. To reduce the computational cost associated with data-centered kernel expansions, we employ a fixed-center approximation and represent the potential as a linear combination of symmetric Gaussian basis functions whose centers are uniformly distributed on $[0,2.5]$. For the $A^\delta$-based formulation, we use $40$ kernel centers, bandwidth $\eta=0.25$, and regularization parameter $\lambda=5\times10^{-2}$. For the flux-based formulation, we use $80$ kernel centers, bandwidth $\eta=0.2$, and regularization parameter $\lambda=10^{-2}$. In both cases, the coefficients are obtained from the corresponding weighted Tikhonov-regularized least-squares problem, and samples with density below $10^{-5}$ are discarded to avoid numerical instabilities in low-density regions.

For the sparse regression approach, the interaction force \(W'\) is represented using localized radial dictionaries on the interval \([0,6]\), which is partitioned into \(12\) uniform subintervals \(\{I_j\}_{j=0}^{11}\). 
The dictionaries take the general form
\[
\left\{\phi_p(|z|)\,\chi_{I_j}(|z|)\,\operatorname{sign}(z):\;p=0,\ldots,P,\;j=0,\ldots,11\right\}.
\]
We consider three choices of the generating functions \(\phi_p\): polynomial functions, Fourier modes, and Gaussian functions, leading respectively to local polynomial, local Fourier, and local Gaussian dictionaries.
The interaction potential \(W\) is recovered by integrating the learned interaction force \(W'\).
The sparse coefficients are identified from the discrete variational formulation using a sparsity-promoting regression procedure followed by restricted least-squares refinement.

\noindent{\bf Results.}\quad
The reconstruction results obtained by the proposed kernel method are shown in Figure~\ref{fig:kernel_result_example1}. Both the \(A^\delta\)-based and flux-based formulations accurately recover the interaction potential from the coarse spatio-temporal observations. The recovered potentials correctly capture the compact support, piecewise linear profile, and interaction strength of the true kernel. Moreover, the two formulations produce visually similar reconstructions, indicating that the proposed kernel framework is robust with respect to the particular variational discretization used in the learning procedure.

\begin{figure}
    \centering
    \includegraphics[width=0.48\linewidth]{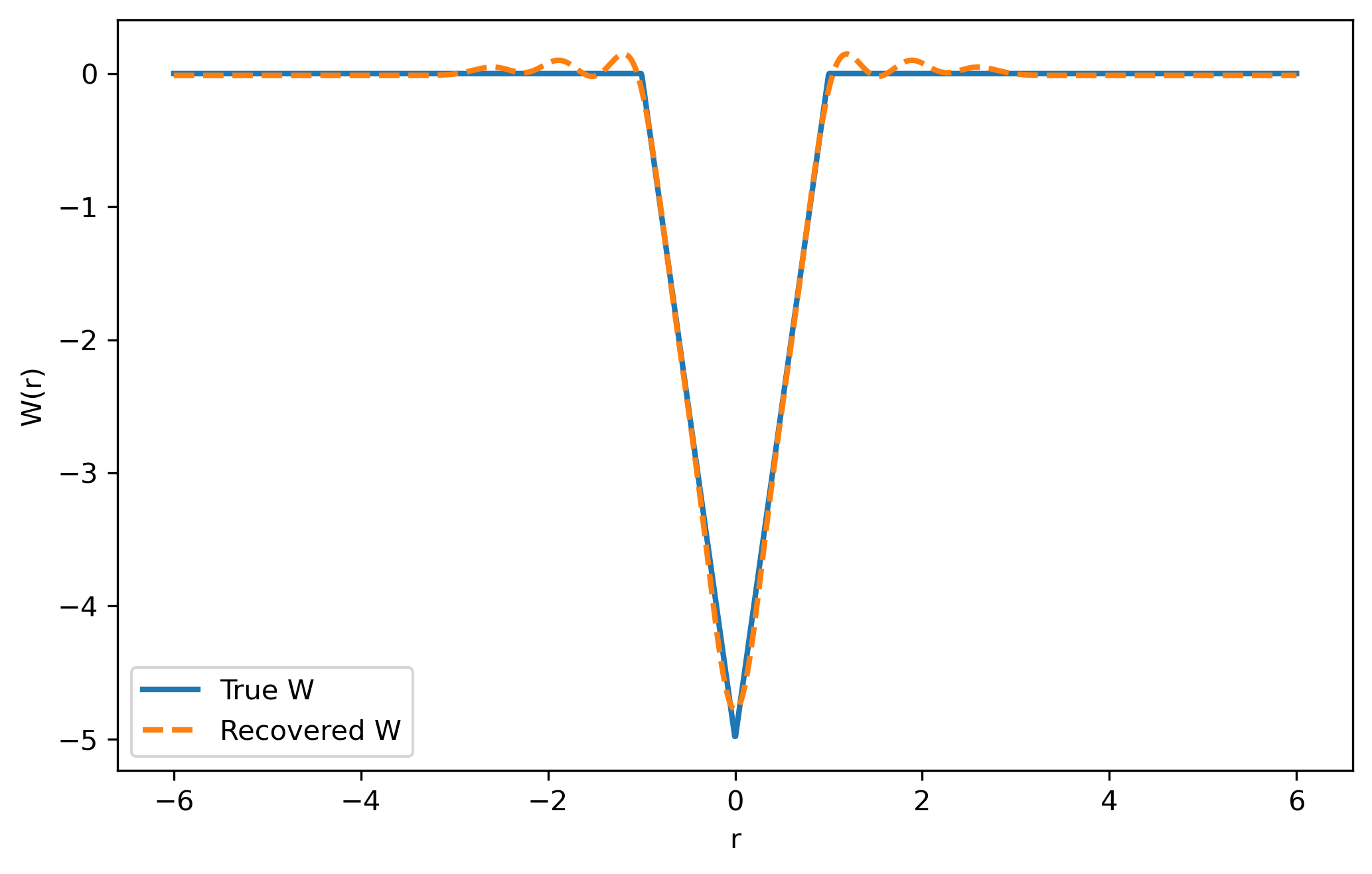}
    \includegraphics[width=0.48\linewidth]{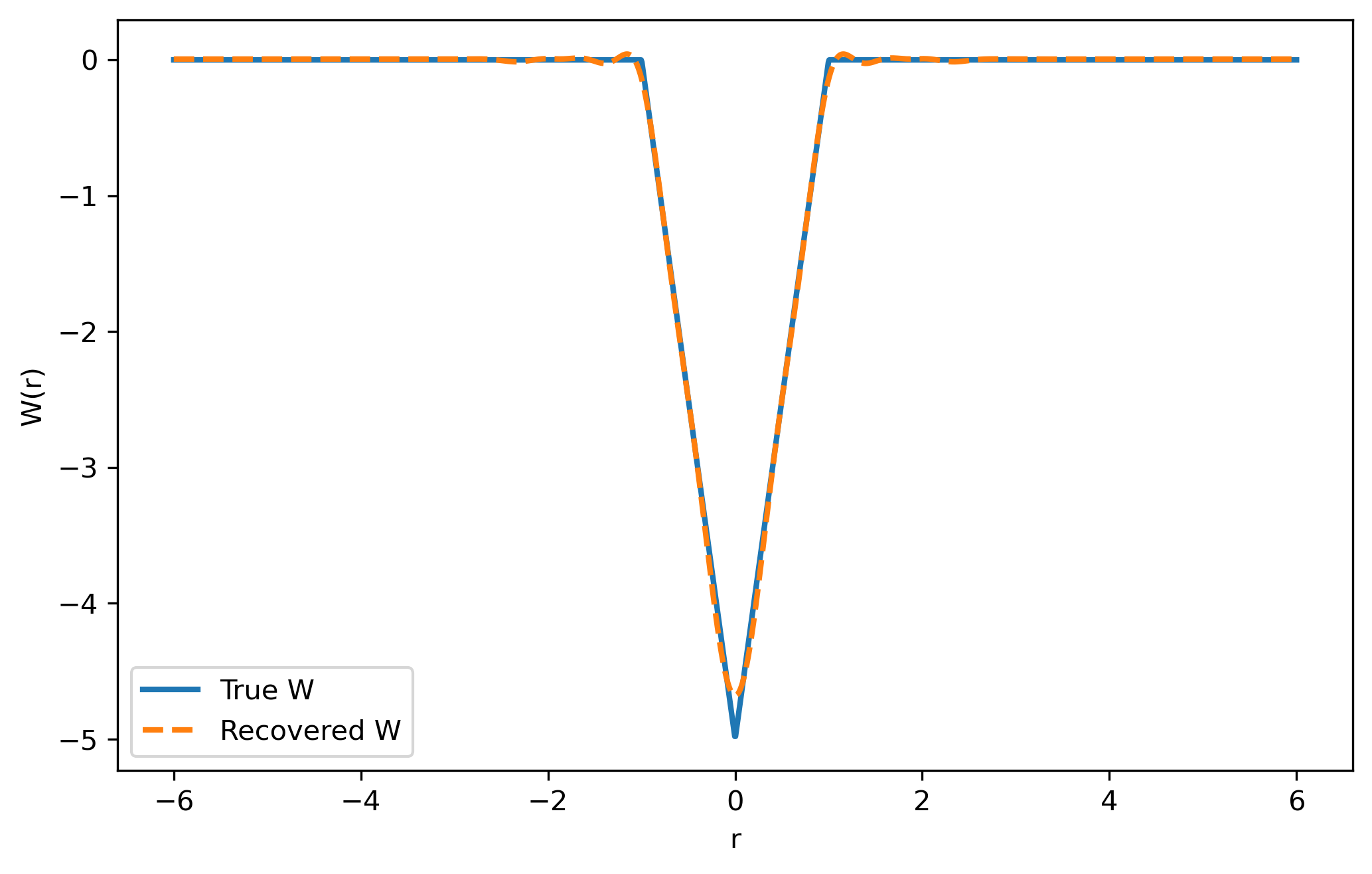}
    \caption{Recovered interaction potentials for Example~1 using the kernel method.
    Left: $A^\delta$-based formulation.
    Right: flux-based formulation.}
    \label{fig:kernel_result_example1}
\end{figure}

Figure~\ref{fig:sparse_result_example1} presents the corresponding sparse-learning reconstructions obtained with different dictionaries. The reconstruction quality depends strongly on the choice of basis. Since the true interaction potential is piecewise linear, the local polynomial dictionary provides the most favorable representation and yields the most accurate sparse reconstruction. In contrast, the Gaussian dictionary captures only the overall shape of the interaction potential and exhibits larger errors near the support boundary, while the Fourier dictionary fails to accurately reproduce the nonsmooth features of the kernel.

The quantitative results reported in Table~\ref{tab: errors_example1} are consistent with these observations. Both kernel formulations achieve smaller reconstruction errors than all sparse-learning approaches on every evaluation interval. In particular, the flux-based formulation attains the lowest errors, with relative \(L^2\) errors below \(3\times10^{-2}\) on the full interval \([-6,6]\). Although the polynomial dictionary substantially outperforms the Gaussian and Fourier dictionaries, its errors remain approximately three times larger than those of the flux-based kernel method.

\begin{figure}
    \centering
    \includegraphics[width=0.3\linewidth]{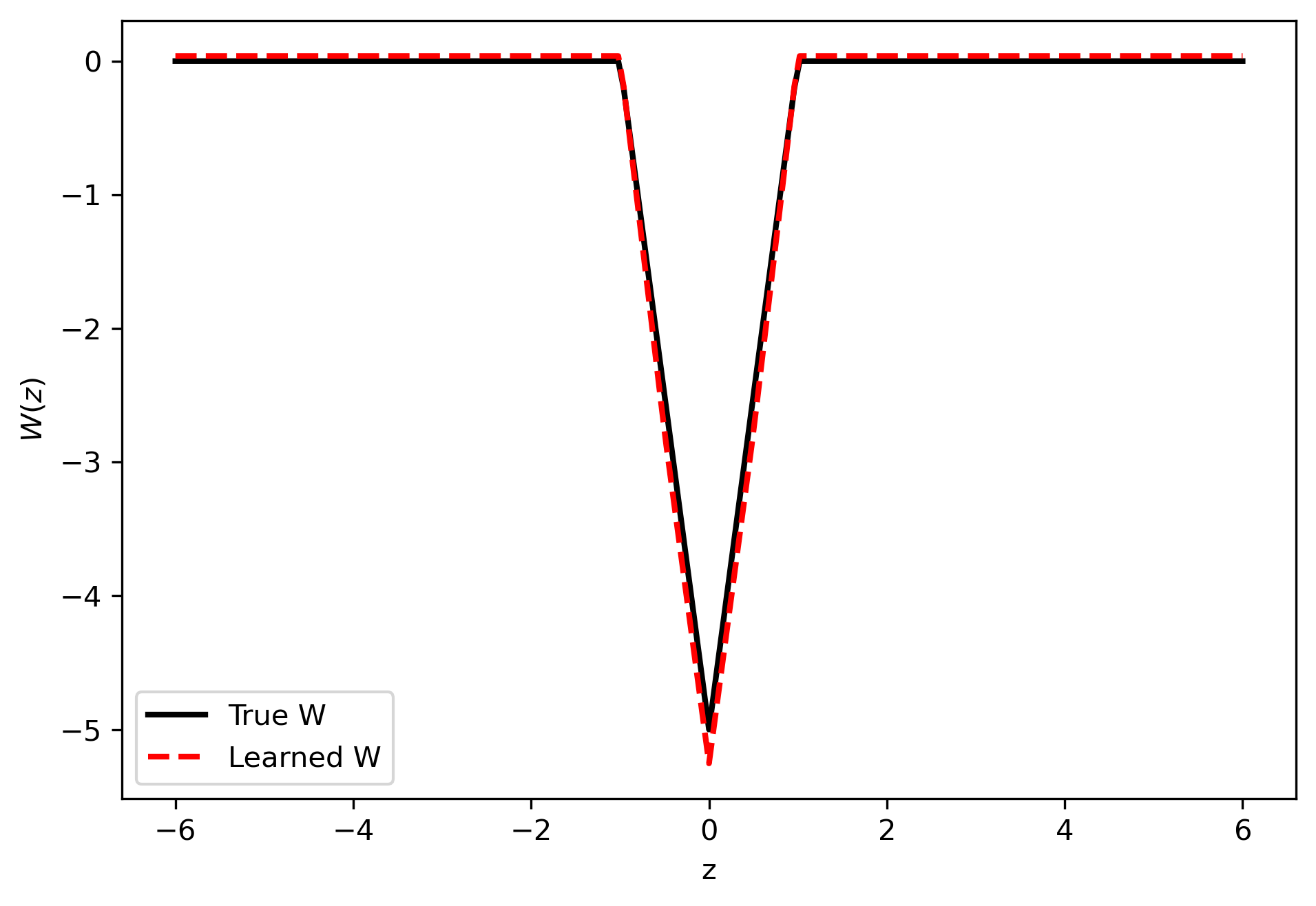}
    \includegraphics[width=0.3\linewidth]{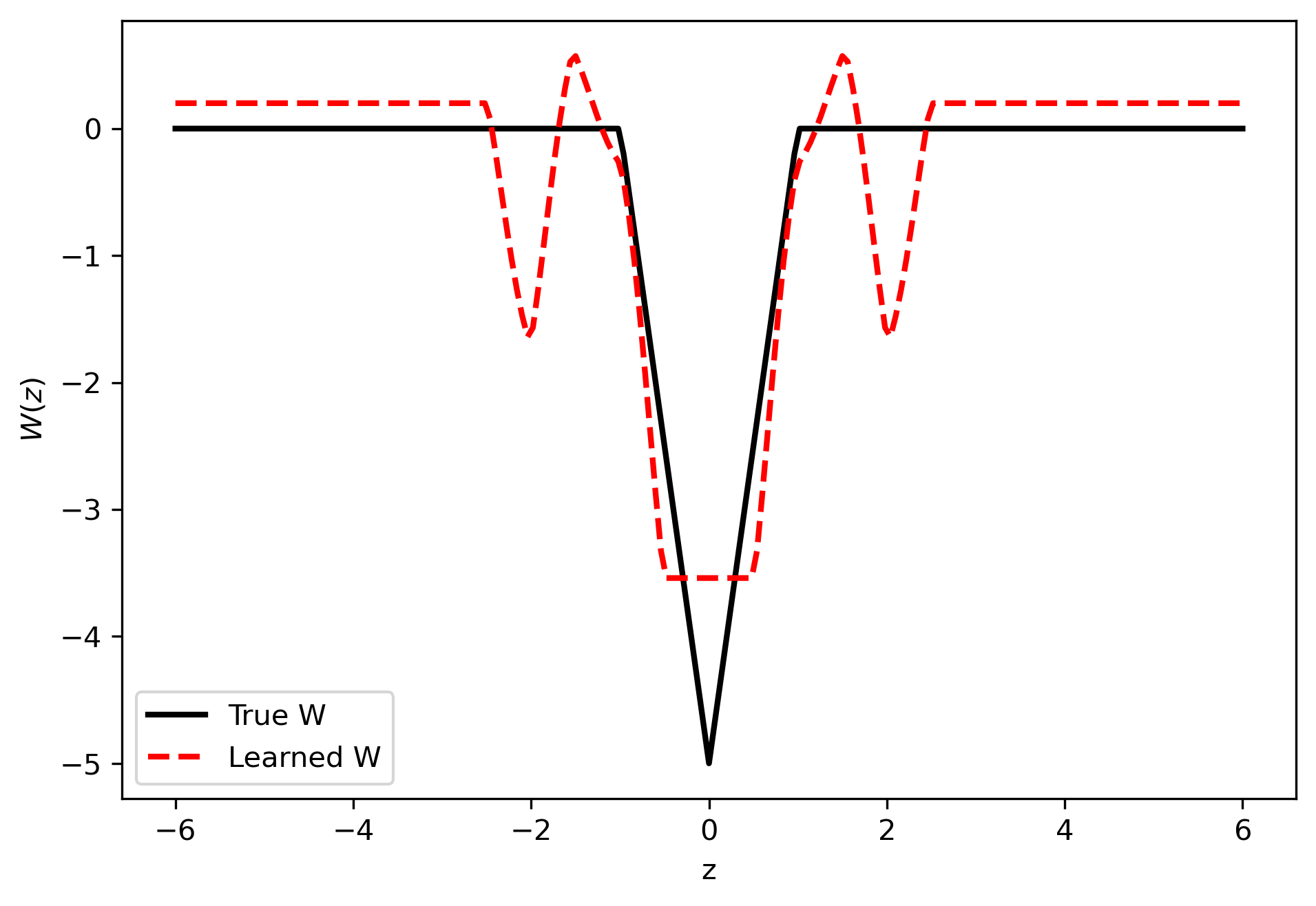}
    \includegraphics[width=0.3\linewidth]{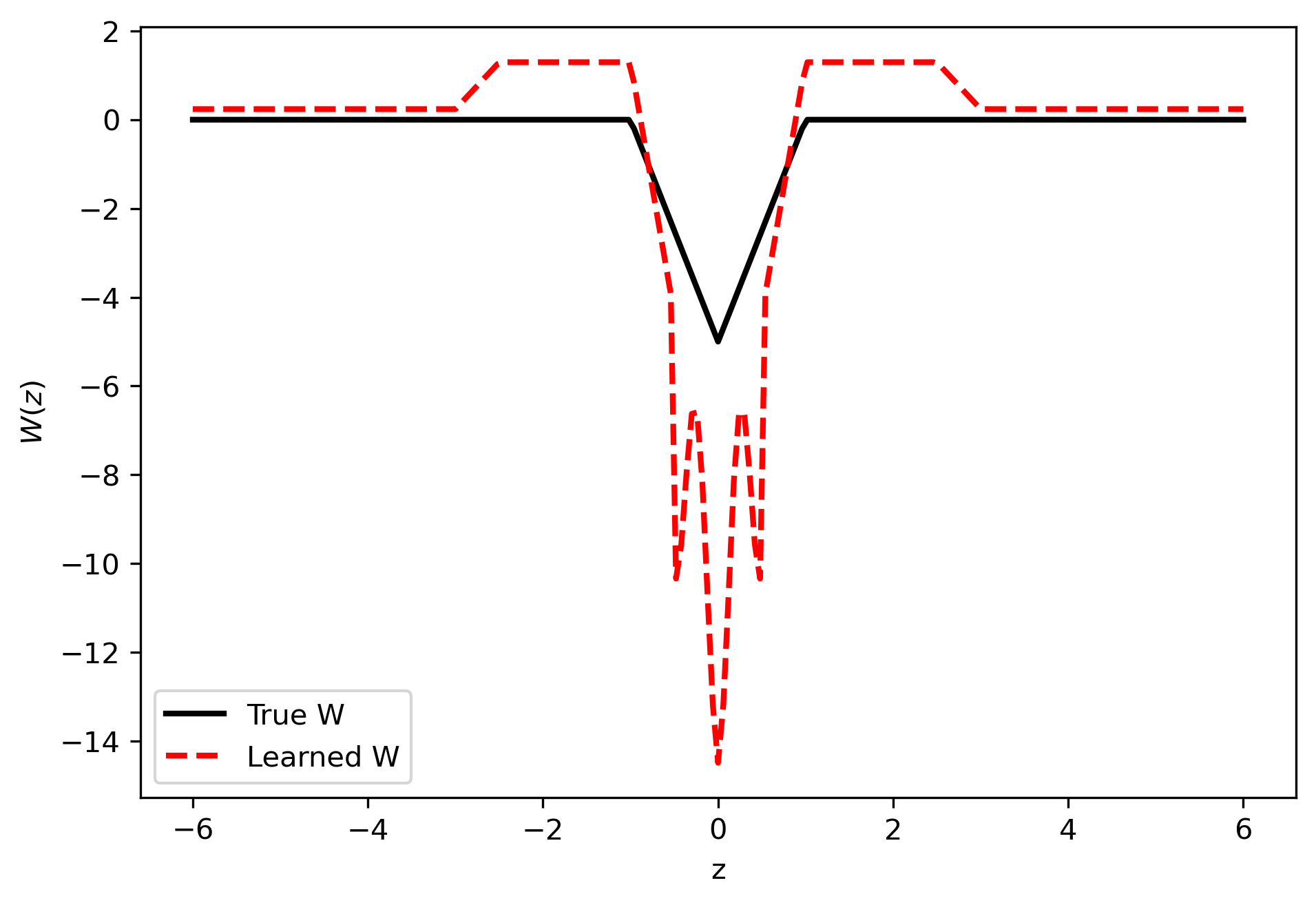}
    \caption{Recovered interaction potentials for Example~1 using sparse learning with different dictionaries.
    Left: local polynomial basis.
    Middle: local Gaussian basis.
    Right: local Fourier basis.}
    \label{fig:sparse_result_example1}
\end{figure}

These results demonstrate that the sparse-learning approach is highly dependent on the choice of dictionary. While the polynomial basis yields a reasonable reconstruction for this piecewise linear interaction potential, the performance deteriorates significantly for the Gaussian and Fourier dictionaries. In contrast, the proposed kernel method achieves accurate recovery without requiring any prior specification of a basis representation and consistently outperforms all sparse-learning approaches across the tested intervals.

\begin{table}[htbp]
\centering
\caption{Relative $L^2$ reconstruction errors for Example 1}\label{tab: errors_example1}
\begin{tabular}{c|c|c|c}
\hline
Methods & interval [-2,2] & interval [-4,4] & interval [-6,6] \\ 
\hline
kernel $A^\delta$ & 3.9826e-02 & 4.1575e-02 & 4.2393e-02 \\ 
\hline
kernel flux & 2.6573e-02 & 2.7073e-02 & 2.7305e-02 \\ 
\hline
sparse polynomial & 6.5664e-02 & 7.6658e-02 & 8.0086e-02 \\ 
\hline
sparse Gaussian & 3.1188e-01 & 4.0637e-01 & 4.2418e-01 \\ 
\hline
sparse Fourier & 1.5349e+00 & 1.6355e+00 & 1.6419e+00 \\ 
\hline
\end{tabular}
\end{table}

\subsubsection{Example 2: Nonlinear diffusion and nonlocal attraction interaction}
We consider the one-dimensional aggregation--diffusion equation \eqref{ex_grad_flow} with \(m=3\), \(\kappa=0.48\), and \(V=0\). The initial density is $\rho_0=\frac12\mathcal N(1,0.5^2)+\frac12\mathcal N(-1,0.5^2)$, 
and the interaction potential is given by
\begin{align*}
W(x)=-2\frac{\exp(-|x|^2)}{\sqrt{\pi}}-2\frac{\exp(-|x|^2/2)}{\sqrt{2\pi}}.    
\end{align*}

In contrast to Example~1, the interaction potential is smooth and globally supported. Moreover, two levels of spatial and temporal subsampling are considered, allowing us to investigate how the reconstruction accuracy depends on the quality of the available observations.

\noindent{\bf Density evolution.}\quad The reference solution is computed on the spatial domain \([-6,6]\) over the time interval \([0,1.5]\) using
\(\delta x=1.25\times10^{-2}\) and
\(\delta t=10^{-4}\).
To investigate the effect of observation resolution, we consider two levels of subsampling:
\[
\text{Case 1}: \quad
(C_x,C_t)=(5,250),
\qquad
\Delta x=6.25\times10^{-2},
\quad
\Delta t=2.5\times10^{-2},
\]
and
\[
\text{Case 2}: \quad
(C_x,C_t)=(2,50),
\qquad
\Delta x=2.5\times10^{-2},
\quad
\Delta t=5\times10^{-3}.
\]

Figure~\ref{fig:example2} displays representative trajectory data used for training. The two cases correspond to coarse and fine observation resolutions, respectively.

\begin{figure}
    \centering
    \includegraphics[width=0.48\linewidth]{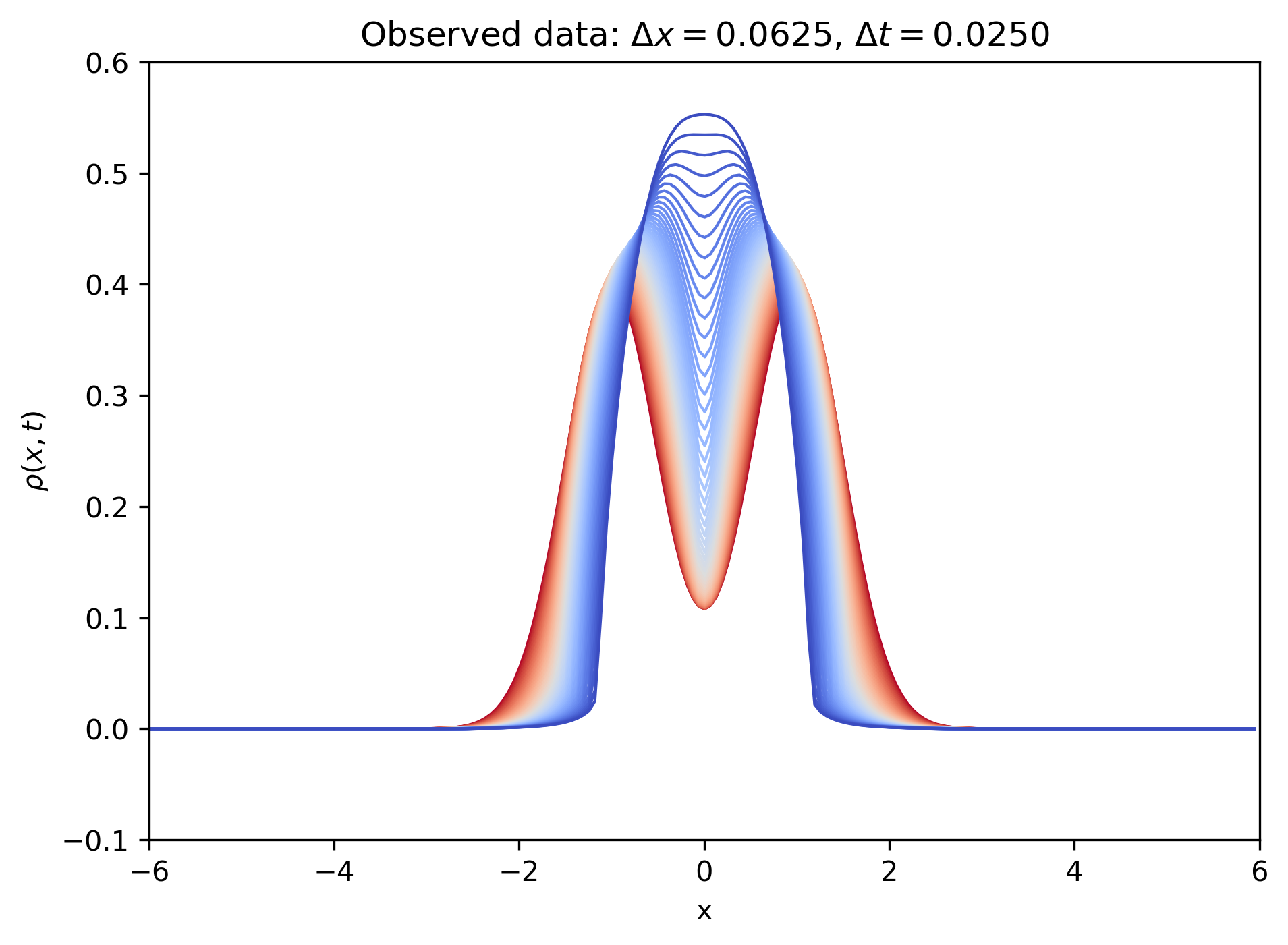}
    \includegraphics[width=0.48\linewidth]{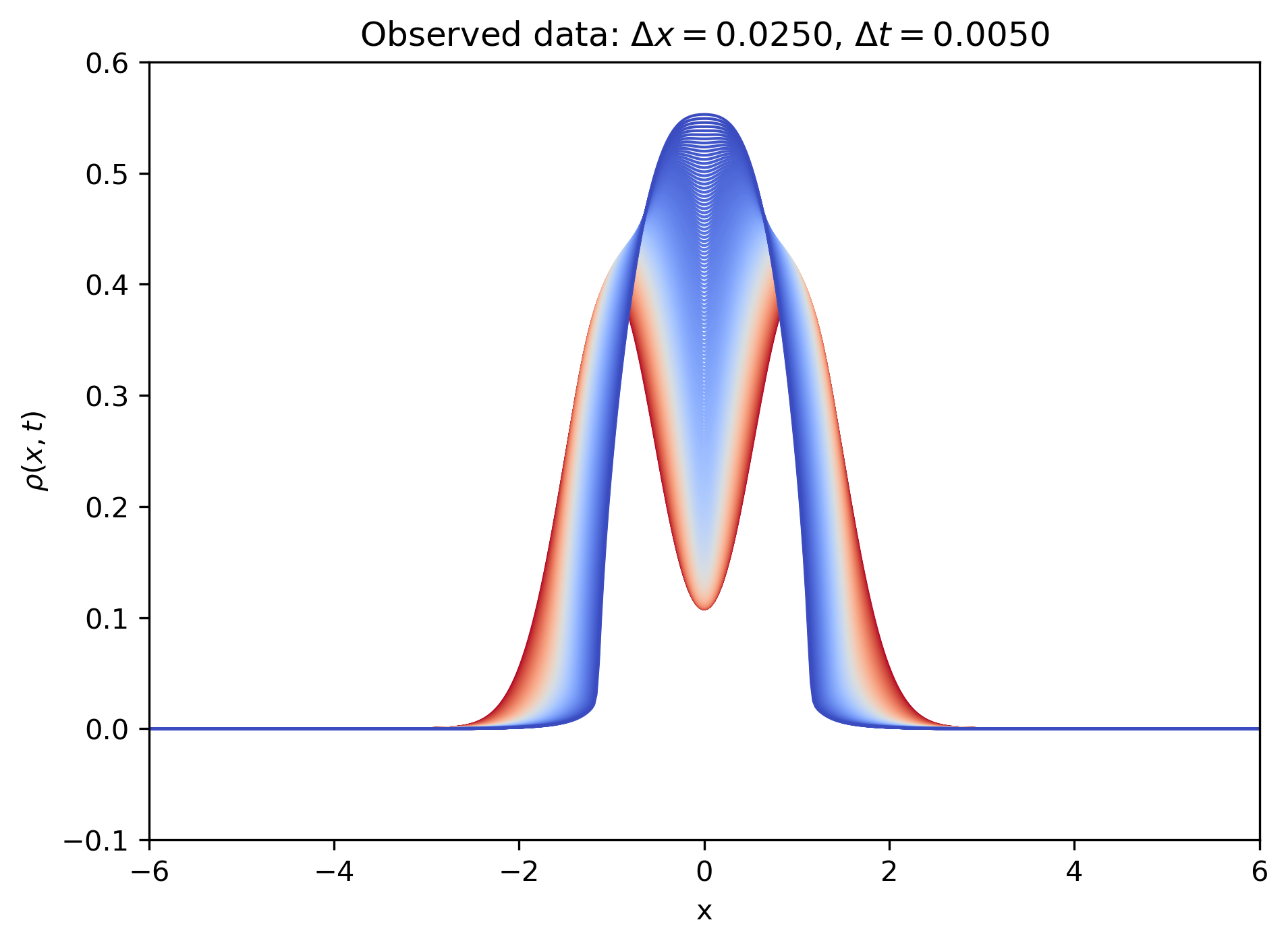}
    \caption{Subsampled trajectory data used for training in Example~2. The left panel corresponds to the coarse observation mesh \(\Delta x = 6.25\times10^{-2}\) and \( \Delta t = 2.5\times10^{-2}\), while the right panel corresponds to the finer observation mesh \(\Delta x = 2.5\times10^{-2}\) and \(\Delta t = 5\times10^{-3}\). The color gradient from red to blue indicates the temporal evolution of the density.
}
    \label{fig:example2}
\end{figure}

\noindent{\bf Training.}\quad
For the kernel-based approach, the interaction potential is approximated in a finite-dimensional reproducing kernel space generated by Gaussian kernel sections. As in Example~1, we employ a fixed-center approximation and represent the interaction potential as a linear combination of symmetric Gaussian basis functions. For the \(A^\delta\)-based formulation, we use \(60\) kernel centers with bandwidth \(\eta=0.6\) in Case~1 and \(120\) kernel centers with the same bandwidth in Case~2. For the flux-based formulation, we use \(60\) kernel centers with bandwidth \(\eta=0.6\) in Case~1 and \(80\) kernel centers with bandwidth \(\eta=0.45\) in Case~2. The regularization parameter is fixed at \(\lambda=10^{-4}\) throughout all experiments.

For the sparse learning approach, the interaction force \(W'\) is represented using one of three global basis dictionaries on the reconstruction interval \([-6,6]\): a polynomial dictionary, a Fourier dictionary, or a Gaussian dictionary. Specifically, the polynomial dictionary consists of odd monomials $\{\phi_k(r)=(\frac{r}{6})^k, k =1,3,5,7,9,11\}$, 
the Fourier dictionary consists of sinusoidal modes $\{\phi_k(r)=\sin(\frac{k\pi r}{6}), k=1,\ldots,20\}$,
and the Gaussian dictionary consists of radial Gaussian functions $\{\phi_k(r)=r\,e^{-kr^2}, k=0.5, 1, \cdots, 5\}$.

The interaction potential is recovered by integrating the learned interaction force. The sparse coefficients are identified from the discrete variational formulation using the PartInv support-selection procedure followed by restricted least-squares refinement. Unless otherwise stated, all sparse-learning parameters are kept fixed throughout the experiments.

\noindent{\bf Results.}\quad
Figure~\ref{fig:kernel_example2} presents the interaction potentials recovered by the proposed kernel method under two observation resolutions. Case 1 corresponds to a relatively coarse observation mesh, whereas Case 2 uses substantially finer spatial and temporal observations.

\begin{figure}[htbp]
    \centering
    \begin{subfigure}{0.48\linewidth}
        \centering      \includegraphics[width=\linewidth]        {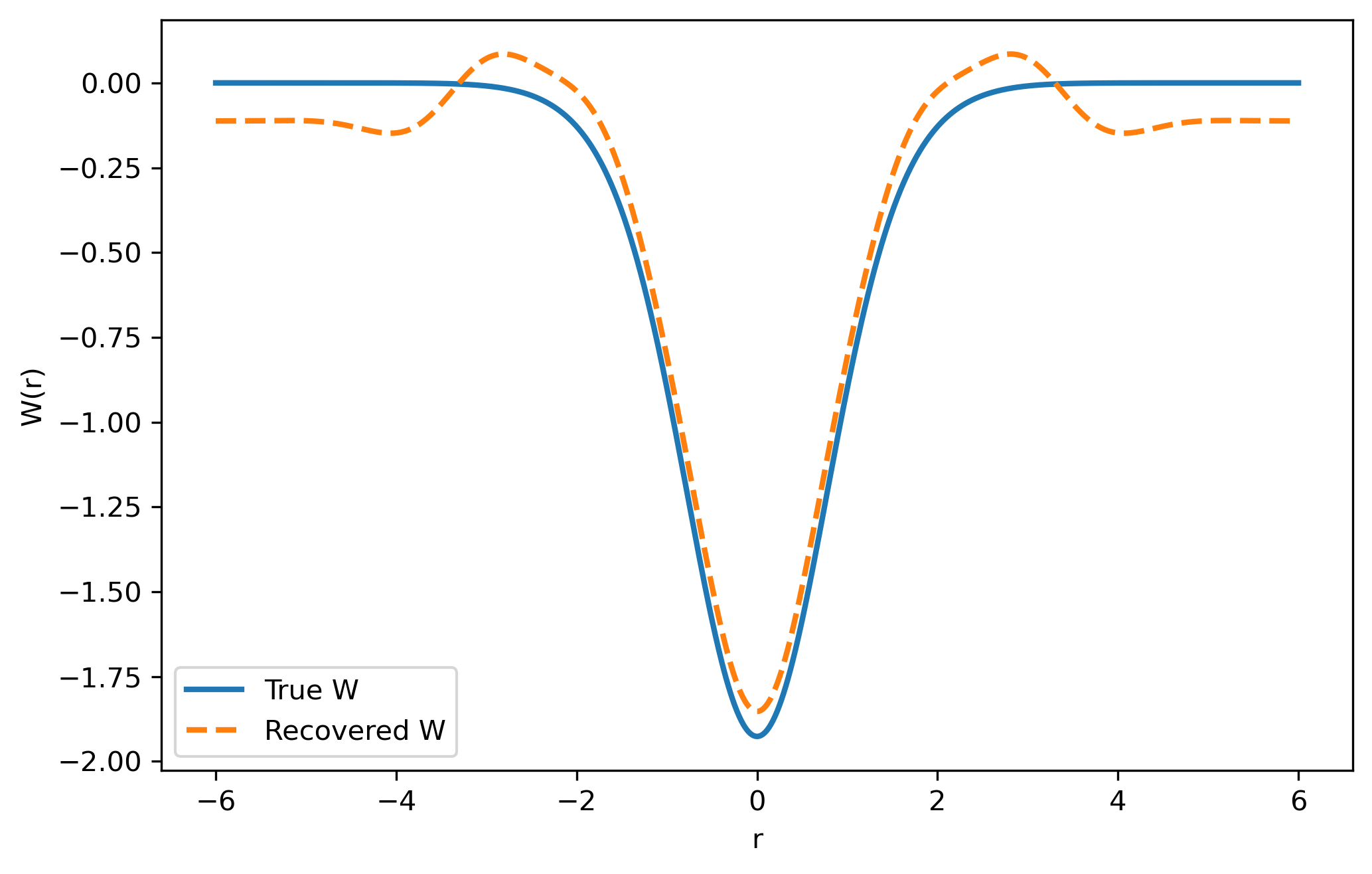}
        \caption{Case 1, \(A^\delta\)-based formulation}
    \end{subfigure}
    \hfill
    \begin{subfigure}{0.48\linewidth}
        \centering       \includegraphics[width=\linewidth]        {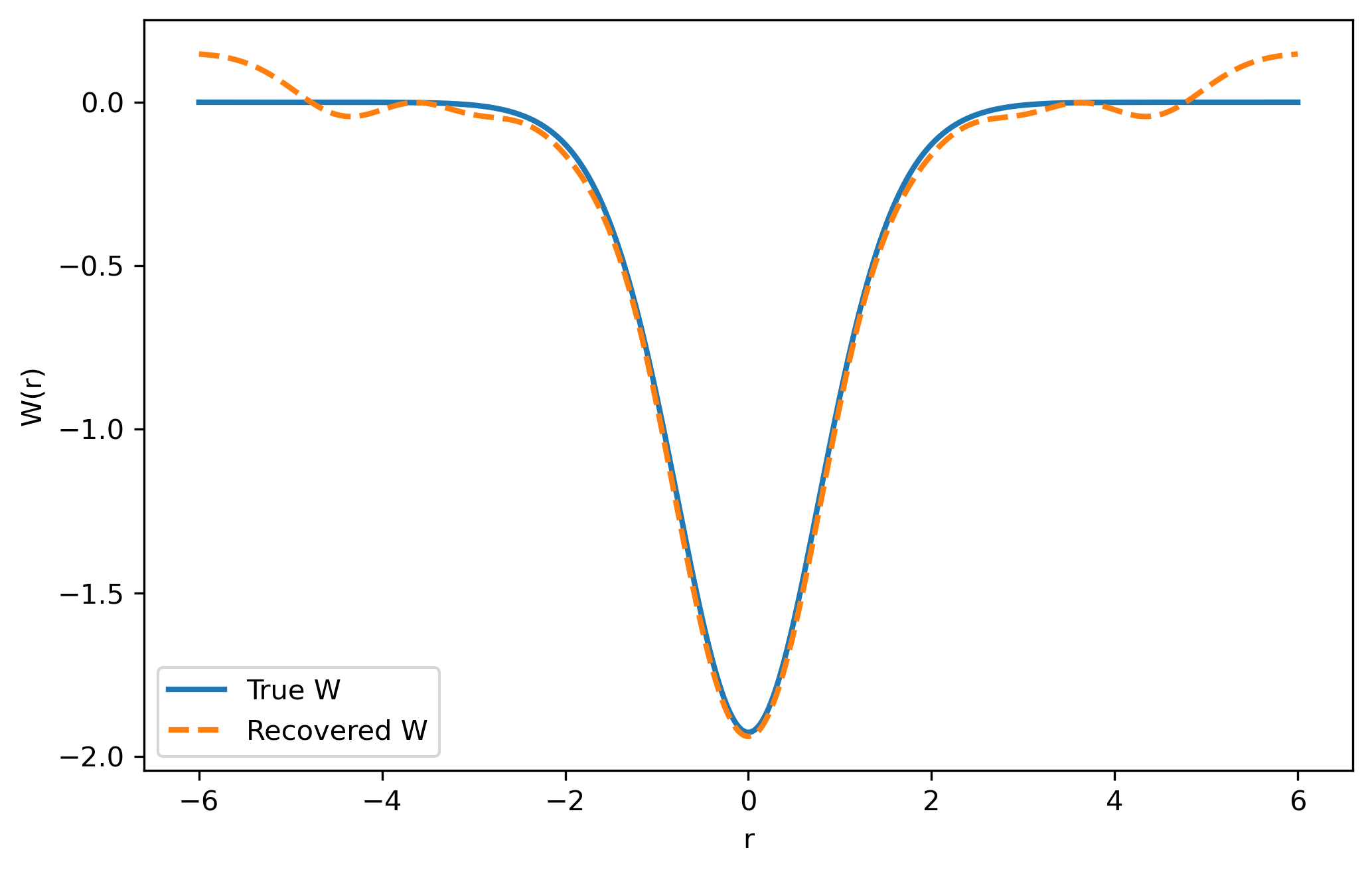}
        \caption{Case 1, flux-based formulation}
    \end{subfigure}
    \vspace{0.5em}
    \begin{subfigure}{0.48\linewidth}
        \centering      \includegraphics[width=\linewidth]        {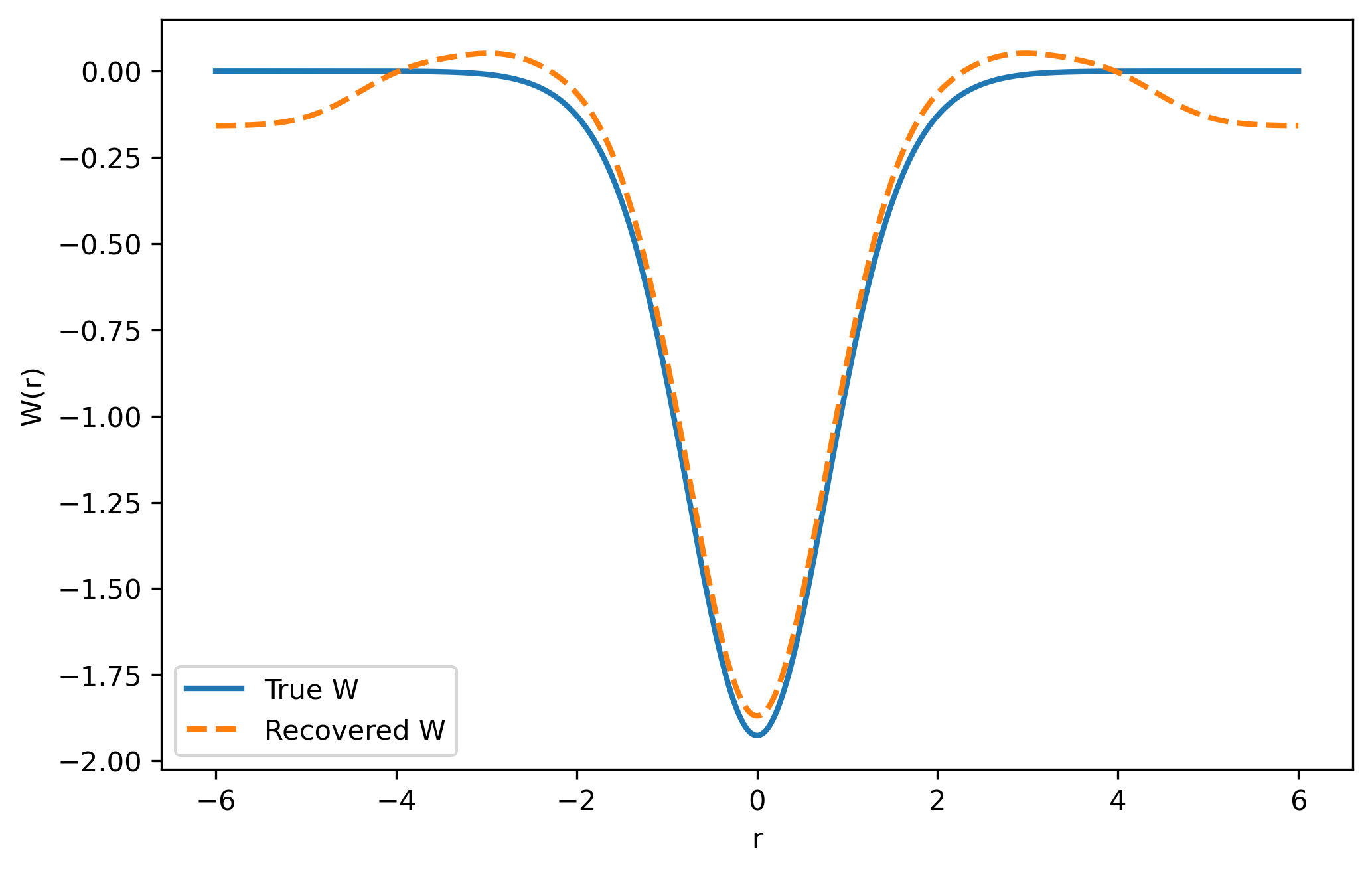}
        \caption{Case 2, \(A^\delta\)-based formulation}
    \end{subfigure}
    \hfill
    \begin{subfigure}{0.48\linewidth}
        \centering        \includegraphics[width=\linewidth]        {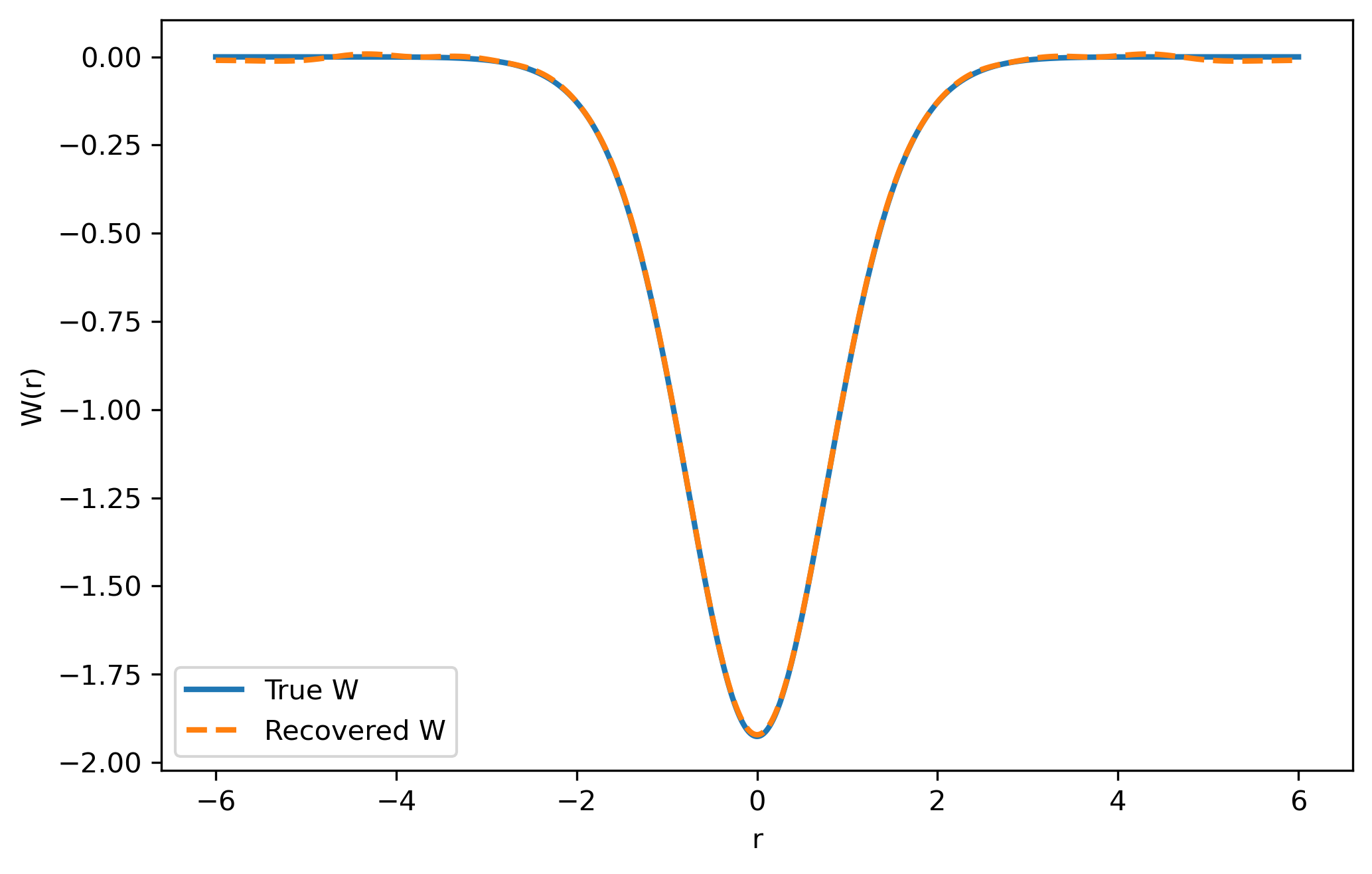}
        \caption{Case 2, flux-based formulation}
    \end{subfigure}
    \caption{
    Recovered interaction potentials for Example~2 using the kernel method.
    The top row corresponds to the coarse observation mesh (Case~1), while the bottom row corresponds to the finer observation mesh (Case~2). The left column shows the \(A^\delta\)-based formulation and the right column shows the flux-based formulation.
    }
    \label{fig:kernel_example2}
\end{figure}

For both resolutions, the kernel method successfully recovers the interaction potential and accurately captures its overall shape and interaction strength. As expected, the reconstruction quality improves as the observation mesh is refined. This improvement is particularly evident for the flux-based formulation, whose reconstruction in Case~2 is nearly indistinguishable from the ground truth. The consistency of the recovered kernels across the two observation regimes demonstrates that the proposed framework can effectively exploit additional observational information while maintaining stable performance when only coarse data are available.

Figure~\ref{fig:sparse_example2} shows the corresponding sparse-learning reconstructions obtained using polynomial, Gaussian, and Fourier dictionaries. In contrast to Example~1, where the piecewise-linear interaction potential naturally favors the polynomial dictionary, the present interaction potential is smooth and globally supported. Consequently, the Gaussian dictionary provides the most accurate sparse reconstruction, while the polynomial and Fourier dictionaries exhibit noticeably larger approximation errors, particularly away from the central region of the interaction kernel.

\begin{figure}[htbp]
    \centering
    \begin{subfigure}{0.32\linewidth}
        \centering        \includegraphics[width=\linewidth]       {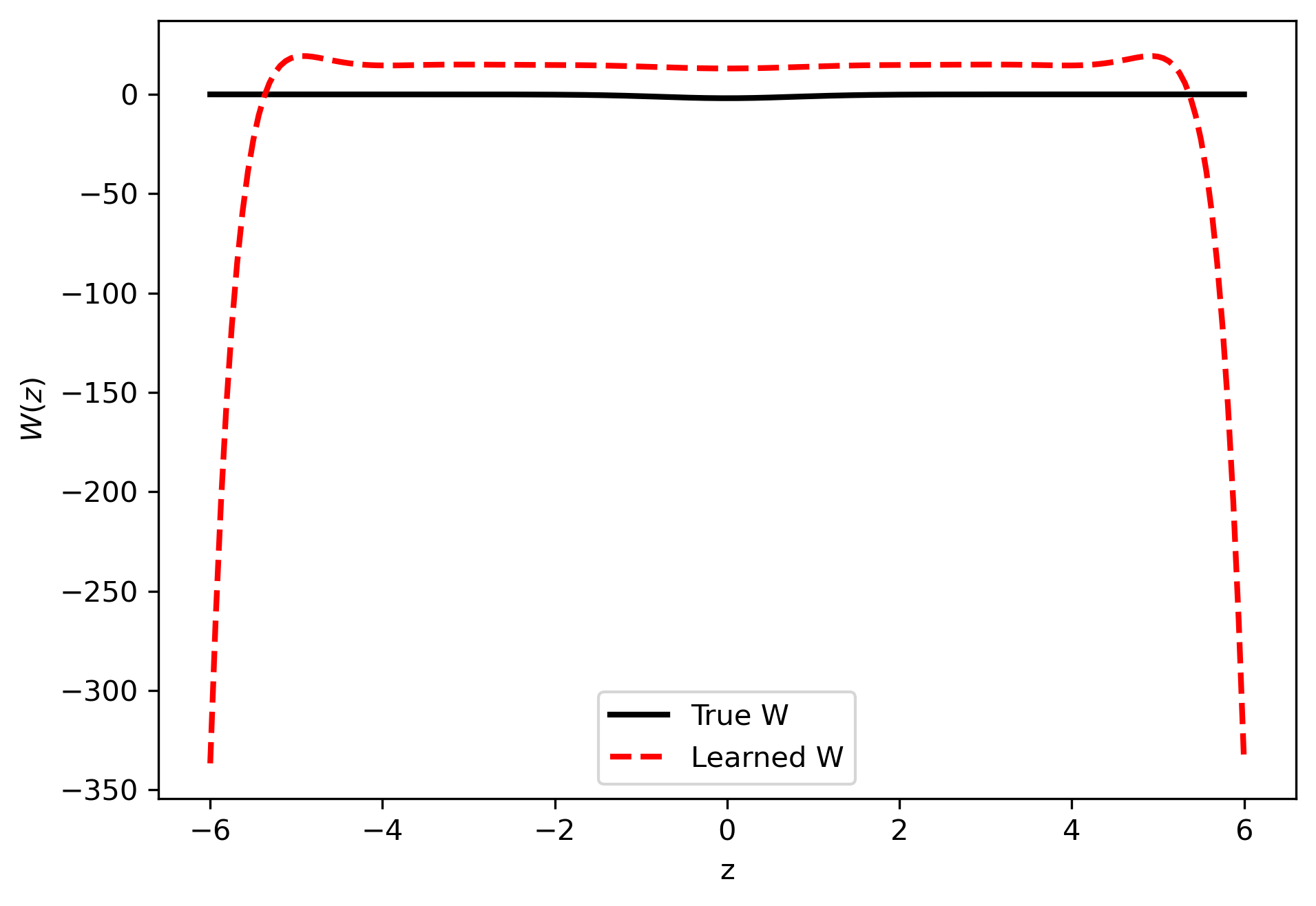}
        \caption{Case 1, polynomial basis}
    \end{subfigure}
    \hfill
    \begin{subfigure}{0.32\linewidth}
        \centering        \includegraphics[width=\linewidth]        {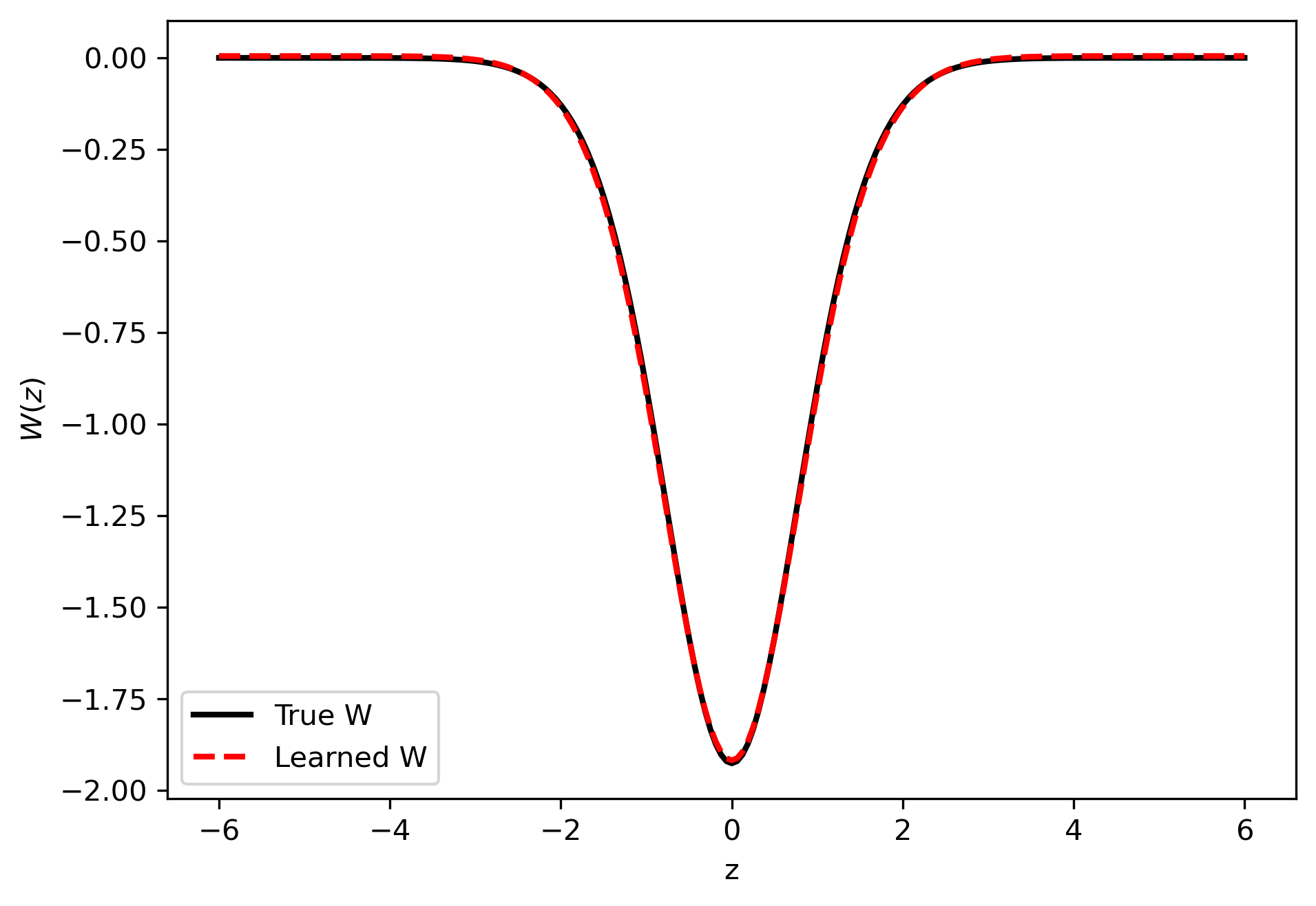}
        \caption{Case 1, Gaussian basis}
    \end{subfigure}
    \hfill
    \begin{subfigure}{0.32\linewidth}
        \centering        \includegraphics[width=\linewidth]        {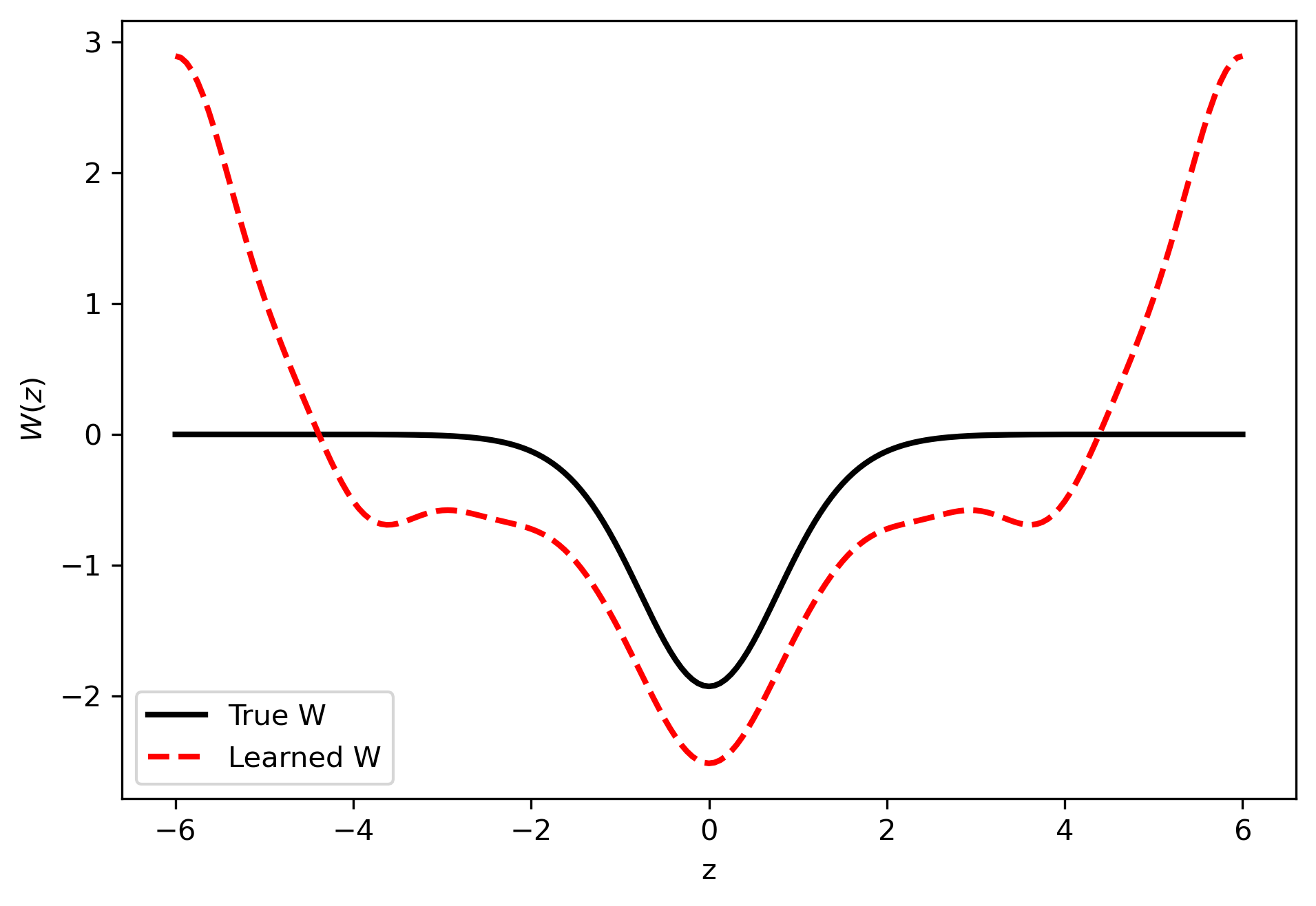}
        \caption{Case 1, Fourier basis}
    \end{subfigure}
    \vspace{0.5em}
    \begin{subfigure}{0.32\linewidth}
        \centering        \includegraphics[width=\linewidth]        {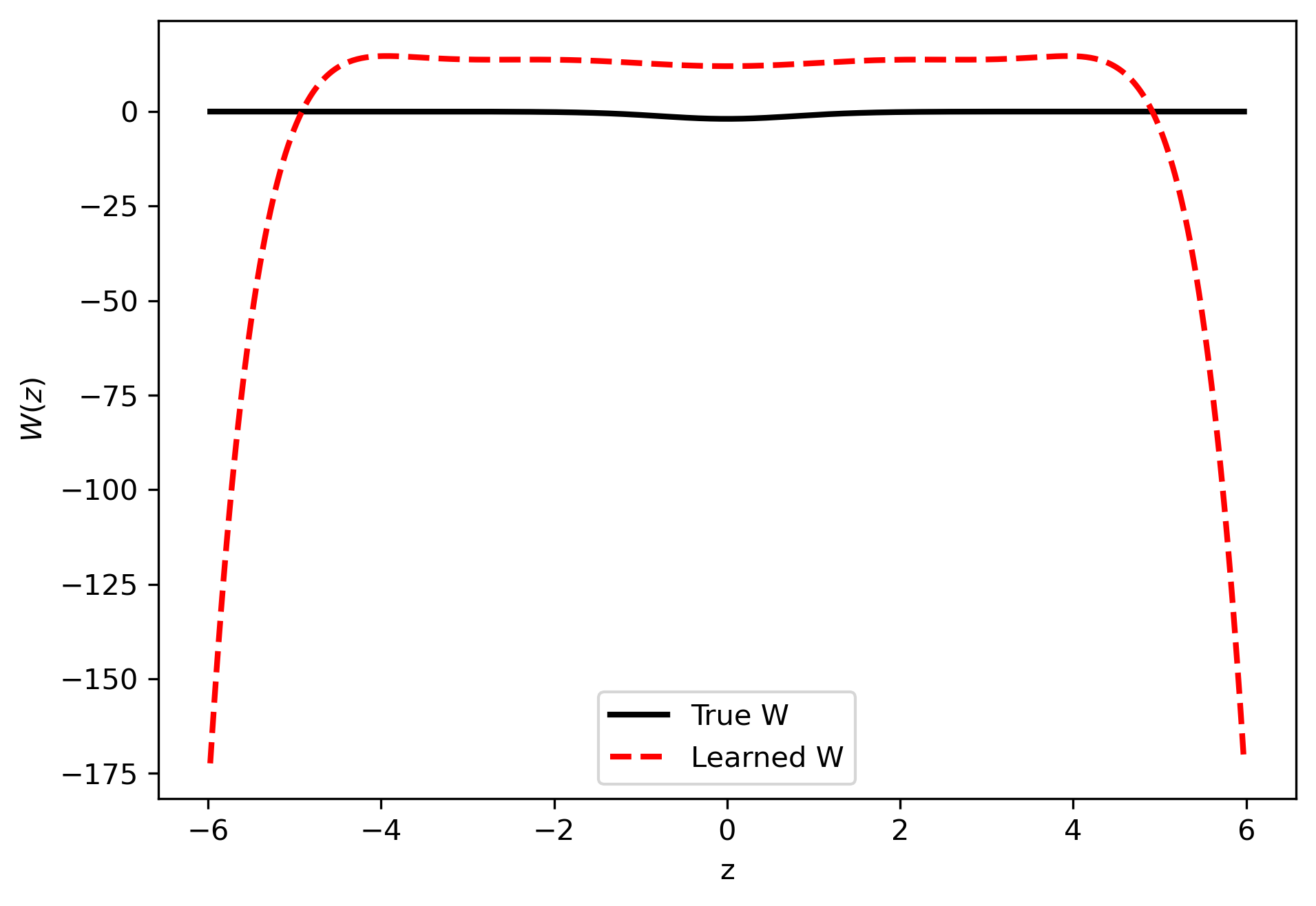}
        \caption{Case 2, polynomial basis}
    \end{subfigure}
    \hfill
    \begin{subfigure}{0.32\linewidth}
        \centering        \includegraphics[width=\linewidth]        {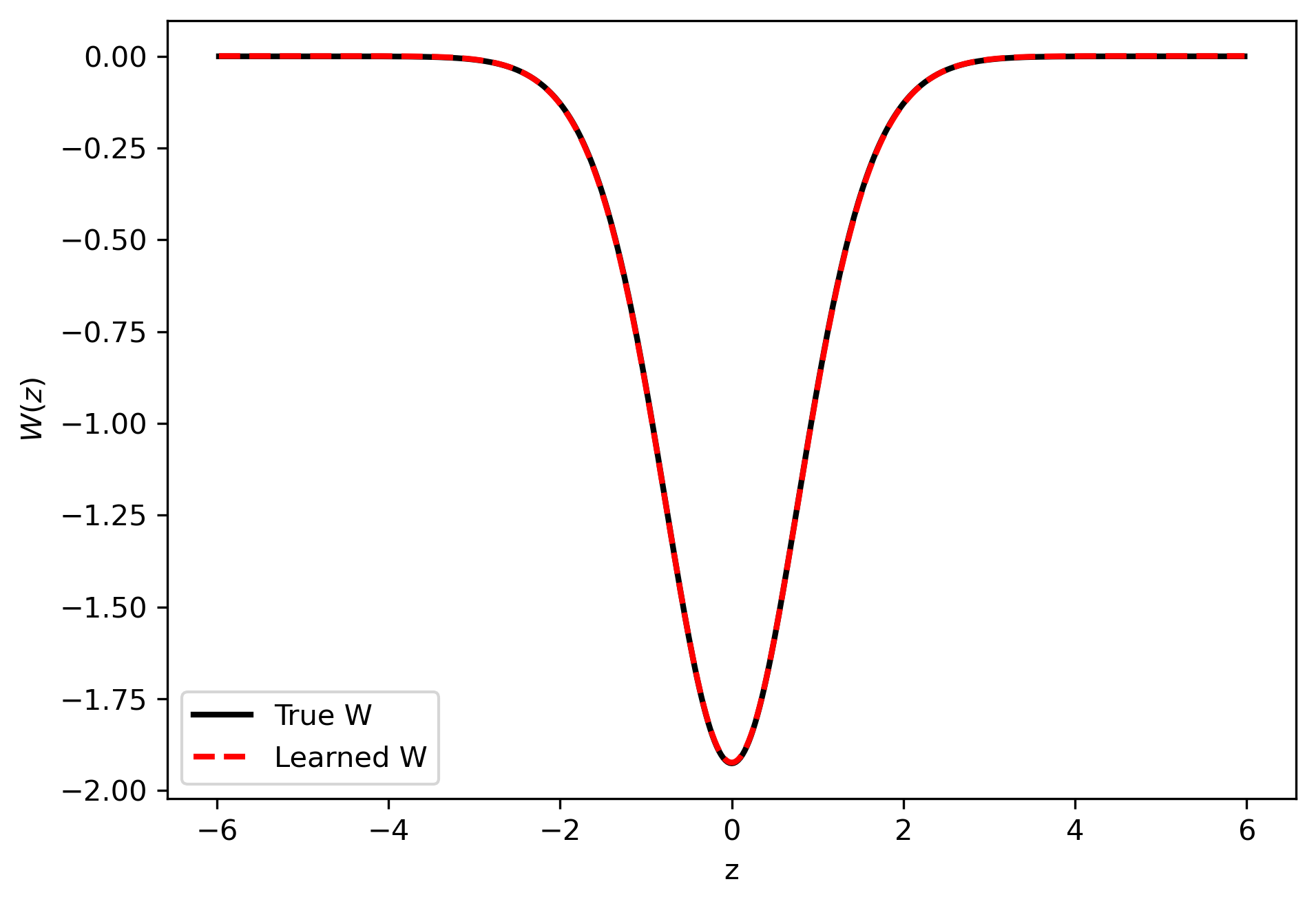}
        \caption{Case 2, Gaussian basis}
    \end{subfigure}
    \hfill
    \begin{subfigure}{0.32\linewidth}
        \centering       \includegraphics[width=\linewidth]        {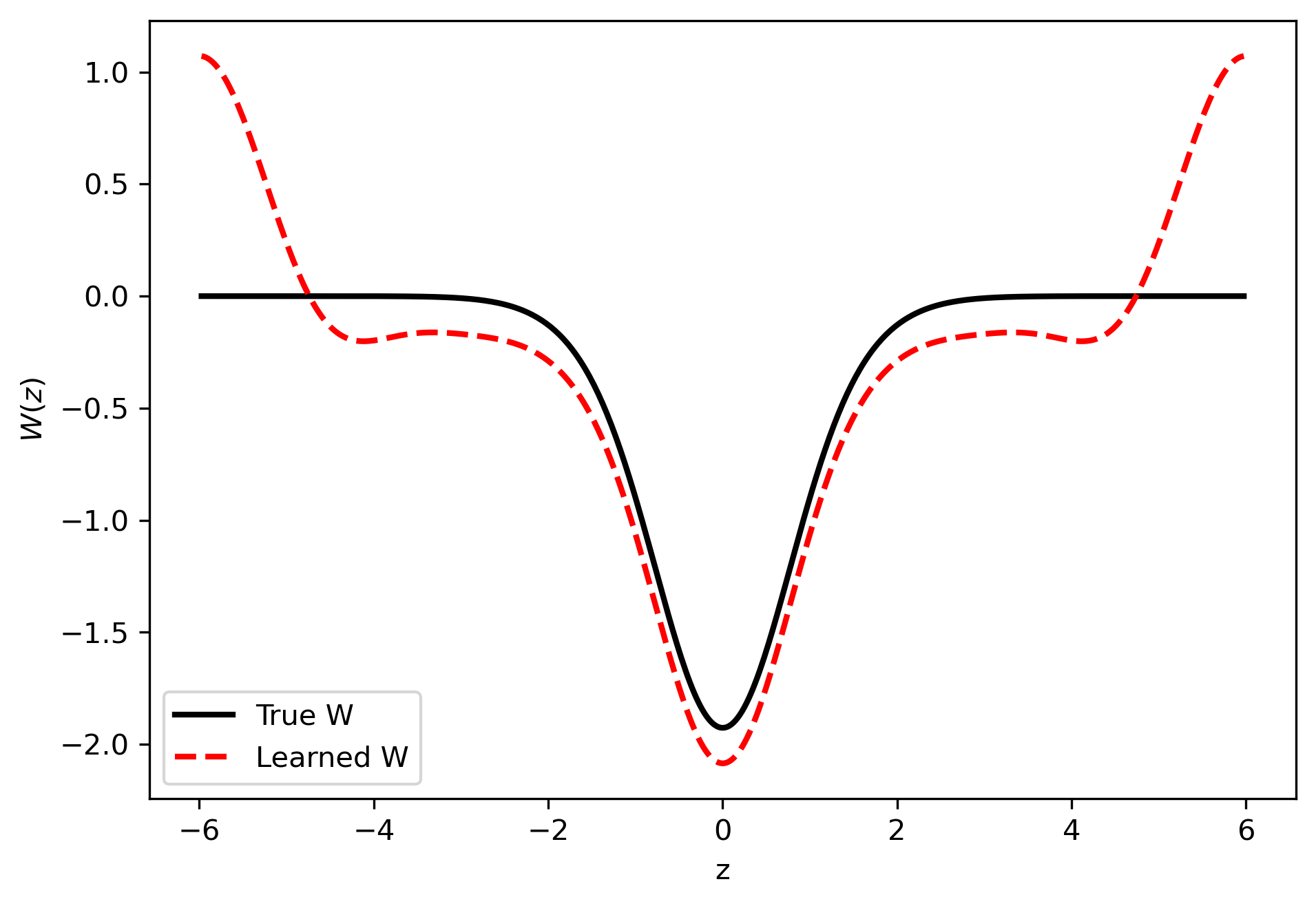}
        \caption{Case 2, Fourier basis}
    \end{subfigure}
    \caption{
    Recovered interaction potentials for Example~2 using sparse learning with different basis dictionaries. The top row corresponds to the coarse observation mesh (Case~1), while the bottom row corresponds to the finer observation mesh (Case~2). From left to right, the columns correspond to the polynomial, Gaussian, and Fourier bases, respectively.
    }
    \label{fig:sparse_example2}
\end{figure}

The quantitative results reported in Table~\ref{tab: errors_example2} further support these observations. For all methods, the reconstruction accuracy improves as the observation mesh is refined from Case~1 to Case~2, indicating that the additional spatial and temporal information is effectively utilized. Both kernel formulations benefit from the increased observation resolution and provide accurate reconstructions across all evaluation intervals. In particular, the improvement is especially pronounced on larger reconstruction domains, where the finer observations lead to a substantially more accurate recovery of the interaction potential.

\begin{table}[htbp]
\centering
\caption{Relative $L^2$ reconstruction errors for Example 2.}
\label{tab: errors_example2}
\begin{tabular}{c c l c c}
\hline
Method  & case & interval [-2,2] & interval [-4,4] & interval [-6,6] \\ 
\hline
kernel $A^\delta$  & Case 1 & 8.9534e-03 & 9.0891e-02 & 1.5739e-01 \\ 
            & Case 2 & 2.3237e-03 & 1.8767e-02 & 1.2763e-01 \\ 
\hline
kernel flux & Case 1 & 5.8991e-03 & 1.1906e-02 & 8.0532e-02 \\ 
             & Case 2 & 9.6763e-04 & 1.5363e-03 & 8.1688e-03 \\ 
\hline
sparse polynomial   & Case 1 & 1.3860e-02 & 1.3531e-01& 8.3381e+01 \\ 
            & Case 2 & 6.0510e-02 & 3.1390e-01 & 5.3314e+01 \\ 
\hline
sparse Gaussian  & Case 1 & 7.1001e-03 & 9.8843e-03& 1.1229e-02  \\ 
            & Case 2 & 1.5007e-03 & 1.8681e-03 & 2.0756e-03 \\ 
\hline
sparse Fourier   & Case 1 & 6.6672e-03 & 3.8211e-02 & 1.5994e+00 \\ 
            & Case 2 & 1.0269e-03 & 8.6041e-03 & 5.2864e-01 \\ 
\hline
\end{tabular}
\end{table}

For the sparse-learning approaches, the reconstruction quality depends strongly on the choice of dictionary. The Gaussian dictionary yields the most accurate sparse reconstructions in this example, reflecting its compatibility with the underlying interaction potential. In contrast, the polynomial and Fourier dictionaries exhibit significantly larger errors, particularly on larger reconstruction intervals. These results further illustrate that the success of sparse recovery depends critically on the availability of a suitable basis representation.

Overall, these experiments highlight two important features of the proposed kernel framework. First, the reconstruction accuracy improves systematically as the observation resolution increases, demonstrating robustness with respect to data quality. Second, unlike sparse-learning approaches, the kernel method does not rely on selecting a dictionary that is well matched to the unknown interaction potential. Consequently, it provides accurate and stable reconstructions across different observation regimes while avoiding the basis-selection issue inherent in sparse recovery methods.

\subsection{Wasserstein Hamiltonian flow}
\label{Wasserstein_Hamiltonian_flow}

In this subsection, we investigate the performance of the proposed kernel framework for recovering external potentials in Wasserstein Hamiltonian systems. We consider Hamiltonian dynamics on the density manifold of the form
\begin{equation}\label{ex_whf}
\partial_{tt}\rho_t+\Gamma_W(\partial_t\rho_t,\partial_t\rho_t)=\nabla\cdot\Bigl(\rho_t\nabla\bigl(U'(\rho_t)+V+(W*\rho_t)\bigr)\Bigr),
\end{equation}
where \(V\) denotes the external potential and \(\Gamma_W\) is the Wasserstein Christoffel symbol in \eqref{Christoffel symbol}.

In the following examples, we use the nonlinear internal energy \eqref{nonlinear internal energy}. The examples considered below are chosen to exhibit different potential landscapes and transport behaviors. Throughout this subsection, our objective is to recover the external potential \(V\) from observations of the density evolution.

Direct discretization of the second-order formulation \eqref{ex_whf} requires evaluating the Wasserstein Christoffel operator \(\Gamma_W\), which is generally difficult to compute numerically.
Therefore, for data generation, we solve the equivalent first-order Wasserstein Hamiltonian system
\begin{equation}\label{eq:first_order_whf}
\partial_t\rho+\nabla\cdot(\rho\nabla\phi)=0,\qquad \partial_t\phi+\frac12|\nabla\phi|^2=-\bigl(U'(\rho)+V+W*\rho\bigr),
\end{equation}
and use the resulting observations of \((\rho,\phi)\) to construct the quantity \(f^\delta\) appearing in the empirical loss functional introduced in Section~\ref{Structure-preserving kernel ridge regression and numerical schemes}.

\noindent{\bf Data generation.}\quad
All datasets in this subsection are generated from numerical solutions of the first-order Wasserstein Hamiltonian system \eqref{eq:first_order_whf}. The Hamilton--Jacobi equation is discretized using a monotone Godunov scheme, while the continuity equation is advanced by an upwind finite-volume method; see \cite{osher1988fronts,leveque2002finite}. The resulting high-resolution solution is regarded as the reference trajectory.

Observational data are obtained by uniformly subsampling the reference solution in both space and time. Given spatial and temporal subsampling factors \(C_x\) and \(C_t\), the observation mesh sizes are defined by
\[
\Delta x=C_x\delta x,
\qquad
\Delta t=C_t\delta t,
\]
where \((\delta x,\delta t)\) denote the discretization parameters used for the reference simulation. The values of \((\delta x,\delta t)\), \((C_x,C_t)\), and the final time \(T\) are specified in each example.

The mesh sizes \((\delta x,\delta t)\) are chosen sufficiently small so that the numerical error of the forward solver is negligible compared with the reconstruction error. The resulting subsampled density observations are then used to construct the discrete quantities \(A^\delta\) and \(f^\delta\) appearing in the empirical loss functional \eqref{emp-fun1}.

\noindent{\bf Error metric.}\quad
The reconstruction accuracy is evaluated using the relative \(L^2\) error defined in \eqref{eq:relative_error}, with \(W\) replaced by \(V\).

\subsubsection{Example 3: Two-dimensional nonlinear diffusion and polynomial potential}

We consider \eqref{ex_whf} with $m=2$, $\kappa=0.5$ and $W=0$. The dynamics are driven by the combined effects of the nonlinear internal energy and the external potential.
The external potential is chosen as
\begin{align*}
V(x_1,x_2)=\frac{1}{2}x_1^2+\frac{1}{3}x_2^2.    
\end{align*}
This example provides a smooth anisotropic confining potential and serves as a benchmark for testing the recovery of external potentials in Wasserstein Hamiltonian dynamics with nonlinear internal energy.

The initial density and velocity potential are specified by
\begin{align*}
\rho_0(x_1,x_2)=\frac{1}{8\pi}\exp\!\left(-\frac{x_1^2+x_2^2}{8}\right),\quad \phi_0(x_1,x_2)=-\frac12 x_1^2.
\end{align*}

\noindent{\bf Density evolution.}\quad The reference solution is computed on the spatial domain
\([-5,5]^2\) over the time interval \([0,0.2]\) using mesh sizes $\delta x=10^{-2}$ and $\delta t=10^{-5}$.
The observational data are obtained by uniform subsampling with factors $C_x=5$ and $C_t=50$, resulting in the observation mesh sizes
\[
\Delta x=5\times10^{-2},
\qquad
\Delta t=5\times10^{-4}.
\]

Figure~\ref{fig:polynomial_density} shows snapshots of the density trajectory at five equally spaced times. 
Starting from a broad Gaussian profile, the density is transported and deformed under the combined effect of the nonlinear internal energy and the anisotropic quadratic potential. 
The resulting trajectory provides informative space--time observations for the recovery of the external potential.
\begin{figure}[htbp]
\centering
\includegraphics[width=0.19\linewidth]{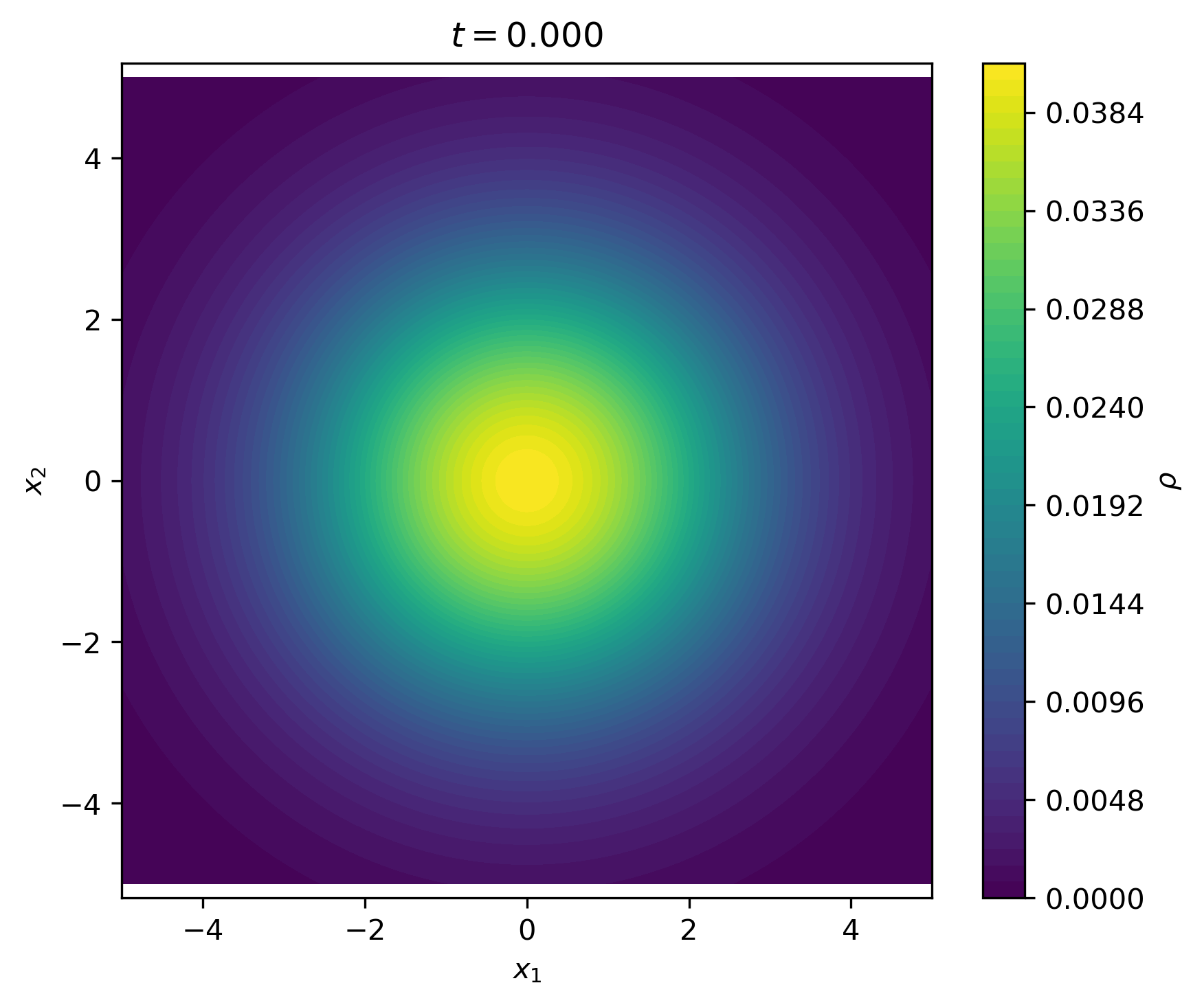}
\includegraphics[width=0.19\linewidth]{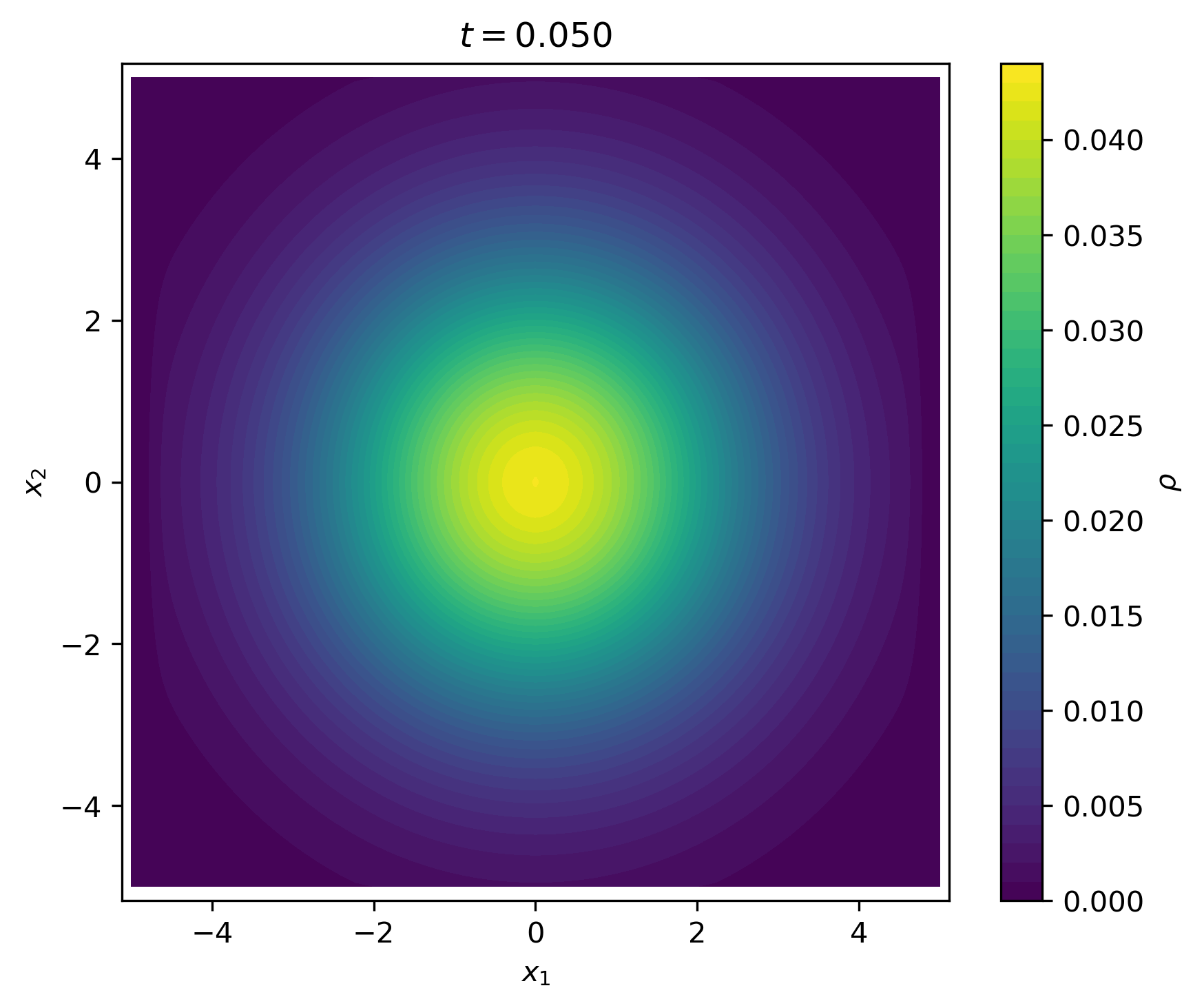}
\includegraphics[width=0.19\linewidth]{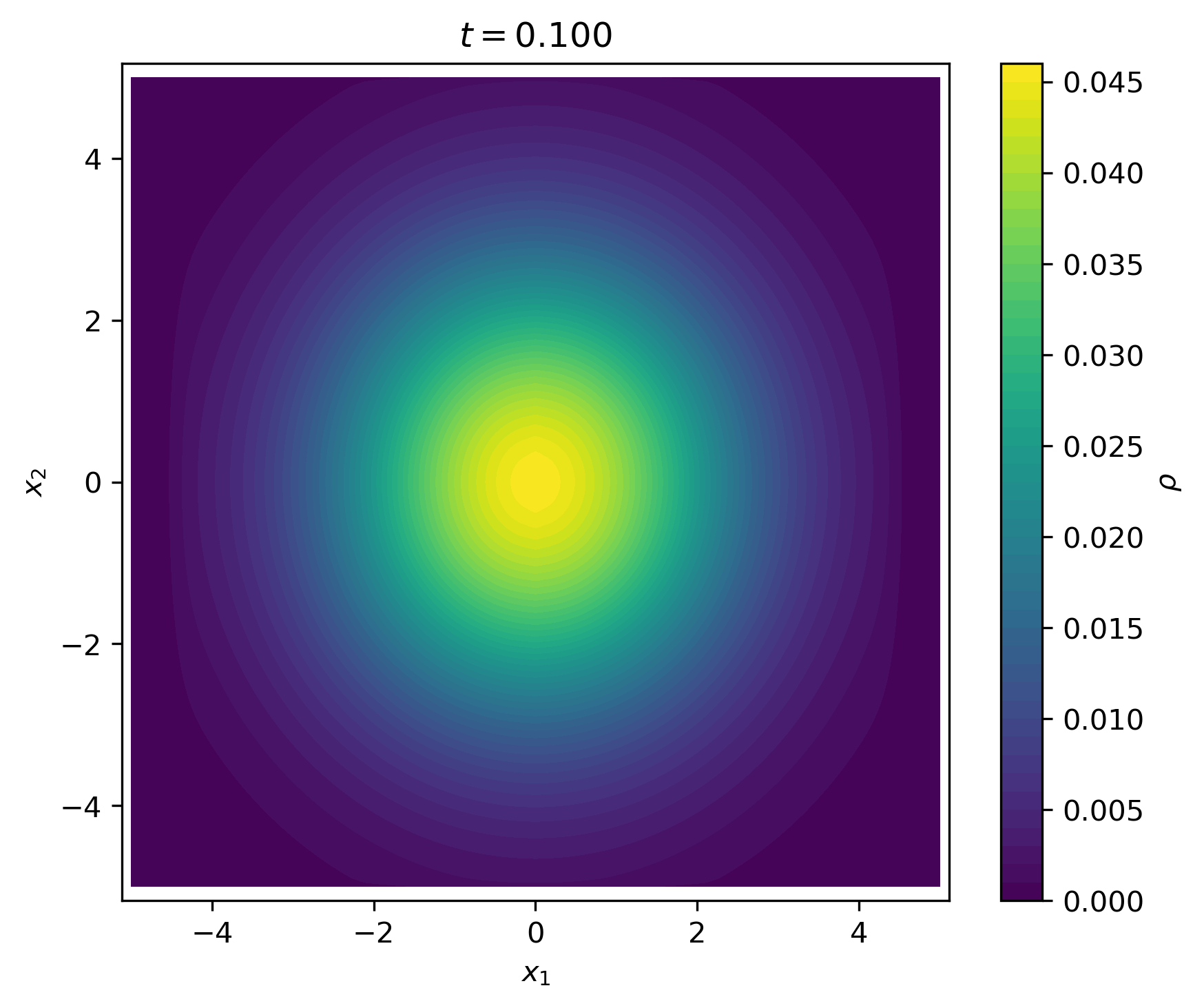}
\includegraphics[width=0.19\linewidth]{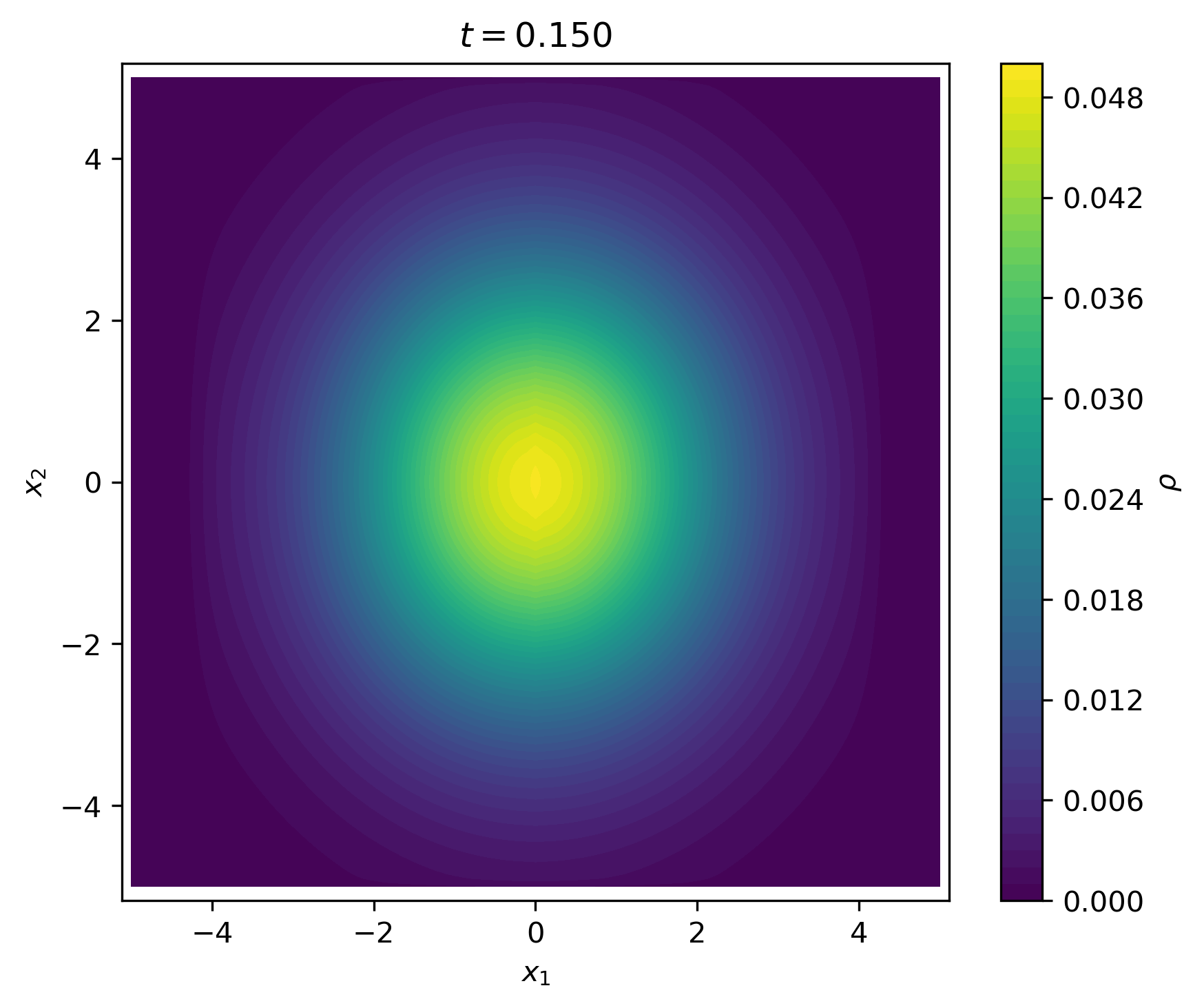}
\includegraphics[width=0.19\linewidth]{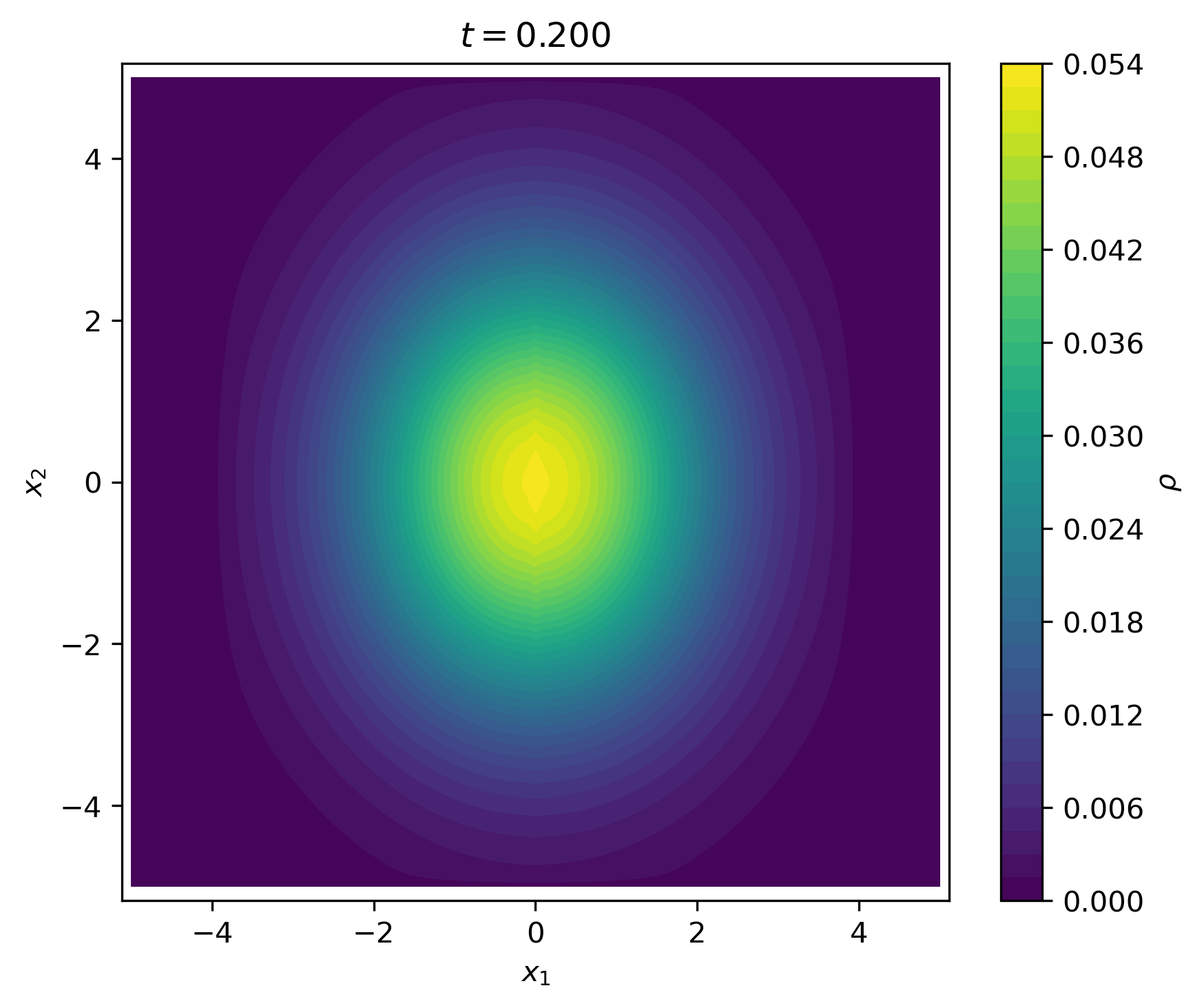}
\caption{
Snapshots of the density evolution for Example~3 at
\(t=0\), \(0.05\), \(0.10\), \(0.15\), and \(0.20\).
The initially Gaussian density evolves under the nonlinear internal energy and the anisotropic quadratic external potential.}
\label{fig:polynomial_density}
\end{figure}

\noindent{\bf Training.}\quad
The potential is reconstructed using the Gaussian kernel estimator introduced in Section~\ref{Structure-preserving kernel ridge regression and numerical schemes}. 
The discrete forcing term \(f^\delta\) is computed from the observed density and velocity potential through the Hamilton--Jacobi equation, using the same Godunov discretization as in the data generation procedure. 
For this nonlinear example, the internal-energy contribution \(U'(\rho)\) is included in the construction of \(f^\delta\). 
The reconstruction is trained with \(M=8000\) collocation points sampled from the observed space--time data, kernel bandwidth \(\eta=2.0\), regularization parameter \(\lambda=10^{-8}\), and density threshold \(10^{-5}\). 
All reported errors are computed after fixing the additive constant of the reconstructed potential by density-weighted mean matching.

For comparison, we also apply the sparse-learning framework.  The same quantity \(f^\delta\) and the same set of sampled space--time observations are used for training. 
Three different dictionaries are considered: a polynomial basis of total degree $5$, a Fourier basis consisting of $8$ trigonometric modes in each coordinate direction, and a Gaussian radial basis dictionary with \(25\) centers uniformly distributed on \([-4,4]^2\).
The sparse coefficients are identified using the PartInv algorithm followed by residual-error pruning. The sparsity level is fixed to four active basis functions for all three dictionaries.

\noindent{\bf Results.}\quad
Figure~\ref{fig:example3_kernel} shows the true external potential, the reconstruction produced by the proposed kernel method, and the corresponding pointwise absolute error. The reconstructed potential accurately reproduces the anisotropic quadratic structure of the ground truth and is visually indistinguishable from the exact potential over the observation domain. The reconstruction error remains small throughout the domain and is mainly concentrated near the boundary, where the observed density is extremely low and consequently provides less information for the inverse problem.

\begin{figure}[htbp]
\centering
\includegraphics[width=0.32\linewidth]{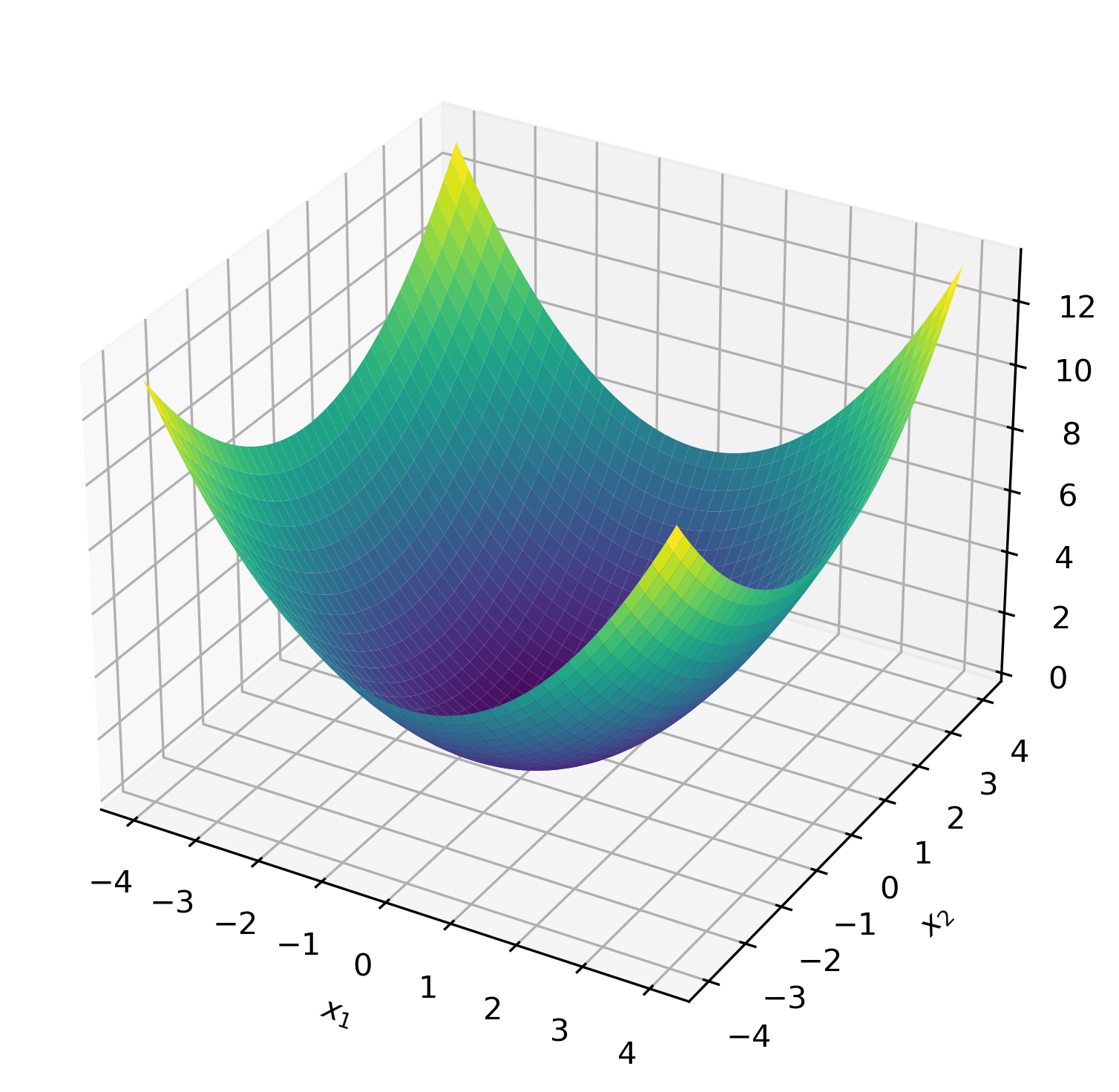}
\includegraphics[width=0.32\linewidth]{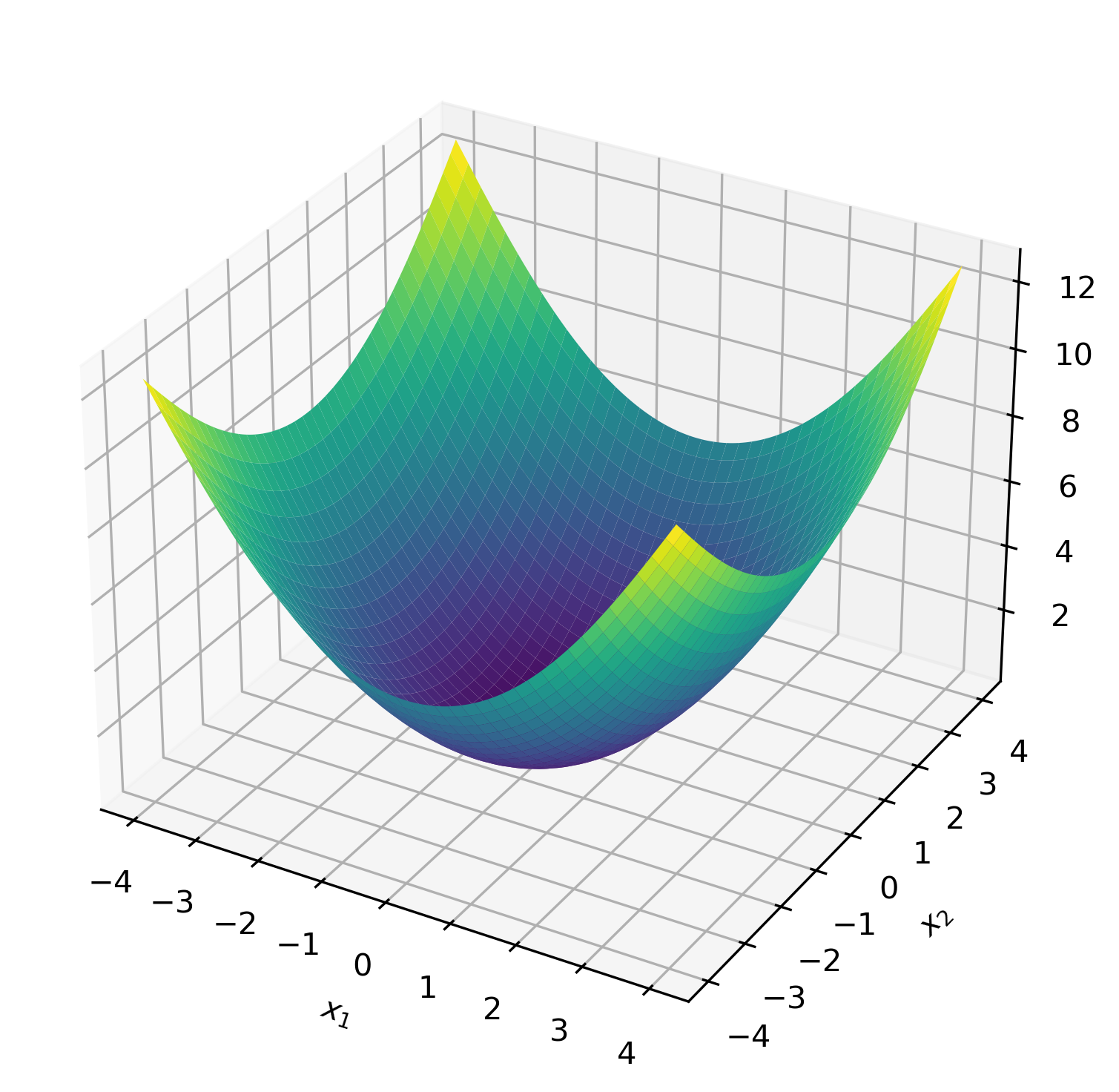}
\includegraphics[width=0.32\linewidth]{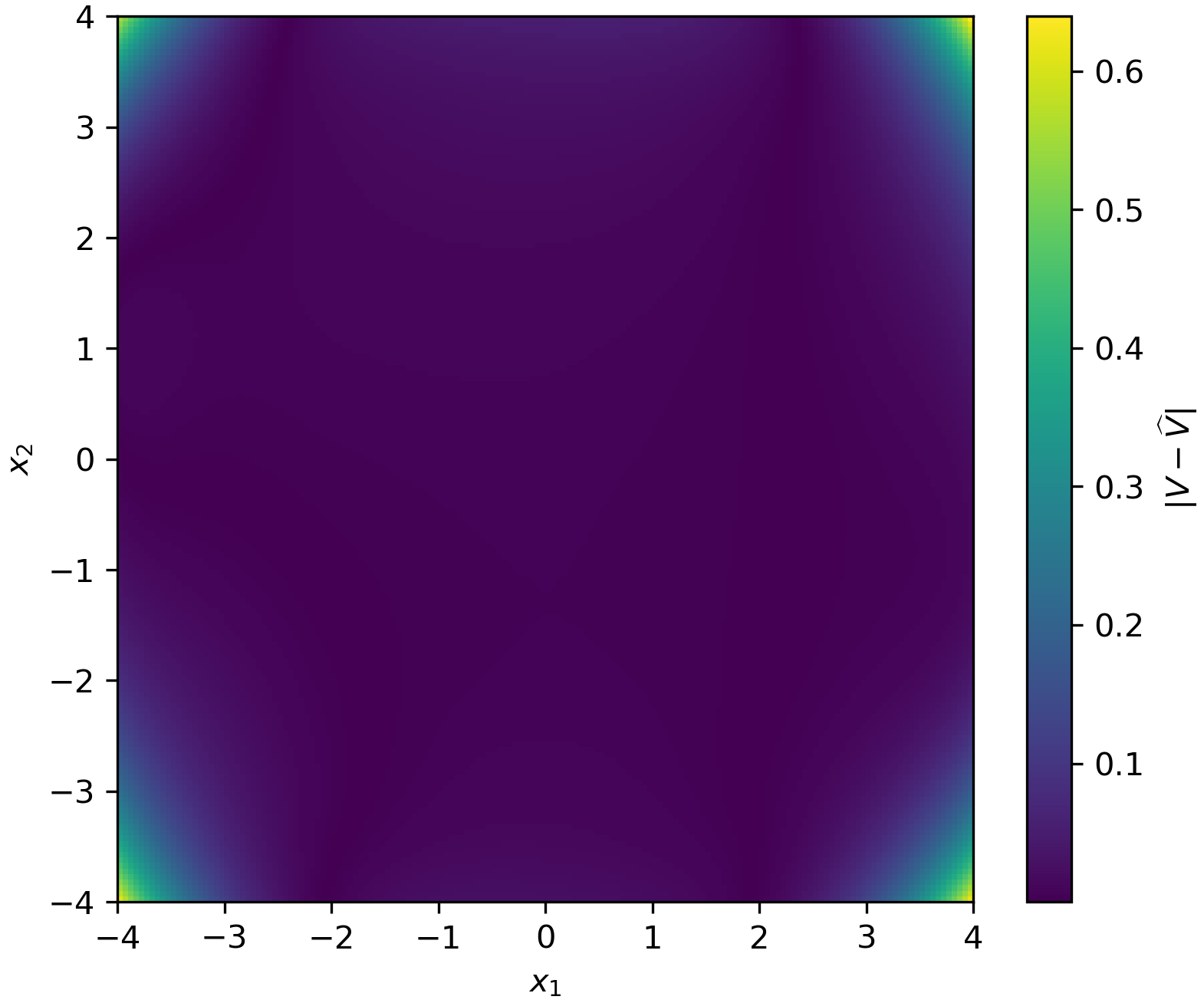}
\caption{
Kernel reconstruction results for Example~3.
From left to right: the true external potential \(V\), the reconstructed potential \(\widehat V\), and the pointwise absolute error \(|V-\widehat V|\) (up to an additive constant).
}
\label{fig:example3_kernel}
\end{figure}

To further evaluate the proposed approach, we compare it with sparse learning based on polynomial, Fourier, and Gaussian dictionaries. The corresponding absolute reconstruction errors are displayed in Figure~\ref{fig:example3_sparse}, while the relative \(L^2\) errors are reported in Table~\ref{tab:errors_example3}.

\begin{figure}[htbp]
\centering
\includegraphics[width=0.32\linewidth]{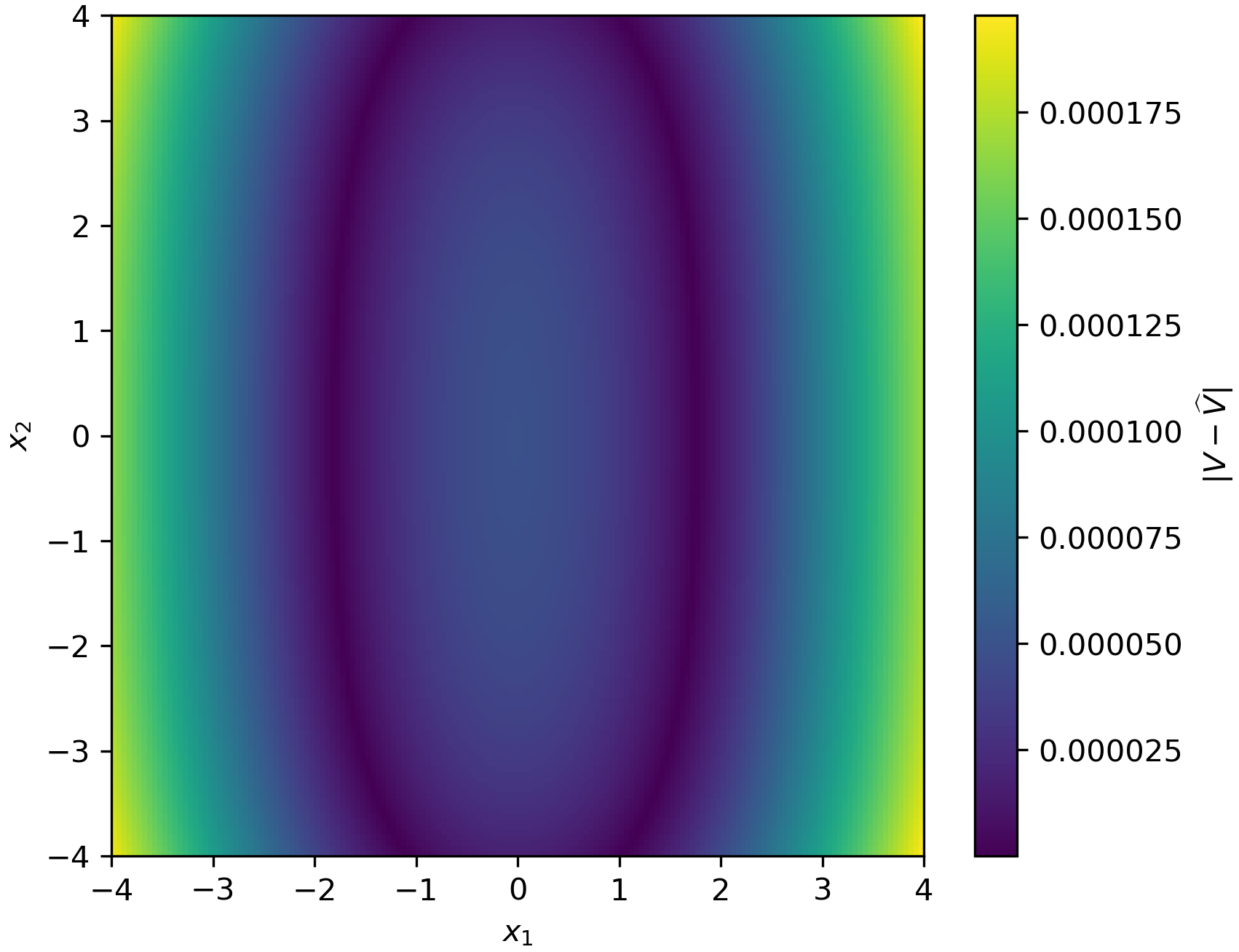}
\includegraphics[width=0.32\linewidth]{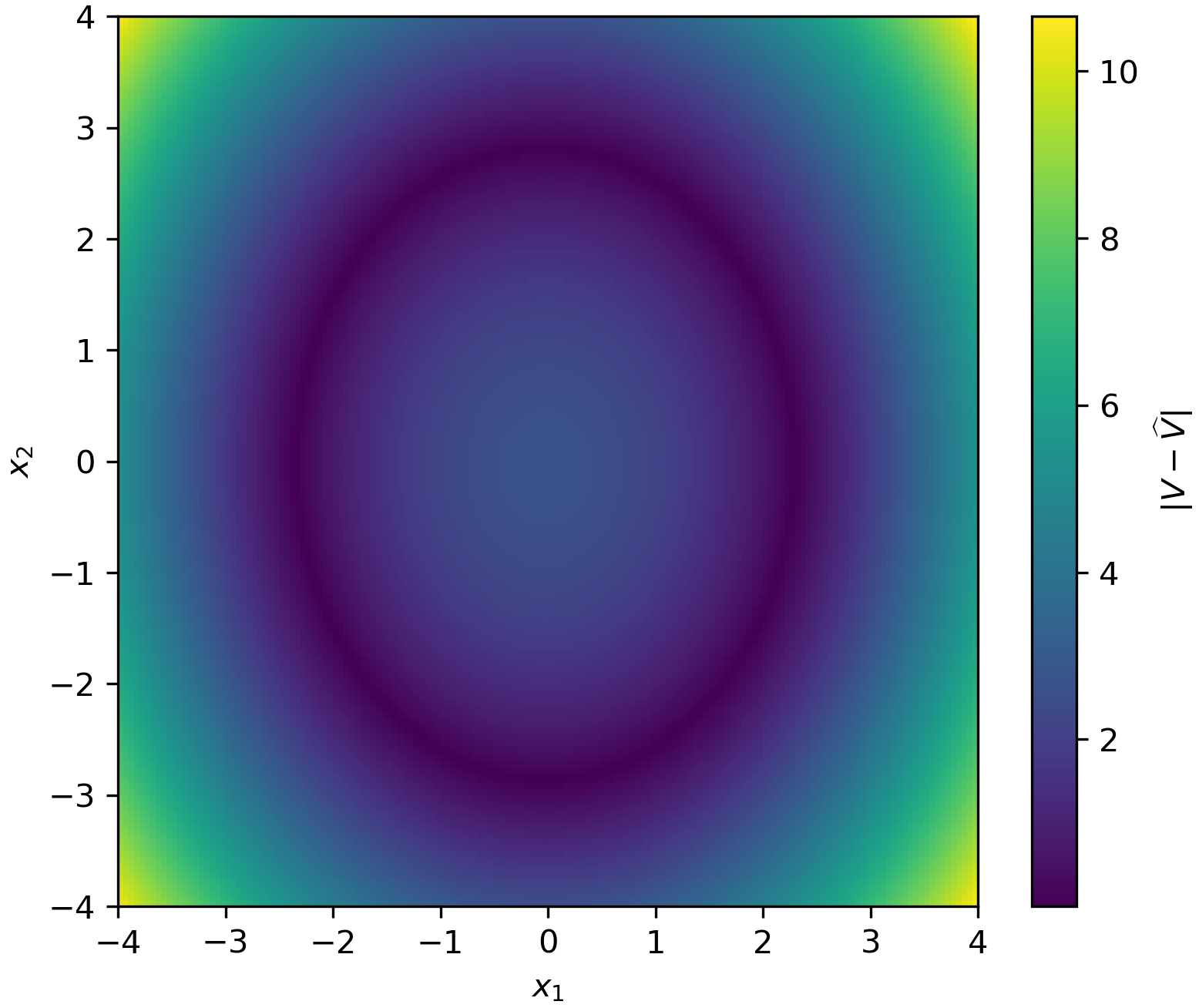}
\includegraphics[width=0.32\linewidth]{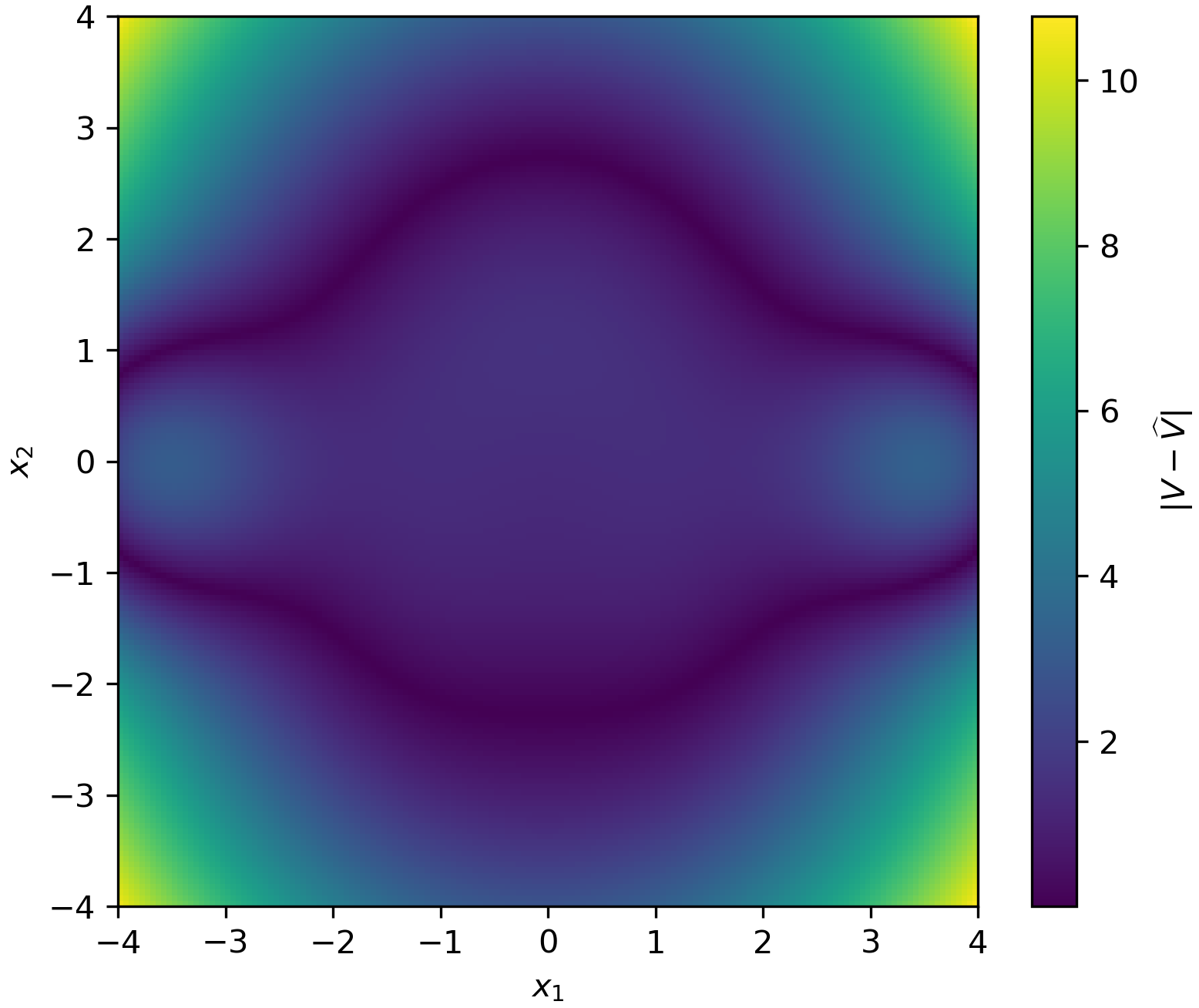}
\caption{
Pointwise absolute reconstruction errors (up to an additive constant) obtained by sparse learning with different dictionaries.
From left to right: polynomial, Fourier, and Gaussian basis functions.
}
\label{fig:example3_sparse}
\end{figure}

Among the sparse models, the polynomial dictionary yields the most accurate reconstruction. This is expected since the true potential is itself a quadratic polynomial and therefore belongs to the approximation space generated by the dictionary. In contrast, the Fourier and Gaussian dictionaries produce significantly larger errors. Although the kernel reconstruction is less accurate than the optimally matched polynomial dictionary, it substantially outperforms the Fourier and Gaussian representations without requiring prior knowledge of the functional form of the potential. These results demonstrate that the proposed kernel framework provides a robust and accurate approach for recovering external potentials from Wasserstein Hamiltonian dynamics without explicit dictionary design.

\begin{table}[htbp]
\centering
\caption{Relative \(L^2\) reconstruction errors for Example~3.}
\label{tab:errors_example3}
\begin{tabular}{c|c|c}
\hline
Method & domain $[-2,2]^2$ & domain $[-4,4]^2$ \\
\hline
Kernel approach      & 4.6703e{-03} & 1.1454e{-02} \\
Polynomial sparse    & 2.4460e{-05} & 1.4294e{-05} \\
Gaussian sparse      & 8.5404e{-01} & 5.8802e{-01} \\
Fourier sparse       & 1.3046e{+00} & 6.3640e{-01} \\
\hline
\end{tabular}
\end{table}

\subsubsection{Example 4: Two-dimensional nonlinear diffusion and highly nonconvex potential}

We consider \eqref{ex_whf} with $m=3$, $\kappa=1$ and $W=0$. The dynamics are driven by the combined effects of the nonlinear internal energy and the external potential.
The external potential is chosen as
\begin{align*}
V(x_1,x_2)=\sin\!\left(\frac{2\pi}{3}x_1\right)\cos\!\left(\frac{2\pi}{3}x_2\right)+\frac{\sin\!\bigl(\sqrt{x_1^2+x_2^2}\bigr)}{\sqrt{x_1^2+x_2^2}}.    
\end{align*}
The first term generates a periodic landscape with multiple local extrema, while the second term introduces a radially symmetric component centered at the origin. The resulting potential is highly nonconvex and contains structures at multiple spatial scales, providing a challenging benchmark for recovering external potentials from coarse observations.

The initial density and velocity potential are specified by
\begin{align*}
\rho_0(x_1,x_2)=\frac{1}{8\pi}\exp\!\left(-\frac{x_1^2+x_2^2}{8}\right),\quad \phi_0(x_1,x_2)=-\frac12 x_1^2.
\end{align*}

\noindent{\bf Density evolution.}\quad The reference solution is computed on the spatial domain
\([-5,5]^2\) over the time interval \([0,0.2]\) using mesh sizes $\delta x=10^{-2}$ and $\delta t=10^{-5}$.
The observational data are obtained by uniform subsampling with factors $C_x=5$ and $C_t=50$, resulting in the observation mesh sizes
\[
\Delta x=5\times10^{-2},
\qquad
\Delta t=5\times10^{-4}.
\]

Figure~\ref{fig:highly_nonconvex_density} shows several snapshots of the density trajectory. Compared with Example~3, the highly nonconvex potential induces a considerably more intricate transport pattern. Under the combined effects of nonlinear diffusion and the highly nonconvex external potential, the density undergoes substantial transport and deformation. As time evolves, the initially smooth profile develops increasingly anisotropic structures and explores multiple regions of the potential landscape. 

\begin{figure}[htbp]
\centering
\includegraphics[width=0.19\linewidth]{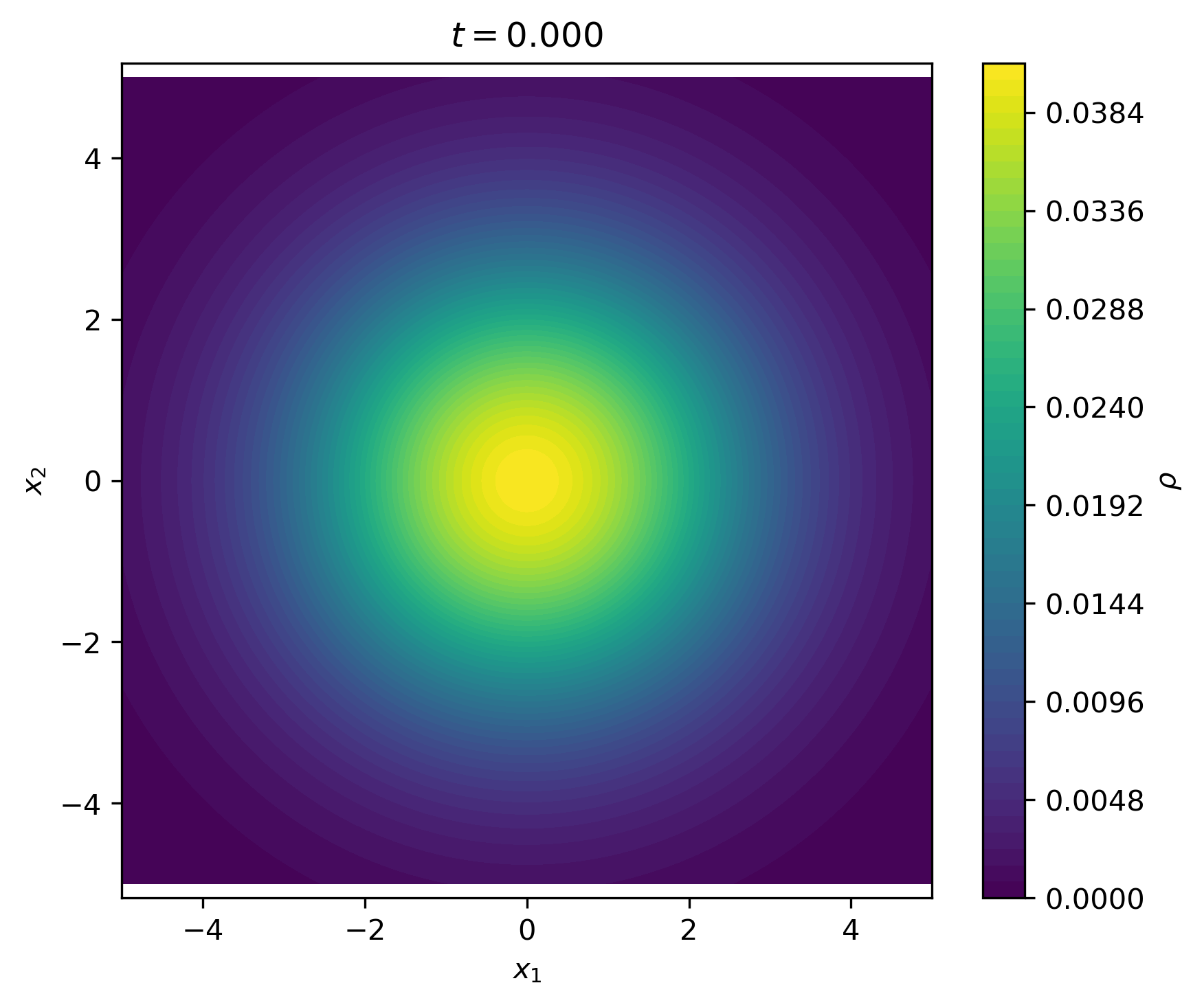}
\includegraphics[width=0.19\linewidth]{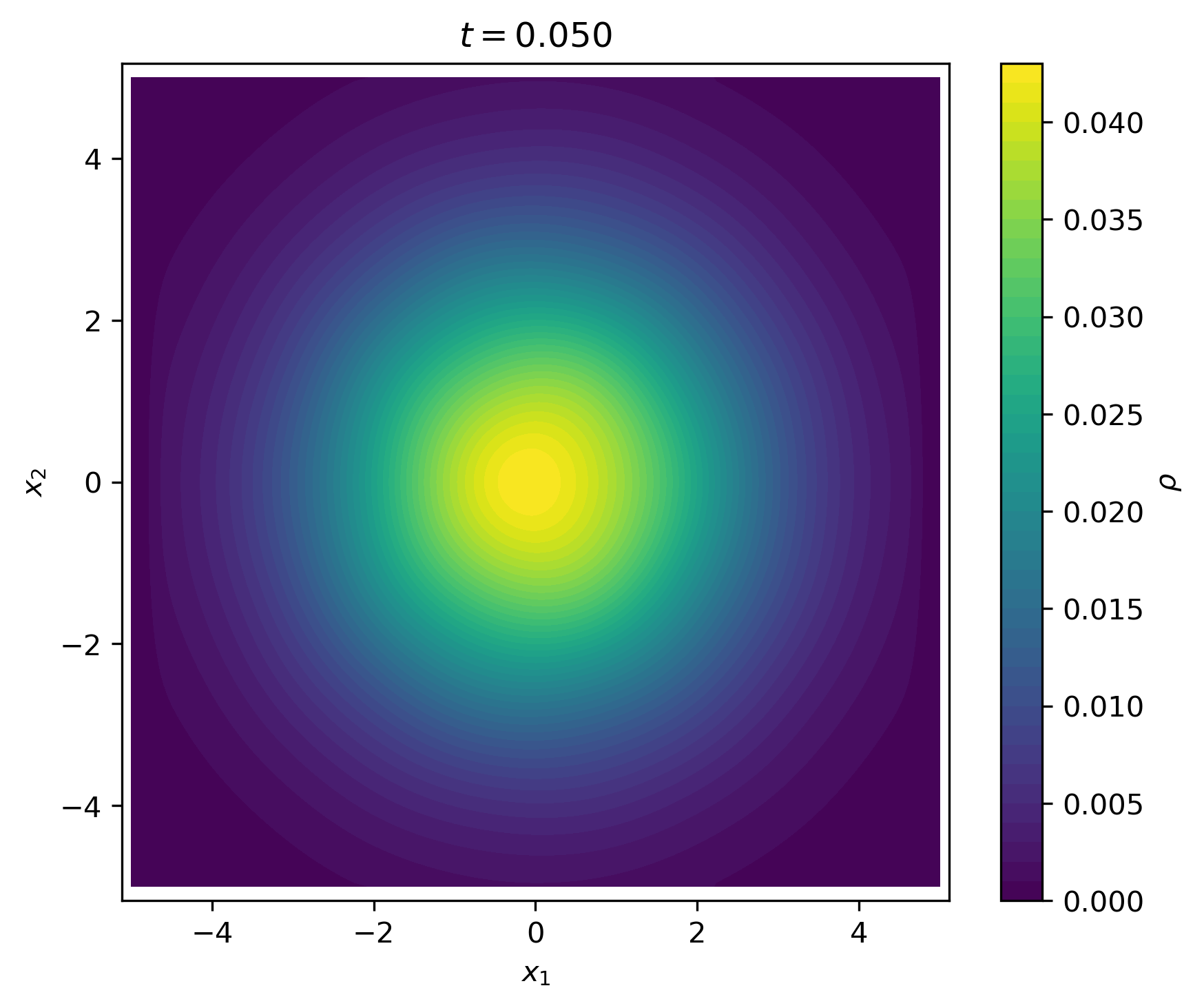}
\includegraphics[width=0.19\linewidth]{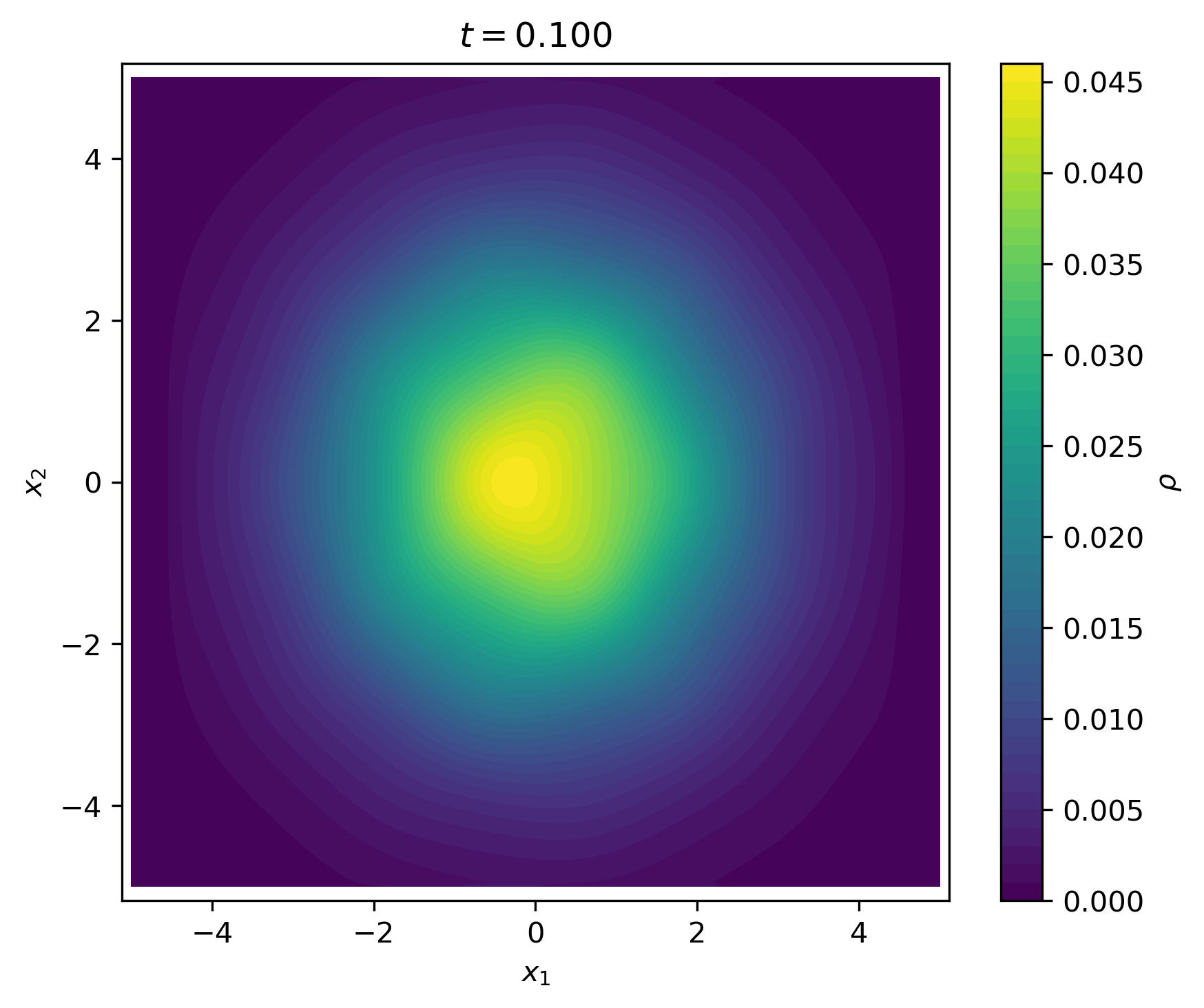}
\includegraphics[width=0.19\linewidth]{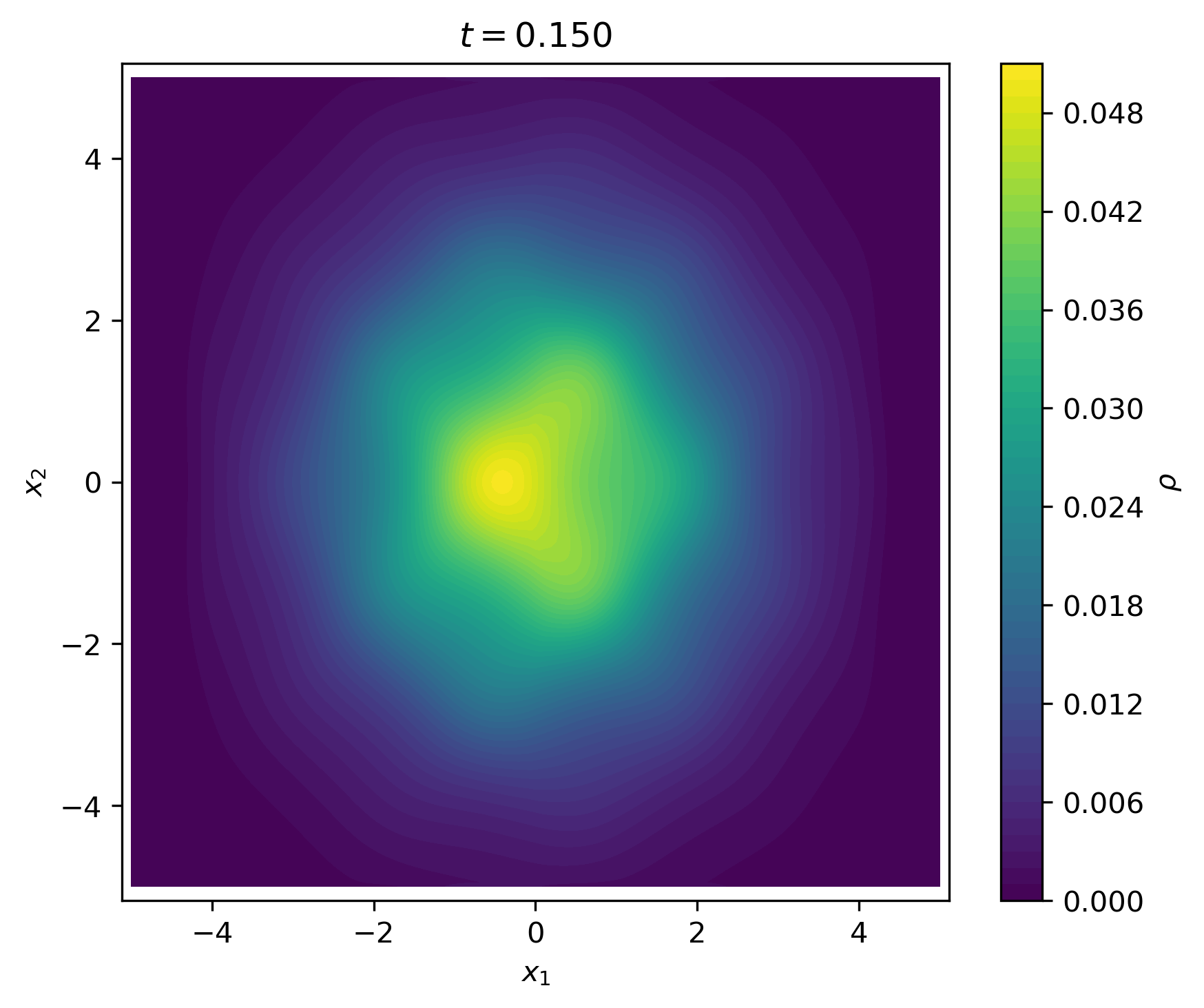}
\includegraphics[width=0.19\linewidth]{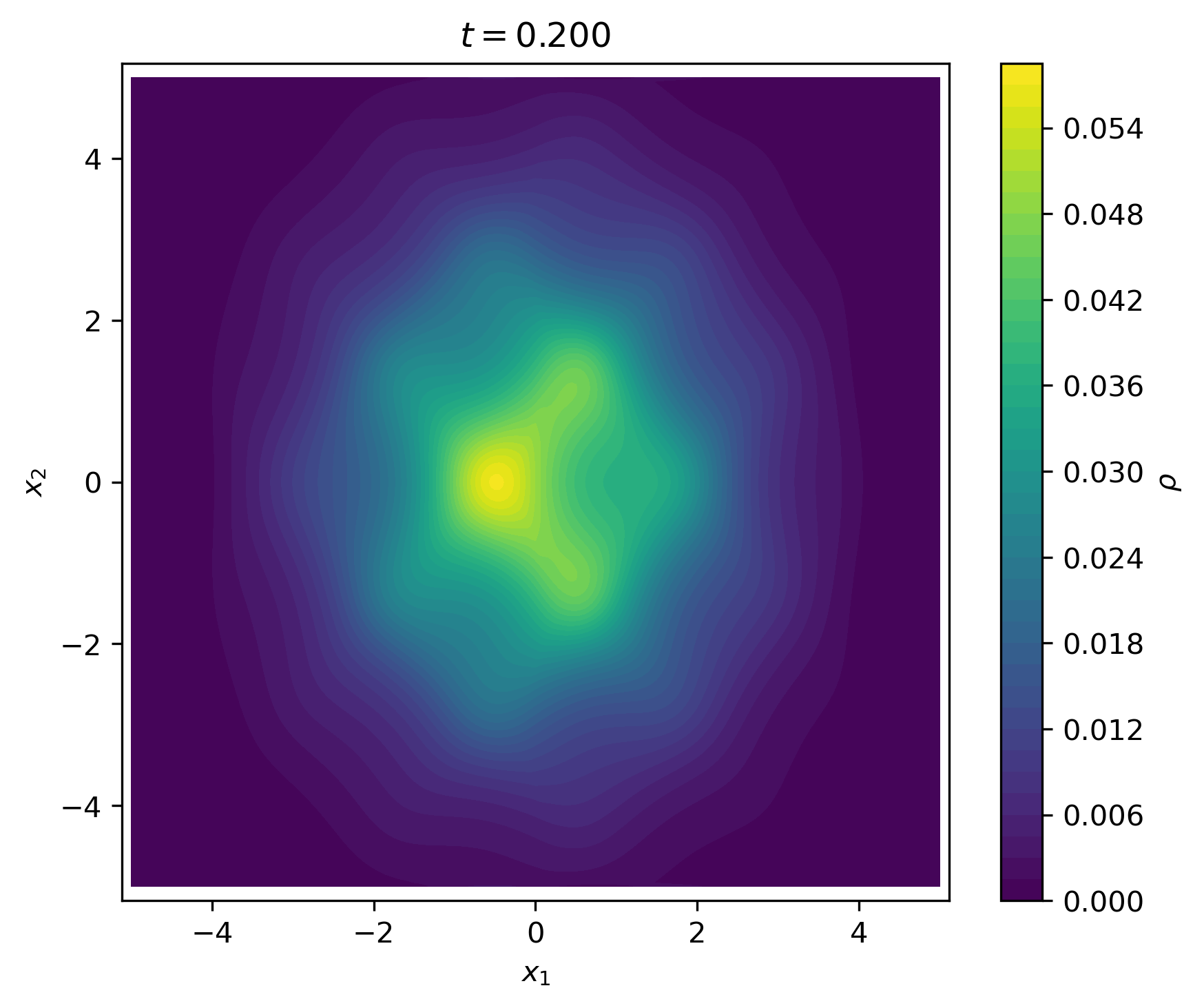}
\caption{
Snapshots of the density evolution for Example~4 at
\(t=0\), \(0.05\), \(0.10\), \(0.15\), and \(0.20\).
The density is transported through a highly nonconvex potential landscape and develops increasingly complex anisotropic structures over time.
}
\label{fig:highly_nonconvex_density}
\end{figure}

\noindent{\bf Training.}\quad
The kernel estimator is trained using \(M=8000\) collocation points sampled from the observed space--time trajectory, kernel bandwidth \(\eta=0.4\), regularization parameter \(\lambda=10^{-7}\), and density threshold \(10^{-5}\). The additive constant of the reconstructed potential is fixed by density-weighted mean matching.

For comparison, we also apply the sparse-learning approach using the same observations and the same forcing term \(f^\delta\). The polynomial, Fourier, and Gaussian dictionaries introduced in Example~3 are employed without modification, and the sparsity level is fixed to four active basis functions in all experiments.

\noindent{\bf Results.}\quad
Figure~\ref{fig:example4_kernel} reports the kernel reconstruction for the highly nonconvex potential. The proposed method accurately resolves both the oscillatory periodic component and the radially symmetric component of the target potential. The pointwise error remains small over the observation domain, with no visible spurious oscillations.

\begin{figure}[htbp]
\centering
\includegraphics[width=0.32\linewidth]{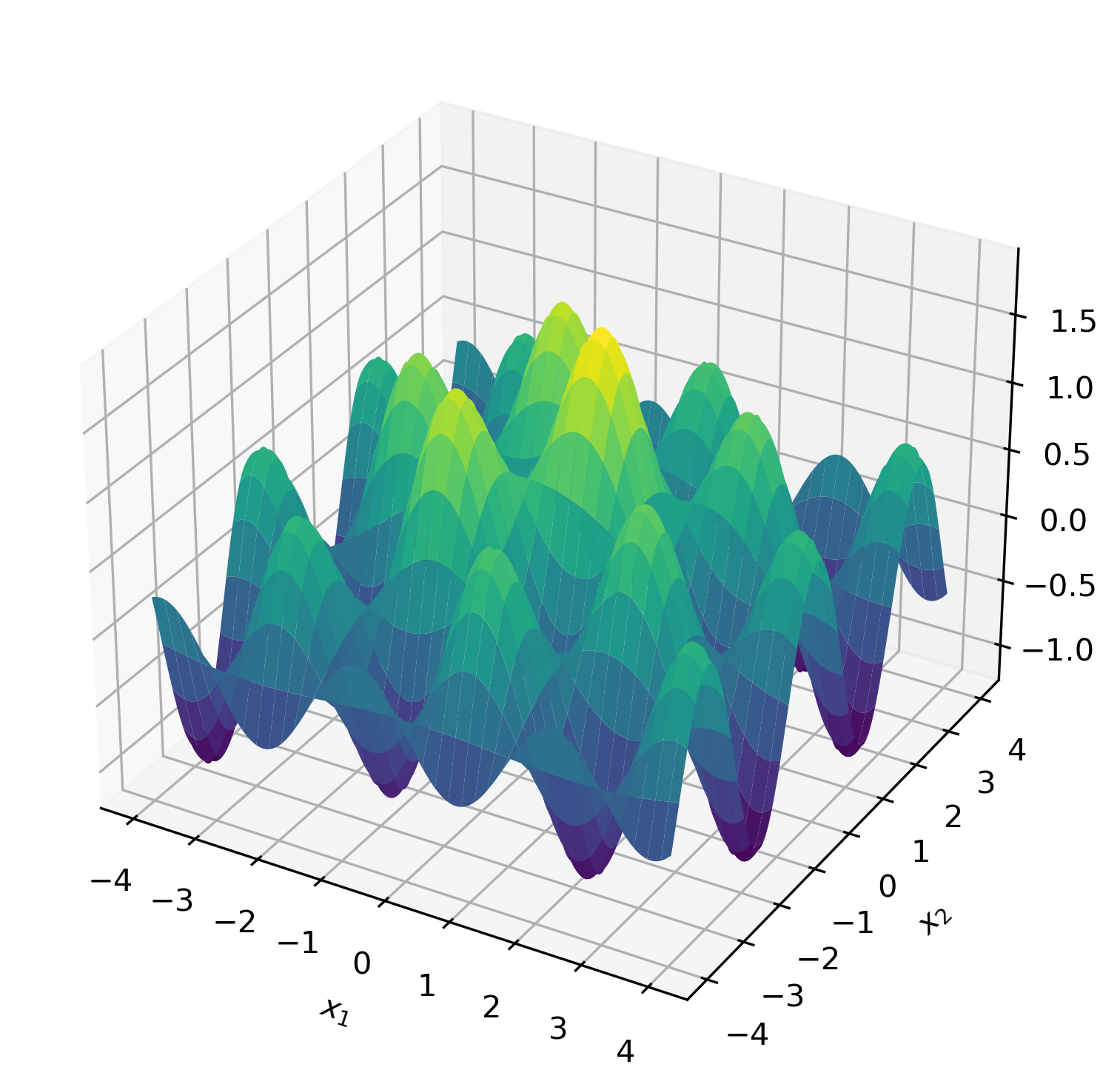}
\includegraphics[width=0.32\linewidth]{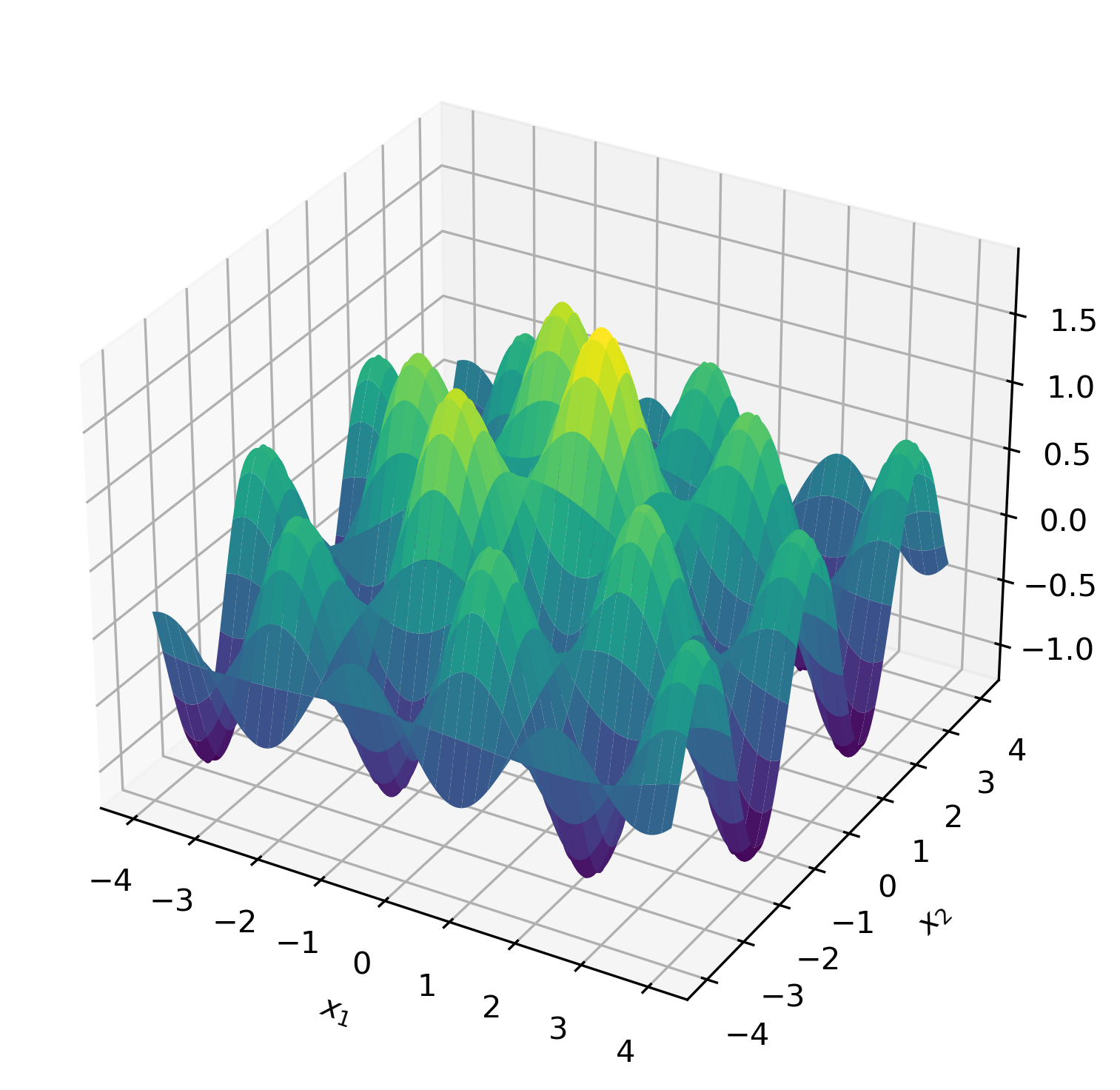}
\includegraphics[width=0.32\linewidth]{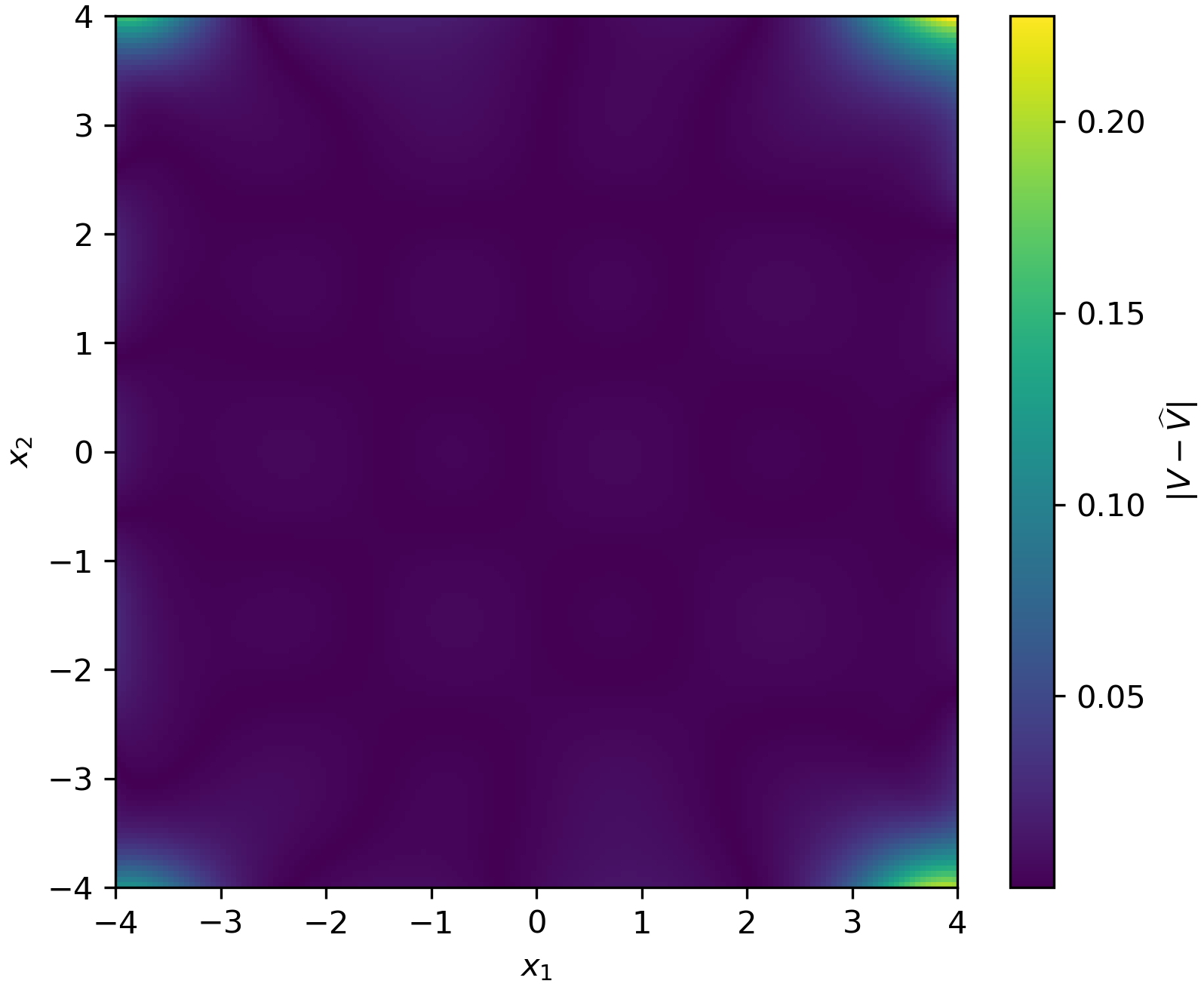}
\caption{
Kernel reconstruction results for Example~4.
From left to right: the true external potential \(V\), the reconstructed potential \(\widehat V\), and the pointwise absolute error \(|V-\widehat V|\) (up to an additive constant).
}
\label{fig:example4_kernel}
\end{figure}

The sparse-learning results are shown in Figure~\ref{fig:example4_sparse}, and the corresponding relative \(L^2\) errors are summarized in Table~\ref{tab:errors_example4}. In contrast to Example~3, the polynomial dictionary is no longer well suited to the target potential and produces large errors. The Fourier dictionary performs best among the sparse models, reflecting the oscillatory structure of \(V\), but its accuracy remains substantially below that of the kernel method. The Gaussian dictionary gives intermediate accuracy.

Overall, the kernel method gives the smallest error on both evaluation domains. This example illustrates that the proposed approach remains effective for highly nonconvex potentials without requiring a dictionary adapted to the functional form of the target potential.

\begin{figure}[htbp]
\centering
\includegraphics[width=0.32\linewidth]{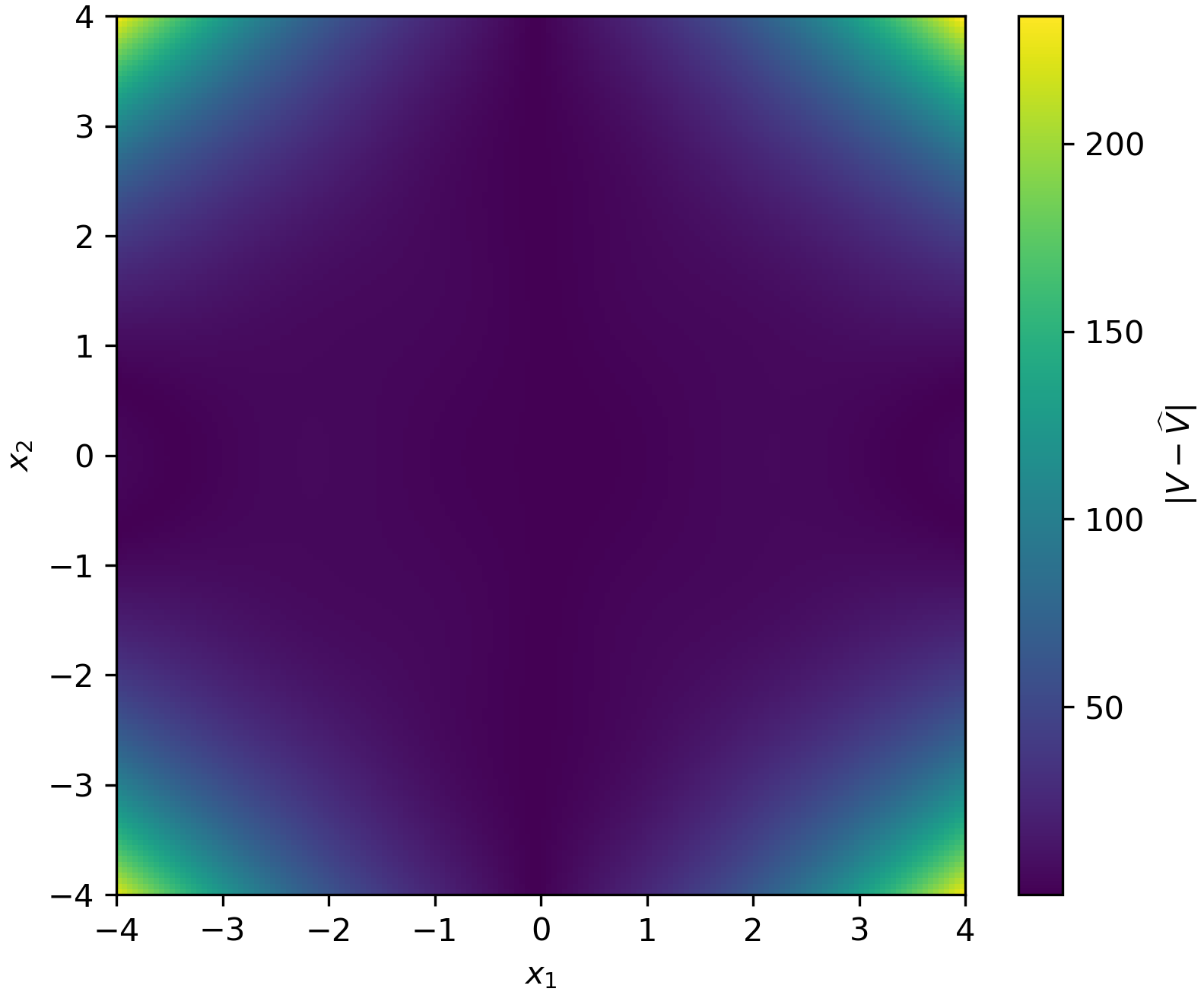}
\includegraphics[width=0.32\linewidth]{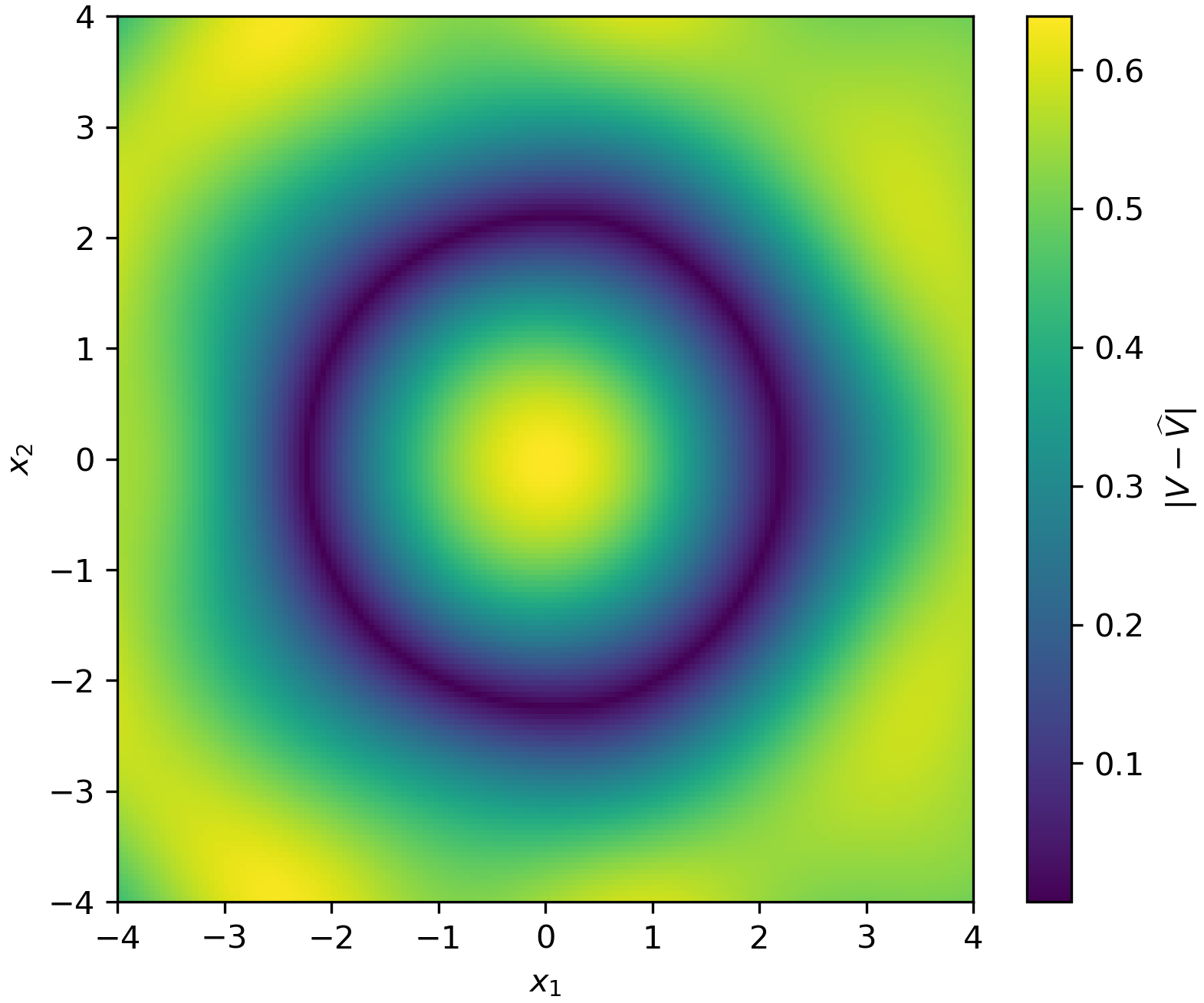}
\includegraphics[width=0.32\linewidth]{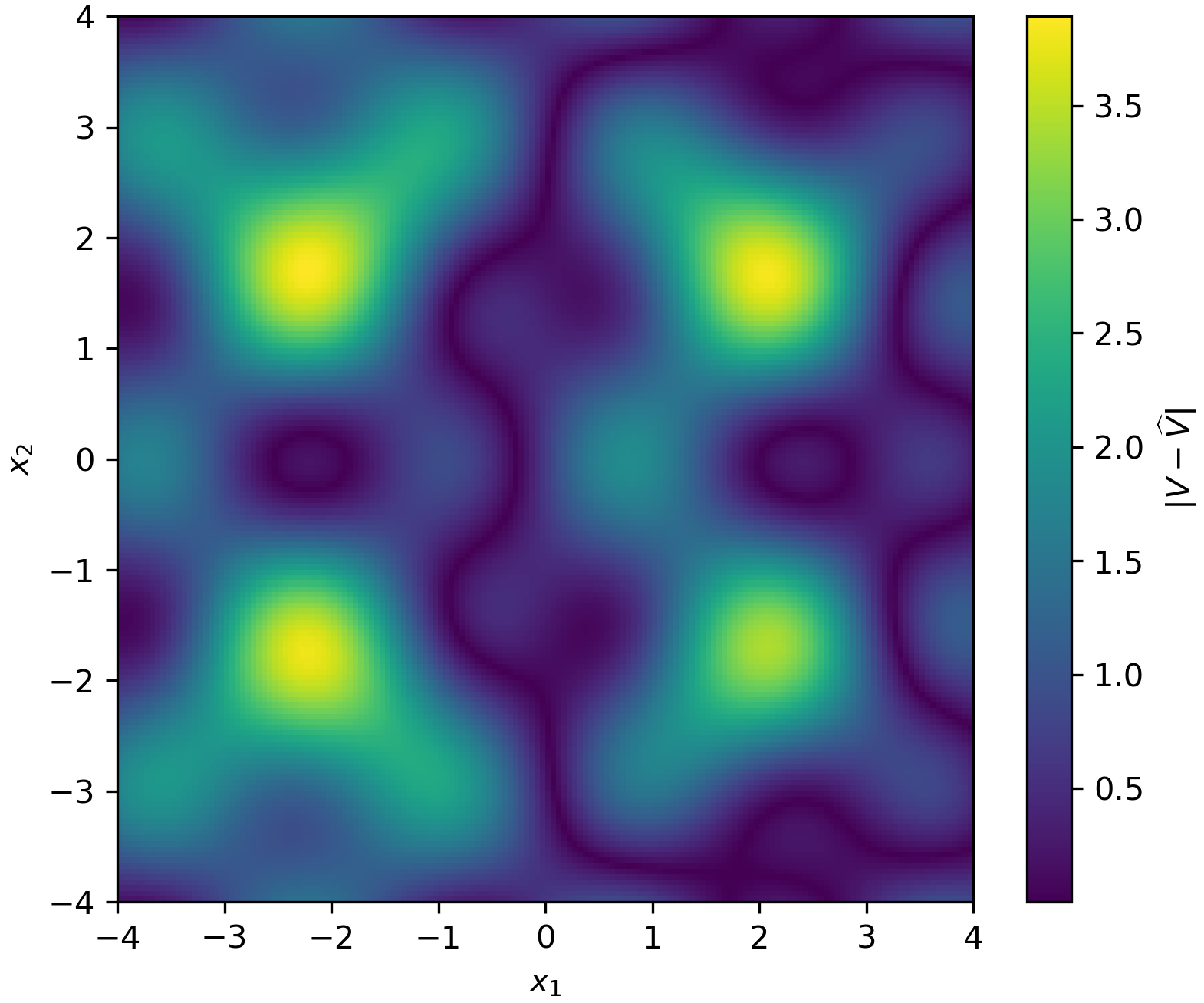}
\caption{
Pointwise absolute reconstruction errors (up to an additive constant) obtained by sparse learning for Example~4.
From left to right: polynomial, Fourier, and Gaussian dictionaries.
}
\label{fig:example4_sparse}
\end{figure}

\begin{table}[htbp]
\centering
\caption{Relative \(L^2\) reconstruction errors for Example~4.}
\label{tab:errors_example4}
\begin{tabular}{c|c|c}
\hline
Method & domain \( [-2,2]^2 \) & domain \( [-4,4]^2 \) \\
\hline
Kernel approach   & 2.5832e{-03} & 2.7633e{-02} \\
Polynomial sparse & 4.9971e{+00}  & 6.7241e{+01}\\\
Gaussian sparse   & 1.6895e{+00}  & 2.3943e{+00} \\
Fourier sparse    & 4.0946e{-01} & 6.8272e{-01}\\
\hline
\end{tabular}
\end{table}
}

\section{Conclusion}

In this work, we have introduced a structure-preserving, kernel-based learning framework for the inverse identification of energy functionals governing Wasserstein geometric flows. Specifically, our method addresses the challenging problem of recovering both the potential function and the interaction kernel underlying Wasserstein gradient and Hamiltonian systems from discretized observations of density trajectories. By embedding the problem within the setting of reproducing kernel Hilbert space (RKHS), we derived a closed-form representation of the estimators that inherently preserves the geometric structure of the underlying flow equations. This structure-preserving property is critical for ensuring that the learned models remain consistent with the intrinsic variational and symplectic formulations that characterize Wasserstein dynamics.

A key contribution of our analysis lies in establishing rigorous convergence and stability guarantees. Through a detailed decomposition of the reconstruction error into approximation, mesh-induced, and scheme-dependent components, we derived explicit convergence rates under adaptive regularization and discretization schemes. Furthermore, we demonstrated that the Wasserstein Hamiltonian flow induced by the learned energy functional converges uniformly, in the Wasserstein metric, to the true continuous flow. This result provides a solid theoretical foundation for applying our approach to realistic data-driven modeling scenarios where only discrete trajectory data are available.

Our operator-theoretic formulation also offers new insights into the identifiability and regularization of infinite-dimensional inverse problems on density manifolds. Unlike previous approaches that depend on basis truncation or pre-specified function dictionaries, our method exploits the differential reproducing property of RKHSs to construct estimators in closed form, avoiding ad hoc truncation and ensuring consistency with underlying infinite-dimensional geometric structures. This makes the framework both flexible and computationally tractable, bridging the gap between modern kernel-based learning and geometric analysis on probability spaces.

{The numerical experiments of Section 5 substantiate these findings on four benchmarks covering both gradient and Hamiltonian Wasserstein dynamics, including two-dimensional problems with smooth quadratic and highly nonconvex potentials. The kernel estimator accurately recovers compactly supported as well as smooth interaction and external potentials from coarse space-time observations, with accuracy improving under mesh refinement as predicted by the analysis of Section 4. The comparison with sparse identification methods shows that while a dictionary well matched to the unknown potential can yield very accurate sparse reconstructions, performance degrades severely under misspecification, whereas the proposed method remains consistently accurate across all examples without any prior basis selection, confirming in practice the advantage of optimizing over the entire RKHS.}

Looking ahead, several promising research directions emerge from this work. Extending the current framework to stochastic systems in both finite \cite{lazaro2008stochastic, lazaro:ortega3} and infinite-dimensional \cite{CruzeiroHolmRatiu2018, ChenCruzeiroRatiu2023} frameworks, incorporating higher-order numerical schemes, and exploring adaptive kernel selection strategies could further enhance performance and applicability. Additionally, integrating our method with generative modeling techniques or geometric deep learning architectures may open new possibilities for data-driven discovery of dynamical laws in physics, biology, and beyond. Altogether, this study establishes a principled connection between kernel learning theory and Wasserstein geometry, providing a robust foundation for the next generation of structure-preserving machine learning methods for complex dynamical systems.

\appendix

\section{Detailed proofs of some of the results}

\subsection{Proof of Theorem \ref{thm:symplectic form}}
\begin{proof}
The bilinearity and skew-symmetry of $\omega$ are immediate from its definition. The closedness follows since
\begin{align*}
    d\omega = \int_M d(d\phi \wedge d\rho) = 0.
\end{align*}
To establish non-degeneracy, suppose $\omega\big((\sigma_1, \psi_1), (\sigma_2, \psi_2)\big) = 0$ for all $(\sigma_2, \psi_2) \in T_{\rho}\mathcal{P}_{+}(M) \times C^\infty(M)/\mathbb{R}$. We will show that this implies $\sigma_1 = 0$ and $\psi_1 = 0$.

First, setting $\psi_2 = 0$, we have
\begin{align*}
    \omega(\rho, \phi)\big((\sigma_1, \psi_1), (\sigma_2, 0)\big) = \int_M \psi_1 \sigma_2 \, d\mathrm{vol}_M = 0, \quad \text{for all } \sigma_2 \in T_{\rho}\mathcal{P}_{+}(M).
\end{align*}
Observe that $\langle \psi_1, \sigma_2 \rangle_{L^2} = g_W(\rho)(-\Delta_\rho \psi_1, \sigma_2)$. By the non-degeneracy of Otto’s Riemannian metric $g_W$ and the isomorphism property of the weighted Laplacian $\Delta_\rho$, we conclude that $\psi_1 = 0$. Similarly, by considering $\sigma_2 = 0$ and varying $\psi_2$, we deduce $\sigma_1 = 0$.
This proves non-degeneracy and completes the proof.
\end{proof}

\subsection{Proof of Corollary \ref{Poissons structure}}
\label{Proof of Corollary Poisson}
\begin{proof}
The bilinearity and skew-symmetry follow directly from the definition. To show the Leibniz rule, let  $\mathcal{H},\mathcal{G},\mathcal{F}\in C^\infty\left(\mathcal{P}_+(M)\times C^\infty(M)/\mathbb{R}\right)$. Then we have that 
\begin{align*}
\{\mathcal{F}\mathcal{G},\mathcal{H}\}&= \int_M \left(\frac{\delta (\mathcal{FG})}{\delta\rho} \frac{\delta \mathcal{H}}{\delta\phi} - \frac{\delta \mathcal{H}}{\delta\rho} \frac{\delta (\mathcal{FG})}{\delta\phi} \right)d\mathrm{vol}_M\\
&=\int_M \left(\frac{\delta \mathcal{F}}{\delta\rho}\mathcal{G}+\frac{\delta \mathcal{G}}{\delta\rho}\mathcal{F}\right) \frac{\delta \mathcal{H}}{\delta\phi} - \frac{\delta \mathcal{H}}{\delta\rho} \left(\frac{\delta \mathcal{F}}{\delta\phi}\mathcal{G}+\frac{\delta \mathcal{G}}{\delta\phi}\mathcal{F}\right) d\mathrm{vol}_M\\
&=\int_M \left(\mathcal{F}\left(\frac{\delta \mathcal{G}}{\delta\rho}\frac{\delta \mathcal{H}}{\delta\phi}-\frac{\delta \mathcal{H}}{\delta\rho}\frac{\delta \mathcal{G}}{\delta\phi}\right)  +  \left(\frac{\delta \mathcal{F}}{\delta\rho}\frac{\delta \mathcal{H}}{\delta\phi}-\frac{\delta \mathcal{H}}{\delta\rho}\frac{\delta \mathcal{F}}{\delta\phi}\right)\mathcal{G}\right) d\mathrm{vol}_M\\
&=\mathcal{F}\{\mathcal{G},\mathcal{H}\}+\{\mathcal{F},\mathcal{H}\}\mathcal{G}.
\end{align*}
Finally, we prove the Jacobi identity. 
Denote $\mathcal{H}_{\rho}=\frac{\delta \mathcal{H}}{\delta\rho}$ and $\mathcal{H}_{\phi}=\frac{\delta \mathcal{H}}{\delta\phi}$. Let $\mathcal{E}=\{\mathcal{H},\mathcal{G}\}$. Then for any $\psi\in C^\infty(M)/\mathbb{R}$, we have 
\begin{align*}
\langle\mathcal{E}_{\phi},\psi\rangle_{L^2} :&= \lim_{\varepsilon\to 0}\frac{1}{\varepsilon} \left(\mathcal{E}(\rho,\phi+\varepsilon\psi)-\mathcal{E}(\rho,\phi)\right) \\
&=\lim_{\varepsilon\to 0}\frac{1}{\varepsilon} \int_M  \mathcal{H}_{\rho}(\rho,\phi+\varepsilon\psi)  \mathcal{G}_{\phi}(\rho,\phi+\varepsilon\psi) - \mathcal{H}_{\rho}(\rho,\phi)  \mathcal{G}_{\phi}(\rho,\phi)  d\mathrm{vol}_M\\
&\quad - \lim_{\varepsilon\to 0}\frac{1}{\varepsilon} \int_M  \mathcal{H}_{\phi}(\rho,\phi+\varepsilon\psi)  \mathcal{G}_{\rho}(\rho,\phi+\varepsilon\psi) - \mathcal{H}_{\phi}(\rho,\phi)  \mathcal{G}_{\rho}(\rho,\phi) d\mathrm{vol}_M\\
&=\langle\mathcal{H}_{\rho\phi} \mathcal{G}_{\phi}+\mathcal{H}_{\rho}\mathcal{G}_{\phi\phi}- \mathcal{H}_{\phi\phi} \mathcal{G}_{\rho}-\mathcal{H}_{\phi}\mathcal{G}_{\rho\phi},\psi\rangle_{L^2},
\end{align*}
which implies that $\mathcal{E}_{\phi}=\mathcal{H}_{\rho\phi} \mathcal{G}_{\phi}+\mathcal{H}_{\rho}\mathcal{G}_{\phi\phi}- \mathcal{H}_{\phi\phi} \mathcal{G}_{\rho}-\mathcal{H}_{\phi}\mathcal{G}_{\rho\phi}$. Similarly, we have $\mathcal{E}_{\rho}=\mathcal{H}_{\rho\rho} \mathcal{G}_{\phi}+\mathcal{H}_{\rho}\mathcal{G}_{\phi\rho}- \mathcal{H}_{\phi\rho} \mathcal{G}_{\rho}-\mathcal{H}_{\phi}\mathcal{G}_{\rho\rho}$. Therefore, we obtain 
\begin{align*}
&\{\mathcal{F},\{\mathcal{H},\mathcal{G}\}\}+ \{\mathcal{H},\{\mathcal{G},\mathcal{F}\}\}+\{\mathcal{G},\{\mathcal{F},\mathcal{H}\}\}\\
=\ &\int_M \mathcal{F}_\rho(\mathcal{H}_{\rho\phi} \mathcal{G}_{\phi}+\mathcal{H}_{\rho}\mathcal{G}_{\phi\phi}- \mathcal{H}_{\phi\phi} \mathcal{G}_{\rho}-\mathcal{H}_{\phi}\mathcal{G}_{\rho\phi})-\mathcal{F}_\phi(\mathcal{H}_{\rho\rho} \mathcal{G}_{\phi}+\mathcal{H}_{\rho}\mathcal{G}_{\phi\rho}- \mathcal{H}_{\phi\rho} \mathcal{G}_{\rho}-\mathcal{H}_{\phi}\mathcal{G}_{\rho\rho})d\mathrm{vol}_M\\
+& \int_M \mathcal{H}_\rho(\mathcal{G}_{\rho\phi} \mathcal{F}_{\phi}+\mathcal{G}_{\rho}\mathcal{F}_{\phi\phi}- \mathcal{G}_{\phi\phi} \mathcal{F}_{\rho}-\mathcal{G}_{\phi}\mathcal{F}_{\rho\phi})-\mathcal{H}_\phi(\mathcal{G}_{\rho\rho} \mathcal{F}_{\phi}+\mathcal{G}_{\rho}\mathcal{F}_{\phi\rho}- \mathcal{G}_{\phi\rho} \mathcal{F}_{\rho}-\mathcal{G}_{\phi}\mathcal{F}_{\rho\rho})d\mathrm{vol}_M\\
+& \int_M \mathcal{G}_\rho(\mathcal{F}_{\rho\phi} \mathcal{H}_{\phi}+\mathcal{F}_{\rho}\mathcal{H}_{\phi\phi}- \mathcal{F}_{\phi\phi} \mathcal{H}_{\rho}-\mathcal{F}_{\phi}\mathcal{H}_{\rho\phi})-\mathcal{G}_\phi(\mathcal{F}_{\rho\rho} \mathcal{H}_{\phi}+\mathcal{F}_{\rho}\mathcal{H}_{\phi\rho}- \mathcal{F}_{\phi\rho} \mathcal{H}_{\rho}-\mathcal{F}_{\phi}\mathcal{H}_{\rho\rho})d\mathrm{vol}_M\\
=\ &\int_M(\mathcal{F}_{\rho}\mathcal{G}_{\phi}-\mathcal{F}_{\phi}\mathcal{G}_{\rho})(\mathcal{H}_{\rho\phi}-\mathcal{H}_{\phi\rho})+(\mathcal{G}_{\rho}\mathcal{H}_{\phi}-\mathcal{G}_{\phi}\mathcal{H}_{\rho})(\mathcal{F}_{\rho\phi}-\mathcal{F}_{\phi\rho})+(\mathcal{H}_{\rho}\mathcal{F}_{\phi}-\mathcal{H}_{\phi}\mathcal{F}_{\rho})(\mathcal{G}_{\rho\phi}-\mathcal{G}_{\phi\rho})d\mathrm{vol}_M\\
=\ &0.
\end{align*}
This completes the proof.
\end{proof}

\subsection{Proof of Proposition \ref{Bound-A}} \label{Proof of Proposition Bound-A}
\begin{proof}
For each $t\in [0,T]$, we have that
\begin{align*}
|\Delta_{\rho_t}\phi(x)| &= |\nabla\cdot (\rho_t(x)\nabla \phi(x))| =|\nabla\rho_t(x)\cdot \nabla \phi(x) + \rho_t(x)\Delta \phi(x)| \\
&\leq \|\nabla\rho_t(x)\|\|\nabla \phi(x)\|+\rho_t(x)|\Delta \phi(x)|,
\end{align*}
which leads to 
\begin{align}\label{ineq-hkn}
\|\Delta_{\rho_t}\phi\|_{\infty}\leq \|\rho_t\|_{C_b^1} \|\phi\|_{C_b^2} \leq  \kappa_1\|\rho_t\|_{C_b^1} \|\phi\|_{\mathcal{H}_{K_1}}
\end{align}
where $\kappa_1^2=2\|K_1\|_{C_b^4}$ and the last equality is due to Theorem 2.7 in \cite{RCSP2}. Similarly, we have that 
\begin{align}\label{con-ineq}
\|\Delta_{\rho_t}\psi*\rho_t\|_{\infty} \leq  \kappa_2\|\rho_t\|_{C_b^1} \|\psi\|_{\mathcal{H}_{K_2}},
\end{align}
where $\kappa_2^2=2\|K_2\|_{C_b^4}$. Hence combining the inequalies \eqref{ineq-hkn} and \eqref{con-ineq}, the operator $A$ is bounded linear because
\begin{align*}
\|A(\phi,\psi)\|^2_{L^2}&= \int_0^T\int_{\mathbb{R}^d}|\Delta_{\rho_t}(\phi+\psi\ast \rho_t)(x)|^2\rho_t(x)\mathrm dx \mathrm dt\\
&\leq \int_0^T\int_{\mathbb{R}^d}2\left(\kappa_1^2 \|\phi\|^2_{\mathcal{H}_{K_1}}+\kappa^2_2\|\psi\|^2_{\mathcal{H}_{K_2}}\right)\|\rho_t\|^2_{C_b^1}\rho_t(x)\mathrm dx \mathrm dt\\
&\leq 2T C_T \left(\kappa_1^2 \|\phi\|^2_{\mathcal{H}_{K_1}}+\kappa^2_2\|\psi\|^2_{\mathcal{H}_{K_2}}\right)<\infty.
\end{align*}
Then, by Proposition \ref{rep-pro}, for any $g\in L^2\left([0,T]\times\mathbb{R}^d,\{\rho_t\}_{t\in[0,T]}\right)$ we have
\begin{align*}
\langle A^*g, (\phi,\psi)\rangle_{\mathcal{H}_{K_1}\times \mathcal{H}_{K_2}}&= \langle g, A(\phi,\psi)\rangle_{L^2}= \int_0^T\int_{\mathbb{R}^d}g(t,x)\Delta_{\rho_t}(\phi+\psi\ast \rho_t)(x)\rho_t(x)\mathrm dx \mathrm dt \\
&=\int_0^T\int_{\mathbb{R}^d}g(t,x)\left(\langle \phi, \Delta_{\rho_t}K_1(x,\cdot)\rangle_{\mathcal{H}_{K_1}}+\langle \psi, \Delta_{\rho_t}(K*\rho_t)(x,\cdot)\rangle_{\mathcal{H}_{K_2}}\right)\rho_t(x)\mathrm dx \mathrm dt\\
&=\left\langle\int_0^T\int_{\mathbb{R}^d} g(t,x)\begin{bmatrix}
    \Delta_{\rho_t}K_1(x,\cdot)\\
    \Delta_{\rho_t}(K_2\ast \rho_t)(x,\cdot)
    \end{bmatrix}^\top\rho_t(x)\mathrm dx \mathrm dt,(\phi,\psi)\right\rangle_{\mathcal{H}_{K_1}\times \mathcal{H}_{K_2}},
\end{align*}
which implies that $A^{\ast}:L^2\left([0,T]\times\mathbb{R}^d,\{\rho_t\}_{t\in[0,T]}\right)\rightarrow\mathcal{H}_{K_1}\times \mathcal{H}_{K_2}$ is
\begin{equation}\label{adjoint}
    A^{\ast}g = \int_0^T\int_{\mathbb{R}^d} g(t,x)\begin{bmatrix}
    \Delta_{\rho_t}K_1(x,\cdot)\\
    \Delta_{\rho_t}(K_2\ast \rho_t)(x,\cdot)
    \end{bmatrix}^\top\rho_t(x)\mathrm dx \mathrm dt.
\end{equation}

Since $B=A^{\ast}A$, $B$ is clearly a bounded linear operator. Equation \eqref{operatorB} follows from \eqref{adjoint} by direct calculation and the fact that the integral commutes with the scalar product. Indeed, for any $(\phi,\psi)\in\mathcal{H}_{K_1}\times\mathcal{H}_{K_2}$,  
\begin{align*}
B(\phi,\psi) &= A^{\ast}A (\phi,\psi)= A^{\ast}(\Delta_{\rho_t}(\phi+\psi\ast \rho_t))\\
&=\int_0^T\int_{\mathbb{R}^d} \Delta_{\rho_t}(\phi+\psi\ast\rho_t)(x)\begin{bmatrix}
    \Delta_{\rho_t} K_1(x,\cdot)\\
    \Delta_{\rho_t}(K_2\ast \rho_t)(x,\cdot)
    \end{bmatrix}^\top\rho_t(x)\mathrm dx \mathrm dt.
\end{align*}

We now prove that $B$ is a trace class operator, that is, we show that $\operatorname{Tr}(|B|)<\infty$, where $|B|=\sqrt{B^* B}$. Since $B$ is positive semidefinite, we have that $|B|=B$. Therefore, it is equivalent to show that $\operatorname{Tr}(B)<\infty$. In order to do that, we choose a spanning orthonormal set $\left\{e _n\right\} _{n \in \mathbb{N}}$ and $\left\{f _n\right\} _{n \in \mathbb{N}}$ for ${\mathcal H} _{K_1}$ and ${\mathcal H} _{K_2}$ respectively whose existence are guaranteed by \cite[Lemma A.3]{RCSP2} and \cite[Theorem 2.4]{owhadi2017separability}. Then,
$$
\begin{aligned}
\operatorname{Tr}(B)&=\operatorname{Tr}\left(A^* A\right) =\sum_n\left\langle A^* A (e_n,f_n), (e_n,f_n)\right\rangle_{{\mathcal{H}_{K_1}}\times{\mathcal{H}_{K_2}}}\\
&=\sum_n\left\langle A (e_n,f_n), A (e_n,f_n)\right\rangle_{L^2\left(\mu_{\mathbf{X}}\right)} \\
& =\sum_n\int_0^T\int_{\mathbb{R}^{d}} \left |\Delta_{\rho_t}(e_n+f_n\ast\rho_t)(x)\right |^2\rho_t(x)\mathrm{d}x\mathrm{d}t\\
& \leq 2\int_0^T\int_{\mathbb{R}^{d}} \Delta_{\rho_t}^{(1,2)}\left(K_1+K_2*\rho_t\right)(x,x)\rho_t(x)\mathrm{d}x\mathrm{d}t\\
&\leq 4T C_T(\kappa_1^2+\kappa_2^2),
\end{aligned}
$$
where the fifth inequality is due to the fact that
\begin{align*}
&\sum_n \left |\Delta_{\rho_t}(e_n+f_n\ast\rho_t)(x)\right |^2\leq
2\sum_n \left(|\Delta_{\rho_t}e_n(x)|^2+|\Delta_{\rho_t}(f_n\ast\rho_t)(x)|^2\right)\\
&= 2\sum_n \left(|\langle e_n, \Delta_{\rho_t}K_1(x,\cdot)\rangle_{\mathcal{H}_{K_1}}|^2+|\langle f_n, \Delta_{\rho_t}(K_2*\rho_t)(x,\cdot)\rangle_{\mathcal{H}_{K_2}}|^2\right)\\
&= 2 \sum_n\left(\langle \langle e_n, \Delta_{\rho_t}K_1(x,\cdot)\rangle_{\mathcal{H}_{K_1}}e_n, \Delta_{\rho_t}K_1(x,\cdot)\right\rangle_{\mathcal{H}_{K_1}}+\left\langle \langle f_n, \Delta_{\rho_t}(K_2*\rho_t)(x,\cdot)\rangle_{\mathcal{H}_{K_2}}f_n, \Delta_{\rho_t}(K_2*\rho_t)(x,\cdot)\rangle_{\mathcal{H}_{K_2}}\right)\\
&=2 \left\langle  \Delta_{\rho_t}K_1(x,\cdot), \Delta_{\rho_t}K_1(x,\cdot)\right\rangle_{\mathcal{H}_{K_1}}+2\left\langle \Delta_{\rho_t}(K_2*\rho_t)(x,\cdot), \Delta_{\rho_t}(K_2*\rho_t)(x,\cdot)\right\rangle_{\mathcal{H}_{K_2}} \\
&=2\Delta_{\rho_t}^{(1,2)}(K_1+K_2*\rho_t)(x,x).
\end{align*}
Finally, the form of the operator $B=A^{*}A$ automatically guarantees that it is positive semidefinite. 
\end{proof}

\subsection{Proof of Proposition \ref{Operator_representation}}
\label{proof prop with the solution}
\begin{proof}
To derive these results, we first compute the G\^ateaux derivative of the functional \( R_{\lambda} \) in the space \(\mathcal{H}_{K_1}\times \mathcal{H}_{K_2}\). For any \((\phi,\psi), (f,g)\in \mathcal{H}_{K_1}\times \mathcal{H}_{K_2}\), we have
\begin{align*}
    \mathrm{d}R_{\lambda}(\phi,\psi)\cdot (f,g)
    &= \lim_{t\to 0}\frac{R_{\lambda}(\phi + t f,\psi + t g)-R_{\lambda}(\phi,\psi)}{t}\\[6pt]
    &= 2\big\langle A(f,g),\,A(\phi,\psi)-A(V,W)\big\rangle_{L^2} 
    + 2\lambda_1\langle \phi,f\rangle_{\mathcal{H}_{K_1}}
    + 2\lambda_2\langle \psi,g\rangle_{\mathcal{H}_{K_2}} \\[6pt]
    &= 2\big\langle (f,g),\,(B+\lambda I)(\phi,\psi)-B(V,W)\big\rangle_{\mathcal{H}_{K_1}\times \mathcal{H}_{K_2}}.
\end{align*}
Hence, the critical points of the functional \( R_{\lambda} \) are characterized by the equation
\begin{equation}\label{Cri-eqn}
    \big\langle (f,g),\,(B+\lambda I)(\phi,\psi)-B(V,W)\big\rangle_{\mathcal{H}_{K_1}\times \mathcal{H}_{K_2}}=0,\quad\text{for all }(f,g)\in \mathcal{H}_{K_1}\times \mathcal{H}_{K_2}.
\end{equation}
Since the operator \( B \) is positive semidefinite and of trace class, it follows that the critical equation~\eqref{Cri-eqn} admits a unique solution given explicitly by
\begin{equation*}
    (V_{\lambda}^*, W_{\lambda}^*)=(B+\lambda I)^{-1}B(V,W).
\end{equation*}  
Similarly, $
\left(\widehat{V}_{\lambda,NL},\,\widehat{W}_{\lambda,NL}\right)= \sqrt{\frac{T|\Omega|}{NL}}\left(B_{NL}^{\delta} + \lambda I\right)^{-1} A_{NL}^{\delta*}\,f_{NL}^{\delta}$ is the unique minimizer of the regularized empirical loss functional \eqref{emp-fun1}.
\end{proof}

\subsection{Proof of Lemma \ref{analyze-u-w}}\label{Proof of Lemma 3.3}
\begin{proof}
By the differential reproducing property \cite{zhou2008derivative} or \cite[Theorem 2.7]{RCSP2}, the smoothness condition $K_1, K_2 \in C^6(\Omega \times \Omega)$ implies that $\mathcal{H}_{K_1}, \mathcal{H}_{K_2} \subset C^3(\Omega)$.  
Then, the regularity condition $\rho \in C^{1,2}([0,T] \times \Omega)$ yields that $u \in C^{1,1}([0,T] \times \Omega)$ and $w\in C^{1,1,1,1}([0,T] \times \Omega\times [0,T] \times \Omega)$.
From the definition \eqref{fun-u}, it follows that:
\begin{equation*}
\left\{
\begin{aligned}
&\|u\|_{\infty} 
\leq \|\rho\|_{C^{0,1}}(\|\phi\|_{C^2} + \|\psi\|_{C^2}), \\
&\|\nabla_t u\|_{\infty} \leq \|\rho\|_{C^{1,1}}\big(2\|\phi\|_{C^2} + (1 + |\Omega|\|\nabla_t \rho\|_{\infty})\|\psi\|_{C^2}\big), \\
&\|\nabla_x u\|_{\infty} \leq \|\rho\|_{C^{0,2}}(\|\phi\|_{C^3} + \|\psi\|_{C^3}).
\end{aligned}
\right.
\end{equation*}

Hence, we obtain:
\begin{align*}
\|u\|_{C^{1,1}} &\leq \|\rho\|_{C^{1,2}} \big(4\|\phi\|_{C^3} + (3 + |\Omega|\|\nabla_t \rho\|_{\infty})\|\psi\|_{C^3}\big) \\
&\leq (4 + |\Omega|\|\nabla_t \rho\|_{\infty})(\kappa_1 + \kappa_2)\|\rho\|_{C^{1,2}} (\|\phi\|_{\mathcal{H}_{K_1}} + \|\psi\|_{\mathcal{H}_{K_2}}),
\end{align*}
where $\kappa_1 = \sqrt{2\|K_1\|_{C^6}}$ and $\kappa_2 = \sqrt{2\|K_2\|_{C^6}}$.

Similarly, from the definition \eqref{fun-w}, we have:
\begin{equation*}
\left\{
\begin{aligned}
&\|w\|_{\infty} 
\leq \|\rho\|^2_{C^{0,1}}(\|K_1\|_{C^4} + \|K_2\|_{C^4}), \\
&\|\nabla_t w\|_{\infty}, \|\nabla_s w\|_{\infty} 
\leq \|\rho\|^2_{C^{1,1}}\big(2\|K_1\|_{C^4} + (1 + |\Omega|\|\nabla_t \rho\|_{\infty})\|K_2\|_{C^4}\big), \\
&\|\nabla_x w\|_{\infty}, \|\nabla_y w\|_{\infty} 
\leq \|\rho\|^2_{C^{0,2}}\big(\|K_1\|_{C^6} + \|K_2\|_{C^6}\big).
\end{aligned}
\right.
\end{equation*}

Thus, we obtain:
\begin{align*}
\|w\|_{C^{1,1,1,1}} &\leq \|\rho\|_{C^{1,2}}^2 \big(7\|K_1\|_{C^6} + (5 + 2|\Omega|\|\nabla_t \rho\|_{\infty})\|K_2\|_{C^6}\big) \\
&\leq (4 + |\Omega|\|\nabla_t \rho\|_{\infty})(\kappa_1^2 + \kappa_2^2)\|\rho\|_{C^{1,2}}^2.
\end{align*}
\end{proof}

\subsection{Proof of Lemma \ref{pre-estimator}}\label{Proof of Lemma 3.5}
\begin{proof}
Firstly, by \ref{numerical-integrations}, we have for each $(s, y) \in [0,T] \times \Omega$:
\begin{align*}
\left|I(u,w(\cdot,\cdot;s,y),\rho)-I_{NL}(u,w(\cdot,\cdot;s,y),\rho) \right| \leq 2T|\Omega|\widetilde{C}(u,w(\cdot,\cdot;s,y),\rho)(\Delta x + \Delta t),    
\end{align*}
where $\widetilde{C}$ is defined in \eqref{cofficient-C}.
Hence, by Lemma \ref{analyze-u-w}, we obtain
\begin{align*}
J_1:&= \left\langle B(\phi,\psi),B(\phi,\psi)-B_{NL}(\phi,\psi)\right\rangle_{\mathcal{H}_{K_1}\times\mathcal{H}_{K_2}}\\
&=\int_0^T\int_{\Omega} u(s,y)\rho_s(y)\left(I(u,w(\cdot,\cdot;s,y),\rho)-I_{NL}(u,w(\cdot,\cdot;s,y),\rho)\right)\mathrm{d}y\mathrm{d}s\\
&\leq \frac{1}{2}C^2_1\|(\phi,\psi)\|^2_{\mathcal{H}_{K_1}\times\mathcal{H}_{K_2}}(\Delta x + \Delta t),
\end{align*}
where $C_1^2=4T^2|\Omega|^2(\kappa_1+\kappa_2)^3(4+|\Omega|\|\nabla_t\rho\|_{\infty})^2\|\rho\|^6_{C^{1,2}}$.
Similarly,
\begin{align*}
J_2:&= \left\langle B_{NL}(\phi,\psi),B(\phi,\psi)-B_{NL}(\phi,\psi)\right\rangle_{\mathcal{H}_{K_1}\times\mathcal{H}_{K_2}}\\
&=\sum_{n,l=1}^{NL} u(t_l,x_n)\rho_{t_l}(x_n)\left(I(u,w(\cdot,\cdot;t_l,x_n),\rho)-I_{NL}(u,w(\cdot,\cdot;t_l,x_n),\rho)\right)\Delta x\Delta t\\
&\leq \frac{1}{2}C^2_1\|(\phi,\psi)\|^2_{\mathcal{H}_{K_1}\times\mathcal{H}_{K_2}}(\Delta x + \Delta t).
\end{align*}
Therefore,
\begin{align*}
\|B(\phi,\psi)-B_{NL}(\phi,\psi)\|^2_{\mathcal{H}_{K_1}\times\mathcal{H}_{K_2}} =J_1-J_2\leq  C_1^2\|(\phi,\psi)\|^2_{\mathcal{H}_{K_1}\times\mathcal{H}_{K_2}}(\Delta x + \Delta t).  
\end{align*}
\end{proof}

\subsection{The proof of Lemma \ref{neumerical-scheme}} \label{Proof of Lemma 3.7}
\begin{proof}
\begin{align*}
|u_l^n-(\delta_{x}^+u)_l^n|&= \left|((\delta_{x}^+\rho)_l^n-(\nabla_x\rho)(t_l,x_n))\nabla(\phi+\psi*\rho_{t_l})(x_n) \right| \\
&\leq \|\rho\|_{C^{0,2}}(\|\phi\|_{C^1}+\|\psi\|_{C^1})\Delta x\\
&\leq (\kappa_1+\kappa_2)\|\rho\|_{C^{0,2}}(\|\phi\|_{\mathcal{H}_{K_1}}+\|\psi\|_{\mathcal{H}_{K_2}})\Delta x
\end{align*}
Similarly,
\begin{align*}
|(\delta_{x}^+w)_{l,k}^{n,m}-w_{l,k}^{n,m}|&= \left|((\delta_{x}^+\rho)_l^n-(\nabla_x\rho)(t_l,x_n))\left(\rho_k^m\nabla^{(2,1)}+(\nabla_x\rho)_k^m\nabla^{(1,1)}\right)(K_1+K_2**(\rho_{t_l},\rho_{t_k}))(x_n,x_m) \right| \\
&\leq \|\rho\|_{C^{0,2}}\|\rho\|_{C^{0,1}}(\|K_1\|_{C^3}+\|K_2\|_{C^3})\Delta x\\
&\leq (\kappa_1^2+\kappa_2^2)\|\rho\|^2_{C^{0,2}} \Delta x. 
\end{align*}
Similary,
\begin{align*}
|(\delta_{y}^+w)_{l,k}^{n,m}-\delta_{xy}^+w_{l,k}^{n,m}|\leq (\kappa_1^2+\kappa_2^2)\|\rho\|^2_{C^{0,2}} \Delta x. 
\end{align*}
{For the gradient flow \eqref{gradiet-flow},} we have:
\begin{equation*}
\begin{aligned}
|f(t_l, x_n) - f^\delta(t_l, x_n)| 
&\leq |(\partial_t \rho)_l^n - (\delta_t^+ \rho)_l^n| 
+ |((\partial_x \rho)_l^n - (\delta_x^+ \rho)_l^n) U''(\rho_l^n)|\\
&\leq \|\rho\|_{C^{2,0}} \Delta t + \|\rho\|_{C^{0,2}} \|U\|_{C_b^2} \Delta x.
\end{aligned}
\end{equation*}
And for the Wasserstein Hamiltonian flow \eqref{Hamiltonian-flow}, we have:
\begin{align*}
|f(t_l, x_n) - f^\delta(t_l, x_n)| 
&\leq |\left(\partial_{tt}\rho\right)_l^n-\left(\delta_{tt}^{+} \rho\right)_l^n| + |\Gamma_W\left(\left(\delta_t^{+} \rho\right)_l^n, \left(\delta_t^{+} \rho\right)_l^n\right) -\Gamma_W\left(\left(\delta_t^{+} \rho\right)_l^n, \left(\delta_t^{+} \rho\right)_l^n\right)|\\
&\ \ \ +  |((\partial_x \rho)_l^n - (\delta_x^+ \rho)_l^n) U''(\rho_l^n)|\\
&\leq \|\rho\|_{C^{3,0}} \Delta t + \|\rho\|_{C^{0,2}} \|U\|_{C_b^2} \Delta x.
\end{align*}
The results follow.
\end{proof}

\subsection{Proof of Lemma \ref{pre-estimator1}}\label{Proof of Lemma 3.8}

\begin{proof}
Firstly, by Lemma \ref{analyze-u-w} and \ref{neumerical-scheme}, we have that
\begin{align*}
J_3:&= \left\langle B_{NL}(\phi,\psi),B_{NL}(\phi,\psi)-B_{NL}^\delta(\phi,\psi)\right\rangle_{\mathcal{H}_{K_1}\times\mathcal{H}_{K_2}}\\
&=\sum_{n,l=1}^{NL} \sum_{m,k=1}^{NL} u_l^n\rho_l^n\rho_k^m\left(w_{l,k}^{n,m}u_k^m-(\delta_{x}^+w)_{l,k}^{n,m}(\delta_{x}^+u)_k^m\right)(\Delta x)^2(\Delta t)^2\\
&\leq T^2|\Omega|^2 \|u\rho^2\|_{\infty} \left(\|w\|_{\infty} |u_k^m-(\delta_{x}^+u)_k^m|+  \|u\|_{\infty}|w_{l,k}^{n,m}-(\delta_{x}^+w)_{l,k}^{n,m}| \right) \Delta x\\
&\leq \frac{C_2^2}{2} \|(\phi,\psi)\|^2_{\mathcal{H}_{K_1}\times\mathcal{H}_{K_2}}\Delta x,
\end{align*}
where
\begin{align*}
C_2:=T|\Omega|(\kappa_1+\kappa_2)^2\|\rho\|_{C^{0,2}}^3.
\end{align*}
Similarly,
\begin{align*}
J_4:&= \left\langle B_{NL}^\delta(\phi,\psi),B_{NL}(\phi,\psi)-B_{NL}^\delta(\phi,\psi)\right\rangle_{\mathcal{H}_{K_1}\times\mathcal{H}_{K_2}}\\
&=\sum_{n,l=1}^{NL} \sum_{m,k=1}^{NL} (\delta_{x}^+u)u_l^n\rho_l^n\rho_k^m\left((\delta_{y}^+w)_{l,k}^{n,m}u_{k}^m-(\delta_{xy}^+w)_{l,k}^{n,m}(\delta_{x}^+u)_k^m\right)(\Delta x)^2(\Delta t)^2\\
&\leq T^2|\Omega|^2~ \|u\rho^2\|_{\infty} \left(\|w\|_{\infty} |u_k^m-(\delta_{x}^+u)_k^m|+  \|u\|_{\infty}|\delta_{y}^+w_{l,k}^{n,m}-(\delta_{xy}^+w)_{l,k}^{n,m}|\right) \Delta x\\
&\leq \frac{C_2^2}{2} \|(\phi,\psi)\|^2_{\mathcal{H}_{K_1}\times\mathcal{H}_{K_2}}\Delta x.
\end{align*}  
Therefore,
\begin{align*}
\|B_{NL}(\phi,\psi)-B_{NL}^\delta(\phi,\psi)\|^2_{\mathcal{H}_{K_1}\times\mathcal{H}_{K_2}} =J_1-J_2  \leq C_2^2 \|(\phi,\psi)\|^2_{\mathcal{H}_{K_1}\times\mathcal{H}_{K_2}}\Delta x.  
\end{align*}
\end{proof}

\subsection{Proof of Lemma \ref{Lip-flow}}
\begin{proof}
By the representation of the flow map $(X_t)_{t\in[0,T]}$ in \eqref{flow-rep}, one can compute that for all $\mathbf x,\mathbf y\in \mathbb{T}^d$,
\begin{align*}
\|X_t(\mathbf x)-X_t(\mathbf y)\| &\leq \|\mathbf{x}-\mathbf{y}\| + \int_0^t\|\nabla \Phi(\mathbf x)-\nabla \Phi(\mathbf y)\|\mathrm ds+\int_{0}^t\int_0^s\|\nabla V(X_\tau(\mathbf x))-\nabla V(X_\tau(\mathbf y)) \|\mathrm d\tau\mathrm ds\\
&\ \ \ +\int_{0}^t\int_0^s\int_{\mathbb{R}^d}\|\nabla W(X_\tau(\mathbf x)-\mathbf{z})-\nabla W(X_\tau(\mathbf y)-\mathbf{z})\|\rho_{\tau}(\mathbf z) \mathrm d\mathbf z\mathrm d\tau\mathrm ds\\
&\leq \|\mathbf{x}-\mathbf{y}\|+  td\|\Phi\|_{C^2}\|\mathbf{x}-\mathbf{y}\|+t\int_0^t\|\nabla V(X_\tau(\mathbf x))-\nabla V(X_\tau(\mathbf y)) \|\mathrm d\tau\\
&\ \ \ +t\int_0^t\int_{\mathbb{R}^d}\|\nabla W(X_\tau(\mathbf x)-\mathbf{z})-\nabla W(X_\tau(\mathbf y)-\mathbf{z})\|\rho_{\tau}(\mathbf z) \mathrm d\mathbf z\mathrm d\tau\\
& \leq \left(1+td\|\Phi\|_{C^2}\right)\|\mathbf{x}-\mathbf{y}\|+td\left(\|V\|_{C^2}+\|W\|_{C^2}\right)\int_0^t\|X_\tau(\mathbf x)-X_\tau(\mathbf y) \|\mathrm d\tau.
\end{align*}
Then by Gr\"{o}nwall's inequality \cite[ Lemma 1.1]{barbu2016differential}, we obtain that
\begin{align*}
\|X_t(\mathbf x)-X_t(\mathbf y)\|\leq    \left(1+td\|\Phi\|_{C^2}\right)\exp\left\{\frac{1}{2}d\left(\|V\|_{C^2}+\|W\|_{C^2}\right)t^2 \right\}\|\mathbf{x}-\mathbf{y}\|,
\end{align*}
which implies the map $X_t$ is Lipschitz for all $t\in[0,T]$.
\end{proof}

\subsection{Proof of Lemma \ref{dist-control}}
\begin{proof}
By the representation of the flow map in \eqref{flow-rep}, for each $\mathbf{x}\in \mathbb{T}^d$, one can compute
\begin{equation*}
\begin{aligned}
\left\|X_t(\mathbf{x})-\widehat{X}_t(\mathbf{x})\right\|^2 &=\left\|\int_{0}^t\int_0^s\nabla (V+W*\rho_{\tau})(X_\tau(\mathbf x))- \nabla (\widehat V+\widehat W*\widehat\rho_{\tau})(\widehat X_\tau(\mathbf x))\mathrm d\tau\mathrm ds\right\|^2\\
&\leq \int_{0}^t\int_0^s\left\|\nabla (V+W*\rho_{\tau})(X_\tau(\mathbf x))- \nabla (\widehat V+\widehat W*\widehat\rho_{\tau})(\widehat X_\tau(\mathbf x))\right\|^2\mathrm d\tau\mathrm ds\\
&\leq t \int_{0}^t\left\|\nabla (V+W*\rho_{s})(X_s(\mathbf x))- \nabla (\widehat V+\widehat W*\widehat\rho_{s})(\widehat X_s(\mathbf x))\right\|^2\mathrm ds \\
&\leq  4t \int_0^t\left\|\nabla V\left(X_s(\mathbf{x})\right)-\nabla \widehat{V}\left(X_s(\mathbf{x})\right)\right\|^2 \mathrm{d} s+ 4t \int_0^t\left\|\nabla \widehat{V}\left(X_s(\mathbf{x})\right)-\nabla \widehat{V}\left(\widehat{X}_s(\mathbf{x})\right)\right\|^2 \mathrm{d} s\\
&\ \ \ + 4t \int_0^t\left\|\nabla \left(W * \rho_s\right)\left(X_s(\mathbf{x})\right)-\nabla \left(\widehat{W}* \widehat\rho_s\right)\left(X_s(\mathbf{x})\right)\right\|^2 \mathrm{d} s \\
&\ \ \ + 4t \int_0^t\left\|\nabla \left(\widehat{W} * \widehat\rho_s\right)\left(X_s(\mathbf{x})\right)-\nabla \left(\widehat{W}* \widehat{\rho}_s\right)\left(\widehat{X}_s(\mathbf{x})\right)\right\|^2 \mathrm{d} s \\
& \leq  4t\int_0^t\left\|\nabla V\left(X_s(\mathbf{x})\right)-\nabla \widehat{V}\left(X_s(\mathbf{x})\right)\right\|^2 \mathrm{d} s + 4t d^2\|\widehat V\|^2_{C^2}\int_0^t\left\|X_s(\mathbf{x})-\widehat{X}_s(\mathbf{x})\right\|^2\mathrm{d} s\\
&\ \ \ + 4t \int_0^t\int_{\mathbb{R}^d}\left\|\nabla W(X_s(\mathbf{x})-\mathbf y)- \nabla\widehat{W}(X_s(\mathbf{x})-\mathbf y)\right\|^2\rho_s(\mathbf y)\mathrm d\mathbf y \mathrm{d} s\\
&\ \ \ + 4t \int_0^t\left\|\int_{\mathbb{R}^d} \nabla\widehat{W}(X_s(\mathbf{x})-\mathbf y)\left(\rho_s(\mathbf y)-\widehat\rho_s(\mathbf y)\right)\mathrm d\mathbf y\right\|^2 \mathrm{d} s\\
&\ \ \ + 4t \int_0^t\int_{\mathbb{R}^d}\left\|\nabla \widehat W(X_s(\mathbf{x})-\mathbf y)- \nabla\widehat{W}(\widehat X_s(\mathbf{x})-\mathbf y)\right\|^2\widehat\rho_s(\mathbf y)\mathrm d\mathbf y \mathrm{d} s \\
& \leq  4t^2d^2\left(\| V- \widehat{V}\|^2_{C^2}+\|W -\widehat{W}\|_{C^2}^2\right)+  4td\|\widehat W\|_{C^2} \int_0^t W_2(\rho_s,\widehat \rho_s) \mathrm{d} s\\
&\ \ \ + 4t d^2\left(\|\widehat V\|^2_{C^2}+\|\widehat W\|^2_{C^2}\right)\int_0^t\left\|X_s(\mathbf{x})-\widehat{X}_s(\mathbf{x})\right\|^2\mathrm{d} s.
\end{aligned}
\end{equation*}
Then by Gr\"{o}nwall's inequality \cite[ Lemma 1.1]{barbu2016differential}, we obtain that
\begin{align*}
\left\|X_t(\mathbf{x})-\widehat{X}_t(\mathbf{x})\right\|^2 \leq &\left(4t^2d^2\left(\| V- \widehat{V}\|^2_{C^2}+\|W -\widehat{W}\|_{C^2}^2\right)+4td\|\widehat W\|_{C^2} \int_0^t W_2(\rho_s,\widehat \rho_s) \mathrm{d} s\right)\\
&\exp\left\{2t^2 d^2\left(\|\widehat V\|^2_{C^2}+\|\widehat W\|^2_{C^2}\right)\right\}.
\end{align*}
\end{proof}

\subsection{Proof of Proposition \ref{stability estimate}}
\begin{proof}
By Proposition \ref{hamiltonian-flow}, it is known that given our assumptions on $W,V, \widehat{W}$ and $\widehat{V}$, the solutions of \eqref{Hamiltonian-flow} are of the form $\rho_t=X_t \# \rho_{0}$, $ \widehat{\rho}_t=\widehat{X}_t \# \widehat{\rho}_0$, where $X_t$, $\widehat{X}_t$ are the flow maps of equations \eqref{flow} with the same initial conditions $X_0=I d, \dot{X}_0=\nabla \Phi (Id)$ for some smooth function $\Phi$ chosen in $C^2(\mathbb T^d)$ but by the accelerated velocity fields $\nabla W * \rho_t+\nabla V$ and $\nabla \widehat{W} * \widehat{\rho}_t+\nabla \widehat{V}$, respectively. Then we have the following
estimate
\begin{equation}\label{Wass-dis}
\begin{aligned}
W_2^2\left(\rho_t, \widehat{\rho}_t\right) & =W_2^2\left(X_t \# \rho_0, \widehat{X}_t \# \widehat{\rho}_0\right) \leq W_2^2\left(X_t \# \rho_0, \widehat{X}_t \# \rho_0\right)+W_2^2\left(\widehat{X}_t \# \rho_0, \widehat{X}_t \# \widehat{\rho}_0\right) \\
& \leq \int_{\mathbb{T}^d}\left\|X_t(\mathbf{x})-\widehat{X}_t(\mathbf{x})\right\|^2 \mathrm{d} \rho_0(\mathbf{x})+W_2^2\left(\widehat{X}_t \# \rho_0, \widehat{X}_t \# \widehat{\rho}_0\right).
\end{aligned}
\end{equation}
Denote the product measure $\Pi_t:=\left(\widehat{X}_t \times \widehat{X}_t\right) \# \Pi_0$, where $\Pi_0$ is the optimal transport plan between $\rho_0$ and $\widehat{\rho}_0$ with respect to the 2-Wasserstein metric. Then, by definition of the 2-Wasserstein metric we have that
\begin{equation}\label{eqn1}
\begin{aligned}
W_2^2\left(\widehat{X}_t \# \rho_0, \widehat{X}_t \# \widehat{\rho}_0\right) & \leq \int_{\mathbb{T}^d \times \mathbb{T}^d}|\mathbf{x}-\mathbf{y}|^2 \mathrm{d} \Pi_t=\int_{\mathbb{T}^d \times \mathbb{T}^d}\left\|\widehat{X}_t(\mathbf{x})-\widehat{X}_t(\mathbf{y})\right\|^2 \mathrm{d} \Pi_0 \\
& \leq L_{\widehat{X}_t} \int_{\mathbb{T}^d \times \mathbb{T}^d}|\mathbf{x}-\mathbf{y}|^2 \mathrm{d} \Pi_0 \leq L_{\widehat{X}_t} W_2^2\left(\rho_0, \widehat{\rho}_0\right),
\end{aligned}
\end{equation}
where $L_{\widehat{X}_t}$ is the Lipschitz constant given in Lemma \ref{Lip-flow} of the flow map $\widehat{X}_t$. Furthermore, by Lemma \ref{dist-control}, we obtain that
\begin{equation}\label{eqn2}
\begin{aligned}
\int_{\mathbb{T}^d}\left\|X_t(\mathbf{x})-\widehat{X}_t(\mathbf{x})\right\|^2 \mathrm{d}& \rho_0(\mathbf{x})\leq \exp\left\{2t^2 d^2\left(\|\widehat V\|^2_{C^2}+\|\widehat W\|^2_{C^2}\right)\right\}\\
&\left(4t^2d^2\left(\| V- \widehat{V}\|^2_{C^2}+\|W -\widehat{W}\|_{C^2}^2\right)+4td\|\widehat W\|_{C^2} \int_0^t W_2(\rho_s,\widehat \rho_s) \mathrm{d} s\right).
\end{aligned}
\end{equation}
Then combing equations \eqref{Wass-dis},\eqref{eqn1} and \eqref{eqn2}, we obtain that
\begin{align*}
W_2^2\left(\rho_t, \widehat{\rho}_t\right)&\leq  L_{\widehat{X}_t} W_2^2\left(\rho_0, \widehat{\rho}_0\right) +\exp\left\{2t^2 d^2\left(\|\widehat V\|^2_{C^2}+\|\widehat W\|^2_{C^2}\right)\right\}\\
&\left(4t^2d^2\left(\| V- \widehat{V}\|^2_{C^2}+\|W -\widehat{W}\|_{C^2}^2\right)+4td\|\widehat W\|_{C^2} \int_0^t W_2(\rho_s,\widehat \rho_s) \mathrm{d} s\right).
\end{align*}
Then the Gr\"{o}nwall's inequality \cite[ Lemma 1.1]{barbu2016differential} and the differential reproducing property \cite[Theorem 2.7]{RCSP2} yield that
\begin{align*}
W_2^2\left(\rho_t, \widehat{\rho}_t\right) \leq C_1(t)W_2^2\left(\rho_0, \widehat{\rho}_0\right) +C_2(t)\left(\kappa_1^2\| V- \widehat{V}\|^2_{\mathcal{H}_{K_1}}+\kappa_2^2\|W -\widehat{W}\|_{\mathcal{H}_{K_2}}^2\right) ,
\end{align*}
where $C_1(t)$ and $ C_2(t)$ are defined in \eqref{coeff}.
\end{proof}

\section{Calculus on the density manifold}
{
\label{apped: density manifold intro}
\subsubsection*{The tangent bundle of the density manifold $\mathcal{P}_{+}(M)$}
We now describe the tangent spaces of the density manifold $\mathcal{P}_{+}(M)$. We say that two curves $\gamma_1,\gamma_2:(-\varepsilon,\varepsilon)\to \mathcal{P}_+(M)$ with $\gamma_1(0)=\gamma_2(0)=\rho$ for some $\varepsilon>0$ are equivalent, if $(f\circ \gamma_1)'(0)=(f\circ \gamma_2)'(0)$ for every smooth real-valued function $f: U _\rho \subset \mathcal{P}_+(M) \longrightarrow \mathbb{R}$ defined in a neighborhood $U _\rho $ of $\rho$. This defines an equivalence relation on the set
of all smooth curves of the form $\gamma:(-\varepsilon,\varepsilon)\to \mathcal{P}_+(M)$ with $\gamma(0)=\rho$. The tangent space of the density manifold $\mathcal{P}_{+}(M)$ at $\rho \in \mathcal{P}_{+}(M) $ denoted by $T_{\rho}\mathcal{P}_{+}(M) $ is defined as
\begin{align*}
T_{\rho}\mathcal{P}_{+}(M):=\left\{ [\gamma]\mid \gamma:(-\varepsilon,\varepsilon)\to \mathcal{P}_+(M), \gamma(0)=\rho \right\},   
\end{align*}
where $[\gamma]$ is the equivalence class of curves associated to $\gamma:(-\varepsilon,\varepsilon)\to \mathcal{P}_+(M) $. The tangent bundle $T\mathcal{P}_{+}(M)  $ of the density manifold is defined as
\begin{align*}
T\mathcal{P}_{+}(M) := \bigcup_{\rho \in \mathcal{P}_{+}(M)} T_{\rho}\mathcal{P}_{+}(M).
\end{align*}

In the following proposition, we provide a characterization of the tangent spaces of the density manifold $\mathcal{P}_{+}(M)$ that was initially presented in \cite{otto2001geometry} and was utilized within the framework of Riemannian geometry to analyze and quantify the asymptotic behavior of the porous medium equation. 
\begin{proposition}
For each $\rho \in \mathcal{P}_{+}(M) $, the tangent space $T_{\rho}\mathcal{P}_{+}(M) $ can be written as  
\begin{align*}
T_{\rho}\mathcal{P}_{+}(M)= \left\{\sigma\in C^\infty(M)~\bigg|~\int_M \sigma\, d\operatorname{vol}_M = 0\right\}.
\end{align*}
\end{proposition}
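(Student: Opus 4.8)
The plan is to exhibit an explicit linear isomorphism $\Phi\colon T_\rho\mathcal P_+(M)\longrightarrow\mathcal T$, where $\mathcal T:=\big\{\sigma\in C^\infty(M)\mid\int_M\sigma\,d\operatorname{vol}_M=0\big\}$, defined by sending the equivalence class $[\gamma]$ of a smooth curve $\gamma\colon(-\varepsilon,\varepsilon)\to\mathcal P_+(M)$ with $\gamma(0)=\rho$ to its velocity $\Phi([\gamma]):=\dot\gamma(0)$. Here one uses that such a $\gamma$, read through the realization of $\mathcal P_+(M)$ inside $C^\infty(M)$, is a smooth curve into the convenient vector space $C^\infty(M)$, so that $\dot\gamma(0)=\frac{d}{dt}\big|_{t=0}\gamma(t)$ is a well-defined element of $C^\infty(M)$. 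The proposition then amounts to checking that $\Phi$ is well defined on equivalence classes, injective, and surjective onto $\mathcal T$.

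Injectivity is the easy direction: by the chain rule, $(f\circ\gamma)'(0)=d_\rho f\big(\dot\gamma(0)\big)$ for every smooth $f\colon U_\rho\subset\mathcal P_+(M)\to\mathbb R$, so $\dot\gamma_1(0)=\dot\gamma_2(0)$ forces $(f\circ\gamma_1)'(0)=(f\circ\gamma_2)'(0)$ for all such $f$, i.e.\ $\gamma_1\sim\gamma_2$. For well-definedness I would use the coordinate functions \eqref{coordinate}: for each $\psi\in C^\infty(M)$ the map $\ell_\psi\colon\rho'\mapsto\int_M\psi\rho'\,d\operatorname{vol}_M$ is a smooth real-valued function on $\mathcal P_+(M)$, and differentiating under the integral sign gives $(\ell_\psi\circ\gamma)'(0)=\int_M\psi\,\dot\gamma(0)\,d\operatorname{vol}_M$. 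Hence if $\gamma_1\sim\gamma_2$ then $\int_M\psi\,\big(\dot\gamma_1(0)-\dot\gamma_2(0)\big)\,d\operatorname{vol}_M=0$ for every $\psi$; restricting $\psi$ to compactly supported test functions shows that the smooth function $\dot\gamma_1(0)-\dot\gamma_2(0)$ vanishes identically, so $\Phi([\gamma])$ depends only on $[\gamma]$.

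To pin down the image, I would first differentiate the normalization constraint $\int_M\gamma(t)\,d\operatorname{vol}_M\equiv1$ at $t=0$, obtaining $\int_M\dot\gamma(0)\,d\operatorname{vol}_M=0$, so that $\Phi$ indeed maps into $\mathcal T$. For surjectivity, given $\sigma\in\mathcal T$ I would produce a realizing curve: the affine curve $\gamma(t):=\rho+t\sigma$ satisfies $\int_M\gamma(t)\,d\operatorname{vol}_M=1+t\int_M\sigma\,d\operatorname{vol}_M=1$, stays positive for $|t|$ small because $\rho>0$, and has $\dot\gamma(0)=\sigma$; hence $\Phi([\gamma])=\sigma$. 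Linearity of $\Phi$ is automatic, since the vector-space structure on $T_\rho\mathcal P_+(M)$ is the one transported from the model space through chart-wise velocities. Together these facts identify $T_\rho\mathcal P_+(M)$ with $\mathcal T$.

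The main obstacle is not the bookkeeping above but the infinite-dimensional calculus underpinning it: one must justify, within the convenient calculus framework of \cite{kriegl1997convenient}, that a smooth curve in $\mathcal P_+(M)$ has a velocity lying in $C^\infty(M)$ (compatibility of the chart structure with the ambient space $C^\infty(M)$) and that $\frac{d}{dt}$ may be exchanged with $\int_M$ in the computations above. A second, more concrete point — vacuous when $M$ is compact — is that the curve used for surjectivity must remain positive and, if $M$ is noncompact, integrable for small $t$; when $\sigma/\rho$ is unbounded one instead uses a renormalized exponential curve of the form $t\mapsto\rho\,e^{t\sigma/\rho}\big/\!\!\int_M\rho\,e^{t\sigma/\rho}\,d\operatorname{vol}_M$ (assuming the requisite integrability), whose velocity at $t=0$ is still $\sigma$ precisely because $\int_M\sigma\,d\operatorname{vol}_M=0$.
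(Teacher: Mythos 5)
Your proof is correct and follows essentially the same route as the paper: one inclusion is obtained from the affine perturbation curve $t\mapsto\rho+t\sigma$ (the paper uses $t\mapsto\rho-t\sigma$), and the other by differentiating the normalization constraint $\int_M\gamma(t)\,d\operatorname{vol}_M=1$ at $t=0$. The additional material you include --- making the identification $[\gamma]\mapsto\dot\gamma(0)$ explicit, checking well-definedness and injectivity, and flagging the positivity/integrability caveat on noncompact $M$ --- only sharpens details that the paper's proof leaves implicit.
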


\begin{proof}
First, note that for any \( \sigma \in C^\infty(M) \) with zero mean, the curve \( t \mapsto \rho - t \sigma \), \( t \in \mathbb{R} \), remains on the density manifold \( \mathcal{P}_{+}(M) \), which shows that \( \sigma \in T_{\rho}\mathcal{P}_{+}(M) \). Therefore, the following inclusion holds:
\begin{align*}
T_{\rho}\mathcal{P}_{+}(M) \supseteq \left\{\sigma\in C^\infty(M)~\bigg|~\int_M \sigma\, d\operatorname{vol}_M = 0\right\}.   
\end{align*}
To prove the reverse inclusion, let \( \gamma:(\varepsilon,\varepsilon)\to \mathcal{P}_+(M) \) be an arbitrary smooth curve in the density manifold \( \mathcal{P}_{+}(M) \) with \( \gamma(0) = \rho \). Then:
\begin{align*}
\int_M \gamma'(0)\, d\operatorname{vol}_M = \frac{d}{dt}\bigg|_{t=0} \int_M \gamma(t)\, d\operatorname{vol}_M = 0,
\end{align*}
which implies \( \gamma'(0) \in C^\infty(M) \) has zero mean and hence 
$
T_{\rho}\mathcal{P}_{+}(M) \subseteq \left\{\sigma\in C^\infty(M)~\bigg|~\int_M \sigma\, d\operatorname{vol}_M = 0\right\}$, necessarily.
\end{proof}

\begin{proposition}
\label{vrho map}
For each $\rho \in \mathcal{P}_{+}(M) $, the operator $h_\rho$ defined in \eqref{V-rho} is a linear isomorphism from the quotient space $C^\infty(M)/\mathbb{R}$ to the tangent space $T_{\rho}\mathcal{P}_{+}(M)$.
\end{proposition}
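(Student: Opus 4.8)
The plan is to check, one by one, that $h_\rho$ is well defined on the quotient $C^\infty(M)/\mathbb{R}$, that it takes values in $T_\rho\mathcal{P}_+(M)$, and that it is linear, injective, and surjective. Well-definedness and linearity are immediate: if $\phi$ and $\phi'$ differ by a constant then $\nabla\phi=\nabla\phi'$, hence $h_\rho(\phi)=h_\rho(\phi')$, and $\phi\mapsto-\nabla\cdot(\rho\nabla\phi)$ is manifestly linear. To see $h_\rho(\phi)\in T_\rho\mathcal{P}_+(M)$ for every $\phi$, apply the divergence theorem on the closed manifold $M$:
\[
\int_M h_\rho(\phi)\, d\operatorname{vol}_M = -\int_M \nabla\cdot(\rho\nabla\phi)\, d\operatorname{vol}_M = 0,
\]
so $h_\rho(\phi)$ has zero mean and thus lies in $T_\rho\mathcal{P}_+(M)$ by the previous proposition.

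For injectivity I would argue by an energy identity. If $h_\rho(\phi)=0$, multiply by $\phi$ and integrate by parts (no boundary terms, since $\partial M=\varnothing$) to get
\[
0 = -\int_M \phi\,\nabla\cdot(\rho\nabla\phi)\, d\operatorname{vol}_M = \int_M \rho\,|\nabla\phi|^2\, d\operatorname{vol}_M .
\]
Since $\rho>0$ everywhere, this forces $\nabla\phi\equiv 0$, so $\phi$ is constant (using connectedness of $M$) and therefore represents the zero class in $C^\infty(M)/\mathbb{R}$.

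The substantive step is surjectivity: given $\sigma\in C^\infty(M)$ with $\int_M\sigma\, d\operatorname{vol}_M=0$, one must produce $\phi\in C^\infty(M)$ solving the weighted Poisson equation $-\nabla\cdot(\rho\nabla\phi)=\sigma$. I would observe that $L_\rho:=-\nabla\cdot(\rho\nabla\,\cdot\,)$ is a second-order uniformly elliptic operator on the compact manifold $M$ (its principal symbol at a covector $\xi$ is $\rho\,g^{-1}(\xi,\xi)>0$), and that the identity $\int_M\psi\,L_\rho\phi\, d\operatorname{vol}_M=\int_M\rho\,\nabla\psi\cdot\nabla\phi\, d\operatorname{vol}_M$ shows $L_\rho$ is formally self-adjoint for the $L^2(d\operatorname{vol}_M)$ pairing. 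By the Fredholm alternative for elliptic operators on closed manifolds, the range of $L_\rho$ on $C^\infty(M)$ is the $L^2$-orthogonal complement of $\ker L_\rho$; by the injectivity argument above $\ker L_\rho$ consists exactly of the constants, so the range is precisely $\{\sigma\in C^\infty(M)\mid\int_M\sigma\, d\operatorname{vol}_M=0\}=T_\rho\mathcal{P}_+(M)$, and elliptic regularity ensures the solution $\phi$ is smooth. Combining the four steps, $h_\rho$ descends to a linear isomorphism from $C^\infty(M)/\mathbb{R}$ onto $T_\rho\mathcal{P}_+(M)$.

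The main obstacle is exactly this surjectivity claim: the other three properties are one-line verifications, whereas surjectivity rests on the solvability and regularity theory for the weighted Laplacian $L_\rho$ (Fredholm alternative plus elliptic regularity), and implicitly on $M$ being compact and connected — in the noncompact Euclidean setting used later in the paper one would instead fix suitable weighted function spaces in which $L_\rho$ remains Fredholm with the analogous zero-mean solvability condition.
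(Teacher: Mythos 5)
Your argument is correct, and its first three steps (zero-mean image via integration by parts, the energy identity for injectivity) coincide with the paper's proof; the difference lies in how you establish surjectivity. The paper works variationally: it equips $H^1(\rho)/\mathbb{R}$ with the weighted Dirichlet inner product, invokes the Lax--Milgram theorem to produce a weak solution of $\nabla\cdot(\rho\nabla\phi)=f$, and then upgrades it to a smooth solution by elliptic regularity. You instead treat $L_\rho=-\nabla\cdot(\rho\nabla\,\cdot\,)$ as a formally self-adjoint elliptic operator on a \emph{closed} manifold and apply the Fredholm alternative, identifying the range with the $L^2$-orthogonal complement of the constants. Both routes end with elliptic regularity and both need connectedness of $M$ so that $\ker L_\rho$ is exactly the constants (a point the paper leaves implicit and you state explicitly). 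What each buys: the Fredholm route is cleaner and gives the solvability condition $\int_M\sigma\,d\operatorname{vol}_M=0$ directly as an orthogonality condition, but it genuinely uses compactness of $M$, whereas the paper's standing hypothesis is only that $M$ is paracompact without boundary --- so strictly speaking your proof covers a narrower setting, which you do acknowledge; conversely, the paper's Lax--Milgram argument is phrased so as not to invoke compactness, though in the noncompact case it too glosses over technical points (continuity of the data functional on $H^1(\rho)/\mathbb{R}$ and completeness of that quotient), so neither proof is fully airtight beyond the compact case. Within the compact, connected setting your proof is complete and acceptable.
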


\begin{proof}
Since \( M \) is a manifold without boundary, integration by parts yields:
\begin{equation}\label{integration by parts}
\int_M h_\rho(\phi)\, \psi\, d\operatorname{vol}_M = \int_M \langle \nabla\phi, \nabla\psi \rangle_g\, \rho\, d\operatorname{vol}_M, \quad \text{for all } \phi, \psi \in C^\infty(M).
\end{equation}
Setting \( \psi \equiv 1 \), we have:
\begin{equation*}
\int_M h_\rho(\phi)\, d\operatorname{vol}_M = 0, \quad \text{for all } \phi \in C^\infty(M),
\end{equation*}
which shows that the image of \( h_\rho \) is contained in the tangent space $T_{\rho}\mathcal{P}_{+}(M)$. To prove the injectivity of $h_\rho$, assume that $\phi \in C^\infty(M) $ is such that  \( h_\rho(\phi) = 0 \). Using \eqref{integration by parts}, we have:
\begin{align*}
\int_M \|\nabla\phi\|_g^2 \rho\, d\operatorname{vol}_M = 0.
\end{align*}
Since \( \rho \) is strictly positive, this implies that \( \nabla\phi = 0 \), so \( \phi \) is constant. Thus, \( h_\rho \) is injective on \( C^\infty(M)/\mathbb{R} \). 

We now establish the surjectivity of \( h_\rho \). Let $H^1(\rho)/\mathbb{R}$ denote the Sobolev space $H^1(\rho)$ defined up to constant functions. This space is a Hilbert space with the inner product
\begin{align*}
\langle \phi, \psi \rangle_{H^1(\rho)} := \int_M \langle \nabla \phi, \nabla \psi \rangle_g\, \rho\, d\operatorname{vol}_M,
\end{align*}
and associated norm
\begin{align*}
\|\phi\|_{H^1(\rho)} := \left( \int_M \|\nabla \phi\|_g^2\, \rho\, d\operatorname{vol}_M \right)^{1/2}.
\end{align*}
The bilinear form $a[\phi, \psi] := \langle \phi, \psi \rangle_{H^1(\rho)}$ satisfies
\begin{align*}
|a[\phi, \psi]| \leq \|\phi\|_{H^1(\rho)} \|\psi\|_{H^1(\rho)}, \quad a[\phi, \phi] \geq \|\phi\|_{H^1(\rho)}^2,
\end{align*}
for all $\phi, \psi \in H^1(\rho)/\mathbb{R}$. By the Lax--Milgram theorem \cite[p.~297]{evans2022partial}, for any continuous linear functional $f$ on $H^1(\rho)/\mathbb{R}$, there exists a unique $\phi \in H^1(\rho)/\mathbb{R}$ such that
\begin{align*}
a[\phi, \psi] = \langle f, \psi \rangle_{L^2}, \quad \text{for all } \psi \in H^1(\rho)/\mathbb{R}.
\end{align*}
This $\phi$ is the weak solution to the {\it weighted Laplacian} equation
\begin{equation}
\label{weighted Laplacian}
\Delta_\rho \phi:=\nabla \cdot (\rho\nabla\phi) = f.
\end{equation}
If $f \in C^\infty(M)$, the Elliptic Regularity Theorem \cite[Theorem 6.30]{warner1983foundations} ensures that the weak solution $\phi$ is smooth. This establishes the surjectivity of \( h_\rho \) since by \eqref{weighted Laplacian} we have that \( h_\rho(-\phi)= f\).
\end{proof}

\subsubsection*{The cotangent space of the density manifold $\mathcal{P}_{+}(M)$}

\begin{proposition}\label{characterization}
The map $f _\rho : C^\infty(M)/\mathbb{R} \to \mathrm{Im}(\flat_\rho)\subsetneq T^*_{\rho}\mathcal{P}_{+}(M)$ defined in \eqref{iso} is a linear isomorphism.
\end{proposition}

\begin{proof}
First, $f _\rho$ is well-defined because $\Delta_\rho(\phi + C) = \Delta_\rho \phi$ for any constant $C$. Linearity and continuity of $f _\rho$ follow directly from the properties of Otto's Riemannian metric $g_W$. In order to prove injectivity, suppose that $\phi \in C^\infty(M) $  is such that  $f _\rho(\phi) = 0$. Then, for all $\sigma \in T_{\rho}\mathcal{P}_{+}(M)$,
\begin{equation*}
g_W(\rho)(\sigma, -\Delta_\rho \phi) = 0.
\end{equation*}
By the non-degeneracy of $g_W$, this implies $-\Delta_\rho \phi = 0$. Hence, $\phi$ is determined up to an additive constant, since the weighted Laplacian $\Delta_\rho$ is an isomorphism between $C^\infty(M)/\mathbb{R}$ and $T_{\rho}\mathcal{P}_{+}(M)$. Thus, $f _\rho$ is injective on $C^\infty(M)/\mathbb{R}$.

We now prove the surjectivity of $f _\rho$. {Let $\alpha \in \mathrm{Im}(\flat_\rho)$. Then there exists a unique $\psi \in T_{\rho}\mathcal{P}_{+}(M)$ such that $\alpha=g_W(\rho)(\cdot,\psi)$ at the point $\rho\in\mathcal{P}_{+}(M)$.}
Since $\Delta_\rho$ is an isomorphism, there exists a unique $\phi \in C^\infty(M)/\mathbb{R}$ such that $\psi = -\Delta_\rho \phi$. Substituting this, we obtain
\begin{equation}
\label{intermediate with gw}
\langle \alpha, \sigma \rangle = g_W(\rho)(\sigma, -\Delta_\rho \phi), \quad \text{for all } \sigma \in T_{\rho}\mathcal{P}_{+}(M),
\end{equation}
which shows that $\alpha = f _\rho(\phi)$ and therefore $f _\rho$ is surjective.
\end{proof}

\subsubsection*{A brief comparison with alternative Hamiltonian structures on Wasserstein space}

The Hamiltonian structure of the Wasserstein space $\mathcal{P}_2(\mathbb{R}^{2d})$ (see \cite[Definition 5.1]{ambrosio2008hamiltonian}) has been extensively studied in recent years. For a comprehensive overview, we refer the reader to \cite{villani2009optimal} and the works \cite{ambrosio2008hamiltonian, Gangbo2011}. Much of this research is motivated by geometric approaches developed in fluid dynamics; see, for instance, \cite{arnold1966geometrie, ebin1970groups,Marsden1982TheHS, holm1998euler}. These geometric perspectives demonstrate, for example, that one-dimensional Euler--Poisson models can be viewed as minimizing paths for the natural action functional \cite{Gangbo2009}. 
For the Hamiltonian structure on the density manifold $\mathcal{P}_+(M)$, the paper \cite{lott2008some} provides insights into the associated Poisson structure. However, both of these Hamiltonian formulations are generally induced by the symplectic or Poisson structure of the underlying space, requiring the base space itself to admit such a structure.

The  symplectic form on the characterization space $\mathcal{P}_+(M)\times C^\infty(M)/\mathbb{R}$ introduced in Section~\ref{Hamiltonian structure}, offers a geometric perspective on the Wasserstein Hamiltonian flows discussed in \cite{chow2020wasserstein, Wu2025}. As it is pointed out in \cite{khesin2019geometry,khesin2021geometric}, this symplectic structure is intrinsic to the density manifold and does not depend on any symplectic or Poisson structure of the underlying space.

\subsubsection*{Infinite dimensional Hamiltonian ODEs on the Wasserstein space} 
The formulation \eqref{Ham-for} differs from the Hamiltonian ODEs \cite{ambrosio2008hamiltonian} defined on the Wasserstein space $\mathcal{P}_2(\mathbb{R}^{2d})$, which is based on Otto's calculus. The main difference between the two is that the topology of the Wasserstein space $\mathcal{P}_2(\mathbb{R}^{2d})$ is induced by the Wasserstein distance. In contrast, the smooth structure of the density manifold arises from the model space. The tangent bundle (see \cite[Definition 8.4.1]{ambrosio2008gradient}) of the Wasserstein space $\mathcal{P}_2(\mathbb{R}^{2d})$ is naturally given by
\begin{align*}
T_{\mu}\mathcal{P}_2(\mathbb{R}^{2d}) := \overline{\{\nabla \varphi : \varphi \in C_c^\infty(\mathbb{R}^{2d})\}}^{L^2(\mu;\mathbb{R}^{2d})},
\end{align*}
where the closure is in the sense of $L^2(\mu)$-norm.
The corresponding cotangent space is given by
\begin{align*}
    T^*_{\mu}\mathcal{P}_2(\mathbb{R}^{2d}) := \{\pi_\mu(Jv) \mid v \in T_{\mu}\mathcal{P}_2(\mathbb{R}^{2d})\},
\end{align*}
where $\pi_\mu : L^2(\mu, \mathbb{R}^{2d}) \to T_{\mu}\mathcal{P}_2(\mathbb{R}^{2d})$ denotes the canonical orthogonal projection, and $J$ is the canonical symplectic matrix.
Consequently, the symplectic form $\Omega_\mu : T^*_{\mu}\mathcal{P}_2(\mathbb{R}^{2d}) \times T^*_{\mu}\mathcal{P}_2(\mathbb{R}^{2d}) \to \mathbb{R}$ is defined by
\begin{align*}
    \Omega_\mu(\bar{v}_1, \bar{v}_2) := \int_{\mathbb{R}^{2d}} \langle J v_1, v_2 \rangle \, d\mu,
\end{align*}
for all $v_1, v_2 \in T_{\mu}\mathcal{P}_2(\mathbb{R}^{2d})$, where $\bar{v}_i = \pi_\mu(J v_i)$ for $i=1,2$.
In this framework, the Hamiltonian ODE takes the form
\begin{align*}
    \partial_t \mu_t + \nabla \cdot (J \nabla \mathcal{H}(\mu_t)\, \mu_t) = 0.
\end{align*}    
This Hamiltonian ODE is, in fact, a Lie–Poisson system \cite{Gangbo2011} on $(C_c^\infty)^*$ whose symplectic leaves are the orbits of the coadjoint action of $\mathrm{Ham}_c(\mathbb{R}^{2d})$ on $(C_c^\infty)^*$.

\subsubsection*{A Poisson bracket on the density manifold}
In \cite{lott2008some}, the underlying space $M$ is assumed to be a Poisson manifold with a Poisson bracket $\{\cdot,\cdot\}$.
Then, the Poisson bracket in the density manifold $\mathcal{P}_+(M)$ is defined as follows:
\begin{align*}
\{F_{\phi_1},F_{\phi_2}\}_W(\mu):=\int_M\{\phi_1,\phi_2\} d\mu,    
\end{align*}
where $F_{\phi_1}, F_{\phi_2}\in C^\infty(\mathcal{P}_+(M))$ are defined as
\begin{align*}
F_{\phi_i}(\rho) = \int_M \phi_i\rho~d_{\mathrm{vol}_M}, i = 1,2,   
\end{align*}
for $\phi_1,\phi_2\in C^\infty(M)$.

\subsubsection*{Hamiltonian structure for ideal fluids}
The Poisson bracket \eqref{Poi-bra} closely resembles the Hamiltonian structure used to describe ideal fluids (see, for instance, \cite{holm1998euler}). Specifically, the Lie–Poisson bracket, often called the ideal fluid bracket, is defined on functions of the velocity field \( \mathbf{v} \) as follows:
\begin{align*}
\left\{F, G\right\}(\mathbf{v}) := \int_{\mathfrak{D}} \mathbf{v} \cdot \left[\frac{\delta F}{\delta \mathbf{v}}, \frac{\delta G}{\delta \mathbf{v}}\right]_L \, dx,
\end{align*}
where \( \mathfrak{D} \subset \mathbb{R}^3 \), \( [\cdot, \cdot]_L \) is the left Lie algebra bracket of vector fields, defined as:
\begin{align*}
[\mathbf{v}, \mathbf{w}]_L^i = \sum_{j=1}^3 \left( w^j \frac{\partial v^i}{\partial x^j} - v^j \frac{\partial w^i}{\partial x^j} \right),
\end{align*}
and the functional derivative \( \delta F / \delta \mathbf{v} \) is defined as the unique element that satisfies the equality:
\begin{align*}
\lim_{\varepsilon \to 0} \frac{1}{\varepsilon} \left[F(\mathbf{v} + \delta \mathbf{v}) - F(\mathbf{v})\right] = \int_{\mathfrak{D}} \delta \mathbf{v} \cdot \frac{\delta F}{\delta \mathbf{v}} \, dx.
\end{align*}
By choosing the energy functional as the kinetic energy:
\begin{align*}
H(\mathbf{v}) := \frac{1}{2} \int_{\mathfrak{D}} |\mathbf{v}|^2 \, dx,
\end{align*}
the Euler equations for ideal fluid dynamics can be derived directly using this Hamiltonian framework.
}

\section*{Acknowledgments}
The authors thank Prof. Lyudmila Grigoryeva, Dr. Qiao Huang, and Prof. Manuel de Le\'{o}n for helpful discussions and remarks, and acknowledge partial financial support from the School of Physical and Mathematical Sciences of the Nanyang Technological University. DY is funded by the Nanyang President's Graduate Scholarship of Nanyang Technological University. Financial support from Singapore's Ministry of Education Tier 1 grant RG100/24 entitled ``Kernel methods for global structure preserving machine learning" is also acknowledged.

\footnotesize
\addcontentsline{toc}{section}{References}
\bibliographystyle{wmaainf}
\bibliography{Refs}
\end{document}